\newtheorem{THM}{Theorem}
\newtheorem{LMA}[THM]{Lemma}
\newtheorem{PROP}[THM]{Proposition}
\newtheorem{CORO}[THM]{Corollary}
\newtheorem{CONJ}[THM]{Conjecture}
\newtheorem{DEF}[THM]{Definition}
\newtheorem*{RMK}{Remark}
\newtheorem{claim}[THM]{Claim}
\newtheorem{subclaim}[THM]{Subclaim}
\numberwithin{equation}{section}
\numberwithin{THM}{section}
\newcommand\opn[1]{ {\operatorname{ #1 } }}
\newcommand\ov[1]{\overline{#1}}
\newcommand\wt[1]{\widetilde{#1}}
\newcommand\offplus{\hspace{1em}}
\newcommand{\tildx}{\widetilde{x}}
\newcommand\Gclone[2]{G_{#1 \rightarrow #2} }
\newcommand\clone[3]{{#1}_{#2 \rightarrow #3}}
\newcommand{\pk}{\ov{p}}
\newcommand{\lb}{\lbrack}
\newcommand{\rb}{\rbrack}
\newcommand{\buc}{\Delta}
\newcommand{\gap}{\Gamma}
\crefname{CONJ}{conjecture}{conjectures}
\crefname{THM}{theorem}{theorems}
\crefname{LMA}{lemma}{lemmas}
\crefname{CORO}{corollary}{corollaries}
\crefname{CONJ}{conjecture}{conjectures}
\crefname{claim}{claim}{claims}
\begin{document}
\centerline{{\bf  On the Minimal Edge Density of $K_4$-free 6-critical Graphs}}%
\vskip .3in
\centerline{{\bf Wenbo Gao}}
\smallskip
\centerline{Department of Industrial Engineering and Operations Research}
\centerline{Columbia University}
\centerline{New York, NY}
\centerline{USA 10027}
\centerline{{\tt wg2279@columbia.edu}}
\bigskip
\centerline{and}
\bigskip
\centerline{{\bf Luke Postle}%
\footnote{Partially
supported by NSERC under Discovery Grant No. 2014-06162, the Ontario Early 
Researcher Awards program and the Canada Research Chairs program.}}
\smallskip
\centerline{Department of Combinatorics and Optimization}
\centerline{University of Waterloo}
\centerline{Waterloo, ON}
\centerline{Canada N2L 3G1}
\centerline{{\tt lpostle@uwaterloo.ca}}

\vskip 0.4in \centerline{\bf ABSTRACT}
\bigskip

\noindent Kostochka and Yancey resolved a famous conjecture of Ore on the asymptotic density of $k$-critical graphs by proving that every $k$-critical graph $G$ satisfies $|E(G)| \geq (\frac{k}{2} - \frac{1}{k-1})|V(G)| - \frac{k(k-3)}{2(k-1)}$. The class of graphs for which this bound is tight, $k$-Ore graphs, contain a notably large number of $K_{k-2}$-subgraphs. Subsequent work attempted to determine the asymptotic density for $k$-critical graphs that do \emph{not} contain large cliques as subgraphs, but only partial progress has been made on this problem. The second author showed that if $G$ is 5-critical and has no $K_3$-subgraphs, then for $\varepsilon = 1/84$, $|E(G)| \geq (\frac{9}{4} + \varepsilon)|V(G)| - \frac{5}{4}$. It has also been shown that for all $k \geq 33$, there exists $\varepsilon_k > 0$ such that $k$-critical graphs with no $K_{k-2}$-subgraphs satisfy $|E(G)| \geq (\frac{k}{2} - \frac{1}{k-1} + \varepsilon_k)|V(G)| - \frac{k(k-3)}{2(k-1)}$. In this work, we develop general structural results that are applicable to resolving the remaining difficult cases $6 \leq k \leq 32$. We apply our results to carefully analyze the structure of 6-critical graphs and use a discharging argument to show that for $\varepsilon_6 = 1/1050$, 6-critical graphs with no $K_4$ subgraph satisfy $|E(G)| \geq ( \frac{k}{2} - \frac{1}{k-1} + \varepsilon_6 ) |V(G)| - \frac{k(k-3)}{2(k-1)}$.

\section{Introduction}
A \emph{$k$-coloring} of a graph $G$ is an assignment $\varphi: V(G) \rightarrow \left\{1,\ldots,k \right\}$ of one of $k$ colors to each vertex of $G$, and is a \emph{proper coloring} if $\varphi(u) \neq \varphi(v)$ for every edge $uv$ of $G$. The \emph{chromatic number} $\chi(G)$ is the smallest integer $k$ for which $G$ has a proper $k$-coloring. A graph $G$ is said to be \emph{$k$-critical} if $\chi(G) = k$, and for every proper subgraph $H$ of $G$, $\chi(H) < k$.

Let $f_k(n)$ denote the minimum number of edges in a $k$-critical graph with $n$ vertices. It has been a long-standing problem to determine the \emph{asymptotic density} of $k$-critical graphs, that is, the value of $\lim\limits_{n \rightarrow \infty} \frac{f_k(n)}{n}$. Early work by Dirac \cite{D1957LMS} established that $k$-critical graphs satisfy $|E(G)| \geq \frac{k-1}{2}|V(G)| + \frac{k-3}{2}$, which was later improved to $|E(G)| \geq (\frac{k-1}{2} + \frac{k-3}{2(k^2-3)})|V(G)|$ by Gallai \cite{G1963HAS,KSW1996DM}. Ore conjectured \cite{O1967} that $f_k(n+k-1) = f_k(n) + (k-1)(\frac{k}{2} - \frac{1}{k-1})$, which would imply that the asymptotic density is $\frac{k}{2} - \frac{1}{k-1}$. This was established by Kostochka and Yancey \cite{KY2014JCTb}, who obtained the following strengthened lower bound.

\begin{THM}[Kostochka and Yancey, \cite{KY2014JCTb}]\label{KY1} If $G$ is $k$-critical, then
$$|E(G)| \geq \left( \frac{k}{2} - \frac{1}{k-1} \right) |V(G)| - \frac{k(k-3)}{2(k-1)}.$$
\end{THM}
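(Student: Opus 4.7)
The plan is to reformulate the inequality via a potential function and proceed by minimum counterexample. For a graph $G$ and $W \subseteq V(G)$, define
$$\rho(W) = (k-2)(k+1)|W| - 2(k-1)|E(G[W])|.$$
After clearing denominators, the target inequality becomes $\rho(V(G)) \leq k(k-3)$, and a direct calculation confirms $\rho(V(K_k)) = k(k-3)$, so the bound is sharp at $K_k$. Moreover, assigning each vertex $v$ the initial charge $(k-2)(k+1) - (k-1)d_G(v)$ makes the total charge equal $\rho(V(G))$, which sets up the problem for a discharging argument.

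Let $G$ be a $k$-critical counterexample on the fewest vertices, so that $\rho(V(G)) > k(k-3)$ and in particular $G \neq K_k$. The heart of the argument is a \emph{subset potential lemma}: for every $R \subsetneq V(G)$ with $|R| \geq 2$, one has a lower bound of the form $\rho(R) \geq k(k-3) + \beta$ for a positive quantity $\beta$ depending on $|R|$. To prove this, exploit criticality on $G - R$: since $G - R$ is a proper subgraph of $G$, it admits a proper $(k-1)$-coloring $\varphi$, which cannot be extended to all of $G$. Identify, for each color $i \in \{1,\ldots,k-1\}$, the vertices of $R$ that are forced to avoid color $i$ by $\varphi$ into a single apex $w_i$, producing a smaller graph $H$ that contains a $k$-critical subgraph (because the coloring obstruction persists). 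Applying the minimality of $G$ to this subgraph yields a potential bound that, when translated back to $R$, supplies the required inequality.

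Equipped with the subset lemma, I would finish via discharging: redistribute the initial vertex charges so that every vertex ends with charge at most some uniform target, forcing $\rho(V(G)) \leq k(k-3)$ and contradicting the assumed strict excess. The rules should send charge away from low-degree vertices (the ones with large positive initial charge) toward their neighborhoods, appealing to the subset lemma in each local configuration to bound the resulting charge concentration. The main obstacle will be calibrating the subset potential lemma sharply enough: the identification step in its proof must not lose too much potential in the translation, and the tight cases are inevitably the near-$K_{k-1}$ subsets, since these are precisely the configurations that glue to form $K_k$ by Ore composition. Making the discharging rules simultaneously respect every such near-extremal subset while leaving a positive slack elsewhere is, I expect, where the bulk of the technical bookkeeping lies.
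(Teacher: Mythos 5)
This statement is not proved in the paper at all: it is quoted as Theorem~\ref{KY1} from Kostochka and Yancey, and the closest the paper comes to its proof is the critical-extension machinery of \Cref{sec:potential} (the construction $G_{R,\varphi}$ and \Cref{submodular,edgedrop}), which adapts the Kostochka--Yancey potential method to the modified $(\varepsilon,\delta)$-potential. Your proposal is an outline of that same potential-function strategy, but as written it has a genuine gap in its central step. In the subset potential lemma you color $G-R$ and then ``identify, for each color $i$, the vertices of $R$ that are forced to avoid color $i$'' into an apex $w_i$. This construction is not well defined: a vertex of $R$ with neighbors of two different colors outside $R$ is forced to avoid both colors, so the proposed apex classes do not partition anything, and it is not clear what graph $H$ you are forming or why ``the coloring obstruction persists.'' The correct move (as in the paper's $G_{R,\varphi}$, and in Kostochka--Yancey) is to take a $(k-1)$-coloring of the \emph{induced} subgraph $G\lb R\rb$, collapse each color class of $R$ to a single vertex $x_i$, add the clique on $x_1,\dots,x_{k-1}$, and attach $G-R$; equivalently one may collapse the colored side $G-R$ and attach $R$. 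Either way the side being collapsed must be the side carrying the coloring, the non-$(k-1)$-colorability of the result must be argued via a color permutation, and one must check that the resulting $k$-critical subgraph is genuinely smaller than $G$ (which forces case analysis on the size of the core and on whether the extension is spanning) before minimality can be invoked.

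Beyond that, the quantitative heart of the theorem is absent: you never state what lower bound $\rho(R)\geq k(k-3)+\beta$ you need, how $\beta$ depends on $|R|$, or how the potential loss in the identification step is controlled (this is exactly what \Cref{submodular,edgedrop} make precise in the paper's setting), and the concluding step is deferred to unspecified discharging rules whose calibration you yourself flag as the bulk of the work. In Kostochka and Yancey's actual argument that final step also requires a structural analysis of the vertices of degree $k-1$ (clusters and their neighborhoods), none of which appears here. So what you have is a correct identification of the known proof strategy, with its key construction garbled and all of the load-bearing estimates and structural lemmas missing; it is a plan rather than a proof.
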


\Cref{KY1} implies the asymptotic density conjectured by Ore. Furthermore, Kostochka and Yancey showed that the \emph{only} graphs for which the inequality is tight are a special class of graphs known as \emph{$k$-Ore graphs}, which can be constructed via a sequence of operations known as \emph{Ore-compositions}.

\begin{DEF}\label{DefOreComp}
An \emph{Ore-composition} of two graphs $G_1, G_2$ with respect to an edge $xy \in E(G_1)$ and a vertex $z \in V(G_2)$ is the graph obtained by deleting the edge $xy$, splitting $z$ into two vertices $z_1, z_2$ of positive degree, and identifying $x$ with $z_1$ and $y$ with $z_2$. We refer to $G_1$ as the \emph{edge-side} of the Ore-composition, $G_2$ as the \emph{vertex-side}, and $z$ as the \emph{split-vertex}.

A graph $G$ is \emph{$k$-Ore} if it can be obtained from performing repeated Ore-compositions starting with $K_k$.
\end{DEF}

The class of $k$-Ore graphs has the notable structural property that its members contain many large cliques. Since the $k$-Ore graphs are precisely the $k$-critical graphs of minimum edge density, it is then natural to ask whether the edge density of $k$-critical graphs \emph{not} containing any large cliques is strictly greater than the lower bound (in the limit). This leads to the following conjecture, which was previously posed in \cite{P2015EDM}.
\begin{CONJ}\label{MAINCONJ}
For every $k \geq 4$, there exists $\varepsilon_k > 0$ such that if $G$ is $k$-critical and does not contain a $K_{k-2}$ subgraph, then
$$|E(G)| \geq \left( \frac{k}{2} - \frac{1}{k-1} + \varepsilon_k \right)|V(G)| - \frac{k(k-3)}{2(k-1)}.$$
\end{CONJ}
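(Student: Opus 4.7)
The plan is to attack the conjecture by contradiction combined with a discharging argument, following the template already used successfully for $k=5$ with $K_3$-free graphs and for $k \geq 33$. I would first introduce the \emph{potential} $p_k(G) := (k(k-1) - 2)|V(G)| - 2(k-1)|E(G)| - k(k-3)$, so that \Cref{KY1} becomes $p_k(G) \leq 0$ (with equality exactly on $k$-Ore graphs) and the conjecture becomes $p_k(G) \leq -2(k-1)\varepsilon_k |V(G)|$. A hypothetical minimum counterexample $G$ is then a $k$-critical, $K_{k-2}$-free graph whose potential is too close to $0$.

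The strategic leverage is the observation that every $k$-Ore graph contains $K_{k-1}$ (and hence $K_{k-2}$), because $K_k - v = K_{k-1}$ survives every Ore-composition on the vertex side and the argument pushes through by induction on the construction. The $K_{k-2}$-free hypothesis therefore forces $G$ to be structurally far from $k$-Ore, and this deviation should be visible near every vertex of degree $k-1$, which is the ``bottleneck'' configuration in \Cref{KY1}. A first collection of lemmas would formalize this quantitatively: applying \Cref{KY1} to $k$-critical subgraphs of any proper subgraph $H \subseteq G$ yields potential inequalities linking local structure (cliques, triangles, degree-$(k-1)$ vertices) and forbidding certain dense configurations whose absence feeds directly into the discharging.

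Next comes the discharging. I would assign initial charge $\mu(v) := d(v) - (k - 2/(k-1))$ to each vertex, giving total $\sum_v \mu(v) = 2|E(G)| - (k - 2/(k-1))|V(G)|$, which is $\geq -k(k-3)/(k-1)$ by \Cref{KY1}; the goal is to redistribute so that every vertex ends with charge at least $2\varepsilon_k - o(1)$. Vertices of degree $\geq k$ begin with nonnegative charge and send it toward degree-$(k-1)$ neighbors whose local structure resembles the $K_{k-1}$ configurations appearing in $k$-Ore graphs. For each remaining degree-$(k-1)$ vertex $v$ in danger of ending with too little charge, I would show that its neighborhood must contain either (i) a ``cloneable'' configuration whose reduction produces a smaller $k$-critical graph contradicting minimality of $G$, or (ii) a $K_{k-2}$, contradicting the hypothesis.

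I expect the hardest step to be the structural work behind case (i): the cloning reductions. Each failure mode of the discharging must be matched to a local modification of $G$ (identifying, splitting, or deleting vertices) producing a strictly smaller $k$-critical graph whose potential can be controlled against $p_k(G)$. The reductions in the $k \geq 33$ case exploit large neighborhoods and hence a comfortable arithmetic margin; in the range $6 \leq k \leq 32$, and especially for $k=6$ where a $K_4$-free neighborhood of a degree-$5$ vertex is already quite sparse, the margins are tight and many new tailored reductions become necessary. Producing a clean $\varepsilon_k$ -- for instance the $\varepsilon_6 = 1/1050$ targeted in the abstract -- is therefore the main obstacle, requiring exhaustive case analysis of the neighborhoods of degree-$(k-1)$ vertices together with a carefully tuned discharging scheme that squeezes a uniform gain out of every surviving configuration.
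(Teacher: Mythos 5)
There is a genuine gap: what you have written is a research plan, not a proof. Every substantive step --- the structural lemmas linking potential to local configurations, the cloning reductions in your case (i), and the discharging case analysis that is supposed to extract a uniform $\varepsilon_k$ --- is deferred ("I would show\ldots", "the hardest step\ldots", "many new tailored reductions become necessary"), so nothing is actually established. Be aware also that the statement you are addressing is stated in the paper as a conjecture: the paper itself does not prove it for all $k \geq 4$, but only for $k=6$ (the cases $k=4,5$ and $k\geq 33$ being known from earlier work), and the range $7 \leq k \leq 32$ remains open. A sketch that purports to handle all $k\geq 4$ uniformly is therefore overreaching; the entire difficulty of the problem lives exactly in the part you postpone.

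Beyond the incompleteness, your plan misses the technical device that makes the paper's $k=6$ argument close. Working with the plain Kostochka--Yancey potential, as you propose, runs into the obstruction that when one analyzes critical extensions (or the critical subgraphs produced by your cloning reductions), the extender may itself be a $k$-Ore graph, whose KY-potential is exactly $k(k-3)$; this gives zero slack and the induction does not gain anything. The paper's fix is to replace $\pk$ by the $(\varepsilon,\delta)$-potential $p(G)=\pk(G)+\varepsilon|V(G)|-\delta T(G)$, where $T(G)$ counts disjoint $K_{k-1}$'s and $K_{k-2}$'s: since $k$-Ore graphs contain many such cliques (\Cref{kOreTBound}), their modified potential drops strictly below $k(k-3)$, and this is what powers \Cref{edgedrop}, \Cref{buckets}, and the cloning lemmas. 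Your correct observation that $k$-Ore graphs contain $K_{k-1}$ is the qualitative seed of this idea, but without quantifying it inside the potential (the $\delta T(G)$ term) the extension and reduction inequalities you plan to use will not give the strict gain $\varepsilon_k$ you need. Finally, even for $k=6$ the paper needs considerably more than "charge flows from degree-$\geq k$ vertices to degree-$(k-1)$ vertices": it needs the gadget/proto-gadget machinery, the classification of components of $D_5(G)$, the Kierstead--Rabern list-coloring bound on edges from independent sets of degree-$5$ vertices, and a six-stage discharging with a global redistribution step; none of this is anticipated in your outline.
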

\Cref{MAINCONJ} has been proved for certain values of $k$. It is vacuously true for $k = 4$, as every 4-critical graph must contain an edge. More is known for $k = 4$; the second author proved in \cite{P2018Manuscript} that \Cref{MAINCONJ} holds for 4-critical graphs of girth 5, and \Cref{KY1} was strengthened for such graphs in \cite{LP2017JGT}. The conjecture has also been proved for the case $k = 5$ in \cite{P2015EDM}. Curiously, the conjecture is also known to be true when $k$ is large; in the thesis \cite{L2015PHD_EMORY} (see also \cite{GLP}), it is shown that \Cref{MAINCONJ} holds for all $k \geq 33$.

This paper aims to lay the groundwork for resolving the remaining cases $6 \leq k \leq 32$, which appear to require more sophisticated and careful analysis. The main elements of this analysis are a modified \emph{potential function} and an operation known as \emph{cloning}, which are used to analyze the local structure of the graph using coarse, global information obtained from measuring the potential. This local information can then be used to obtain global bounds on the number of edges and vertices via \emph{discharging}.

The \emph{potential function} is a key tool used by Kostochka and Yancey \cite{KY2014JCTb} to pass between local and global structure. Their original potential, which we denote $\pk(\cdot)$ and refer to as the \emph{KY-potential},\footnote{to distinguish it from the new $(\varepsilon,\delta)$-potential used in our paper, defined below.} is defined to be
$$\pk(G) = (k-2)(k+1)|V(G)| - 2(k-1)|E(G)|.$$
For $R \subseteq V(G)$, the potential of the subgraph induced by $R$ is defined as $\pk(R) = \pk(G \lb R \rb)$. By rearranging, we immediately observe that \Cref{KY1} is equivalent to the statement that $\pk(G) \leq k(k-3)$ for all $k$-critical graphs $G$.

Now, to account for the presence of large cliques, we define
$$T(G) = \max \{2a(H) + b(H): H \subseteq G \text{ a union of vertex-disjoint cliques} \}$$
where $a(H)$ is the number of components isomorphic to $K_{k-1}$, and $b(H)$ the number of components isomorphic to $K_{k-2}$. We define a modification of the potential function as follows. For fixed $\varepsilon > 0, \delta > 0$, the \emph{$(\varepsilon,\delta)$-potential} (or simply the \emph{potential}) is defined to be
$$p(G) = ( (k-2)(k+1) + \varepsilon)|V(G)| - 2(k-1)|E(G)| - \delta T(G)$$
For $R \subseteq V(G)$, the potential of the subgraph is $p_G(R) = p(G\lb R \rb)$ as before. Note that $p(G) = \pk(G) + \varepsilon |V(G)|-\delta T(G)$. We make the following conjecture.
\begin{CONJ}\label{OURCONJ}
For all $k \geq 6$, there exist $\varepsilon_k, \delta_k, P_k  > 0$ such that the $(\varepsilon_k, \delta_k)$-potential satisfies
\begin{enumerate}
\item $p(K_k) = k(k-3) + k\varepsilon_k - 2\delta_k$, and
\item $p(G) \leq k(k-3) + |V(G)|\varepsilon_k - \left( 2 + \frac{|V(G)| - 1}{k-1}\right)\delta_k$ if $G$ is $k$-Ore and $G \neq K_k$, and
\item $p(G) \leq k(k-3) - P_k$ if $G$ is $k$-critical and not $k$-Ore.
\end{enumerate}
\end{CONJ}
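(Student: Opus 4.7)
The plan is to handle the three parts of the conjecture in order of increasing difficulty. Parts (1) and (2) reduce to computation and a structural induction, while part (3) is the main obstacle.

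Part (1) is a direct computation: $|V(K_k)| = k$, $|E(K_k)| = \binom{k}{2}$, and $T(K_k) = 2$ (attained by any $K_{k-1}$-subgraph). Substituting into the definition of $p$ and using the identity $(k-2)(k+1) - (k-1)^2 = k-3$ gives $p(K_k) = k(k-3) + k\varepsilon_k - 2\delta_k$, as required.

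For part (2), I would induct on the number of Ore-compositions used to build $G$. If $G$ is obtained from $k$-Ore graphs $G_1, G_2$ by an Ore-composition across $xy \in E(G_1)$ and split-vertex $z \in V(G_2)$, then $|V(G)| = |V(G_1)| + |V(G_2)| - 1$ and $|E(G)| = |E(G_1)| + |E(G_2)| - 1$, from which a short calculation yields $\pk(G) = \pk(G_1) + \pk(G_2) - k(k-3)$. Combined with $\pk(G_i) = k(k-3)$ (the equality case of \Cref{KY1} for $k$-Ore graphs), this gives $\pk(G) = k(k-3)$, so $p(G) = k(k-3) + \varepsilon_k |V(G)| - \delta_k T(G)$. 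It then suffices to show $T(G) \geq 2 + (|V(G)|-1)/(k-1)$ for $G \neq K_k$. Writing $|V(G)| = 1 + m(k-1)$ with $m \geq 2$, I would prove this inductively by exhibiting, for each leaf $K_k$ of the Ore-construction tree, a disjoint $K_{k-1}$-subgraph of $G$: each time $K_k$ is used as the vertex-side, the remainder $V(K_k) \setminus \{z\}$ is a fresh $K_{k-1}$ disjoint from the rest, and the edge-side $K_k$'s contribute analogously, yielding $T(G) \geq 2m \geq 2 + m$.

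For part (3), the main obstacle, I would begin by invoking the equality characterization in Kostochka--Yancey: a non-Ore $k$-critical graph satisfies $\pk(G) < k(k-3)$ strictly, and integrality of $\pk$ promotes this to $\pk(G) \leq k(k-3) - c$ for a fixed positive integer $c$. However, the term $\varepsilon_k|V(G)|$ in $p(G) = \pk(G) + \varepsilon_k |V(G)| - \delta_k T(G)$ grows with $|V(G)|$, so this integrality slack alone does not yield a uniform bound. I would split into two regimes: when $T(G) \geq \alpha |V(G)|$ for a chosen threshold $\alpha > 0$, the term $-\delta_k T(G)$ absorbs $\varepsilon_k|V(G)|$ provided $\delta_k / \varepsilon_k$ is large enough; when $T(G) < \alpha |V(G)|$, the graph is sparse in $K_{k-1}$ and $K_{k-2}$ subgraphs, and a genuine linear-in-$|V(G)|$ improvement over Kostochka--Yancey must be established. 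The second regime is the principal difficulty: it essentially restates \Cref{MAINCONJ} and cannot be extracted by bootstrapping from parts (1) and (2), so a new structural argument is required. For $k = 6$ this is where the paper's cloning reductions and discharging would be deployed, analyzing local configurations around low-degree vertices in a minimum counterexample to derive a contradiction. The parameters $\varepsilon_k, \delta_k, P_k$ would be tuned at the end to balance the constraints from all three parts, with $\varepsilon_k$ chosen small relative to both $\delta_k$ and the linear improvement obtained in the second regime.
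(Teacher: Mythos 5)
The central gap is part (3): your proposal does not prove it, and it cannot be dispatched in the space of a paragraph, because it is essentially the whole content of the conjecture. Your two-regime split is sound bookkeeping but not a proof: when $T(G) \geq \alpha|V(G)|$ the term $-\delta_k T(G)$ does absorb $\varepsilon_k|V(G)|$ (and one can even start from the stronger non-Ore bound $\pk(G) \leq k(k-3) - 2(k-1)$ of \Cref{strong_KY_cor} rather than bare integrality), but in the regime $T(G) < \alpha|V(G)|$ you concede that ``a genuine linear-in-$|V(G)|$ improvement over Kostochka--Yancey must be established'' by ``a new structural argument'' --- which is precisely \Cref{MAINCONJ}-strength content and is exactly what the paper supplies only for $k=6$, via the potential/critical-extension machinery of \Cref{sec:potential}, the cloning and cluster analysis of \Cref{sec:cloning}, the structure of $D_5(G)$ in \Cref{sec:6critical}, and the discharging of \Cref{sec:discharging}; for $7 \leq k \leq 32$ the statement remains open. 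So what you have for part (3) is a plan, not an argument.

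There is also a concrete error in your part (2). The intermediate claim $T(G) \geq 2m$, where $m = (|V(G)|-1)/(k-1)$ is the number of $K_k$ leaves of the Ore-construction, is false: a leaf contributes a disjoint $K_{k-1}$ only as long as at most one of its edges is disturbed, and later compositions can replace several edges of the same leaf. For instance, start with $K_k$ on $v_1,\ldots,v_k$ and replace the two independent edges $v_1v_2$ and $v_3v_4$ by split copies of $K_k$; then $m=3$ but the frame now contributes only a $K_{k-2}$ disjoint from the two internal $K_{k-1}$'s, and one checks $T(G) = 5 < 6 = 2m$ (the needed bound $2+m = 5$ is met exactly). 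The correct route, which is the paper's, is superadditivity under Ore-composition: $T(G) \geq T(G_1)+T(G_2)-2$, improved to $-1$ when a side is $K_k$ (\Cref{Tsuperadd}), which by induction yields $T(G) \geq 2 + \frac{|V(G)|-1}{k-1}$ (\Cref{kOreTBound}); combined with $\pk(G)=k(k-3)$ for $k$-Ore graphs this gives part (2), and your part (1) computation is correct and matches the paper.
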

The non-trivial content of this conjecture is the third statement; the first two statements follow from the properties of $k$-Ore graphs and routine calculations. \Cref{OURCONJ} clearly implies \Cref{MAINCONJ}, as a $k$-critical graph $G$ that does not contain a $K_{k-2}$ clique satisfies $T(G) = 0$ and is not $k$-Ore. Rearranging the inequality $p(G) \leq k(k-3) - P_k$ when $T(G) = 0$ yields
$$ |E(G)| \geq \left( \frac{k}{2} - \frac{1}{k-1} +\frac{\varepsilon_k}{2(k-1)} \right) |V(G)| - \frac{k(k-3)}{2(k-1)} + \frac{P_k}{2(k-1)}$$
Moreover, this implies that the asymptotic density of $k$-critical graphs not containing a $K_{k-2}$ subgraph increases to at least $\frac{k}{2} - \frac{1}{k-1} + \frac{\varepsilon_k}{2(k-1)}$.

In this paper, we first develop general techniques for proving \Cref{OURCONJ}, and then apply them to the case $k = 6$. Our main theorem proves \Cref{OURCONJ} when $k=6$ as follows.
\begin{THM}\label{MAINTHM}
For $\varepsilon = \frac{1}{105}, \delta = \frac{10}{105}, P = \frac{20}{21}$, the $(\varepsilon, \delta)$-potential satisfies
\begin{enumerate}
\item $p(K_6) = 18 + 6\varepsilon - 2\delta$, and
\item $p(G) \leq 18 + |V(G)|\varepsilon - \left( 2 + \frac{|V(G)| - 1}{5}\right)\delta$ if $G$ is $6$-Ore and $G \neq K_6$, and
\item $p(G) \leq 18 - P$ if $G$ is 6-critical and not 6-Ore.
\end{enumerate}
\end{THM}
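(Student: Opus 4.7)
The plan is to prove the three items in order, treating (1) as a calculation, (2) as induction on the Ore-decomposition, and (3) as the main combinatorial heart via a minimum-counterexample argument combined with discharging.

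For item (1), I would simply verify by direct substitution: $K_6$ has $6$ vertices and $15$ edges, and since $K_6$ does not itself equal $K_5$ or $K_4$ but contains a $K_5$ as a subgraph, the maximum union of vertex-disjoint cliques contributing to $T$ is a single $K_5$, giving $T(K_6)=2$. Substituting these values with $k=6$ into the definition of $p$ yields $p(K_6)=28\cdot 6+6\varepsilon-2\cdot 5\cdot 15-2\delta=18+6\varepsilon-2\delta$, matching the stated value.

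For item (2), I would induct on the number of Ore-compositions used to build $G$. Suppose $G$ is obtained by Ore-composing $G_1$ (edge-side, edge $xy$) with $G_2$ (vertex-side, split-vertex $z$), both 6-Ore. Then $|V(G)|=|V(G_1)|+|V(G_2)|-1$ and $|E(G)|=|E(G_1)|+|E(G_2)|-1$. The task is then to show that the $T$ term only decreases by a controlled amount relative to the base-case configuration where $G_i=K_6$: the upper bound on $T(G)$ is what drives the $-\delta|V(G)|/(k-1)$ slope in (2), reflecting that when one of the two sides is $K_6$ and contributes a new $K_5$ (or $K_4$) gadget, the slack term $-(|V(G)|-1)\delta/(k-1)$ grows at exactly the right linear rate. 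Applying the inductive hypothesis to each side, summing, and checking the worst case in which no new large clique is created should complete the step; the base case is already established in (1).

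For item (3), which is the main obstacle, the strategy is to take a minimum counterexample $G$ with respect to $|V(G)|$ and then $|E(G)|$, and derive a contradiction via discharging. The first phase is to prove a collection of structural lemmas about $G$: that every proper subgraph $H\subsetneq G$ with $|V(H)|\geq 2$ satisfies an appropriate potential bound inherited from the minimality (so that small cuts and low-potential sets can be ``reduced''), that certain configurations involving $K_5$- or $K_4$-subgraphs can be replaced via cloning (as indicated in the introduction) to produce a smaller graph that contradicts either the inductive bound or \Cref{OURCONJ} for a smaller instance, and that the ``local'' structure around low-degree vertices is highly constrained. The second phase is a discharging argument: assign initial charges to vertices, faces, or vertex/clique groups based on degree and membership in near-extremal cliques, then redistribute charges according to rules designed to capture exactly the excess that the $\varepsilon$-term and $\delta T$-term afford. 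Summing charges and using the structural lemmas to show each final charge is sufficiently large should contradict the bound implied by assuming $p(G)>18-P$. The hard part will be calibrating the discharging rules so that the numerical constants $\varepsilon=1/105$, $\delta=10/105$, $P=20/21$ balance precisely: the choices of $\varepsilon$ and $\delta$ must simultaneously accommodate Ore-graphs (so that case (2) is not violated) and leave enough slack at non-Ore configurations to yield the $-P$ gap. I expect most of the paper's length to be spent on carefully cataloguing the reducible configurations and verifying the discharging inequalities case-by-case, particularly around vertices of degree $5$ and $6$ adjacent to $K_4$'s or other dense substructures.
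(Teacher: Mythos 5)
Your items (1) and (2) are correct and follow essentially the same route as the paper. For (1), $T(K_6)=2$ and the substitution is right. For (2), your induction on the Ore-decomposition is exactly the paper's argument via superadditivity of $T$ under Ore-composition (\Cref{Tsuperadd}) and \Cref{kOreTBound}, combined with the fact that the KY-part of the potential equals $18$ for every $6$-Ore graph (it is additive up to $-k(k-3)$ under Ore-composition). Two small corrections: what you need is a \emph{lower} bound on $T(G)$ (it enters $p$ with a negative sign), not an upper bound as you wrote; and in the mixed case where exactly one side is $K_6$, the crude bound $T(G)\ge T(G_1)+T(G_2)-2$ does not suffice — you need the sharper $T(G)\ge T(G_1)+T(G_2)-1$ available when one side is $K_6$, which is why the paper proves the refined second statement of \Cref{Tsuperadd}.

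Item (3), however, has a genuine gap: what you offer is a plan, not a proof, and the plan is essentially the strategy the paper itself announces in its introduction. The entire content of the theorem lies in carrying it out: the potential-function analysis showing that a tight, ungemmed counterexample admits no $i$-edge-additions and that proper non-clique subsets have high potential (\Cref{buckets}), the cloning machinery with the bound on cluster size (\Cref{k3Cluster}), the resulting classification of components of $D_5(G)$ into six small types (\Cref{d5struct}), the gadget/proto-gadget/kite analysis that controls which degree-$6$ vertices may withhold charge (the ``reserved'' vertices and ``dangling'' vertices), the Kierstead--Rabern-based edge bound \Cref{A2B} that powers the global stages of the discharging, and the explicit six-stage rule set whose inequalities must be verified at $\varepsilon=\frac{1}{105}$, $\delta=\frac{10}{105}$, $P=\frac{20}{21}$. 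None of these reducible configurations or discharging rules is specified in your proposal, and the assertion that the rules ``should'' balance is precisely the statement requiring proof; nothing in your write-up would let a reader reconstruct the argument or check that the stated constants work. So (1) and (2) stand, but (3) — the nontrivial part of \Cref{MAINTHM} — remains unproven as written.
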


\subsection{Organization}

This paper is organized as follows. In \Cref{sec:kOre}, we prove several facts about $k$-Ore graphs and Ore-compositions. In \Cref{sec:kiersteadrabern}, we use list colorings to study independent sets of degree $k-1$ vertices. In \Cref{sec:potential}, we use the potential function to study the general structural properties of $k$-critical graphs that are `close' to violating \Cref{OURCONJ}. In \Cref{sec:cloning}, we define a notion called \emph{cloning} and develop its properties. The results in \Cref{sec:potential,sec:cloning} apply to all $k \geq 6$. In \Cref{sec:6critical}, we apply the general results of the previous sections to 6-critical graphs. In \Cref{sec:discharging}, we complete the proof of \Cref{MAINTHM} using discharging.

\subsection{Notation and Conventions}

We adopt the following conventions. Unless stated otherwise, a \emph{graph} is a simple graph, and a \emph{coloring} is a proper coloring. For $R \subseteq V(G)$, $G \lb R \rb$ denotes the subgraph induced by the vertices in $R$ and $\partial_G R$ denotes the \emph{boundary} of $R$ in $G$ that is $\{v\in R: N(v)\setminus R\ne \emptyset\}$. The edge between vertices $u,v$ will be denoted $uv$. The set of neighbors of the vertex $x$ is denoted $N(x)$, and the closed neighborhood $\{x\} \cup N(x)$ is denoted $N\lb x \rb$. 

For a set $S$ consisting of pairs $\{u,v\}$ of vertices of $G$, $G + S$ denotes the graph obtained by adding the edges in $S$ to $G$. When $S$ consists of a single edge $uv$, we simply write $G + uv$. For a vertex $v \in V(G)$, $G - v$ denotes the graph obtained by deleting the vertex $v$. For $A, B \subseteq V(G)$, we use $E_G(A,B)$ to denote the set of edges $ab$ with $a \in A, b \in B$. When $G$ is unambiguous, we simply write $E(A,B)$.

If $G$ is an Ore-composition of two graphs $G_1, G_2$ on the vertices $\{a,b\}$, then we refer to $a,b$ as \emph{overlap vertices}, and $\underline{ab}$ denotes the vertex that was split in $G_2$ (the \emph{split-vertex}).

\section{$k$-Ore Graphs and Gems}\label{sec:kOre}

We first study the properties of $k$-Ore graphs and a related class of graphs called \emph{gems}, which appear when considering counterexamples to \Cref{OURCONJ}. We will prove the first two (easy) statements of \Cref{OURCONJ} and \Cref{MAINTHM}, as well as several lemmas which will be needed later.

\subsection{$k$-Ore Graphs}

A graph $H$ obtained from a $k$-Ore graph $G$ by splitting a vertex $v$ of $G$ into two vertices $a,b$ of positive degree will be called a \emph{split $k$-Ore graph} and we call $a$ and $b$ the \emph{split vertices} of $H$. 

Recall that a $k$-Ore graph is obtained by a sequence of Ore-compositions (\Cref{DefOreComp}) applied to $K_k$. Suppose $G$ is an Ore-composition of $G_1, G_2$ on $xy \in E(G_1), z \in V(G_2)$. We can view $G$ as a graph $G'$ isomorphic to $G_1$, with the edge $xy \in E(G')$ corresponding to the subgraph of $G$ which is isomorphic to $G_2$ with $z$ split. The edge $xy \in E(G')$ is called a \emph{replacement edge} of the Ore-composition $G$ when viewing $G$ as $G'$. See \Cref{fig:frame} for an example where $G_1 = K_k$ and the edge $v_3v_4$ has been replaced.

\begin{figure}
	\centering
	\begin{tikzpicture}[transform shape]
	\foreach \x in {1,...,6}{%
		\pgfmathparse{(\x+2)*360/6+30}
		\pgfmathtruncatemacro\lab{7-\x}
		\node[draw,circle,inner sep=0.05cm] (N-\x) at (\pgfmathresult:1.6cm) {$v_\lab$};
	} 
	\node[inner sep=0.05cm] (G2) at (\the\numexpr 6*360/6:1.4cm){\tiny $G_2$};
	\foreach \x [count=\xi from 1] in {1,...,6}{%
		\foreach \y in {\x,...,6}{%
			\ifboolexpr{ test {\ifnumcomp{\x}{=}{3}} and test {\ifnumcomp{\y}{=}{4}}}
			{}
			{\path (N-\x) edge[ultra thin,-] (N-\y);}
		}
	}
	\draw[bend right=25, -, dashed, red] (N-3) to (N-4);
	\draw[bend left=25, -, dashed, red] (N-3) to (N-4);
	\draw[thin] (N-3) to (G2);
	\draw[bend left=15, thin] (N-3) to (G2);
	\draw[bend right=15, thin] (N-3) to (G2);
	\draw[thin] (N-4) to (G2);
	\draw[bend left=15, thin] (N-4) to (G2);
	\draw[bend right=15, thin] (N-4) to (G2);
	\end{tikzpicture}
	\caption{The frame of an Ore-composition.}
	\label{fig:frame}
\end{figure}
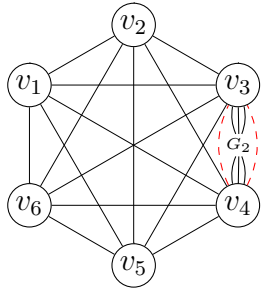

When $G_1 = G_2 = K_k$, the Ore-composition of $G_1$ and $G_2$ is isomorphic to $K_k$ with a replacement edge. This is true for $k$-Ore graphs in general, which motivates the following definition.
\begin{DEF}
A \emph{frame} $F$ for the $k$-Ore graph $G$ is a set of $k$ vertices such that $G$ is isomorphic to the complete graph $K_k$ on $F$, with some edges possibly being replacement edges.
\end{DEF}

\begin{LMA}\label{kOreAsClique}
Every $k$-Ore graph has a frame.
\end{LMA}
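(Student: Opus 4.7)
The plan is to proceed by induction on the number of Ore-compositions used to build $G$. The base case is $G = K_k$, where $V(G)$ is itself a frame of $G$. For the inductive step, suppose $G$ is obtained as the Ore-composition of two $k$-Ore graphs $G_1, G_2$ on an edge $xy \in E(G_1)$ and vertex $z \in V(G_2)$; each of $G_1, G_2$ uses strictly fewer Ore-compositions, so by the inductive hypothesis $G_1$ has a frame $F_1$. The candidate frame for $G$ that I would try is $F_1$ itself (which is naturally a subset of $V(G)$ via the inclusion $V(G_1) \subseteq V(G)$).

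To verify $F_1$ is a frame of $G$, I would split into cases according to where the edge $xy$ lies in the frame view of $G_1$. Either $xy$ is one of the edges of the underlying $K_k$ on $F_1$ that is already an actual edge of $G_1$ (equivalently, both $x,y \in F_1$ with $xy$ not a replacement edge), or $xy$ lies inside one of the subgraphs $H$ corresponding to a replacement edge of $F_1$ in $G_1$. In the first case there is essentially nothing to do: the Ore-composition along $xy$ turns $xy$ into a new replacement edge of $F_1$ (with replacement subgraph isomorphic to $G_2$ with $z$ split), while all other edges of the $K_k$ on $F_1$ are untouched, so $F_1$ is a frame of $G$.

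The nontrivial case, and the main obstacle, is when $xy$ lies inside a replacement subgraph $H$ of $F_1$; here I need to check that performing the Ore-composition inside $H$ again yields a split $k$-Ore subgraph, so that $F_1$ still exhibits $G$ as $K_k$-on-$F_1$ with replacement edges. By definition $H$ arises from some $k$-Ore graph $H_0$ by splitting a vertex $v_0$ into two vertices $a,b$ of positive degree; since $H$ is simple, $a$ and $b$ are nonadjacent, so in particular $\{x,y\} \ne \{a,b\}$. I would then argue that unsplitting commutes with the Ore-composition: unsplitting $a,b$ in the modified subgraph $H'$ (i.e.\ the Ore-composition of $H$ with $G_2$ on $xy$) gives exactly the Ore-composition of $H_0$ with $G_2$ on the unsplit copy of $xy$, which is a $k$-Ore graph by definition. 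Hence $H'$ is a split $k$-Ore graph, as required.

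Putting the two cases together, $F_1$ serves as a frame of $G$, completing the inductive step. I expect the only delicate point to be the unsplitting argument in the second case; everything else is direct bookkeeping from the definitions of Ore-composition, frame, and split $k$-Ore graph.
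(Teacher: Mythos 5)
Your proof is correct and follows essentially the same route as the paper: induct (the paper uses $|V(G)|$, you use the number of Ore-compositions, which is equivalent), take a frame of the edge-side $G_1$, and split into the cases where the replaced edge $xy$ is a genuine frame edge or lies inside a replacement edge. Your unsplitting/commutation argument in the second case is simply a more detailed justification of the paper's one-line assertion that replacing an edge of a split $k$-Ore graph yields another split $k$-Ore graph, so no substantive difference.
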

\begin{proof}
We proceed by induction on $|V(G)|$. If $|V(G)| = k$, the only $k$-Ore graph is $G = K_k$ which is a frame of itself as desired. So we may assume that $G$ is the Ore-composition of $k$-Ore graphs $G_1, G_2$ on $xy \in E(G_1)$. Since $|V(G_1)| < |V(G)|$, the induction hypothesis implies $G_1$ has a frame $F = \{v_1,\ldots,v_k\}$. 
	
We claim that $F$ is a frame for $G$ as follows. If $x,y \in F$, then $F$ is still a frame for $G$, with an additional replacement edge between $x,y$. Otherwise, $xy$ is an edge contained in the split $k$-Ore graph between $v_i,v_j$, which remains a split $k$-Ore graph when $xy$ is replaced. Since replacing an edge in a split $k$-Ore graph results in another $k$-Ore graph, $F$ is still a frame for $G$.
\end{proof}

Given a $k$-Ore graph $H$, we use $V^\ast(H)$ to denote a frame of $H$, which may be a particular frame if specified, or if not, an arbitrarily chosen frame of $H$.

Ore-compositions preserve (almost all) cliques, so we have a strong bound on the number of large cliques in the resulting graph as the following lemma shows.

\begin{LMA}\label{Tsuperadd}
If $G$ is an Ore-composition of $G_1, G_2$, then $T(G) \geq T(G_1) + T(G_2) - 2$. Moreover, if $G_1 = K_k$ or $G_2 = K_k$, then $T(G) \geq T(G_1) + T(G_2) - 1$. 
\end{LMA}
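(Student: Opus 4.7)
The plan is to take unions of vertex-disjoint cliques $H_1 \subseteq G_1$ and $H_2 \subseteq G_2$ witnessing $T(G_1)$ and $T(G_2)$, modify each slightly, and combine them into a single union of vertex-disjoint cliques $H \subseteq G$ whose value $2a + b$ is only a little smaller. The key structural observation I will use is that in $G$, the vertex sets $V(G_1)$ and $V(G_2) \setminus \{z\}$ meet only in the overlap pair $\{x, y\}$ (identified with the split vertices $z_1, z_2$), and any component of $H_2$ that does not contain $z$ lies entirely in $V(G_2) \setminus \{z, z_1, z_2\}$, which is disjoint from $V(G_1)$ in $G$.

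First I will modify $H_1$: if some component $C \in H_1$ contains both $x$ and $y$, then $C$ is no longer a clique in $G$ because the edge $xy$ has been deleted; I replace $C$ by $C \setminus \{y\}$, which remains a clique and whose contribution to $2a + b$ drops by at most $1$ (a $K_{k-1}$ becomes a $K_{k-2}$, or a $K_{k-2}$ drops out entirely). Next I modify $H_2$ analogously: if some component $D \in H_2$ contains $z$, I replace it by $D \setminus \{z\}$, again a clique in $G$, losing at most $1$ in the value. Call the resulting sets $H_1^\ast$ and $H_2^\ast$. By the disjointness observation above, $H_1^\ast \cup H_2^\ast$ is a union of vertex-disjoint cliques in $G$ whose total value is at least $T(G_1) + T(G_2) - 2$, giving the first bound.

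For the moreover statement, I will exploit the extra freedom when $G_1 = K_k$ (the case $G_2 = K_k$ being symmetric). Since every $(k-1)$-subset of $V(K_k)$ induces a $K_{k-1}$, I can pick $H_1$ to be the $K_{k-1}$ on $V(K_k) \setminus \{y\}$. This $H_1$ achieves $T(G_1) = 2$ but avoids $y$, so removing the edge $xy$ does not affect it and no $-1$ correction is needed on the $H_1$ side. Combining this unmodified $H_1$ with $H_2^\ast$ as before then yields $T(G) \geq T(G_1) + T(G_2) - 1$. In the symmetric case $G_2 = K_k$, I instead choose $H_2$ to be the $K_{k-1}$ on $V(K_k) \setminus \{z\}$, so that the $z$-containing component never needs modification.

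The main obstacle is pure bookkeeping: I need to check carefully that components of $H_2$ not containing $z$ cannot touch the overlap vertices $x, y$ after the identification $x = z_1, y = z_2$, so that $H_1^\ast$ and $H_2^\ast$ really are vertex-disjoint in $G$. I expect no conceptual difficulty beyond tracking the identifications and the deleted edge $xy$ correctly.
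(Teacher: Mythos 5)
Your proposal is correct and is essentially the paper's argument: the paper writes $T(G) \geq T(G_1 - xy) + T(G_2 \setminus \{z\}) \geq T(G_1) - 1 + T(G_2) - 1$, and for the moreover part uses $T(K_k - e) = T(K_k \setminus \{z\}) = T(K_k)$, which is exactly what your explicit witness modifications and your choice of a $K_{k-1}$ avoiding $y$ (resp. $z$) establish. You have simply unpacked these inequalities at the level of the clique-union witnesses, so there is nothing to change.
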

\begin{proof}
Let $e$ be the replaced edge of $G_1$, and $z$ the split-vertex of $G_2$. We clearly have $T(G) \geq T(G_1 - e) + T(G_2 \setminus \{z\}) \geq T(G_1) - 1 + T(G_2) - 1$. The second statement follows as $T(K_k - e ) = T(K_k \setminus \{z\}) = T(K_k)$.
\end{proof}

\begin{LMA}\label{kOreTBound}
If $G$ is a $k$-Ore graph and $G \neq K_k$, then $T(G) \geq 2 + \frac{|V(G)| - 1}{k-1}$. 
\end{LMA}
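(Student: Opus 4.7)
The plan is to induct on $|V(G)|$, exploiting the vertex-count identity $|V(G)| = |V(G_1)| + |V(G_2)| - 1$ for an Ore-composition together with the superadditivity of $T$ from \Cref{Tsuperadd}. Since $G \neq K_k$ is $k$-Ore, it decomposes as an Ore-composition of two $k$-Ore graphs $G_1, G_2$.

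For the base case, the minimum-sized $k$-Ore graph other than $K_k$ arises from $G_1 = G_2 = K_k$ and has $|V(G)| = 2k - 1$, where the bound reduces to $T(G) \geq 4$. Assuming the Ore-composition deletes $xy \in E(G_1)$ and identifies the split of $z \in V(G_2)$ with $x$ and $y$, I would exhibit two vertex-disjoint copies of $K_{k-1}$ in $G$ directly: the set $V(G_1) \setminus \{y\}$ still spans a $K_{k-1}$ (the deleted edge was incident to $y$, hence not internal to this set), and $V(G_2) \setminus \{z\}$ spans $K_{k-1}$ and, because only the split halves of $z$ were identified with vertices of $G_1$, is disjoint from $V(G_1) \setminus \{y\}$ inside $G$.

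For the inductive step, I would split on whether either $G_i$ equals $K_k$. If neither does, apply the induction hypothesis to both along with the general bound $T(G) \geq T(G_1) + T(G_2) - 2$ from \Cref{Tsuperadd}; the arithmetic telescopes exactly because the two constant $+2$'s collapse to one and the two denominators $|V(G_i)| - 1$ combine via the vertex-count identity. If exactly one, say $G_1$, equals $K_k$, I would instead use the sharper bound $T(G) \geq T(G_1) + T(G_2) - 1$ afforded by \Cref{Tsuperadd} in this case, together with the induction hypothesis on $G_2$ and the value $T(K_k) = 2$. The extra $+1$ saved by the sharper bound is precisely what matches the $\tfrac{k-1}{k-1} = 1$ that $G_1 = K_k$ contributes to the target quantity $\tfrac{|V(G)|-1}{k-1}$.

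The only substantive step is the base case, which pins down the constant $+2$ that the induction then carries forward. The inductive arithmetic is routine but tight; the single subtle point is that the sharpening in \Cref{Tsuperadd} when one side is $K_k$ must be invoked, since otherwise the bound would fall short by exactly $1$ whenever one Ore-composition summand is $K_k$.
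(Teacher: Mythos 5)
Your proposal is correct and follows essentially the same route as the paper: induction on $|V(G)|$ with the same three-way case split, using \Cref{Tsuperadd} (including its sharper form when one side is $K_k$) and the identity $|V(G)| = |V(G_1)| + |V(G_2)| - 1$. The only difference is cosmetic — you spell out the base case $T(G) \geq 4$ by exhibiting the two disjoint copies of $K_{k-1}$ explicitly, which the paper simply asserts.
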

\begin{proof}
We proceed by induction on $|V(G)|$. Since $G \neq K_k$, $G$ is the Ore-composition of two $k$-Ore graphs $G_1, G_2$. If $G_1 = G_2 = K_k$, then $T(G) = 4 \geq 2 + \frac{2k-2}{k-1}$ as desired. If $G_1 \neq K_k$ and $G_2 = K_k$, the induction hypothesis applies to $G_1$, and we have $T(G) \geq T(G_1) + 1 = 2 + \frac{ |V(G)| - 1}{k-1}$ by \Cref{Tsuperadd} as desired. Similarly this also holds if $G_1 = K_k$ and $G_2 \neq K_k$.

So we may assume that $G_1, G_2 \neq K_k$. By induction $T(G_i) \geq 2 + \frac{ |V(G_i)| - 1}{k-1}$. \Cref{Tsuperadd} implies that $T(G) \geq \left( 2 + \frac{ |V(G_1)| - 1}{k-1} \right) + \left( 2 + \frac{ |V(G_2)| - 1}{k-1} \right) - 2 = 2 + \frac{ |V(G)| - 1}{k-1}$ as desired.
\end{proof}

Note that the second statements of \Cref{MAINTHM} and \Cref{OURCONJ} follow from \Cref{kOreTBound}.

\subsection{Gems}
In minimal counterexamples to \Cref{OURCONJ}, we are interested in studying certain subgraphs called \emph{gems}, which occur if the graph $G$ resembles an Ore-composition.

\begin{DEF}
A subgraph $D \subseteq G$ is a \emph{diamond} if $G\lb D\rb = K_k - uv$ and $d_G(x) = k-1$ for every $x \in V(D) \setminus \{u,v\}$. The vertices $u,v$ are called the endpoints of the diamond.

A subgraph $D \subseteq G$ is an \emph{emerald} if $G\lb D\rb = K_{k-1}$ and $d_G(x) = k - 1$ for every $x \in V(D)$.

A \emph{gem} is a diamond or emerald.
\end{DEF}

\begin{LMA}\label{kOreGem}
If $G$ is a $k$-Ore graph and $v \in V(G)$, then there exists a gem of $G$ not containing $v$.
\end{LMA}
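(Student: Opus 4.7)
Plan: I would prove this by strong induction on $|V(G)|$. The base case $G = K_k$ is immediate: any $(k-1)$-subset of $V(G) \setminus \{v\}$ induces $K_{k-1}$ in which every vertex has degree $k-1$ in $G$, hence is an emerald avoiding $v$.

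For the inductive step, write $G \neq K_k$ as an Ore-composition of $k$-Ore graphs $G_1, G_2$ on edge $xy \in E(G_1)$ with split vertex $z \in V(G_2)$ (so $z_1 = x$, $z_2 = y$), and partition $V(G) = A \sqcup B$ where $A = V(G_1)$ and $B = V(G_2) \setminus \{z\}$. The overall strategy is to produce one gem of $G$ contained in $A$ and one contained in $B$; since these sets are disjoint, $v$ lies in at most one of them, so at least one of these gems avoids $v$.

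For the $B$-side: if $G_2 = K_k$, then $G[B] = K_{k-1}$ and the split preserves every degree in $B$ (neighbors of $z$ are merely reassigned to $z_1$ or $z_2$), so $B$ is an emerald of $G$. Otherwise, by induction applied to $G_2$ at $z$ there is a gem $D_2$ of $G_2$ avoiding $z$; since no vertex of $B$ has its $G$-neighborhood altered relative to $G_2$, $D_2$ is also a gem of $G$. Symmetrically, for the $A$-side: if $G_1 = K_k$, then $G[A] = K_k - xy$ and every non-endpoint retains its degree $k-1$, so $A$ is a diamond of $G$ with endpoints $x,y$. Otherwise, by induction on $G_1$ at $y$ there is a gem $D_1$ of $G_1$ avoiding $y$; if also $x \notin D_1$, or if $x \in D_1$ is a diamond endpoint of $D_1$ (in which case $G[D_1] = G_1[D_1]$, because the removed edge $xy$ has $y \notin D_1$, and an endpoint has no degree constraint), then $D_1$ is already a gem of $G$.

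The main obstacle is the residual sub-case where $G_1 \neq K_k$ and the inductively produced $D_1$ contains $x$ as a non-endpoint of a diamond or an emerald vertex; then $d_G(x) = d_{G_1}(x) - 1 + \deg_{\text{split}}(z_1)$ may strictly exceed $k-1$, so $D_1$ fails to be a gem of $G$. To finish, I would strengthen the inductive hypothesis on $G_1$ to guarantee a gem avoiding $y$ in which, whenever $x$ appears, it appears only as a diamond endpoint. This strengthened statement is proved along the same induction by drilling into the inner Ore-decomposition of $G_1$ and using \Cref{kOreAsClique}: $G_1$ has a frame $F_1$, and when $xy$ straddles the frame and a blob (so that one of $\{x,y\}$ coincides with an inner split vertex of $G_1$), the frame-diamond on $F_1$ naturally places that vertex at an endpoint; when instead $xy$ lies entirely within $F_1$ or entirely within a single blob, an emerald disjoint from $\{x,y\}$ is available on the other side of the inner Ore-composition, and the induction transfers it to a gem of $G$.
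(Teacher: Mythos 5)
Your base case, your vertex-side argument, and your edge-side argument in the cases $G_1 = K_k$, $x \notin V(D_1)$, or $x$ a diamond endpoint are all correct, and up to that point you follow essentially the same decomposition as the paper. But the residual case is where the real content lies, and your proposed fix has a genuine gap: the strengthened inductive hypothesis you state --- for every edge $xy$ of a $k$-Ore graph there is a gem avoiding $y$ in which $x$ appears only as a diamond endpoint --- is false. Take $G_1$ to be the Ore-composition of two copies of $K_k$ in which the split vertex sends at least two edges to each overlap vertex, let $y$ be an overlap vertex, and let $x$ be a neighbour of $y$ inside the vertex-side. One checks that the only gems of this graph are the diamond on the edge-side (which contains $y$ as an endpoint) and the emerald formed by the interior of the vertex-side (which contains $x$ as an emerald vertex); hence no gem avoids $y$ while containing $x$ only as an endpoint. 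What your application actually needs is the symmetric statement --- a gem meeting $\{x,y\}$ in at most one vertex, and only as a diamond endpoint --- which does hold in this example, but it is not what you stated, and your induction is organized around the false asymmetric version.

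The sketch you give for proving the strengthening also does not go through. A frame of $G_1$ (\Cref{kOreAsClique}) may carry two or more replacement edges, in which case it induces $K_k$ minus several edges and is not a diamond, so ``the frame-diamond on $F_1$'' need not exist; and when $xy$ lies entirely inside one replacement edge, the gem you want ``on the other side'' must avoid both endpoints of that replacement edge (their degrees change when the blob is reattached), which is again the strengthened statement for a smaller graph rather than the plain lemma, and it fails outright when that smaller edge-side is $K_k$, a case needing separate treatment. This is exactly the difficulty the paper's proof is engineered to avoid: it chooses the Ore-decomposition of $G$ with $|E(G_1)|$ minimum, and when $v$ lies on the vertex-side and $G_1 \neq K_k$ it decomposes $G_1$ further into $H_1, H_2$ and argues that the replaced edge $ab$ must lie in the edge-side $H_1$ (otherwise $G$ would admit a decomposition with a smaller edge-side), so the gem of $H_2$ avoiding $\underline{cd}$ given by induction misses both overlap vertices automatically and survives into $G$. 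Either adopt that minimality argument, or correctly state and prove the symmetric strengthening; as written, the key case is not established.
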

\begin{proof}
\begin{proof}
We proceed by induction on $|V(G)|$. If $G = K_k$, then $G \setminus \{v\}$ is an emerald of $G$ not containing $v$. Otherwise, $G$ is the Ore-composition of two $k$-Ore graphs $G_1, G_2$ ($G_1$ the edge-side) with overlap vertices $\{a,b\}$. Let $G_1, G_2$ be chosen such that $|E(G_1)|$ is minimized. If $v \in V(G_1)$, then by induction, there is a gem $F$ in $G_2$ not containing the vertex $\underline{ab} \in V(G_2)$, but then $F$ is also a gem of $G$ as desired.
	
Hence, we may assume that $v \in V(G_2) \setminus V(G_1)$. If $G_1 = K_k$, then $G \lb V(G_1) \rb$ is a diamond of $G$ not containing $v$. Otherwise, $G_1$ itself is an Ore-composition of two $k$-Ore graphs $H_1, H_2$ with overlap vertices $\{c,d\}$ where $H_1$ denotes the edge-side of the composition. If the edge $ab \in E(G_1)$ is an edge of $H_2$, then there is an Ore decomposition of $G$ with $H_1$ as the edge-side, contradicting the minimality of $G_1$. Thus $ab \in E(H_1)$. By induction, there is a gem $F_2$ in $H_2$ not containing $\underline{cd} \in V(H_2)$. Since $ab \in E(H_1)$, $F_2$ is also a gem of $G$ and $F_2$ does not contain $v$ as desired.
\end{proof}
\end{proof}

\begin{LMA}\label{kOreExcludeGem}
If $G$ is $k$-Ore and $D = K_{k-1}$ is a subgraph of $G$, then either $G = K_k$ or there exists a diamond or emerald of $G$ disjoint from $D$.
\end{LMA}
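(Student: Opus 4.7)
The plan is induction on $|V(G)|$, with the base case $G = K_k$ immediate. For the inductive step, assume $G \neq K_k$ and write $G$ as an Ore-composition $G_1 \oplus G_2$ with edge-side $G_1$, overlap $\{a,b\}$, and split vertex $\underline{ab} \in V(G_2)$, chosen to minimize $|E(G_1)|$ (the minimality device used in the proof of \Cref{kOreGem}). Since $ab \notin E(G)$ and every cross-edge between $V(G_1) \setminus \{a,b\}$ and $V(G_2) \setminus \{\underline{ab}\}$ passes through $\{a,b\}$, and $|V(D)| = k-1 \geq 5$, the set $V(D)$ lies entirely inside $V(G_1)$ or entirely inside $U := (V(G_2) \setminus \{\underline{ab}\}) \cup \{a,b\}$, with $|V(D) \cap \{a,b\}| \leq 1$.

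In Case 1, where $V(D) \subseteq V(G_1)$, I seek a gem on the $G_2$ side. If $G_2 = K_k$, then $V(G_2) \setminus \{\underline{ab}\}$ directly induces a $K_{k-1}$ in $G$ whose vertices all have degree $k-1$, so it is an emerald. Otherwise, \Cref{kOreGem} applied to $G_2$ at $\underline{ab}$ produces a gem of $G_2$ avoiding $\underline{ab}$; because all its vertices are interior to $G_2$, both the induced subgraph and degrees transfer unchanged to $G$, and the lifted gem is disjoint from $V(G_1) \supseteq V(D)$.

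In Case 2, where $V(D) \subseteq U$, first consider $G_1 = K_k$. If $V(D) \cap \{a,b\} = \emptyset$, then the diamond $G[V(G_1)] = K_k - ab$ is disjoint from $V(D)$. If instead (WLOG) $a \in V(D)$, the set $D' := (V(D) \setminus \{a\}) \cup \{\underline{ab}\}$ is a $K_{k-1}$ in $G_2$, since the edges of $D$ involving $a$ correspond under the split to edges of $\underline{ab}$ in $G_2$. For $G_2 \neq K_k$, induction on $G_2$ supplies a gem disjoint from $V(D')$, hence avoiding $\underline{ab}$, which lifts to a gem of $G$ disjoint from $V(D)$. The sub-base $G_1 = G_2 = K_k$ with $a \in V(D)$ is handled by a short counting argument: the partition of $N_{G_2}(\underline{ab})$ into $V_a \cup V_b$ must satisfy $|V_a| \geq k-2$ (the $k-2$ non-$a$ vertices of $D$ are all in $V_a$) and $|V_b| \geq 1$, forcing $|V_b| = 1$, so $d_G(b) = k-1$ and $\{b\} \cup (V(G_1) \setminus \{a,b\})$ is an emerald disjoint from $V(D)$.

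The main obstacle is the remaining subcase, $V(D) \subseteq U$ with $G_1 \neq K_k$: a direct induction on $G_1$ fails because the resulting gem could contain $a$ or $b$, whose degrees change under the outer composition. Minimality of $|E(G_1)|$ saves the argument. Writing $G_1 = H_1 \oplus H_2$ with edge-side $H_1$ and overlap $\{c,d\}$: if $ab \in E(H_2)$, then $G$ re-decomposes as $H_1 \oplus (H_2 \oplus G_2)$ with strictly smaller edge-side, contradicting minimality; hence $ab \in E(H_1)$ and $\{a,b\} \subseteq V(H_1)$. Applying \Cref{kOreGem} to $H_2$ at $\underline{cd}$ produces a gem $F_2$ in $H_2$ avoiding $\underline{cd}$, which in $G_1$ lies in $V(G_1) \setminus V(H_1)$ and thus avoids $\{a,b\}$; it is also disjoint from $V(G_2) \setminus \{\underline{ab}\}$, so $V(F_2) \cap U = \emptyset$ and $V(F_2) \cap V(D) = \emptyset$. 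Since $F_2$ avoids $\{c,d\}$ and $\{a,b\}$, neither Ore-composition alters its induced subgraph or degrees, so $F_2$ is the desired gem of $G$.
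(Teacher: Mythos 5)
Your proof is correct and follows essentially the same strategy as the paper: induction on $|V(G)|$, writing $G$ as an Ore-composition chosen by a minimality condition, splitting on which side of the composition $D$ lies and whether it meets the overlap $\{a,b\}$, and invoking \Cref{kOreGem} either on the far side or on the inner decomposition of $G_1$. The only differences are organizational — you minimize $|E(G_1)|$ instead of $|V(G_1)|$, and by sending the entire subcase $V(D)\subseteq U$, $G_1\neq K_k$ through the inner decomposition of $G_1$ you avoid the paper's analysis of whether the gem found in $G_1$ contains $b$ (when $a\in V(D)$ and $G_2=K_k$), replacing it with a direct degree count in the residual base case $G_1=G_2=K_k$; both are harmless variations.
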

\begin{proof}
We proceed by induction on $|V(G)|$. If $G = K_k$ we are done, so suppose that $G$ is the Ore-composition of two $k$-Ore graphs $G_1, G_2$ with overlap vertices $\{a,b\}$. Let the composition be chosen such that $|V(G_1)|$ is minimized. Note that since $D$ is a clique, $D$ must lie entirely on either the edge-side or the vertex-side of the Ore-composition. If $V(D) \subseteq V(G_1)$, then by \Cref{kOreGem}, there is a gem $F$ in $G_2$ not containing the vertex $\underline{ab} \in V(G_2)$. Now $F$ is a gem of $G$ disjoint from $D$ as desired.

Thus we may assume that $D$ is on the vertex-side. There are two cases, depending on whether $V(D)$ is disjoint from $\{a,b\}$. First, suppose that $V(D) \cap \{a,b\} = \emptyset$. If $G_1 = K_k$, then $G_1$ is a diamond disjoint from $D$ as desired. Otherwise, $G_1$ is the composition of two $k$-Ore graphs $H_1, H_2$ with overlap vertices $\{c,d\}$. If the edge $ab \in E(G_1)$ is in $E(H_2)$, then there is an Ore decomposition of $G$ with $H_1$ as the edge-side, contradicting the minimality of $G_1$. Thus $ab \in E(H_1)$ and by \Cref{kOreGem}, there is a gem $F_2$ in $H_2$ not containing $\underline{cd} \in V(H_2)$. Now $F_2$ does not contain $a,b$ as $a,b \in V(H_1)$, and is therefore a gem in $G$ disjoint from $D$ as desired.

So we may suppose that $V(D)$ and $\{a,b\}$ are not disjoint. At most one of $a,b$ can be a vertex of $D$, so we may assume without loss of generality that $a \in V(D)$. If $G_2 \neq K_k$ then by induction, there is a gem $F_2'$ in $G_2$ disjoint from $D$.  Now $F_2'$ is disjoint from $D$ in the graph $G$ as desired. 

Thus we may assume that $G_2 = K_k$. As $d_{G_2}(\underline{ab}) = k-1$, it follows that $b \in V(G)$ has exactly one neighbor in $V(G_2)$. By \Cref{kOreGem}, there is a gem $D'$ in $G_1$ disjoint from $a$. If $D'$ is a diamond, then $D'$ is also a diamond of $G$ disjoint from $D$, as $b \in V(D')$ implies that $b$ is an endpoint of $D'$. If $D'$ is an emerald that does not contain $b$, then $D'$ is an emerald in $G$ disjoint from $D$. If $D'$ is an emerald with $b \in V(D')$, then $D'$ is still an emerald of $G$, as $b$ is not adjacent to $a$ in $G$ and has exactly one neighbor on the vertex-side $V(G_2)$. In any case then, $D'$ is an emerald of $G$ disjoint from $D$ as desired.
\end{proof}

\section{Colorings and Independent Sets}\label{sec:kiersteadrabern}
In this section, our goal is to obtain an upper bound on the number of vertices of degree $k$ which are adjacent to vertices of degree $k-1$ in a $k$-critical graph. This will be important when discharging.

We briefly introduce several new definitions which are needed for this section. Given a function $L(v)$ which associates a list of colors to each vertex $v$, an assignment $\varphi: V(G) \rightarrow \mathbb{N}$ is an \emph{$L$-coloring} if $\varphi(v) \in L(v)$ for all $v$, and $\varphi(u) \neq \varphi(v)$ for every edge $uv$. Given a function $f: V(G) \rightarrow \mathbb{N}$, the graph $G$ is \emph{$f$-choosable} if an $L$-coloring exists for every function $L(v)$ with $|L(v)| = f(v)$ for all $v$.

In our discharging argument (\Cref{subs:global}), it is necessary to bound the number of edges emanating from independent sets consisting of degree $k-1$ vertices. To do so, we employ the following lemma of Kierstead and Rabern \cite{KR2017JGT}:
\begin{LMA}[Kierstead and Rabern, \cite{KR2017JGT}]\label{KRmain}
Let $G$ be a non-empty graph and let $f:V(G) \rightarrow \mathbb{N}$ with $f(v) \leq d_G(v) + 1$ for all $v \in V(G)$. If there is an independent set $A \subseteq V(G)$ such that
$$|E(A,V(G))| \geq \sum_{v \in V(G)} d_G(v) + 1 - f(v)$$
then $G$ has a non-empty induced subgraph $H$ that is $f_H$-choosable, for $f_H(v) = f(v) + d_H(v) - d_G(v)$.
\end{LMA}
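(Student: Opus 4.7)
The plan is to prove the contrapositive by induction on $|V(G)|$: assuming that no nonempty induced subgraph $H$ of $G$ is $f_H$-choosable, I would show that every independent set $A \subseteq V(G)$ satisfies
\[
|E(A, V(G))| < \sum_{v \in V(G)} (d_G(v) + 1 - f(v)).
\]
A first step is to extract structural information from the non-choosability assumption. Applying it to singleton subgraphs $H = \{v\}$, where $f_H(v) = f(v) - d_G(v)$ and a single vertex is $k$-choosable iff $k \geq 1$, one sharpens the hypothesis $f(v) \leq d_G(v) + 1$ to the stronger inequality $f(v) \leq d_G(v)$ for every $v \in V(G)$. In particular $G$ itself is not $f$-choosable, so some list assignment $L$ with $|L(v)| = f(v)$ admits no proper $L$-coloring.

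The key idea is to exploit the independent set $A$ as a ``first layer'' of coloring. For any partial coloring $\varphi$ of $A$ from the lists $L$, the reduced lists $L'(v) = L(v) \setminus \{\varphi(a) : a \in N(v) \cap A\}$ on $H = G \setminus A$ have sizes at least $f(v) - |N(v) \cap A|$; since $A$ is independent we have $|N(v) \cap A| = d_G(v) - d_H(v)$ for $v \in H$, so $|L'(v)| \geq f_H(v)$. If $H$ were $f_H$-choosable, then $L'$ would admit a proper coloring, which together with $\varphi$ would form an $L$-coloring of $G$ --- contradicting non-$f$-choosability. Hence our assumption forces every choice of $\varphi$ on $A$ to fail, and carefully tallying how many colors are ``blocked'' at each neighbor of $A$ across all such failures should yield the required strict edge inequality.

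The main obstacle is making this counting precise while coordinating the failures across different choices of $\varphi$. A naive union bound over color choices is not sharp enough. The Kierstead--Rabern argument combines the Erd\H{o}s--Rubin--Taylor characterization of degree-choosable graphs (a connected graph is $d_G$-choosable if and only if it is not a Gallai tree, i.e., its blocks are all complete graphs or odd cycles) with a Hall-type matching argument on the bipartite ``vertex--color'' incidence graph associated to $L$. The inductive step then amounts to selecting a vertex $v$ to remove so that both the non-choosability assumption and the edge condition transfer to $G - v$ with an appropriately modified $f$; ensuring that both invariants are preserved requires a delicate case analysis driven by the Gallai-tree structure uncovered by Erd\H{o}s--Rubin--Taylor. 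This reduction step is the technical heart of the proof and is where I expect the main difficulty to lie.
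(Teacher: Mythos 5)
There is no in-paper proof to compare against here: the paper imports this statement verbatim from Kierstead and Rabern \cite{KR2017JGT} and uses it as a black box, so the benchmark is whether your write-up would stand as a self-contained proof of their lemma. It does not. What you give is a plan, and you concede the decisive step yourself. Your middle paragraph only establishes consistency, not the inequality: from a bad list assignment $L$ for $G$ you correctly observe that for every partial coloring $\varphi$ of $A$ the reduced lists $L'$ on $G\setminus A$ admit no proper coloring (since $A$ is independent and $|L'(v)|\ge f_{G\setminus A}(v)$), but this is exactly what the hypothesis ``no induced subgraph is $f_H$-choosable'' already predicts; nothing quantitative about $|E(A,V(G))|$ versus $\sum_v (d_G(v)+1-f(v))$ has been extracted. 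The sentence that the tallying of blocked colors ``should yield the required strict edge inequality'' is precisely the content of the lemma, and you then acknowledge that a union bound is too weak and that the real argument needs the Erd\H{o}s--Rubin--Taylor/Gallai-tree characterization together with a Hall-type matching step and a delicate vertex-removal induction --- none of which you carry out or even specify (which vertex is removed, how $f$ is modified, why both the non-choosability hypothesis and the edge count transfer). That reduction is the heart of Kierstead and Rabern's proof, so leaving it as ``where I expect the main difficulty to lie'' means the proof attempt has a genuine gap rather than an alternative route.

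The small preliminary observations you do make are fine: applying the hypothesis to singletons in the contrapositive indeed sharpens $f(v)\le d_G(v)+1$ to $f(v)\le d_G(v)$, and the identity $|N(v)\cap A| = d_G(v)-d_H(v)$ for $v\in V(G)\setminus A$ (using independence of $A$) is the right way to see why $f_H$ is the natural list-size function on $G\setminus A$. But for the purposes of this paper the honest options are either to cite the lemma, as the authors do, or to reproduce the Kierstead--Rabern argument in full; an outline that stops at the technical core does neither.
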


We are interested in the case where $G$ is $k$-critical and $A$ consists of vertices of degree $k - 1$.

\begin{LMA}\label{indsetedges}
Let $G$ be a $k$-critical graph, and let $A \subseteq V(G)$ be an independent set with $d_G(v) = k- 1$ for all $v \in A$. Let $B_i$ be the set of vertices $v$ that have a neighbor in $A$ such that $d_G(v) = k-1+i$. Then for any $\ell \geq 1$, defining $B = \bigcup_{i=1}^\ell B_i$, we have
$$|E_G(A,B)| \leq |A| + \sum_{i = 1}^\ell (i+1) |B_i|.$$
\end{LMA}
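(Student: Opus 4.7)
The plan is to argue by contradiction using Kierstead--Rabern (\Cref{KRmain}) applied to the induced subgraph $G' = G[A \cup B]$. Suppose $|E_G(A,B)| > |A| + \sum_{i=1}^\ell (i+1)|B_i|$; then $A$ is nonempty and so is $G'$. I will define $f\colon V(G') \to \mathbb{N}$ by $f(v) = d_{G'}(v)$ for $v \in A$ and $f(v) = \max\{0,\, d_{G'}(v) - i\}$ for $v \in B_i$. Since $d_{G'}(v) \le d_G(v)$, the admissibility condition $f(v) \le d_{G'}(v) + 1$ holds trivially, so $f$ is a valid input to \Cref{KRmain}.

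The next step is to verify the edge condition of \Cref{KRmain}. Because $A$ is independent in $G$, every edge of $G'$ incident to $A$ has its other endpoint in $B$, so $|E_{G'}(A, V(G'))| = |E_G(A,B)|$. Each $v \in A$ contributes $d_{G'}(v) + 1 - f(v) = 1$ to the right-hand side sum, and each $v \in B_i$ contributes at most $i + 1$ (namely $i+1$ when $d_{G'}(v) \ge i$, and $d_{G'}(v) + 1 \le i$ otherwise). Hence
$$
\sum_{v \in V(G')} \bigl(d_{G'}(v) + 1 - f(v)\bigr)\ \le\ |A| + \sum_{i=1}^\ell (i+1)|B_i|\ <\ |E_G(A,B)|,
$$
so \Cref{KRmain} produces a nonempty induced subgraph $H \subseteq G'$ that is $f_H$-choosable, where $f_H(v) = f(v) + d_H(v) - d_{G'}(v)$.

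From $H$ I will construct a proper $(k-1)$-coloring of $G$, contradicting $k$-criticality. Since $V(H) \ne \emptyset$, $G - V(H)$ is a proper subgraph and hence admits a proper $(k-1)$-coloring $\varphi$. For each $v \in V(H)$, let $L(v) = \{1,\ldots,k-1\} \setminus \varphi(N_G(v) \setminus V(H))$, so that $|L(v)| \ge (k-1) - (d_G(v) - d_H(v))$. For $v \in A$ this quantity equals $d_H(v) = f_H(v)$, and for $v \in B_i$ with $d_{G'}(v) \ge i$ it equals $d_H(v) - i = f_H(v)$. The $f_H$-choosability of $H$ then yields an $L$-coloring of $H$, which combined with $\varphi$ is a proper $(k-1)$-coloring of $G$, giving the desired contradiction.

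The main obstacle I anticipate is the degenerate vertices $v \in B_i$ with $d_{G'}(v) < i$, for which $f(v) = 0$ and the list-size bound above does not directly match $f_H(v)$. I plan to dispose of these by observing that they cannot actually lie in $V(H)$: if such a $v$ were in $V(H)$, then $f_H(v) = d_H(v) - d_{G'}(v) \le 0$ (since $H$ is an induced subgraph of $G'$), contradicting $H$ being $f_H$-choosable. Once these vertices are known to be excluded from $V(H)$, the extension step applies uniformly, and beyond this technicality the proof is a clean combination of \Cref{KRmain} with the defining property of $k$-critical graphs.
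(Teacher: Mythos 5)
Your proof is correct and takes essentially the same route as the paper: apply \Cref{KRmain} to $G' = G[A\cup B]$ with the same choice of $f$, then extend a $(k-1)$-coloring of $G - V(H)$ via the $f_H$-choosability of $H$ to contradict $k$-criticality. The only cosmetic difference is that you rule the vertices with $d_{G'}(v) < i$ out of $V(H)$ directly, whereas the paper absorbs that corner case through the inequality $\max\{d_H(v)-i,0\} \geq \max\{d_{G'}(v)-i,0\} + d_H(v) - d_{G'}(v) = f_H(v)$; both handle it adequately.
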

\begin{proof}
Suppose to the contrary that $G$ contains an independent set $A$ violating the desired inequality. Let $G'$ be the subgraph induced by $A$ and $B$, so $G' = G\lb A \cup B\rb$. Define $f$ on $V(G')$ by $f(v) = d_{G'}(v)$ for $v \in A$, and $f(v) = \max\{d_{G'}(v) - i, 0\}$ for $v \in B_i$. Then we have
$$|E_{G'}(A,B)| > |A| + \sum_{i=1}^\ell (i+1)|B_i| \geq \sum_{v \in V(G')} d_{G'}(v) + 1 - f(v).$$
Hence the \Cref{indsetedges} applies to $G'$. Thus, we can find a non-empty induced subgraph $H$ of $G'$ (and hence of $G$) which is $f_H$-choosable for $f_H(v) = f(v) + d_H(v) - d_{G'}(v)$.

Since $G$ is $k$-critical, there exists a $(k-1)$-coloring $\varphi$ of $G \setminus H$. For each $v \in V(H)$, let $g(v)$ be the number of colors in $\{1,\ldots,k-1\}$ that are not used by the neighbors of $v$ in $\varphi$. It suffices to show that $g(v) \geq f_H(v)$ for all $v \in V(H)$, since then $\varphi$ could be extended to a $(k-1)$-coloring of $G$ by using the $f_H$-choosability of $H$, thus contradicting that $G$ is $k$-critical. Let $v \in V(H)$ be a vertex in $B_i$. Clearly $v$ has at most $d_G(v) - d_H(v)$ neighbors in $G \setminus H$, so $g(v) \geq \max \{ (k-1) - (d_G(v) - d_H(v)), 0\} = \max \{ d_H(v) - i, 0\}$. Since $d_H(v) - d_{G'}(v) \leq 0$, a routine calculation shows that $\max\{d_H(v) - i, 0\} \geq \max \{d_{G'}(v) - i, 0\} + d_H(v) - d_{G'}(v) = f_H(v)$. Therefore $g(v) \geq f_H(v)$, completing the proof.
\end{proof}

In \Cref{sec:discharging}, we will apply the following special case of \Cref{indsetedges}.
\begin{CORO}\label{A2B}
Let $G$ be a $k$-critical graph, and let $A \subseteq V(G)$ be an independent set with $d_G(v) = k-1$ for all $v \in A$. Let $B$ be the set of neighbors of $A$ of degree $k$. Then $|E_G(A,B)| \leq |A| + 2|B|$.
\end{CORO}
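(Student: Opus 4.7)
The statement is a direct specialization of Lemma \ref{indsetedges}, so the plan is simply to apply that lemma with the parameter $\ell = 1$ and identify the resulting objects with those in the corollary. The only step that requires attention is matching the definition of $B_i$ in the lemma with the set $B$ in the corollary.

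Concretely, I would first note that in the notation of Lemma \ref{indsetedges}, a vertex lies in $B_i$ precisely when it has a neighbor in $A$ and has degree $k-1+i$ in $G$. Taking $\ell = 1$, the set $B_1$ is then exactly the set of neighbors of $A$ of degree $k-1+1 = k$, which is precisely the set $B$ defined in the corollary. The hypothesis on $A$ (independent, consisting of degree-$(k-1)$ vertices in the $k$-critical graph $G$) is identical, so Lemma \ref{indsetedges} applies and yields
\[
|E_G(A,B)| \;\leq\; |A| + \sum_{i=1}^{1}(i+1)|B_i| \;=\; |A| + 2|B_1| \;=\; |A| + 2|B|,
\]
which is the desired inequality.

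Since this is a pure specialization, there is no real obstacle: all the genuine work (invoking the Kierstead--Rabern lemma \ref{KRmain}, extending a $(k-1)$-coloring of $G \setminus H$ to $H$ via $f_H$-choosability, and checking that $g(v) \geq f_H(v)$) has already been carried out in the proof of Lemma \ref{indsetedges}. The only thing one must be careful about is bookkeeping the index shift so that "degree $k$" corresponds to $i = 1$ rather than $i = 0$.
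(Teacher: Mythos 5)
Your proposal is correct and matches the paper exactly: \Cref{A2B} is stated there as the special case $\ell = 1$ of \Cref{indsetedges}, with $B = B_1$ the degree-$k$ neighbors of $A$, giving $|E_G(A,B)| \leq |A| + 2|B_1| = |A| + 2|B|$. The index bookkeeping you flag ($d_G(v)=k$ corresponding to $i=1$) is handled the same way in the paper.
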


\section{Applications of the Potential Function}\label{sec:potential}

Recall the definition of the $(\varepsilon,\delta)$-potential function as follows.
\begin{DEF}
The \emph{potential} of a graph $G$ is given by
$$p(G) = ((k-2)(k+1) + \varepsilon)|V(G)| - 2(k-1)|E(G)| - \delta T(G)$$
For $R \subseteq V(G)$, the potential $p_G(R)$ is $p(G \lbrack R \rbrack )$.
\end{DEF}

This section is devoted to analyzing the local structure of $k$-critical graphs for $k \geq 6$, by measuring the contribution of edges and vertices to the potential in substructures of the graph. The key result is that, broadly speaking, $k$-critical graphs with high potential cannot contain any `nearly' $k$-critical graphs as substructures, which can be made $k$-critical by adding a small number of edges. To make this precise, we state a few definitions.

\begin{DEF}
We say a graph $H$ is \emph{smaller} than $G$ if any of the following hold:
\begin{enumerate}
\item $|V(H)| < |V(G)|,$ or
\item $|V(H)| = |V(G)|$ and $|E(H)| < |E(G)|,$ or
\item $|V(H)| = |V(G)|$, $|E(H)| = |E(G)|$, and $H$ has more pairs of vertices with the same closed neighborhood than $G$.
\end{enumerate}
We say a $k$-critical graph $G$ is \emph{good} if every graph smaller than $G$ satisfies \Cref{OURCONJ}. We say $G$ is \emph{$i$-tight} if $G$ is good and $p(G) > k(k-3) - P - Q + i\delta$, for an auxiliary constant $Q$ to be defined in \Cref{sec:potential_assumptions}. We say $G$ is \emph{tight} if $G$ is 0-tight.
\end{DEF}

Hence tight graphs are $k$-critical graphs which are close to being counterexamples to \Cref{OURCONJ}.

\begin{DEF}
An \emph{edge-addition} in $G$ is a non-edge $xy$ such that $G + xy$ contains a $k$-critical subgraph $H$ with $V(H) \subsetneq V(G)$.
	
An \emph{$i$-edge-addition} in $G$ is a set $S$ of at most $i$ non-edges such that $G + S$ contains a $k$-critical subgraph $H$ with $V(H) \subsetneq V(G)$.
\end{DEF}

We will show (as a part of the more general \Cref{buckets}) the following.

\begin{LMA}
Let $G$ be a tight, ungemmed graph. Then for $1 \leq i \leq (k-2)/2$, $G$ does not admit an $i$-edge-addition.
\end{LMA}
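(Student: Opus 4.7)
The plan is to argue by contradiction via a Gallai-type color identification that produces a smaller $k$-chromatic graph, whose potential is controlled by the goodness hypothesis on $G$. Combining this with the conjecture bound on the assumed $k$-critical subgraph $H$ will force $p(G) \leq k(k-3) - P - Q$, contradicting the tightness of $G$.

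First I would set up the contradiction. Suppose $G$ admits an $i$-edge-addition $S$ with $G + S \supseteq H$ where $H$ is $k$-critical and $R := V(H) \subsetneq V(G)$; set $W := V(G) \setminus R$. Since $G$ is $k$-critical, $H \not\subseteq G$, so some edge of $S$ lies in $E(H)$. By replacing $H$ with a suitable $k$-chromatic subgraph of $G[R] + S$ if needed, I may assume $E(G[R]) \subseteq E(H)$, writing $H = G[R] + B$ with $B \subseteq S$ and $1 \leq |B| \leq i$. Note that $|R| \geq k$ since $\chi(H) = k$.

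Next I would build the identification graph $G^*$. Fix $e \in B$ and a proper $(k-1)$-coloring $\varphi$ of $H - e$ (which exists by the criticality of $H$); since $e \notin E(G)$, $\varphi$ is in particular a proper $(k-1)$-coloring of $G[R]$. Define $G^*$ on $W \cup \{c_1, \ldots, c_{k-1}\}$ by keeping $G[W]$, placing a $K_{k-1}$ on the $c_j$'s, and adding an edge $wc_j$ exactly when $w \in W$ has a neighbor in $R$ of $\varphi$-color $j$. A routine lifting argument shows $G^*$ is $k$-chromatic: any proper $(k-1)$-coloring $\psi$ of $G^*$ would induce a bijection on $\{c_1, \ldots, c_{k-1}\}$ and combine with $\varphi$ to yield a proper $(k-1)$-coloring of $G$, contradicting $\chi(G) = k$. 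Since $|R| \geq k$, we have $|V(G^*)| = |W| + (k-1) < |V(G)|$, so $G^*$ is smaller than $G$; by goodness, every $k$-critical $H^* \subseteq G^*$ satisfies \Cref{OURCONJ}, providing a bound on $p(G^*)$ (up to a small correction if $V(H^*) \subsetneq V(G^*)$). Similarly $H$ is $k$-critical and smaller than $G$, so $p(H)$ is controlled by the conjecture.

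A potential decomposition then gives
\[
p(G) = p(G^*) + p(G[R]) - p(K_{k-1}) + 2(k-1)s + \delta \Delta_T,
\]
where $s$ counts boundary edges collapsed by the color identification and $\Delta_T$ is a $T$-correction. From $H = G[R] + B$ we obtain $p(G[R]) = p(H) + 2(k-1)|B| + \delta(T(H) - T(G[R]))$ with $|B| \leq i$; the crude bound $T(H) - T(G[R]) \leq 2|B| \leq k - 2$ follows because every new clique in the optimal disjoint packing must use at least one edge of $B$ and contributes at most $2$ to $T$, and this is where the hypothesis $i \leq (k-2)/2$ enters. Substituting the conjecture bounds on $p(H)$ and $p(G^*)$ into the decomposition, and splitting into cases on the structure of $H$ (and, in parallel, of $H^*$)---$K_k$, $k$-Ore but $\neq K_k$, or not $k$-Ore---yields $p(G) \leq k(k-3) - P - Q$, contradicting tightness.

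The hardest part will be the $T$-bookkeeping across these reductions: since $T$ is a maximum over vertex-disjoint clique packings, it can be unstable under both edge addition (going $G[R] \to H$) and vertex replacement (going $G \to G^*$), so bounding $\Delta_T$ and the discrepancy $T(G^*) - T(G)$ requires careful analysis of how the identified $K_{k-1}$ on $\{c_j\}$ interacts with $W$-side cliques. The bound $i \leq (k-2)/2$ is calibrated precisely so that $2(k-1)|B|$ together with the $\delta$-penalty from $T(H) - T(G[R])$ is offset by the $-p(K_{k-1})$ savings. The ungemmed hypothesis is invoked in the degenerate case $H = K_k$: without it, $R$ with a single $W$-neighbor would form a diamond, or a $K_{k-1} \subseteq R$ would form an emerald of $G$, and the reduction would fail to produce the required savings.
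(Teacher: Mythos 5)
There is a genuine gap, and it sits at the heart of your plan: the bound on $p(G^*)$. Goodness only controls the potential of $k$-critical graphs smaller than $G$; your identified graph $G^*$ is $k$-chromatic but generally not $k$-critical, and the ``small correction if $V(H^*)\subsetneq V(G^*)$'' is not small or controllable. Vertices of $W$ can lose many edges under the color identification (all neighbors of one color collapse), so vertices of $G^*$ outside a critical subgraph $H^*$ can each contribute positively to $p(G^*)-p(H^*)$, and nothing bounds this. This non-spanning situation is exactly why the paper does not argue with the whole identified graph: it takes a $k$-critical subgraph $W$ of $G_{R,\varphi}$, forms the critical extension $R'=R\cup(V(W)\setminus X)$, and when $R'\subsetneq V(G)$ it falls back on the \emph{coupled} inductive statement of \Cref{buckets} that no proper non-clique subset has potential below $p(G)+2i(k-1)+\buc$ (via \Cref{submodular,edgedrop}). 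Your plan proves only the edge-addition half and has no substitute for that companion statement, so the case where the critical subgraph of $G^*$ misses part of $W$ is unhandled.

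Even in the favorable case, the arithmetic you sketch does not close. Plugging $p(G[R])\leq k(k-3)+2i(k-1)+i\delta$ (essentially \Cref{addUpperBound}) and even an optimistic $p(G^*)\leq k(k-3)$ into your decomposition gives, since $2k(k-3)-p(K_{k-1})\approx -4$, only $p(G)\leq k(k-3)-4+2i(k-1)+\cdots$, which exceeds $k(k-3)$ for every $i\geq 1$ and is no contradiction; one would need the collapsed-edge count $s$ to be large, and establishing that is precisely the content of the paper's completeness/collapsibility machinery (\Cref{collapsibility,colladd}, forcing extensions to be spanning, complete, with core of size one). Moreover the paper only reaches a contradiction after a substantial structural analysis: showing the relevant critical graph is $k$-Ore, then $K_k$, then extracting a diamond ($i=1$), an emerald (star case), or an edge-connectivity violation (triangle case), with the ungemmed hypothesis and \Cref{kOreGem} invoked repeatedly -- not only in the degenerate $H=K_k$ case as your plan suggests. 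As written, the proposal replaces this with potential bookkeeping that cannot by itself deliver the bound $p(G)\leq k(k-3)-P-Q$.
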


The next results demonstrate how the nonexistence of edge additions can be used to make strong deductions about the local properties of the graph. The following lemmas will be used frequently in subsequent sections.

\begin{LMA}\label{commonNeighbors}
Suppose that $G$ does not admit an $i$-edge-addition for some $i\ge 1$. Let $x$ be a vertex of degree $k-1$ and $y$ a neighbor of $x$ of degree at most $k - 2 + i$. If $|N(x)\cap N(y)|\ge k-3$, then $N\lb x\rb \subseteq N\lb y\rb$.
\end{LMA}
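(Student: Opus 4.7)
The plan is to argue by contradiction, so suppose $N\lb x\rb \not\subseteq N\lb y\rb$. Since $|N(x) \cap N(y)| \ge k-3$ and $y \in N(x) \setminus N(y)$, the common-neighbor count is either $k-3$ or $k-2$; the case $k-2$ forces $N(x) \setminus \{y\} \subseteq N(y)$ and hence $N\lb x\rb \subseteq N\lb y\rb$ outright, so we must have $|N(x) \cap N(y)| = k-3$, and there is a unique $z \in N(x) \setminus N\lb y\rb$. I would write $N(x) = \{y, z, w_1, \ldots, w_{k-3}\}$ with the $w_j$'s the common neighbors of $x$ and $y$, and $N(y) = \{x, w_1, \ldots, w_{k-3}, u_1, \ldots, u_m\}$; the degree bound $d(y) \le k-2+i$ gives $m \le i$.

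Next I would take a $(k-1)$-coloring $\varphi$ of $G - x$, which exists by $k$-criticality. Since $\varphi$ cannot extend to $x$, it must be injective on $N(x)$, so $\varphi(y), \varphi(z), \varphi(w_1), \ldots, \varphi(w_{k-3})$ together exhaust all $k-1$ colors. The key step is to try to change the color of $y$ to $\varphi(z)$. The $w_j$'s already avoid color $\varphi(z)$ by injectivity, so the only potential obstructions come from the $u_j$'s. If $\varphi(u_j) \ne \varphi(z)$ for every $j$, the recoloring is valid, $y$ and $z$ then share a color, $x$'s neighborhood uses only $k-2$ colors, and assigning $x$ the original color $\varphi(y)$ yields a $(k-1)$-coloring of $G$, contradicting criticality.

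The remaining case is that every $(k-1)$-coloring $\varphi$ of $G - x$ has some $u_j$ with $\varphi(u_j) = \varphi(z)$. Any such index must satisfy $u_j \not\sim z$, and hence lies in $J := \{j : u_j \not\sim z\}$. Setting $S := \{u_j z : j \in J\}$ gives a set of non-edges of $G$ with $|S| = |J| \le m \le i$, and the previous condition reformulates precisely as: $(G - x) + S$ has no $(k-1)$-coloring. Taking a $k$-critical subgraph $H \subseteq (G - x) + S \subseteq G + S$ then yields $V(H) \subseteq V(G) \setminus \{x\} \subsetneq V(G)$, so $S$ is an $i$-edge-addition in $G$, contradicting the hypothesis.

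The main subtlety is the bookkeeping for indices $j$ with $u_j \sim z$. For such $j$, $\varphi(u_j) \ne \varphi(z)$ is automatic in every coloring, so these indices can never furnish the obstruction to recoloring and must be excluded from $J$; doing so is exactly what guarantees both that $S$ consists only of genuine non-edges (so that $G + S$ is well-defined as an edge-addition) and that the universal failure of the recoloring attempt corresponds precisely to the non-$(k-1)$-colorability of $(G-x) + S$. Once this distinction is maintained, the dichotomy above runs uniformly for every $i \ge 1$.
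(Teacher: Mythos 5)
Your proof is correct and is essentially the same argument as the paper's: both add at most $i$ edges between the unique non-common neighbor of $x$ (your $z$, the paper's $a$) and the extra neighbors of $y$, and both use the same color swap (recolor $y$ with $\varphi(z)$ and give $x$ the freed color $\varphi(y)$) to show a $(k-1)$-coloring of the augmented graph $G-x+S$ would extend to $G$, so that non-colorability yields a forbidden $i$-edge-addition. Your restriction of $S$ to the indices $J$ with $u_j\not\sim z$ is a small tidiness improvement over the paper, which does not explicitly ensure that the added pairs are non-edges.
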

\begin{proof}
Let $H = G \lb N\lb x\rb \cap N\lb y\rb \rb$, the subgraph induced by $x,y$ and their common neighbors $N(x)\cap N(y)$. If $|N(x)\cap N(y)|\ge k-2$, then $N[x]=N[y]$ as desired. So we may assume that $|N(x)\cap N(y)|=k-3$. Thus $x$ has exactly one neighbor $a$ not in $H$. Let $b_1,\ldots, b_i$ be the neighbors of $y$ not in $H$. Let $S$ be the set of edges $\{ ab_1, \ldots, ab_i\}$ and suppose that $G \setminus \{x\} + S$ has a $(k-1)$-coloring $\varphi$. 
	
Let $L = \{\varphi(v): v\in N(x)\}$. If $|L| \leq k-2$, then $\varphi$ extends to a $(k-1)$-coloring of $G$ by letting $\varphi(x) \in [k-1]\setminus L$. But then $S$ is an $i$-edge-addition, contrary to our assumption. 
	
So we may suppose that $|L|=k-1$. Define a new $(k-1)$-coloring $\psi$ by letting $\psi(x) = \varphi(y), \psi(y) = \varphi(a)$, and $\psi(z) = \varphi(z)$ for all $z \in V(G) \setminus \{x,y\}$. Since $\varphi(b_1), \ldots, \varphi(b_i)$ are all distinct from $\varphi(a)$, $\psi$ is a $(k-1)$-coloring of $G$, a contradiction. But then $S$ is an $i$-edge-addition, contrary to our assumption.
\end{proof}

\begin{CORO}\label{emeraldCluster}
Suppose $G$ does not admit edge-additions. If $x$ and $y$ are adjacent vertices of degree $k-1$ with $|N(x)\cap N(y)|\ge k-3$, then $N\lb x\rb = N\lb y\rb$.
\end{CORO}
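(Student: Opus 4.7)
The plan is to derive this corollary directly from \Cref{commonNeighbors} by exploiting the symmetry between $x$ and $y$. The hypothesis that $G$ admits no edge-additions is exactly the $i=1$ instance of not admitting an $i$-edge-addition, so the preceding lemma is available with $i=1$.

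First I would apply \Cref{commonNeighbors} with $x$ in the role of ``$x$'' and $y$ in the role of ``$y$''. The required hypotheses hold: $d(x) = k-1$, $d(y) = k-1 \le k-2+1$, and $|N(x) \cap N(y)| \ge k-3$ by assumption. The conclusion of the lemma then gives $N\lb x \rb \subseteq N\lb y \rb$. Next, I would apply the lemma a second time, now with the roles of $x$ and $y$ reversed. The hypotheses are unchanged under this swap, since both vertices have degree $k-1$ and the common-neighborhood condition is symmetric in $x$ and $y$; this yields $N\lb y \rb \subseteq N\lb x \rb$.

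Combining the two inclusions yields $N\lb x \rb = N\lb y \rb$, as desired. There is no substantive obstacle here: the key observation is simply that, since both endpoints of the edge $xy$ have degree exactly $k-1$, each one serves equally well as the ``low-degree neighbor'' in the hypothesis of \Cref{commonNeighbors} with $i=1$, allowing the two one-sided inclusions to be combined into an equality.
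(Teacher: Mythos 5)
Your proof is correct and is exactly the intended derivation: the paper states this as an immediate corollary of \Cref{commonNeighbors}, applied with $i=1$ (noting that excluding edge-additions excludes $1$-edge-additions since $G$ is $k$-critical), and the equality $N\lb x\rb = N\lb y\rb$ follows by the symmetric application you describe (or, equivalently, from one inclusion plus $|N\lb x\rb| = |N\lb y\rb| = k$).
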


\begin{CORO}\label{kNeighbor}
Suppose $G$ does not admit 2-edge-additions. If $x$ and $y$ are adjacent vertices with $d(x) = k-1, d(y) = k$, and $|N(x)\cap N(y)| \ge k-3$, then $N\lb x\rb \subseteq N\lb y\rb$. 
\end{CORO}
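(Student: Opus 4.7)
The plan is to invoke \Cref{commonNeighbors} with the parameter choice $i=2$. Under that specialization, the hypotheses of the lemma read: $G$ admits no $2$-edge-addition, $x$ has degree $k-1$, $y$ is a neighbor of $x$ with $d(y) \le k-2+2 = k$, and $|N(x)\cap N(y)| \ge k-3$. All four are satisfied by the assumptions of the corollary (in particular, $d(y)=k$ meets the degree bound with equality), so the conclusion $N\lb x\rb \subseteq N\lb y\rb$ follows at once.

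In other words, the corollary is a direct instantiation of \Cref{commonNeighbors}; there is no separate argument to make and no genuine obstacle to overcome. The reason for isolating it as a named corollary is presumably forward-looking: the case where $y$ is a degree-$k$ neighbor of a degree-$(k-1)$ vertex $x$ arises repeatedly in the discharging analysis of \Cref{sec:discharging}, and it is convenient to have the implication $N\lb x\rb \subseteq N\lb y\rb$ packaged under a hypothesis (``no $2$-edge-addition'') that will be available in the minimal counterexample setting via the tightness machinery developed in \Cref{sec:potential}.
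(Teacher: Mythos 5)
Your proposal matches the paper exactly: the corollary is stated without separate proof as the $i=2$ instantiation of \Cref{commonNeighbors}, since $d(y)=k=k-2+2$ and the no-2-edge-addition hypothesis is precisely what the lemma requires. Correct, same approach.
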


\subsection{Assumptions}\label{sec:potential_assumptions}

Let $k \geq 6$ be fixed. We subsequently write $\varepsilon, \delta, P$ for the constants $\varepsilon_k, \delta_k, P_k$ in \Cref{OURCONJ}. Unless stated otherwise, $G$ is always $k$-critical.

In this section and following sections, we will assume that $\varepsilon, \delta, P$, and auxiliary constants $Q, \buc, \gap$, satisfy the conditions of the forthcoming Assumption 1. These inequalities may not be sufficient to determine precise values of $\varepsilon_k, \delta_k, P_k$ for which \Cref{OURCONJ} holds for a specific $k$, as additional restrictions on $\varepsilon_k$ may arise during the discharging step. However, so long as the following inequalities are satisfied, the results of \Cref{sec:potential,sec:cloning} will hold for any $k \geq 6$.

\textbf{Assumption 1}
All of the following inequalities hold:
\begin{enumerate}
\item $\delta = 2(k-1)\varepsilon$, and
\item $2 + \delta \leq \gap \leq k - 2$, and
\item $(\gap - 2) + Q + k\delta \leq P$, and
\item $P + Q + \frac{k}{2} \delta \leq \buc$, and
\item $\buc + (k-1)\delta \leq 2$.
\end{enumerate}
In general, we are interested in maximizing the value of $\varepsilon$. However, the calculations are easier to follow when stated in terms of these constants, so we defer expressing them in terms of $\varepsilon$ only.

\subsection{Analysis of Potential}

\begin{DEF}
Suppose $G$ is a $k$-critical graph, and $R \subsetneq V(G)$ a proper subset of the vertices. By definition, there exists a $(k-1)$-coloring $\varphi: R \rightarrow \{1,\ldots,k-1\}$ of $G \lb R\rb$. We define $G_{R,\varphi}$ to be the graph obtained by identifying all vertices of the same color in $\varphi$ to a single vertex $x_i$ for all $i\in \{1,\ldots, k-1\}$, adding the edges $x_ix_j$ for all $i\ne j\in \{1,\ldots,k-1\}$, and removing multiple edges (to obtain a simple graph). 
	
Note that $G_{R,\varphi}$ is not $(k-1)$-colorable, or else $G$ would have been $(k-1)$-colorable. Thus $G_{R,\varphi}$ contains a $k$-critical subgraph, which necessarily includes at least one of the vertices $x_1,\ldots,x_{k-1}$. Let $W$ be such a subgraph. Let $X = V(W) \cap \{x_1,\ldots,x_{k-1}\}$ and $R' = R \cup (V(W) - X)$. Note that $R \subsetneq R' \subseteq V(G)$. We say $R'$ is a \emph{critical extension} of $R$ 
with \emph{extender} $W$ and \emph{core} $X$.
\end{DEF}

\begin{DEF}
Let $R'$ be a critical extension of $R \subsetneq V(G)$ with extender $W$ and core $X$. If $R' = V(G)$, we say $R'$ is \emph{spanning}. If $|E(G \lb R' \rb)| = |E(G \lb R \rb)| + |E(W)| - |E(K_X)| + i$, we say $R'$ is \emph{$i$-incomplete}. We say $R'$ is \emph{complete} if $i = 0$.
\end{DEF}

\begin{LMA}\label{submodular}
If $R'$ is an $i$-incomplete critical extension of $R \subsetneq V(G)$ with extender $W$ and core $X$, then
$$p_G(R') \leq p_G(R) + p(W) - 2i(k-1) - (p(K_X) + \delta T(K_X) - \delta |X|).$$
\end{LMA}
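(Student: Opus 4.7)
The plan is to prove the inequality by expanding the definition of the $(\varepsilon,\delta)$-potential on both sides and tracking the contributions from vertices, edges, and the clique-function $T$ separately. The vertex and edge contributions reassemble into $p(W)$ and $p(K_X)$ by pure algebra, using $|R'| - |R| = |V(W)| - |X|$ together with the $i$-incompleteness hypothesis $|E(G[R'])| - |E(G[R])| = |E(W)| - |E(K_X)| + i$. The non-trivial content of the lemma lives entirely in the $T$-terms, which I isolate at the end.

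Concretely, writing $A := (k-2)(k+1) + \varepsilon$ for brevity, I would compute
\begin{align*}
p_G(R') - p_G(R) &= A(|V(W)| - |X|) - 2(k-1)(|E(W)| - |E(K_X)| + i) - \delta(T(G[R']) - T(G[R])) \\
&= \bigl[p(W) + \delta T(W)\bigr] - \bigl[p(K_X) + \delta T(K_X)\bigr] - 2i(k-1) - \delta\bigl(T(G[R']) - T(G[R])\bigr),
\end{align*}
after recognising $A|V(W)| - 2(k-1)|E(W)|$ and $A|X| - 2(k-1)|E(K_X)|$ as $p(W) + \delta T(W)$ and $p(K_X) + \delta T(K_X)$, respectively. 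Comparing with the claimed bound, every term except the $T$-terms cancels, and the lemma reduces to the single inequality
\begin{equation*}
T(G[R']) \;\geq\; T(G[R]) + T(W) - |X|.
\end{equation*}

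To establish this residual inequality I would build an explicit clique-union witness for $T(G[R'])$ by combining optimal witnesses $\mathcal{H}_R \subseteq G[R]$ for $T(G[R])$ and $\mathcal{H}_W \subseteq W$ for $T(W)$. For each clique component $C$ of $\mathcal{H}_W$, set $j_C := |V(C) \cap X|$ and let $C'$ denote the subgraph of $W$ induced on $V(C) \setminus X$. Because the identifications defining $G_{R,\varphi}$ only affect the vertices $x_1, \ldots, x_{k-1}$, every edge of $W$ between two vertices outside $X$ is a genuine edge of $G$, so each $C'$ is an actual clique of $G$ lying in $G[V(W) \setminus X] \subseteq G[R']$. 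Since the $C$'s are pairwise vertex-disjoint in $W$, the $C'$'s are pairwise vertex-disjoint in $G$, and together with $\mathcal{H}_R$ (which lies entirely in $G[R]$) they form a vertex-disjoint clique union in $G[R']$.

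A short case analysis then bounds the loss. If $C$ is a $K_{k-1}$ then $C'$ is a $K_{k-1-j_C}$, which contributes $2, 1, 0$ to $T$ for $j_C = 0, 1, \geq 2$ respectively, so the loss is at most $\min(j_C, 2) \leq j_C$; if $C$ is a $K_{k-2}$ then $C'$ is a $K_{k-2-j_C}$, with loss $\min(j_C, 1) \leq j_C$. Summing over components and using $\sum_C j_C \leq |X|$ (each vertex of $X$ lies in at most one component), the total loss is at most $|X|$, which proves the residual inequality. The only step requiring any thought, and hence the main obstacle, is this case analysis — in particular the observation that a $K_{k-1}$-component losing a single $X$-vertex still contributes $1$ (as a $K_{k-2}$) rather than being dropped outright. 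This is exactly what forces the right-hand constant to be $|X|$ rather than $2|X|$, and everything else is mechanical algebra.
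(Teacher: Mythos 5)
Your proof is correct and follows essentially the same route as the paper: both reduce the lemma, by cancelling the vertex and edge contributions via the $i$-incompleteness identity, to the inequality $T(G[R']) \geq T(G[R]) + T(W) - |X|$, which the paper proves as $T(G[R']) \geq T(G[R]) + T(W-X)$ together with $T(W-X) \geq T(W) - |X|$. Your explicit witness construction and per-component case analysis is just a more detailed justification of the paper's one-line claim that deleting the $|X|$ core vertices costs $T$ at most $|X|$.
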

\begin{proof}
We count the contribution of vertices, edges, and $T(\cdot)$ to each side. Since $R' = R \cup(V(W) - X)$, each vertex of $R'$ is counted exactly once on the right side. The number of edges on the right is, by definition, $|E(G \lb R' \rb)| - i$, so the net contribution from edges is $-2i(k-1)$ on the right side. Note that edges within $G \lb R \rb$ and $W - X$ are counted exactly once on each side, and each edge between $X$ and $V(W) - X$ in $G_{R, \varphi}$ corresponds to at least one distinct edge between $R$ and $V(W) - X$ in $G$, so $i \geq 0$. Finally, $T(G \lb R' \rb) \geq T(G \lb R \rb) + T(W - X)$, and $T(W - X) \geq T(W) - |X|$, as $W$ contains at most $|X|$ more disjoint cliques than $W - X$. Adding these inequalities proves the lemma.
\end{proof}

Our next lemma shows that potential decreases under extensions.

\begin{LMA}\label{edgedrop}
Suppose $G$ is good, and $R \subsetneq V(G)$ with $G \lb R \rb$ not a clique. If $R'$ is an $i$-incomplete critical extension of $R$, then $p_G(R') \leq p_G(R) - 2(i+1)(k-1) - \delta$.
\end{LMA}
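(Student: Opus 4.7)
The plan is to apply \Cref{submodular} to reduce the claim to an inequality bounding $p(W)$, then invoke the goodness of $G$ to bound $p(W)$ via the three estimates of \Cref{OURCONJ}.

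\textbf{Step 1.} \Cref{submodular} immediately yields
$$p_G(R') \leq p_G(R) + p(W) - 2i(k-1) - \bigl(p(K_X) + \delta T(K_X) - \delta|X|\bigr),$$
so it suffices to prove
$$p(W) \leq p(K_X) + \delta T(K_X) - \delta|X| - 2(k-1) - \delta. \qquad (\star)$$

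\textbf{Step 2 ($W$ smaller than $G$).} Any $x_i$ whose color is unused by $\varphi$ has degree only $k-2$ in $G_{R,\varphi}$ (it is adjacent only to the other $x_j$'s), so $k$-criticality of $W$ (minimum degree $\geq k-1$) forces $W$ to avoid every such ``fake'' $x_i$. Hence $|X|\leq|\{\text{colors used by }\varphi\}|\leq|R|$, giving $|V(W)| = |X| + |R'\setminus R| \leq |R'| \leq |V(G)|$. Strict inequality makes $W$ smaller by vertex count. Equality can happen only in the spanning case $R' = V(G)$ with $\varphi$ injective on $R$; here the hypothesis that $G\lb R\rb$ is not a clique lets us reselect $\varphi$ so that two nonadjacent vertices of $R$ share a color, thereby producing an alternative critical extension of $R$ to the same $R'$ whose extender is strictly smaller than $G$.

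\textbf{Step 3 and main obstacle.} Goodness applied to $W$ bounds $p(W)$ by one of (i) $p(K_k) = k(k-3) + k\varepsilon - 2\delta$ when $W = K_k$, (ii) the $k$-Ore bound $p(W) \leq k(k-3) + |V(W)|\varepsilon - (2 + (|V(W)|-1)/(k-1))\delta$ when $W$ is $k$-Ore with $W \neq K_k$, or (iii) $p(W) \leq k(k-3) - P$ when $W$ is $k$-critical but not $k$-Ore. In each subcase $(\star)$ reduces to an elementary algebraic inequality in $|X|\in\{1,\ldots,k-1\}$, verified using $\delta = 2(k-1)\varepsilon$ together with the other inequalities of Assumption~1. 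The delicate subcase is $W = K_k$ with $|X| = 1$: the bound underlying \Cref{submodular}, namely $T(W\setminus X) \geq T(W) - |X|$, is loose by exactly~$1$ here since $T(K_{k-1}) = 2 = T(K_k)$, so a direct application of $(\star)$ fails. Retracing the proof of \Cref{submodular} while keeping $T(W) - T(W\setminus X) = 0$ exactly recovers a term of $\delta$ on the right of $(\star)$, and the identity $\delta = 2(k-1)\varepsilon$ then reduces the requirement to $(k-1)\varepsilon \leq \delta$, which holds with margin. All other subcases reduce to similar but strictly easier inequalities.
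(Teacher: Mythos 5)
Your Steps 1 and 3 follow the paper's proof almost verbatim: apply \Cref{submodular}, observe that $p(K_X)+\delta T(K_X)-\delta|X|$ is minimized over $|X|\in\{1,\ldots,k-1\}$ at $|X|=1$, split according to whether $W=K_k$, $W$ is $k$-Ore with $W\neq K_k$, or $W$ is not $k$-Ore, and in the first case recover the lost $\delta$ by noting $T(K_k-x)=T(K_k)$, so that only $(k-1)\varepsilon\leq\delta$ (Assumption 1.1) is needed. You correctly identify this as the one delicate subcase; the two cases you leave unchecked are indeed routine ($\delta=2(k-1)\varepsilon$ absorbs the $|V(W)|\varepsilon$ term in the $k$-Ore bound of \Cref{OURCONJ}, and $P\geq 2\delta$ handles the non-Ore case), exactly as in the paper.

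The genuine problem is the equality case in your Step 2. A critical extension, its core, and its incompleteness parameter are all defined relative to a particular coloring $\varphi$ and extender $W$, and the lemma asserts an inequality for the \emph{given} $i$-incomplete extension $R'$. If you ``reselect $\varphi$'' so that two nonadjacent vertices of $R$ share a color, you obtain a different auxiliary graph $G_{R,\varphi'}$ and hence a different critical extension: nothing guarantees that its extension set is ``the same $R'$'', and even if it were, its incompleteness need not equal $i$, so the inequality you would derive concerns different data and does not yield the stated bound for the given witness. (For context: the paper disposes of this point with the bare assertion that $W\neq G$ and is therefore smaller than $G$. Your observation that the only way $|V(W)|=|V(G)|$ can occur is the spanning case with $\varphi$ injective on $R$ is correct and more careful than the paper; but in that configuration the embedded copy of $G$ inside $G_{R,\varphi}$ is itself an admissible extender, so closing the case requires a direct argument about the given $W$ rather than a change of coloring. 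As written, the reselection maneuver does not close it.)
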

\begin{proof}
Since $G \lb R\rb$ is not a clique, $W \neq G$ and so $W$ is smaller than $G$. As $G$ is good, we find that $W$ satisfies \Cref{OURCONJ}.
	
By a routine calculation,
$$p(K_X) + \delta T(K_X) - \delta |X| = -|X| ( (k-1)|X| - (k^2 - 3 + \varepsilon - \delta)),$$
which is quadratic in $|X|$ with roots $\{0, \frac{k^2 - 3 + \varepsilon - \delta}{k-1}\}$. Since $\varepsilon - \delta \geq 3 - k$, this quadratic is minimized over $|X| \in \{1,\ldots, k -1\}$ at $|X| = 1$. Thus
$$p(K_X) + \delta T(K_X) - \delta|X| \ge p(K_1) - \delta = (k-1)\varepsilon - \delta.$$
	
Now, we condition on three possibilities for $W$. First suppose that $W = K_k$. Observe that \Cref{submodular} holds without adding the term $\delta |X|$, as $T(W) = T(W - x)$ for every vertex $x \in X$. We then have
$$p_G(R') \leq p_G(R) - 2i(k-1) + p(K_k) - p(K_1) = p(R) - 2(i+1)(k-1) + (k-1)\varepsilon - 2\delta.$$
Since $\delta \geq (k-1)\varepsilon$ by Assumption 1.1, the desired inequality holds.
	
Next suppose that $W$ is $k$-Ore and $W \neq K_k$. Using \Cref{kOreTBound} and the fact that $\delta = 2(k-1)\varepsilon$ by Assumption 1.1, we have
\begin{align*}
p_G(R') &\leq p_G(R) - 2i(k-1) \\
&\offplus + \left( k(k-3)  + |V(G)|\varepsilon - \left( 2 + \frac{|V(G)| - 1}{k-1}\right) \delta \right) - (k^2 - k + \varepsilon - \delta) \\
&= p_G(R) - 2(i+1)(k-1) - \delta - (|V(G)| - 1)\varepsilon \\
& \leq p_G(R) -2(i+1)(k-1) - \delta.
\end{align*}
	
Finally suppose that $W$ is not $k$-Ore. Now $p(W) \leq k(k-3) - P$ and
$$p_G(R') \leq p_G(R) -2(i+1)(k-1) - (P + \varepsilon - \delta)$$
and we obtain the desired inequality since $P \geq 2\delta$ by Assumption 1.2.
\end{proof}

\Cref{edgedrop} implies that every proper subgraph $G \lbrack R \rbrack$ has higher potential than $G$, as the potential decreases by iterating critical extensions.

\subsection{Collapsible Subsets}

The following definition is useful for characterizing the proper subsets of $G$ which have the lowest potential.

\begin{DEF}
A subset $R \subsetneq V(G), |R| \geq 2$ is \emph{collapsible} in $G$ if for every proper $(k-1)$-coloring of $G \lb R\rb$, the vertices of $\partial_G R$ receive the same color.
	
A subset $R \subsetneq V(G), |R| \geq 2$ is \emph{$i$-collapsible} in $G$ if for all $(k-1)$-colorings $\varphi: R \rightarrow C = \{1,\ldots,k-1\}$ of $G \lb R \rb$,
$$\min_{1 \leq c \leq k - 1} |E(\varphi^{-1}(C \setminus \{c\}) \cap R, V(G) \setminus R)| \leq i.$$
	
It is easy to see that collapsibility corresponds to 0-collapsibility.
\end{DEF}

We now provide a characterization of collapsible subsets.

\begin{PROP}\label{prop:COLLAPSE_EQUIV}
A subset $R \subsetneq V(G), |R| \geq 2$ is collapsible in $G$ if and only if every critical extension has a core of size 1, is spanning, and is complete.
\end{PROP}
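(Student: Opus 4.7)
My plan is to establish both directions of the equivalence via degree considerations in $G_{R,\varphi}$ together with coloring-extension arguments that exploit the $k$-criticality of $G$ and of $W$. Throughout, fix a $(k-1)$-coloring $\varphi$ of $G[R]$, let $W$ be a $k$-critical subgraph of $G_{R,\varphi}$ with core $X = V(W) \cap \{x_1,\ldots,x_{k-1}\}$, and let $R' = R \cup (V(W) \setminus X)$ denote the associated critical extension.

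For the forward direction, assume $R$ is collapsible, so $\varphi(\partial_G R) = \{c\}$ for a single color $c$. For the core size claim, observe that when $i \neq c$ the vertex $x_i$ in $G_{R,\varphi}$ has no external neighbor, giving $d_{G_{R,\varphi}}(x_i) = k-2$, which is too low to lie in any $k$-critical subgraph; hence $X = \{x_c\}$. For spanning, I argue by contradiction: if $R' \subsetneq V(G)$, then $G[R']$ admits a $(k-1)$-coloring $\psi$, whose restriction to $R$ colors $\partial_G R$ with a single color $c'$ by collapsibility; setting $\psi(x_c) := c'$ extends $\psi$ to a proper $(k-1)$-coloring of $W$, since each edge $x_c v \in E(W)$ arises from an edge $uv \in E(G[R'])$ with $u \in \partial_G R$ and thus $\psi(v) \neq c'$, contradicting $k$-criticality of $W$. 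For completeness, the identity $i = |E_G(R, V(G) \setminus R)| - |N_W(x_c)|$ (which holds when $|X|=1$ and $R'$ is spanning) reduces the claim to showing that no $v \in V(G) \setminus R$ has two neighbors in $\partial_G R$. If $v$ did have two such neighbors $u_1, u_2$, then a $(k-1)$-coloring $\psi$ of $G - u_1 v$ (which exists since $G$ is $k$-critical) would by collapsibility satisfy $\psi(\partial_G R) = \{c'\}$, and the remaining edge $u_2 v$ would force $\psi(v) \neq c'$; but then $\psi$ is already a proper $(k-1)$-coloring of all of $G$, a contradiction.

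For the backward direction, I prove the contrapositive. Assume $R$ is not collapsible, so there exists $\varphi$ with $|\varphi(\partial_G R)| \geq 2$, containing colors $c_1, c_2$. Let $W$ be any $k$-critical subgraph of $G_{R,\varphi}$: if $|X| \geq 2$ then the core-size condition already fails. Otherwise $X = \{x_c\}$; the assumed spanning and completeness force $R' = V(G)$ and $|E_G(R, V(G) \setminus R)| = |N_W(x_c)|$. However, picking a color-$c'$ boundary vertex $u'$ (where $c' \in \{c_1, c_2\} \setminus \{c\}$) together with one of its external neighbors $v'$, the edge $u'v'$ is counted on the left-hand side, while on the right $v'$ contributes at most one unit (and only if $v'$ additionally has a color-$c$ boundary neighbor in $G$); this pair therefore forces $i \geq 1$, contradicting completeness.

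The main obstacle is the completeness portion of the forward direction. Unlike the degree-based core-size argument and the coloring-extension argument for spanning, ruling out vertices with multiple boundary neighbors requires deleting precisely the right edge of $G$ to leverage collapsibility into a full $(k-1)$-coloring of $G$; the argument depends delicately on the interplay between the $k$-criticality of $G$ itself and the monochromatic-boundary constraint, and its cleanliness is what makes the equivalence in the proposition go through.
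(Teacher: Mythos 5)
Your core-size and spanning arguments are fine (the degree count $d_{G_{R,\varphi}}(x_i)=k-2$ for $i\neq c$ is a legitimate alternative to the paper's cut-vertex argument, and the spanning step is essentially the paper's, using a coloring of $G[R']$ instead of $G-y$), and your backward direction is a correct counting variant of the paper's. The gap is in the completeness step of the forward direction. The identity you invoke, $i = |E_G(R,V(G)\setminus R)| - |N_W(x_c)|$, is not an identity: with $|X|=1$ and $R'$ spanning, the correct computation is
$$i \;=\; \bigl(|E(G[V(G)\setminus R])| - |E(W - x_c)|\bigr) \;+\; \bigl(|E_G(R,V(G)\setminus R)| - d_W(x_c)\bigr),$$
and $W$ is merely \emph{some} $k$-critical subgraph of $G_{R,\varphi}$, so nothing a priori forces it to contain every edge of $G$ induced on $V(G)\setminus R$, nor every edge $x_c v$ of $G_{R,\varphi}$ with $v\in V(W)$. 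Consequently your reduction of completeness to ``no $v\in V(G)\setminus R$ has two neighbors in $\partial_G R$'' misses two sources of incompleteness: (a) an edge $f$ of $G[R'\setminus R]$ absent from $W$, and (b) a vertex $v$ with a single boundary neighbor $u$ but with $x_c v\notin E(W)$, in which case the edge $uv$ is counted in $E(G[R'])$ but not in $E(W)$. (In your backward direction the faulty identity is harmless, since there you only need the true inequality $i \geq |E_G(R,V(G)\setminus R)| - d_W(x_c)$.)

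Both missing cases are genuine obligations and the paper handles case (a) explicitly: if $f\in E(G[R'\setminus R])\setminus E(W)$, take a $(k-1)$-coloring $\psi$ of $G-f$ (which exists by $k$-criticality); its restriction to $R$ is a proper coloring of $G[R]$, so by collapsibility $\partial_G R$ is monochromatic, and coloring $x_c$ with that color extends $\psi$ to a proper $(k-1)$-coloring of $W$ (the only edge that could fail is $f$, which is not in $W$), contradicting the criticality of $W$. The same trick disposes of case (b) by deleting the edge $uv$. So the repair uses exactly the coloring-deletion argument you already deploy for the two-boundary-neighbor case, but as written your proof asserts the identity rather than proving these cases cannot occur, and that step does not stand on its own.
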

\begin{proof}
Suppose that $R$ is collapsible in $G$. Let $R'$ be a critical extension of $R$ with extender $W$ and core $X$ with respect to a $(k-1)$ coloring $\phi$ of $G \lb R \rb$. Without loss of generality we may assume that $\phi(\partial_G R)=1$. By definition of collapsibility, $x_1$ is a cut-vertex of $G_{R,\varphi}$. Let $A = (V(G) \setminus R) \cup \{x_1\}$. A $k$-critical graph has no cut-vertex, and so $W$ is a subgraph of $G_{R,\varphi} \lb A \rb$ and $X = \{x_1\}$.
	
Now suppose that $R' \neq V(G)$. Let $y \in V(G) \setminus R'$. Since $G$ is $k$-critical, there exists a $(k-1)$-coloring $\psi$ of $G - y$. $\psi$ induces a $(k-1)$-coloring of $R$, and we may assume without loss of generality that $\psi(\partial_G R) = 1$. Now $\psi$ induces a $(k-1)$-coloring of $W$, a contradiction. Hence we may assume that $R' = V(G)$ and thus the extension is spanning. It only remains to show that the extension is complete. 
	
Next suppose that $G \lb R' \setminus R\rb$ has an edge $f$ not in $W \lb R' \setminus R\rb$. Since $G$ is $k$-critical, there exists a $(k-1)$-coloring $\psi$ of $G - f$. This induces a $(k-1)$-coloring of $W$, a contradiction. So we may assume that $E(G \lb R' \setminus R\rb) = E(W \lb R' \setminus R\rb)$. 
	
Since the extension is not complete, we may now assume that there exists an edge $ux_1 \in E(W)$ that corresponds to two edges $uv_1, uv_2 \in E(G)$, with $v_1, v_2 \in R$. Since $G$ is $k$-critical, there exists a $(k-1)$-coloring $\psi$ of $G - uv_1$. Since $R$ is collapsible, $\psi(v_1) = \psi(v_2)$. Moreover, $\psi(u_1)$ is a distinct color from $\psi(u)$ and thus $\psi$ extends to a $(k-1)$-coloring of $G$, a contradiction. Therefore $R'$ is complete. Since $R'$ was arbitrary, it follows that every critical extension of $R$ has a core of size 1, is spanning, and is complete as desired.
	
We now prove the converse. Suppose to the contrary that every critical extension of $R$ has a core of size 1, is spanning, and is complete, but that there exists a $(k-1)$-coloring $\varphi$ of $G \lb R\rb$ with $\varphi(u) = 1, \varphi(v) = 2$ for some $u,v \in \partial_G R$. Let $W$ be a $k$-critical subgraph of $G_{R,\varphi}$ and let $R'$ be a critical extension of $R$ with extender $W$ and core $X$. Since $|X| = 1$, we may assume that $x_2 \notin X$. Since $v \in \partial_G R$ and $R'$ is spanning, there is an edge $vz \in E(G)$ between $R$ and $V(G) \setminus R$ with no corresponding edge in $W$. Thus $R'$ is at least 1-incomplete, a contradiction.
\end{proof}

One of the implications of \Cref{prop:COLLAPSE_EQUIV} holds for an analogue for $i$-collapsible subsets as follows.

\begin{LMA}\label{collapsibility} 
If $R \subsetneq V(G)$ has the property that all critical extensions of $R$ are spanning, have a core of size 1, and are at most $i$-incomplete, then $R$ is $i$-collapsible.
\end{LMA}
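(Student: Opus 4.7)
The plan is to argue by contradiction, following the outline of the converse direction of \Cref{prop:COLLAPSE_EQUIV}. Suppose $R$ were not $i$-collapsible: then there exists a $(k-1)$-coloring $\varphi: R \to C = \{1,\ldots,k-1\}$ of $G\lb R\rb$ such that for every color $c$, the number of $G$-edges from $\varphi^{-1}(C\setminus\{c\})\cap R$ to $V(G)\setminus R$ is at least $i+1$. It will be convenient to set $i_c := |E_G(\varphi^{-1}(c)\cap R, V(G)\setminus R)|$ and $I := |E_G(R, V(G)\setminus R)| = \sum_c i_c$, so that non-$i$-collapsibility rephrases as $I - i_c \geq i+1$ for every color $c$.

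I would then apply the hypothesis to this particular $\varphi$: form $G_{R,\varphi}$, extract a $k$-critical subgraph $W$, and let $R'$ be the corresponding critical extension with core $X$. By assumption $R' = V(G)$, $|X| = 1$ (so $X = \{x_{c^*}\}$ for some color $c^*$), and the incompleteness $i'$ satisfies $i' \le i$. The heart of the argument is a single edge-counting inequality asserting $i' \geq I - i_{c^*}$. Since $R' = V(G)$, one has $|E(G\lb R'\rb)| = |E(G\lb R\rb)| + I + |E(G\lb V(G)\setminus R\rb)|$. On the other hand, every edge of $W$ either lies within $V(G)\setminus R$ (contributing at most $|E(G\lb V(G)\setminus R\rb)|$, since such edges of $G_{R,\varphi}$ coincide with edges of $G$) or is incident to $x_{c^*}$ (contributing at most $i_{c^*}$, since each edge $x_{c^*}u$ of $G_{R,\varphi}$ arises from at least one $G$-edge between $\varphi^{-1}(c^*)\cap R$ and $u$). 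Substituting into $i' = |E(G\lb R'\rb)| - |E(G\lb R\rb)| - |E(W)| + |E(K_X)|$ and using $|E(K_X)| = 0$ yields $i' \geq I - i_{c^*} \geq i+1$, contradicting $i' \le i$.

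The only subtlety will be the bound on edges of $W$ incident to $x_{c^*}$: because $G_{R,\varphi}$ is simplified by removing parallel edges, a single edge $x_{c^*}u$ may correspond to multiple $G$-edges between $\varphi^{-1}(c^*)\cap R$ and $u$. Fortunately, the counting only requires the trivial lower bound of at least one $G$-edge per edge of $G_{R,\varphi}$, so no further care is needed. Every other step is routine bookkeeping, and no structural information beyond the three clauses of the hypothesis is used.
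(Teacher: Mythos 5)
Your proof is correct and is essentially the paper's argument: both hinge on the observation that, for the extension attached to the given coloring, every $G$-edge from a non-core color class of $R$ to $V(G)\setminus R$ is counted in $E(G\lb R'\rb)$ but not in $|E(G\lb R\rb)| + |E(W)| - |E(K_X)|$, so the incompleteness is at least the number of such edges, which bounds the collapsibility quantity at the core's color. The paper phrases this directly rather than by contradiction, but the counting is the same.
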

\begin{proof}
Let $\varphi$ be any $(k-1)$-coloring of $G \lb R\rb$. Let $W$ be a $k$-critical subgraph of $G_{R,\varphi}$ and let $R'$ be a critical extension of $R$ with extender $W$ and core $X$. By our hypothesis, $R' = V(G)$ and $|X|=1$. We may assume without loss of generality that $X=\{x_1\}$. Now an edge between $E(\varphi^{-1}(C \setminus \{1\}) \cap R$ and $V(G) \setminus R$ is in $|E(G \lb R' \rb)|$ but not in $|E(G \lb R \rb)| + |E(W)| - |E(K_X)|$. As $R'$ is at most $i$-incomplete, it follows that there are at most $i$ such edges. Then for $c = 1$,
$$|E(\varphi^{-1}(C \setminus \{c\}) \cap R, V(G) \setminus R)| \leq i,$$
which proves that $R$ is $i$-collapsible.
\end{proof}

\subsection{Edge Additions}

The following lemma shows that such a critical subgraph arising from an $i$-edge-addition yields a subset of $V(G)$ whose potential is small.

\begin{LMA}\label{addUpperBound}
Suppose $G$ is tight and admits an $i$-edge-addition. Let $H$ be a $k$-critical subgraph of $G + S$ and let $R = V(H)$. Then
$$p_G(R) \leq p(G) + P + Q + 2i(k-1) + i\delta.$$
Furthermore, if $H$ is not $k$-Ore, then $p_G(R) \leq p(G) + Q + 2i(k-1) + i\delta$.
\end{LMA}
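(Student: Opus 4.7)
My plan is to bound $p_G(R)$ by relating it to $p(H)$ and invoking the goodness hypothesis on $H$.

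First, since $V(H) \subsetneq V(G)$ by the definition of an $i$-edge-addition, $H$ is smaller than $G$, so by goodness $H$ satisfies \Cref{OURCONJ}. Because $|V(H)| = |R|$, the definition of the potential gives
\[
p_G(R) - p(H) = 2(k-1)\bigl(|E(H)| - |E(G[R])|\bigr) + \delta\bigl(T(H) - T(G[R])\bigr).
\]
It therefore suffices to bound each discrepancy above by $i$ and then estimate $p(H)$.

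Next, I would bound the two discrepancies. Since $H$ is a subgraph of $G + S$ with $V(H) = R$, every edge of $H$ lies in $E(G[R]) \cup S$, so $|E(H)| - |E(G[R])| \leq |S| \leq i$. For the clique term I would establish the auxiliary fact $T(G' + e) \leq T(G') + 1$ for any graph $G'$ and added edge $e = uv$ by a swap argument: in an optimal vertex-disjoint clique family for $G' + e$, at most one clique $C$ contains $e$; if $|C| = k-1$, replace $C$ by the $K_{k-2}$ on $C \setminus \{u\}$, and $2a + b$ drops by exactly $1$; if $|C| = k-2$, delete $C$ outright and $2a + b$ again drops by $1$. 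Iterating over the at most $i$ edges of $S$ gives $T(G[R] + S) \leq T(G[R]) + i$, and since $H$ is a spanning subgraph of $G[R] + S$, monotonicity of $T$ under edge-deletion yields $T(H) \leq T(G[R] + S)$. Combining,
\[
p_G(R) \leq p(H) + 2i(k-1) + i\delta.
\]

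Finally, I would case split on whether $H$ is $k$-Ore. If $H$ is not $k$-Ore, \Cref{OURCONJ} gives $p(H) \leq k(k-3) - P$, and $0$-tightness of $G$ ($p(G) > k(k-3) - P - Q$) rearranges to $p(H) < p(G) + Q$, which proves the \emph{furthermore} statement. If $H$ is $k$-Ore, I would show $p(H) \leq k(k-3)$ directly using $\delta = 2(k-1)\varepsilon$ from Assumption 1.1: for $H = K_k$, $p(K_k) = k(k-3) + k\varepsilon - 2\delta = k(k-3) - (3k-4)\varepsilon$; for $H \neq K_k$, the $k$-Ore bound $p(H) \leq k(k-3) + |V(H)|\varepsilon - (2 + \tfrac{|V(H)|-1}{k-1})\delta$ simplifies, upon substituting $\delta = 2(k-1)\varepsilon$, to $k(k-3) - (|V(H)| + 4k - 6)\varepsilon$. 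Both quantities are at most $k(k-3)$, so combining with tightness ($k(k-3) < p(G) + P + Q$) yields $p_G(R) \leq p(G) + P + Q + 2i(k-1) + i\delta$ in this case as well.

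The main obstacle I anticipate is the swap argument for $T(G' + e) \leq T(G') + 1$: one must carefully verify in both the $K_{k-1}$ and $K_{k-2}$ cases that the modified family remains vertex-disjoint in $G'$ and that $2a + b$ changes by exactly the claimed amount. Everything else is a routine combination of the inductive hypothesis, the definition of tight, and Assumption 1.
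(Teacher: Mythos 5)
Your proposal is correct and follows essentially the same route as the paper: bound the edge and $T(\cdot)$ discrepancies between $H$ and $G[R]$ by $i$ to get $p_G(R) \leq p(H) + 2i(k-1) + i\delta$, then invoke goodness of $G$ (so $p(H) \leq k(k-3)$, or $\leq k(k-3)-P$ if $H$ is not $k$-Ore) together with tightness $p(G) > k(k-3) - P - Q$. The only difference is that you spell out details the paper leaves implicit, namely the swap argument for $T(G'+e) \leq T(G')+1$ and the verification that $k$-Ore graphs satisfy $p(H) \leq k(k-3)$ under Assumption 1.1, both of which are correct.
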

\begin{proof}
Since $|E(H)\setminus E(G[R])|\le i$, it follows that $T(H) \leq T(G \lb R \rb) + i$. Hence $p(H) \geq p_G(R) - 2i(k-1) - i\delta$. Since $G$ is tight and $H$ is smaller than $G$, $H$ satisfies \Cref{OURCONJ}. Thus $p(H) \leq k(k-3) \leq p(G) + P + Q$. So $p_G(R) \leq p(G) + P + Q + 2i(k-1) + i\delta$ as desired. Furthermore, if $H$ is not $k$-Ore, then $p(H) \leq k(k-3) - P = p(G) + Q$ as desired.
\end{proof}

Our next lemma shows that $i$-collapsible subsets yield $(i+1)$-edge additions for $i\leq (k-3)/2$.

\begin{LMA}\label{colladd} 
If $G$ contains an $i$-collapsible subset $R$ for $i \leq (k-3)/2$, then $G$ admits an $(i+1)$-edge-addition $S$ consisting of non-edges of $G \lb R\rb$.
\end{LMA}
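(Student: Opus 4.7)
The plan is to proceed by induction on $i$. For the base case $i = 0$, the set $R$ is collapsible, so in every $(k-1)$-coloring of $G \lb R \rb$ the vertices of $\partial_G R$ share a single color, forcing $\partial_G R$ to be an independent set. Since $G$ is $k$-critical and hence 2-connected, $|\partial_G R| \geq 2$, and we select two non-adjacent vertices $u, v \in \partial_G R$. The graph $G \lb R \rb + uv$ is then not $(k-1)$-colorable (every $(k-1)$-coloring of $G\lb R\rb$ assigns $u$ and $v$ the same color), so it contains a $k$-critical subgraph $H$ with $V(H) \subseteq R \subsetneq V(G)$; thus $\{uv\}$ is a $1$-edge-addition of $G$ as required.

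For $i \geq 1$, assume the result for $i - 1$, and let $R$ be $i$-collapsible in $G$. The plan is to find a single non-edge $e' \in \binom{R}{2} \setminus E(G)$ with the property that either $\{e'\}$ is already a $1$-edge-addition of $G$, or $R$ is $(i-1)$-collapsible in $G' := G + e'$. In the latter subcase, note that $G'$ is still $k$-critical (else $\{e'\}$ would yield a proper $k$-critical subgraph), so the inductive hypothesis applied to $G'$ yields an $i$-edge-addition $S' \subseteq \binom{R}{2} \setminus E(G')$, and then $S := \{e'\} \cup S'$ is the desired $(i+1)$-edge-addition of $G$ (boundary edges out of $R$ are the same in $G$ and $G'$, so the collapsibility condition transfers correctly). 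To construct $e'$, fix any $(k-1)$-coloring $\varphi_0$ of $G \lb R \rb$ with good color $c_0 = c(\varphi_0)$. By $i$-collapsibility, at most $i$ boundary vertices of $R$ are colored anything other than $c_0$; hence $|\partial_G R \cap \varphi_0^{-1}(c_0)| \geq |\partial_G R| - i$. Provided $|\partial_G R| \geq i + 2$, we can choose two distinct vertices $w_1, w_2$ in this intersection, which are non-adjacent in $G$ since they share color $c_0$, and take $e' := w_1 w_2$. In any $(k-1)$-coloring $\varphi$ of $G' \lb R \rb$, the constraint $\varphi(w_1) \neq \varphi(w_2)$ forces at least one of $w_1, w_2$ to have color distinct from the good color $c(\varphi)$, providing an extra bad boundary edge and dropping the collapsibility level by one.

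The main obstacle comes in two places. First, when $|\partial_G R| < i + 2$, the construction above fails and one must argue directly by analyzing the contracted graph $G_{R, \varphi_0}$ and a $k$-critical subgraph $W$ with core $X = V(W) \cap \{x_1, \ldots, x_{k-1}\}$. A minimum-degree calculation on $W$ (using $\delta(W) \geq k-1$ and that at most $|X| - 1$ neighbors of each $x_j \in X$ lie in $X$) shows that each $x_j \in X$ has at least $k - |X|$ neighbors in $V(W) \setminus X \subseteq V(G) \setminus R$, translating to at least $k - |X|$ boundary edges from $\varphi_0^{-1}(j) \cap R$. Combined with $i$-collapsibility this gives $(|X| - [c_0 \in X])(k - |X|) \leq i$, and the hypothesis $i \leq (k-3)/2$ forces $|X| \in \{1, k-1\}$ with $x_{c_0} \in X$ in either case. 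From this rigid structure of $W$ one can extract $i+1$ non-edges within $\varphi_0^{-1}(c_0) \cap R$ (corresponding to distinct neighbors of $x_{c_0}$ in $W$) that together block every $(k-1)$-coloring of $G \lb R \rb + S$. Second, in the main inductive step, the claim that $R$ is $(i-1)$-collapsible in $G'$ requires ruling out the existence of a tight $i$-collapsibility coloring $\varphi$ of $G \lb R \rb$ with $\varphi(w_1) \neq \varphi(w_2)$; verifying this is the technical heart of the argument and may require choosing the pair $w_1, w_2$ so that every tight coloring agrees on them, which is where the most care is needed.
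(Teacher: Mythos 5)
Your base case is fine, but the inductive step has two genuine gaps. First, you apply the inductive hypothesis to $G' = G + e'$, justifying this with the claim that $G'$ is still $k$-critical ``else $\{e'\}$ would yield a proper $k$-critical subgraph.'' This is incorrect: $G'$ contains $G$ as a proper subgraph with $\chi(G)=k$, so $G'$ is never $k$-critical, and this failure of criticality does not by itself produce a $k$-critical subgraph on a \emph{proper vertex subset}, which is what a $1$-edge-addition requires. Since the lemma is stated (per the section's convention) for $k$-critical $G$, and any proof of it leans on criticality (e.g.\ $(k-1)$-colorability of proper subgraphs and $(k-1)$-edge-connectivity), the induction hypothesis cannot be invoked for $G'$ as written; you would at minimum need to reformulate and reprove the statement for graphs of the form ``$k$-critical plus added non-edges inside $R$,'' which you have not done.

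Second, and more seriously, the heart of your step --- that $R$ becomes $(i-1)$-collapsible in $G' = G + w_1w_2$ --- is not established. Knowing that in any coloring of $G'[R]$ one of $w_1, w_2$ lies outside the minimizing color class does not lower the boundary-edge count below $i$: a coloring $\varphi$ of $G[R]$ with $\varphi(w_1)\neq\varphi(w_2)$ whose minimum count equals exactly $i$ is still a valid coloring of $G'[R]$ and still witnesses only $i$-collapsibility, so the ``level drop'' does not follow. You concede this yourself (``the technical heart \dots where the most care is needed'') but give no mechanism for choosing $w_1,w_2$ so that every tight coloring identifies them; the sketch for the case $|\partial_G R| < i+2$ is likewise only an outline (and the degree count there in fact forces $|X|=1$, not $|X|\in\{1,k-1\}$, since $(k-2)\cdot 1 > (k-3)/2$ as well, while the claimed extraction of $i+1$ blocking non-edges is unsubstantiated). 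For comparison, the paper avoids per-edge induction entirely: assuming no $(i+1)$-edge-addition exists, it uses $(k-1)$-edge-connectivity and an auxiliary multigraph on $\partial_G R$ to bound the boundary weights, takes $S$ to be a star centered at the heaviest boundary vertex $u_1$, and derives a contradiction with $k$-criticality via a color-permutation argument applied to a coloring of $(V(G)\setminus R)\cup\{u_1\}$ chosen to minimize conflicts. You would need to either supply the missing selection argument for $w_1,w_2$ (and a corrected induction framework) or switch to an argument of the paper's type.
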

\begin{proof}
Suppose not. If $i = 0$, a collapsible subset clearly admits an edge-addition, as $R$ contains at least two vertices which receive the same color in every $(k-1)$-coloring of $G \lb R \rb$. Hence we may assume that $i \geq 1$.
	
Let $R \subsetneq V(G)$ be an $i$-collapsible subset for some $i$ with $1 \leq i \leq (k-3)/2$. Suppose to the contrary that $G$ does not admit an $(i+1)$-edge-addition. For each $u \in \partial_G R$, let $w(u) = |\{ uv \in E(G)| v \in V(G) \setminus R\}|$. Since $G$ is $(k-1)$-edge connected, $\sum_{u \in \partial_G R} w(u) \geq k-1$. Let $\partial_G R = \{u_1,\ldots, u_s\}$ and assume without loss of generality that $w(u_1) \geq w(u_j)$ for all $j \geq 2$.
	
\begin{claim}
$w(u_2) + \ldots + w(u_s) \leq i + 1$.
\end{claim}
\begin{proof}
Suppose not, that is $w(u_2) + \ldots + w(u_s) \geq i + 2$. We may assume without loss of generality that $w(u_1) \geq w(u_2) \geq \ldots \geq w(u_s)$. Choose the largest index $j$ such that $w(u_j) + \ldots + w(u_s) \geq i + 1$, and set $\alpha = i + 1 - (w(u_{j+1}) + \ldots + w(u_s))$. Since $w(u_1) + \ldots + w(u_s) \geq k - 1$, we also have that $w(u_1) + \ldots + w(u_j) \geq (k-1) - (i+1-\alpha) = k-2-i-\alpha$, which is at least $i + 1 + \alpha$ as $i \leq (k-3)/2$. 
		
By the choice of $j$ and the ordering of the vertices, $0 < \alpha \leq w(u_j) \leq w(u_1)$. Define a multigraph graph $H$ on the vertices $V(H) = \partial_G R$ with the following edges. Add $\alpha$ edges between $u_1$ and $u_j$, and $i + 1 - \alpha$ edges between the vertex sets $\{u_{j+1},\ldots,u_s\}$ and $\{u_1,\ldots,u_j\}$ so that for each $\ell$, the degree of $u_\ell$ in $H$ is at most $w(u_\ell)$. 
		
As $G$ admits no $(i+1)$-edge-addition, there is a $(k-1)$-coloring $\varphi$ of $G\lb R \rb + E(H)$, which is also a $(k-1)$-coloring of $G \lb R \rb$. For any color $c$ and $M = \varphi^{-1}(c)$, we have that
$$\sum_{u \in (\partial_G) \setminus M} w(u) \geq \sum_{u \in (\partial_G R) \setminus M} d_H(u) \geq \frac{1}{2} \sum_{u \in \partial_G R} d_H(u) = i + 1.$$
This contradicts the $i$-collapsibility of $R$.
\end{proof}

Now let $S = \{ u_1u_j| 2 \leq j \leq s\}$, and note that $|S| \leq \sum_{2\le j \le s} w(u_j) \le i + 1$. Hence $G\lb R \rb + S$ has a $(k-1)$-coloring $\varphi$. We may assume that $\varphi(u_1) = 1$ and hence $\varphi(u_j)\ne 1$ for all $j\ge 2$.

Note that $w(u_1) \geq k-1-(i+1) \ge i + 1$. Thus for all $c$ with $2\le c\le k-1$, we have that $|E(\varphi^{-1}(C \setminus \{c\}) \cap R, V(G) \setminus R)| \ge i+1$. Since $R$ is $i$-collapsible, it follows that $|E(\varphi^{-1}(C \setminus \{1\}) \cap R, V(G) \setminus R)|\le i$.  Thus $w(u_2) + \ldots + w(u_s) \leq i$. 

Let $\psi$ be a $(k-1)$-coloring of $G\lb (V(G) \setminus R) \cup \{u_1\} \rb$ such that $\psi(u_1) = 1$. Consider the \emph{improper} coloring $\ov{\psi}$ induced on $G$ by the union of $\varphi$ and $\psi$. Now let us choose $\psi$ so that the number of edges between $R, V(G) \setminus R$ with endpoints having the same color in $\ov{\psi}$ is minimized. Without loss of generality, assume that $\varphi(u_2) = 2$ and that some neighbor $z$ of $u_2$ in $V(G) \setminus R$ also has $\ov{\psi}(z) = 2$.

To obtain a contradiction, we show that the colors on $V(G) \setminus R$ can be permuted so as to contradict the minimality of $\psi$. Let $\ov{C}$ be the set of colors
$$\{1\} \cup \{ \varphi(u_j): 2 \leq j \leq s\} \cup \{ \varphi(v): v \in V(G) \setminus R, \exists u_jv \in E(G), \varphi(u_j) = 2\}.$$
Now $\ov{C}$ contains at most $i + 1$ colors, as there are at most $i$ edges joining $u_2,\ldots,u_s$ to $V(G) \setminus R$. Hence, there exists a color $c' \in \{1,\ldots, k-1\} \setminus \ov{C}$. The coloring $\psi'$ obtained by permuting the colors $2$ and $c'$ in $\psi$ contradicts the minimality of $\psi$, and this completes the proof.
\end{proof}

Now we are ready to prove the following lemma which shows that $G$ does not admit $i$-edge-additions for $i\leq (k-2)/2$ and in turn provides a lower bound on the potential of proper subsets of $G$.

\begin{LMA}\label{buckets}
Let $G$ be a tight, ungemmed graph. Then for all $i$ with $1 \leq i \leq (k-2)/2$, $G$ does not admit an $i$-edge-addition, and there is no subset $R \subsetneq V(G)$, $G \lb R\rb$ not a clique, with $p_G(R) < p(G) + 2i(k-1) + \buc$.
\end{LMA}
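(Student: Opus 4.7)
The plan is to prove both claims by strong induction on $i$, from $i=1$ up to $\lfloor(k-2)/2\rfloor$, with the implicit ``base case $i=0$'' being the easy statement that no non-clique $R \subsetneq V(G)$ has $p_G(R) < p(G) + \buc$ (provable by iterating critical extensions, each of which drops potential by at least $2(k-1)+\delta > \buc$ via \Cref{edgedrop} and Assumption~1.5, eventually exhausting $V(G)$). The first claim (no $i$-edge-addition) then reduces to the second: given an $i$-edge-addition with $k$-critical $H \subseteq G+S$ on $R = V(H) \subsetneq V(G)$, the edges of $S$ are non-edges of $G$ and so force $G\lb R\rb$ to miss at least one edge of $H$, making $G\lb R\rb$ non-clique; \Cref{addUpperBound} then yields $p_G(R) \le p(G) + P + Q + 2i(k-1) + i\delta$, which by Assumption~1.4 and $i \le (k-2)/2 < k/2$ strictly is strictly less than $p(G) + 2i(k-1) + \buc$, contradicting the second claim.

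For the second claim, assume for contradiction a non-clique $R \subsetneq V(G)$ with $p_G(R) < p(G) + 2i(k-1) + \buc$, chosen of minimum cardinality. A critical extension of $R$ produces $R'$ with extender $W$, core $X$, and $j$-incompleteness, and \Cref{edgedrop} gives $p_G(R') \le p_G(R) - 2(j+1)(k-1) - \delta$. If $R' \subsetneq V(G)$, then $G\lb R'\rb$ is non-clique (since it induces a supergraph of the non-clique $G\lb R\rb$, and induced subgraphs of cliques are cliques), and the induction hypothesis at $i-1$ forces $p_G(R') \ge p(G) + 2(i-1)(k-1) + \buc$; combining with the edgedrop bound yields $p_G(R) \ge p(G) + 2i(k-1) + \buc + \delta$, a contradiction.

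The main obstacle is the case $R' = V(G)$, where $p(G) = p_G(R')$ and Assumption~1.5 ($\buc + (k-1)\delta \le 2$) force only $j \le i-1$. The plan is to show that, over all choices of $\varphi$ and the $k$-critical subgraph $W$ of $G_{R,\varphi}$, every critical extension of $R$ is spanning with incompleteness $\le i-1$ and---crucially---core of size $1$. The ungemmed hypothesis is used here to eliminate $k$-Ore extenders: if $W = K_k$ with $|X|=1$ and the extension is complete, then $V(W) \setminus X$ forms an induced $K_{k-1}$ in $G$ with every vertex of degree exactly $k-1$, an emerald contradicting ungemmed; the general $k$-Ore case is handled by locating a gem of $W$ via \Cref{kOreGem} and projecting it to $G$. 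Once every critical extension is forced to have $|X|=1$, \Cref{collapsibility} identifies $R$ as $(i-1)$-collapsible and \Cref{colladd} (applicable since $i-1 \le (k-3)/2$) yields an $i$-edge-addition $S$ consisting of non-edges of $G\lb R\rb$; the first-claim reduction then converts this into a non-clique subset $V(H) \subsetneq V(G)$ with $p_G(V(H)) < p(G) + 2i(k-1) + \buc$, and because $S$ lies in $G\lb R\rb$, a careful analysis constrains $V(H) \subsetneq R$, contradicting the minimality of $|R|$.
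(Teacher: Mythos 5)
Your overall frame (induction on $i$, base case via \Cref{edgedrop}, reduction of the ``no $i$-edge-addition'' claim to the potential claim via \Cref{addUpperBound} and Assumption 1.4, and the use of \Cref{edgedrop}, \Cref{collapsibility} and \Cref{colladd} to show a minimal bad $R$ is $(i-1)$-collapsible and hence yields an $i$-edge-addition inside $G\lb R\rb$) matches the paper. But the endgame has a genuine gap. After \Cref{colladd} gives an $i$-edge-addition $S$ of non-edges of $G\lb R\rb$ with $k$-critical $H \subseteq G\lb R\rb + S$, the potential bound $p_G(V(H)) < p(G) + 2i(k-1) + \buc$ together with minimality of $|R|$ does \emph{not} force $V(H) \subsetneq R$ and hence gives no contradiction: since $V(H) \subseteq R$, minimality forces exactly $V(H) = R$, which is a perfectly consistent configuration. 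This is precisely where the paper's real work begins: it must show that $H$ is $k$-Ore (using the inductive absence of $(i-1)$-edge-additions and a gem argument on the extender), then that $H$ cannot be an Ore-composition and so $H = K_k$ (a delicate boundary/recoloring analysis, Claims \ref{claim:S1}--\ref{claim:Kk}), and finally derive a contradiction with the ungemmed hypothesis: for $i=1$, $\partial_G R = \{x,y\}$ and $G\lb R\rb$ is a diamond; for $i>1$, the $(i-1)$-collapsibility of $R$ forces $S$ to be a star or a triangle, producing an emerald in $G$ or violating $(k-1)$-edge-connectivity. None of this appears in your proposal, and the phrase ``a careful analysis constrains $V(H) \subsetneq R$'' cannot be made to work.

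A secondary point: your plan to establish the core-of-size-one claim by ``eliminating $k$-Ore extenders'' with gems is misplaced. In the paper that claim (\Cref{claim:coll}) is pure potential arithmetic: if $|X| \geq 2$, \Cref{submodular} with the bound $p(K_X) + \delta T(K_X) - \delta|X| \geq p(K_{k-1}) - (k-3)\delta$ forces $p(W) > k(k-3)$, contradicting that $W$ satisfies \Cref{OURCONJ}. Your proposed argument addresses complete extensions with $W = K_k$ and $|X| = 1$ (which is not the case to exclude), and the degree claim for the vertices of $V(W)\setminus X$ requires completeness of the extension, which is not available ($R'$ is only known to be at most $(i-1)$-incomplete). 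The gem machinery (\Cref{kOreGem}, \Cref{kOreExcludeGem}) is indeed used in this proof, but later, to analyze the extender and the graph $H$ once $V(H) = R$, not to control the core size.
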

\begin{proof}
We will show by induction on $i$ that for $0 \leq i \leq (k-2)/2$, $G$ does not admit an $i$-edge-addition, and there is no subset $R \subsetneq V(G)$, $G\lb R\rb$ not a clique, with $p_G(R) < p(G) + 2i(k-1) + \buc$. The base case $i = 0$ is true since \Cref{edgedrop} implies that $p(G) \leq p_G(R) - 2(k-1)$.
	
Now suppose it holds for $i - 1$ and suppose to the contrary it does not hold for $i$. 
	
\begin{claim}\label{claim:coll}
If $R \subsetneq V(G)$, $G \lb R\rb$ not a clique, with $p_G(R) < p(G) + 2i(k-1) + \buc$, then every critical extension of $R$ is spanning, at most $(i-1)$-incomplete, and has a core of size one.
\end{claim}
\begin{proof}
Let $R'$ be a critical extension of $R$ with extender $W$. By \Cref{edgedrop}, if $R'$ is $\ell$-incomplete, then
$$p_G(R') \leq p_G(R) - 2(\ell+1)(k-1) - \delta < p(G) + 2(i - 1 - \ell)(k-1) + \buc - \delta.$$
This is a contradiction unless $R' = V(G)$ and $\ell \leq i-1$. Hence $R'$ is spanning and $(i-1)$-incomplete. Next suppose that $R'$ has size greater than one. Then by \Cref{submodular},
\begin{align*}p_G(R') &\leq p_G(R) + p(W)  - (p(K_{k-1}) -(k-3)\delta) \\
&= p_G(R) + p(W) - (2(k-1)(k-2) - (k-1)(\delta - \varepsilon)).
\end{align*}
Since $p_G(R) < p(G) + 2i(k-1) + \buc \leq p(G) + (k-1)(k-2) + \buc$ and $p_G(R') \geq p(G)$, we have $p(W) > k(k-3) + 2  - (\buc + (k-1)(\delta - \varepsilon)) \geq k(k-3)$.
However, $G$ is tight, so $W$ satisfies \Cref{OURCONJ}, in which case $p(W) \leq k(k-3)$, a contradiction. Hence $R'$ has a core of size one as desired.
\end{proof}
	
Since the statement does not hold for $i$, there exists $R\subsetneq V(G)$, $G \lb R\rb$ not a clique, with $p_G(R) < p(G) + 2i(k-1) + \buc$. Now let us assume that $R$ is such a subset of minimal size. Note that by \Cref{claim:coll}, every critical extension of $R$ is spanning, at most $(i-1)$-incomplete and has a core of size one. By \Cref{collapsibility}, $R$ is $(i-1)$-collapsible. 
	
Since $i - 1 \leq (k-3)/2$, \Cref{colladd} implies that $G$ admits an $i$-edge-addition $S$ consisting of non-edges of $G\lb R \rb$. By induction, $G$ does not admit an $(i-1)$-edge-addition, so $|S| = i$. Let $S$ be chosen so that the $k$-critical graph $H \subseteq G \lb R\rb + S$ has the minimum number of vertices, and let $R_0 = V(H)$. By the same calculation as in \Cref{addUpperBound}, $p_G(R_0) \leq p(H) + 2i(k-1) + i\delta$. As $G$ is good and $H$ is smaller than $G$, $p(H) \leq k(k-3)$. But then 
\begin{align*}p_G(R_0) &\leq k(k-3) + 2i(k-1) + i\delta \\
&< p(G) + 2i(k-1) + (P + Q + i\delta) \\
&< p(G) + 2i(k-1) + \buc,
\end{align*}
where the second inequality follows since $p(G) \ge k(k-3) - P - Q$ and the third inequality follows since $P+Q+i\delta \le \Delta$ by Assumption 1.4. Since $R$ is a minimum subset with this property, we have that $R_0 = R$.
	
By assumption, $G$ does not admits an $(i-1)$-edge-addition and hence by \Cref{colladd}, $R$ is not $(i-2)$-collapsible. It follows then from \Cref{claim:coll} that there exists a critical extension $R'$ of $R$ that is not $(i-2)$-incomplete.
	
\begin{claim}\label{claim:kOre}
$H$ is $k$-Ore.
\end{claim}
\begin{proof}
Suppose not. As $H$ is smaller than $G$, $p(H) \leq k(k-3) - P < p(G) + Q$. Since $R'$ is spanning and has core of size one, \Cref{submodular,addUpperBound} yield
\begin{align*}
p(G) &\leq p_G(R) + p(W) - 2(i-1)(k-1) - (k^2 - k - 2 + \varepsilon - \delta) \\
&\leq (p(H) + 2i(k-1) + i\delta) + p(W) - 2(i-1)(k-1) - (k^2 - k - 2 + \varepsilon - \delta) \\
&< p(G) + p(W) + Q +  2(k-1) + i\delta - (k^2 - k + \varepsilon - \delta)
\end{align*}
and therefore $p(W) > k(k-3) - (Q + (i+1)\delta - \varepsilon)$. Since $Q + k\delta \leq P$, this implies that $W$ is $k$-Ore.
		
Since $|X| = 1$, by \Cref{kOreGem}, there exists a gem $D$ in $W$ with $x_1 \notin V(D)$. If $i = 1$, then as $R'$ is complete, $D$ is also a gem in $G$, a contradiction that $G$ is ungemmed. Hence we may assume that $i > 1$. If $D$ is a diamond, then $G$ admits an edge-addition, which contradicts the induction hypothesis. 
		
Thus we may assume that $D$ is an emerald, with vertices $\{u_1,\ldots,u_{k-1}\}$. Since $R'$ is $(i-1)$-incomplete, there are at most $i - 1$ edges incident to $D$ in $G$ but not in $W$, so there are at least $k - i$ vertices in $D$ of degree $k-1$ in $G$, which we may assume are $\{u_1,\ldots,u_\ell \}$, $\ell \leq k - i$. For each $1 \leq i \leq \ell$, let $v_i$ denote the unique neighbor of $u_i$ outside $D$. If $v_i \neq v_j$ for any $i\ne j$, then $v_iv_j$ is an edge-addition in $G$. To see this, suppose there exists a $(k-1)$-coloring $\varphi$ of $G + v_iv_j - \{u_i,u_j\}$. Since $\varphi(v_i) \neq \varphi(v_j)$, $\varphi$ extends to a $(k-1)$-coloring of $G$, a contradiction. Since $i > 1$, $G$ does not admit an edge-addition, so $v_i = v_j$ for all $i,j$. But then adding the edges $v_1u_j$ for $j = \ell + 1,\ldots, k - 1$ yields a $K_k$-subgraph; so $G$ admits an $(i-1)$-edge-addition, a contradiction. Therefore $W$ is not $k$-Ore, a contradiction.
\end{proof}
	
\begin{claim}\label{claim:S1}
If $i = 1$, then $S = \{xy\}$ for a single edge $xy$ and $H = G\lb R\rb + S$.
\end{claim}
\begin{proof}
Suppose not. Recall that $V(H) = R$. As $E(G[R])\setminus E(H)\ne \emptyset$, then $p_G(R) \leq p(H) + \delta$. In which case
\begin{align*}
p(G) &\leq p_G(R) + p(W) - (k^2 - k - 2 + \varepsilon - \delta) \\
&\leq p(H) + \delta + (k(k-3) - P) - (k^2 - k - \varepsilon + \delta) \\
&\leq p(H) - 2(k-1) - P - \varepsilon + 2\delta.
\end{align*}
Rearranging yields $p(H) \geq p(G) + 2(k-1) + P + \varepsilon - 2\delta$. Since $p(G) > k(k-3) - P - Q$, we have that $p(H) > k(k-3)$, which is a contradiction, since we assumed $H$ to be $k$-Ore.
\end{proof}
	
\begin{claim}\label{claim:Kk}
$H=K_k$.
\end{claim}
\begin{proof}
Suppose not. Since $H\ne K_k$ and $H$ is $k$-Ore, we have that $H$ is an Ore-composition of two $k$-Ore graphs $H_1, H_2$. We will work towards a contradiction showing that $H$ cannot be an Ore-composition. Let $\{a,b\}$ denote the overlap vertices, and $\underline{ab}$ the split-vertex of $H_2$. The proof for $i = 1$ differs from the general case, so we first resolve $i = 1$.

For $i = 1$ and $S = \{xy\}$, we show that $\partial_G R$ intersects both $V(H_1)\setminus \{a,b\}, V(H_2)-\underline{ab}$. 

\begin{subclaim}\label{claim:boundaryNotOneSide}
There exists $u,v \in \partial_G R$ with $u \in V(H_1) \setminus V(H_2), v \in V(H_2) \setminus V(H_1)$.
\end{subclaim}
\begin{proof}
Suppose to the contrary that $\partial_G R$ is contained in $V(H_1) \setminus V(H_2)$. By \Cref{kOreGem}, there exists a gem $D$ in $H_2$ with $\underline{ab} \notin V(D)$. Since there are no edges between $V(H_2)$ and $G \setminus R$, $D$ is a gem in $G$, a contradiction. Now suppose that $\partial_G R$ was contained in $V(H_2) \setminus V(H_1)$. Notice that in any $(k-1)$-coloring of $G \lb R\rb$, $a$ and $b$ receive different colors, as otherwise there exists a $(k-1)$-coloring of $H_2$, a contradiction. Thus, at most one of $a,b$ is in $\partial_G R$; we may assume without loss of generality that $b \notin \partial_G R$. By \Cref{kOreGem}, there exists a gem $D$ in $H_1$ with $a \notin V(D)$, in which case $ab \notin E(D)$. $D$ does not contain the edge $xy$, or else one of the edges $x,y$ would have degree strictly less than $k - 1$ in $G$. Then $D$ is a gem in $G$, a contradiction.
\end{proof}

The remainder of the argument for $i = 1$ will appear again, so we label it for convenience.

\begin{subclaim}\label{claim:i1GivenBoundary}
If \Cref{claim:boundaryNotOneSide} holds, then for $i = 1$, we obtain the desired result that $H$ cannot be an Ore-composition of two $k$-Ore graphs.
\end{subclaim}
\begin{proof}
From \Cref{claim:boundaryNotOneSide}, there exists $u,v \in \partial_G R$ with $u \in V(H_1) \setminus V(H_2), v \in V(H_2) \setminus V(H_1)$. Let $\varphi$ be a $(k-1)$-coloring of $G \lb R\rb$; since $R$ is collapsible, $\varphi(u) = \varphi(v)$. Since $H_2$ is a split $k$-Ore subgraph of $G$, we have that $\varphi(a)\ne \varphi(b)$. So let us assume without loss of generality that $\varphi(a) = 1$ and $\varphi(b) = 2$.
		
Suppose $\varphi(u)$ is distinct from $\varphi(a), \varphi(b)$. Suppose without loss of generality that $\varphi(u)=3$. Hence, by permuting the colors $3,4$ on $V(H_1)$, we obtain a $(k-1)$-coloring of $G \lb R\rb$ in which $u,v$ have distinct colors, a contradiction. So we may assume $\varphi(u) = \varphi(v) = 1$ in every $(k-1)$-coloring of $G \lb R \rb$. This implies that $H_2 + av$ is not $(k-1)$-colorable, but then $G \lb H_2 \rb$ admits an edge-addition, which contradicts the minimality of $H$. This proves the subcase $i=1$.
\end{proof}
		
Now we consider $i > 1$. Let $\ov{S} = S \cap E(H_1)$ be the edges of $S$ added on the vertex-side. If $\ov{S} \leq i - 1$, then $\ov{S} \cup \{ab\}$ is an $i$-edge-addition contained in $G\lb V(H_1)\rb$, which contradicts the minimality of $H$. Hence $S \subseteq E(H_1)$, so $H_2 - \underline{ab}$ is a subgraph of $G$. By \Cref{kOreGem}, there exists a gem $D$ in $H_2$ with $\underline{ab} \notin V(D)$. If $D$ is a diamond, then $G$ admits an edge-addition, a contradiction. Thus $D$ is an emerald, with vertices $\{u_1,\ldots,u_{k-1}\}$.
		
In a $(k-1)$-coloring of $G\lb R\rb$, the vertices of $D$ receive different colors. Let $X = \{x_1\}$ be the core of $R'$. Since $R'$ is $(i-1)$-incomplete, there are at most $i$ vertices in $V(D) \cap \partial_G R$, as otherwise there are at least $i$ edges from $\{x_2,\ldots,x_{k-1}\}$ to $V(G) \setminus R$ not included in $W$. Now let $\{u_1,\ldots,u_\ell\}$, be the vertices in $V(D) \setminus \partial_G R$. As before, the vertices $u_1,\ldots,u_\ell$ have a common neighbor $v$ outside $D$, or else $G$ would admit an edge-addition. Adding $i$ edges between $v$ and $u_{\ell + 1}, \ldots, u_{k-1}$ yields a $K_k$, which contradicts the minimality of $S$ as an $i$-edge-addition.
\end{proof}
	
Now suppose that $i = 1$. By \Cref{claim:S1}, $S = \{xy\}$ and $H = G \lb R \rb + S$. The vertices $x,y$ are in $\partial_G R$, or else one of $x$ or $y$ has degree less than $k - 1$ in $G$, a contradiction as $G$ is $k$-critical. But $R'$ has a core of size one and is complete, so $\partial_G R = \{x,y\}$. By \Cref{claim:Kk}, $H=K_k$. Hence $G \lb R \rb$ is a diamond in $G$, a contradiction.
	
So we may assume that $i > 1$. Let $V(H) = \{u_1,\ldots,u_k\}$, and note that $G \lb R \rb + S = H$ by the minimality of $R$. Without loss of generality, assume that $u_1u_k \in S$. Let $\varphi: G \lb R\rb \rightarrow \{1,\ldots,k-1\}$ be the improper coloring $\varphi(u_i) = i$ for $1 \leq i \leq k-1$ and $\varphi(u_k) = 1$. Since $R$ is $(i-1)$-collapsible, we have that
$$\min_{c \in C} |E(\varphi^{-1}(C \setminus \{c\}) \cap R, V(G) \setminus R)| \leq i - 1.$$
	
Observe that as $G$ is $k$-critical, $d_G(u_i) \geq k-1$, so each vertex $u_i$ has at least as many neighbors in $V(G) \setminus R$ as the number of edges of $S$ incident with $u_i$. Therefore, if $c \neq 1, k$, the vertex $v_c$ is incident to at most $i - 1$ edges of $S$, and the vertices $\{v_i: i \neq c\}$ are incident with at least $i - 1$ edges with ends in $V(G) \setminus R$, with an additional 2 edges for the edge $u_1u_k \in S$. Hence for $c \neq 1$,
$$|E(\varphi^{-1}(C \setminus \{c\}) \cap R, V(G) \setminus R)| \geq i+ 1.$$
	
Since $R$ is $(i-1)$-collapsible, it follows that $|E(\varphi^{-1}(C \setminus \{1\}) \cap R, V(G) \setminus R)| \leq i-1$. By the pigeonhole principle, every edge in $S$ is incident with either $u_1$ or $u_k$. If the edges $u_1u_i$ and $u_ku_j$ are both in $S$ for some $u_i \neq u_j$, then let $\psi$ be the improper coloring obtained from $\varphi$ by setting $\psi(u_i) = 1, \psi(u_k) = \varphi(u_i)$ (that is, we swap the colors on $u_i$ and $u_k$). Then $\psi$ induces a proper coloring of $G \lb R\rb$ in which no color class covers all but $i - 1$ endpoints of $S$, which contradicts the $(i-1)$-collapsibility of $R$. Hence the edges of $S$ either form a star, or $i = 3$ and $S$ forms a triangle.
	
Suppose first that $S$ forms a star, with $u_1$ the center. In this case, for $R$ to satisfy the $(i-1)$-collapsibility condition, every leaf of the star has exactly one edge going to $V(G) \setminus R$, and every $u_i \in R$ not incident with $S$ has no edges to $V(G) \setminus R$. Then $d_G(u_i) = k - 1$ for $2 \leq i \leq k$ and $G \lb R \setminus \{u_1\} \rb$ is an emerald in $G$, a contradiction.
	
Suppose instead that $S$ forms a triangle on $\{u_1,u_2,u_3\}$. Then each of $u_1,u_2,u_3$ are incident with exactly 2 edges with ends in $V(G) \setminus R$ while for every $i > 3$, $u_i$ is not incident with an edge with an end in $V(G) \setminus R$. However, $|S| = 3$ occurs only when $(k-2)/2 \geq 3$, or $k \geq 8$. Thus $|E(R,V(G) \setminus R)|\le 6$. Yet $G$ is at least 7-edge-connected as $G$ is $k$-critical with $k\ge 8$, a contradiction.
	
We conclude that there is no subset $R \subsetneq V(G)$, $G\lb R\rb$ not a clique, with $p_G(R) < p(G) + 2i(k-1) + \buc$. By  \Cref{addUpperBound}, if $G$ admitted an $i$-edge-addition with $R = V(G)$, we would have $p_G(R) \leq  p(G) + 2i(k-1) + P + Q + i\delta$. Since $P + Q + i\delta < P + Q + \frac{k}{2}\delta \leq \buc$, no such subset $R$ exists, and thus no $i$-edge-addition exists. This completes the induction.
\end{proof}

\section{Cloning}\label{sec:cloning}
In this section, we introduce an operation known as \emph{cloning} and use it to study the structure of tight, ungemmed graphs. Applied to vertices of low degree, cloning produces subgraphs that resemble $K_k$, which we refer to as \emph{clusters}. Using the potential function and the results of \Cref{sec:potential}, we can infer facts about the structure of the cluster, and therefore of the original graph, which are of great importance when we later perform discharging.

We begin by defining cloning and clusters, and analyzing them. A key result is that a tight, ungemmed graph cannot have a cluster of size greater than $k - 4$. We then turn our attention to a particular subgraph called a \emph{gadget} which arises from cloning, and has a certain recursive structure that allows for carefully targeted discharging. Finally, we obtain several results about minimal counterexamples.

\subsection{Cloning and Clusters}
In this section, we define an operation called \emph{cloning}, which creates a copy of a vertex of degree $k-1$ and deletes one of its neighbors. We then use it to derive structural properties of a minimum counterexample. Unless stated otherwise, $G$ is assumed to be $k$-critical. The constants $\varepsilon, \delta, P, Q, \buc, \gap$ satisfy Assumption 1.

\begin{DEF}
Let $x,y$ be vertices with $d(x) = k-1$ and $xy \in E(G)$. The graph $\Gclone{y}{x}$ is obtained by letting $V(\Gclone{y}{x}) = V(G) \setminus \{y\} \cup \{ \tildx \}$ and $E(\Gclone{y}{x}) = E(G - y) \cup \{\tildx v ~|~ v \in N_G(x) \} \cup \{\tildx x\} $. This operation is referred to as \emph{cloning} $x$ with $y$.
\end{DEF}
\begin{DEF}
A \emph{cluster} is a maximal set $R \subseteq V(G)$ such that $d(x) = k-1$ for every $x \in R$, and $N\lb x\rb = N\lb y\rb$ for every $x,y \in R$.

For a cluster $C$, $\Gclone{y}{C}$ denotes the cloning of a vertex of $C$ with $y$. It is easy to see that this is independent of the vertex of $C$ chosen. When we use the notation $\Gclone{y}{C}$, we use $\wt{y}$ to denote the new vertex of the clone (note this is the opposite of the notation $\wt{x}$ used when we clone $\Gclone{y}{x}$).
\end{DEF}

\begin{LMA}\label{criticalClone}
If $x$ and $y$ are vertices such that $x$ is in a cluster $C$ of size $s$, $xy \in E(G)$, and $d(y) \leq k - 2 + s$, then $\Gclone{y}{x}$ is not $(k-1)$-colorable.
\end{LMA}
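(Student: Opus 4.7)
The plan is to argue by contradiction: assume $\Gclone{y}{x}$ admits a $(k-1)$-coloring $\varphi$, and construct from $\varphi$ a $(k-1)$-coloring $\psi$ of $G$, contradicting the $k$-criticality of $G$. The coloring $\psi$ will agree with $\varphi$ on $V(G)\setminus\{y\}=V(\Gclone{y}{x})\setminus\{\tildx\}$, so the whole task reduces to finding a legal color for $y$.

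The key structural observation to extract first is that since $C$ is a cluster, its vertices are pairwise adjacent (they belong to each other's closed neighborhoods), and in $\Gclone{y}{x}$ the set $C' := (C\setminus\{y\})\cup\{\tildx\}$ is again a clique whose members share a common closed neighborhood with $x$. Consequently, in $\varphi$ the vertices of $C'$ receive pairwise distinct colors, and moreover these colors can be permuted freely among the members of $C'$ without destroying properness. This permutation freedom is what will ultimately outrun the degree bound $d(y)\le k-2+s$.

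I would then split into two cases according to whether $y\in C$. If $y\in C$, then $N_G[y]=N_G[x]$ forces $N_G(y)=(N_G(x)\setminus\{y\})\cup\{x\}$, which is exactly the neighborhood of $\tildx$ in $\Gclone{y}{x}$; hence setting $\psi(y):=\varphi(\tildx)$ already gives a proper coloring of $G$. If $y\notin C$, then $y$ is adjacent to every vertex of $C$ (since $y\in N_G(x)=N_G(z)$ for each $z\in C$), so the $s$ colors $\varphi(C)$ are forbidden at $y$. Let $A:=\varphi(N_G(y)\setminus C)$; then $|A|\le d(y)-s\le(k-2+s)-s=k-2$, so there exists a color $c\in\{1,\ldots,k-1\}\setminus A$. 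Using the permutation freedom on $C'$, I may assume $\varphi(\tildx)=c$, which removes $c$ from $\varphi(C)$; now $\psi(y):=c$ is a valid extension, and $\psi$ is a $(k-1)$-coloring of $G$.

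There is no real obstacle in this argument; the only subtle point, and the one worth highlighting cleanly, is the permutation-freedom of the enlarged cluster $C'$ of size up to $s+1$ in $\Gclone{y}{x}$, which is precisely the mechanism that pays for the ``+s'' slack in the hypothesis $d(y)\le k-2+s$. Once this is isolated, the bookkeeping $|A|\le d(y)-s\le k-2<k-1$ finishes the proof.
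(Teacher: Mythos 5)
Your proof is correct and takes essentially the same route as the paper: pick a color $c$ missing from $\varphi(N(y)\setminus C)$, and if $c$ appears on the cluster, move it onto $\tildx$ before coloring $y$ with $c$ --- your ``permutation freedom'' of $(C\setminus\{y\})\cup\{\tildx\}$ is exactly the paper's single exchange $\psi(z)=\varphi(\tildx)$, $\psi(y)=c$. One small phrasing nit: ``I may assume $\varphi(\tildx)=c$'' is only achievable by a permutation within $C'$ when $c$ already appears on $C'$; in the other case no permutation exists or is needed, since then $c\notin\varphi(C)$ holds trivially.
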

\begin{proof}
Suppose $\varphi$ is a $(k-1)$-coloring of $\Gclone{y}{x}$. Note $|\{\varphi(v):v\in N(y)\setminus C\}|\ge k-2$. Thus there exists a color $c \in [k-1]\setminus \{\varphi(v): v\in N(y) \setminus C\}$. If there does not exist a vertex $z \in C$ such that $\varphi(z) = c$, then $\varphi$ extends to a $(k-1)$-coloring of $G$ by letting $\varphi(y)=c$, a contradiction. So we may assume there exists a vertex $z\in C$ such that $\varphi(z)=c$; moreover, such a vertex is unique since $G[C]$ is a clique. Now define a coloring $\psi$ on $G$ by letting $\psi(y) = c$, $\psi(z) = \varphi(\tildx)$, and $\psi = \varphi$ for all other vertices of $G$. But then $\psi$ is a $(k-1)$-coloring of $G$, a contradiction.
\end{proof}

\begin{LMA}\label{cloning}
Suppose that $G$ is a tight, ungemmed graph, and $xy \in E(G)$ such that
\begin{enumerate}
\item $x$ is in a cluster $C_x$ of size $s$
\item $d(y) \leq k - 2 + s$
\item If $y$ is in a cluster $C_y$, then $C_y \neq C_x, |C_y| \leq s$.
\end{enumerate}
If $H \subseteq \Gclone{y}{x}$ is a $k$-critical graph, then either $H$ is $k$-Ore, or, $H = \Gclone{y}{x}$ and $d_G(y) = k-1$.
\end{LMA}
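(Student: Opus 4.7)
The plan is to argue by contradiction: suppose $H \subseteq G' := \Gclone{y}{x}$ is a $k$-critical subgraph that is not $k$-Ore, yet $H \neq G'$ or $d_G(y) \neq k-1$. The first step is to pin down the forced local structure of $H$: since $G' - \tildx \cong G - y$ is $(k-1)$-colorable (as $G$ is $k$-critical), $\tildx$ must lie in $V(H)$, and then $d_H(\tildx) \geq k-1 = d_{G'}(\tildx)$ forces $N_H(\tildx) = N_{G'}(\tildx) = (N_G(x) \setminus \{y\}) \cup \{x\}$. A symmetric argument using $G' - x \cong G - y$ (under the isomorphism sending $\tildx \mapsto x$) places $x \in V(H)$ with $N_H(x) = N_{G'}(x)$, so $x$ and $\tildx$ are twins in $H$.

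The key reduction is to relabel $\tildx$ as $y$ in $H$, producing an isomorphic $k$-critical copy $H^*$ on $R \cup \{y\}$, where $R = V(H) \setminus \{\tildx\}$. In $H^*$, the vertices $x$ and $y$ become twins with common closed neighborhood $N_G[x]$; and checking edges shows $H^* \subseteq G + S$, where $S = \{yv : v \in N_G(x) \setminus (N_G(y) \cup \{y\})\}$ has size $(k-2) - c$ with $c = |N_G(x) \cap N_G(y)|$. Since each vertex of $C_x \setminus \{x\}$ is a twin of $x$ in $G$ and hence a common neighbor of $x$ and $y$, we get $c \geq s - 1$; similarly $c \geq s_y - 1$ whenever $y$ lies in a cluster $C_y$ of size $s_y$.

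I then split on whether $V(H) = V(G')$. If $V(H) = V(G')$ and $d_G(y) \geq k$, then $|E(G')| < |E(G)|$, so $G'$ (and hence $H$) is smaller than $G$, giving $p(H) \leq k(k-3) - P$ by goodness. The identity $p(G') = p(G) + 2(k-1)(d_G(y) - (k-1)) - \delta(T(G') - T(G))$, combined with $T(G') - T(G) \leq 2$ (cloning replaces $y$ by $\tildx$, altering $T$ by at most $2$ in total) and the tightness bound $p(G) > k(k-3) - P - Q$, yields $2(k-1) - 2\delta \leq Q$, contradicting Assumption~1 for $k \geq 6$; so $d_G(y) = k-1$, and an analogous potential comparison rules out $H \subsetneq G'$ as a strict edge-subset with $V(H) = V(G')$, giving $H = G'$. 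In the remaining branch $V(H) \subsetneq V(G')$, we have $R \cup \{y\} \subsetneq V(G)$, so $S$ is an $|S|$-edge-addition in $G$ whose $k$-critical, non-$k$-Ore subgraph is $H^*$; if $|S| \leq (k-2)/2$, this directly contradicts \Cref{buckets}. The hard case is $|S| > (k-2)/2$ (i.e., $c < (k-2)/2$), where combining the upper bound $p_G(R \cup \{y\}) \leq p(G) + Q + 2|S|(k-1) + |S|\delta$ from \Cref{addUpperBound} with the lower bound $p_G(R \cup \{y\}) \geq p(G) + 2i(k-1) + \buc$ from \Cref{buckets} (for each $i \leq (k-2)/2$) and Assumption~1.4 only narrowly constrains $|S|$. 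The main obstacle is closing this regime, which I expect requires combining the potential pinch with $c \geq s - 1$, with \Cref{commonNeighbors} (which, using the absence of $s$-edge-additions from \Cref{buckets}, rules out any $c$ with $k - 3 \leq c \leq k - 3$, forcing the dichotomy $c = k-2$ or $c \leq k-4$), and with a structural analysis of $H^*$ paralleling the concluding gem-finding argument in the proof of \Cref{buckets}, so as to derive the final contradiction via the ungemmedness of $G$.
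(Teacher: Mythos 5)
There is a genuine gap, and you name it yourself: the case $V(H) \subsetneq V(\Gclone{y}{x})$ with $c = |N_G(x)\cap N_G(y)| < (k-2)/2$ is not proved, only described as something you ``expect'' can be closed. This is not a corner case but the generic one: condition 1 allows $s=1$, and nothing in the hypotheses forces $x$ and $y$ to have any common neighbors (a $k$-critical graph can be triangle-free), so $|S| = k-2-c$ is typically larger than $(k-2)/2$ and \Cref{buckets} simply does not apply to the edge-addition $S$ you construct at $y$. The ingredients you propose to combine do not obviously suffice either: \Cref{commonNeighbors} gives information only in the regime $c \geq k-3$, the inequality $c \geq s-1$ is vacuous for small clusters, and the potential ``pinch'' between \Cref{addUpperBound} and \Cref{buckets} degrades as $|S|$ grows, so the hard regime is exactly where these tools are weakest. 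Your analysis of the spanning case $V(H)=V(\Gclone{y}{x})$ (forcing $d_G(y)=k-1$ and $H=\Gclone{y}{x}$ by a potential comparison) is fine, and the twin structure of $x,\tildx$ in $H$ is correctly derived, but the lemma is not proved without the missing case.

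For comparison, the paper never converts $H$ into an explicit edge-addition at $y$, precisely to avoid depending on $c$. It sets $R = V(H)\setminus\{\tildx\}$, uses $d_H(\tildx)=k-1$ to get $p_G(R) \leq p(H) - ((k-2)(k+1)+\varepsilon) + 2(k-1)^2 + \delta$, and then runs the critical-extension machinery: via \Cref{submodular} and tightness it shows every critical extension of $R$ has core of size one, is spanning, and is at most $(k-4)/2$-incomplete (the subcase $|X|=k-1$, $W=K_k$ is where the alternative $H=\Gclone{y}{x}$, $d_G(y)=k-1$ emerges, and $W\neq K_k$ is excluded by \Cref{kOreExcludeGem} plus ungemmedness). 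Then \Cref{collapsibility} makes $R$ $(k-4)/2$-collapsible and \Cref{colladd} produces a $(k-2)/2$-edge-addition contradicting \Cref{buckets}. That collapsibility route is the idea your proposal is missing in the hard regime; to complete your argument you would essentially have to reproduce it.
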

\begin{proof}
Suppose not. By conditions 1 and 2 and \Cref{criticalClone}, there exists a $k$-critical subgraph $H$ of $\Gclone{y}{x}$. By condition 3, $H$ is smaller than $G$.
	
Let $R = V(H) \setminus \{\tildx\}$. If $G\lb R \rb$ is a clique, then $H = K_k$ and hence $G \lb R \cup \{y\} \rb$ is a $K_k$-clique in $G$, a contradiction. So we may assume that $G[R]$ is not a clique. The vertex $\tildx$ has degree $k-1$ in $H$. Thus the potential of $R$ satisfies
$$p_G(R) \leq p(H) - ((k-2)(k+1) + \varepsilon) + 2(k-1)(k-1) + \delta.$$
Let $R'$ be a critical extension of $R$ with extender $W$. Since $G\lb R \rb$ is not a clique, $W$ is smaller than $G$. Let $X$ be the core of $R'$.
	
\begin{claim}\label{claim:X1}
$|X|=1$.
\end{claim}
\begin{proof}
Suppose not. First suppose that $1 < |X| < k - 1$. By \Cref{submodular},
\begin{align*}
p(G) \leq p_G(R') &\leq p_G(R) + p(W) - (p(K_2) - 2\delta) \\
&\leq p(H) + p(W) - (k^2 - k - 6) + 3(\delta - \varepsilon).
\end{align*}
As $G$ is good and $H$ is not $k$-Ore, $p(H) \leq k(k-3) - P$. Hence
$$p(W) \geq k(k-3) + 2k - 6 - (Q + 3(\delta - \varepsilon)) > k(k-3).$$
But $W$ is smaller than $G$, so $p(W) \leq k(k-3)$. This is a contradiction.
		
So we may assume that $|X| = k - 1$. It follows from a similar calculation using \Cref{submodular} as above that
$$p(W) \geq k(k-3) - (Q + k(\delta - \varepsilon)) > k(k-3) - P,$$
and hence $W$ is $k$-Ore. Furthermore, if $R'$ is incomplete or not spanning, then by \Cref{edgedrop}, $p_G(R') \geq p(G) + 2(k-1)$ and hence
$$p(W) \geq k(k-3) + 2k - 2 - (Q + k(\delta - \varepsilon)) > k(k-3),$$
a contradiction. So we may assume that $R'$ is spanning and complete.
		
First suppose that $W = K_k$. Then as $|X| = k -1$ and $R'$ is spanning and complete, $\partial_G (V(G) \setminus R) = \{y\}$. This implies that $V(G) = R \cup \{y\}$. Now $p(H) \leq p(G) + Q$, $|V(H)| = |V(G)|$, and $|E(H)| \leq |E(G)|$, and hence we have that $|E(H)| = |E(G)|$. Therefore $d_G(y) = k-1$ and $H = \Gclone{y}{x}$, a contradiction.
		
So we may assume that $W \neq K_k$. Then \Cref{kOreExcludeGem} implies that there exists a gem $D$ in $W$ disjoint from $X$. If $D$ is a diamond, then $G$ admits an edge-addition, contradicting \Cref{buckets}. So we may assume that $D$ is an emerald. Since $R'$ is complete, every vertex $v$ of $D$ satisfies $d_G(v) = d_W(v) = k-1$. But then $D$ is an emerald of $G$, a contradiction.
\end{proof}
	
\begin{claim}\label{claim:RSI} 
$R'$ is spanning and $i$-incomplete for some $i \le (k-4)/2$.
\end{claim}
\begin{proof}
Suppose not. Hence $R'$ is $j$-incomplete for some $j \geq (k-3)/2$. By \Cref{submodular},
\begin{align*}
p_G(R') &\leq p(R) + p(W) - 2j(k-1) - (p(K_1) - \delta) \\
&\leq p(H) + p(W) - 2j(k-1) - (2k - 6 - 2(\delta - \varepsilon)).
\end{align*}
If $R' \subsetneq V(G)$ is proper, \Cref{buckets} implies that $p_G(R') \geq p(G) + (k-1)(k-2) + \buc$. Then, as $p(H) \leq p(G) + Q$, we have that
$$p(W) \geq k(k-3) + 2j(k-1) + 4(k-2) + \buc - (Q + 2(\delta - \varepsilon)) > k(k-3),$$
a contradiction. Thus, $R'$ is spanning and so $p(G) = p_G(R')$. But then we have that
$$p(W) \geq 2j(k-1) + (2k - 6 - Q) - 2(\delta - \varepsilon).$$
Since $j \geq (k-3)/2$, we have that $p(W) \geq k(k-3) + (k - 3 - Q - 2\delta - \varepsilon) \ge k(k-3)$ by Assumption 1, a contradiction. 
\end{proof}
	
By \Cref{claim:X1,claim:RSI}, every critical extension of $R$ is spanning, has a core of size one, and is at most $(k-4)/2$-incomplete. Thus by \Cref{collapsibility}, $R$ is $(k-4)/2$-collapsible. Hence by \Cref{colladd}, $G$ admits a $(k-2)/2$-edge-addition, contradicting \Cref{buckets}.
\end{proof}

\begin{LMA}\label{singleVertex}
Let $G$ be a tight, ungemmed graph. If $R \subsetneq V(G), |R| \geq k$ is a proper subset, then $p_G(R) > p(G) + (k-1)(k-2) + (\gap + Q + \delta)$ unless $G - R$ is a single vertex of degree at most $k - 1 + (k-4)/2$ in $G$.
\end{LMA}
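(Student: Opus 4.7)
My plan is to consider a critical extension $R'$ of $R$, with extender $W$ and core $X$, splitting on whether $R'$ is spanning. First, if $R' \subsetneq V(G)$, then $|R'| \geq |R| + 1 \geq k+1$ and, because the $k$-critical graph $G$ contains no $K_{k+1}$, the induced subgraph $G[R']$ is not a clique. \Cref{buckets} applied with $i = (k-2)/2$ then yields $p_G(R') \geq p(G) + (k-1)(k-2) + \buc$, and \Cref{edgedrop} gives $p_G(R) \geq p_G(R') + 2(k-1) + \delta$. Combining these and noting that $\buc + 2(k-1) > \gap + Q$ (which holds because $\gap \leq k-2$ by Assumption 1.2 and $\buc > Q$ by Assumption 1.4) yields the required bound on $p_G(R)$.

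If instead $R' = V(G)$, then \Cref{submodular} with $R' = V(G)$ gives
\begin{equation*}
p_G(R) \geq p(G) - p(W) + 2i(k-1) + \bigl(p(K_X) + \delta T(K_X) - \delta|X|\bigr).
\end{equation*}
Since $W$ is $k$-critical, $|V(W)| \geq k$, and combined with $|V(W)| = |V(G) \setminus R| + |X|$ this forces $|X| + |V(G) \setminus R| \geq k$. The key sub-case is $|V(G) \setminus R| = 1$, which forces $|X| = k-1$ and (as the unique $k$-critical graph on $k$ vertices) $W = K_k$; letting $y$ denote the single vertex of $V(G) \setminus R$, the definition of $i$-incompleteness gives $i = d_G(y) - (k-1)$. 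Substituting $p(K_k) = k(k-3) + k\varepsilon - 2\delta$ and $p(K_{k-1}) + \delta T(K_{k-1}) - \delta(k-1) = 2(k-1)(k-2) - (k-1)(\delta - \varepsilon)$ (via the quadratic identity used in \Cref{edgedrop}), the desired inequality reduces to
\begin{equation*}
2i(k-1) > (\gap - 2) + Q + (k-2)\delta + \varepsilon,
\end{equation*}
which by Assumption 1.3 holds whenever $i \geq (k-2)/2$, i.e., whenever $d_G(y) > k - 1 + (k-4)/2$. The complementary case is the stated exception.

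The remaining spanning configurations have $|V(G) \setminus R| \geq 2$ and $W \neq K_k$; here $W$ is smaller than $G$, so goodness combined with \Cref{OURCONJ} furnishes either $p(W) \leq k(k-3) - P$ (if $W$ is not $k$-Ore) or the strict $k$-Ore bound (if $W$ is $k$-Ore but $\neq K_k$). For a generic choice of $|X|$, inserting these into \Cref{submodular} together with the quadratic formula for $p(K_X) + \delta T(K_X) - \delta|X|$ already gives the required bound via Assumption 1. The borderline configurations---small $|X|$, in particular $|X| = 1$, which forces $G[V(G) \setminus R]$ to contain a $K_{k-1}$-subgraph whose vertices have degrees near $k-1$ in $G$---must be ruled out by the ungemmed hypothesis on $G$: the $i = 0$ case produces a genuine emerald, and small positive $i$ yields, through \Cref{emeraldCluster}, a cluster whose size exceeds the structural bound available elsewhere in \Cref{sec:cloning}, a contradiction.

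The main obstacle is exactly this last borderline analysis: the bare potential bound from \Cref{submodular} alone is insufficient in the extremal spanning configurations, and one must invoke both the ungemmed structural hypothesis on $G$ and the cluster-size control of \Cref{sec:cloning} to close the argument.
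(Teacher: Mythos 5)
Your non-spanning case and your analysis of the sub-case $|V(G)\setminus R|=1$ (forcing $W=K_k$, $|X|=k-1$, $i=d_G(y)-(k-1)$) are fine and essentially match the endpoints of the paper's argument. The genuine gap is in the spanning case with a core of size one. Your claim that $|X|=1$ ``forces $G[V(G)\setminus R]$ to contain a $K_{k-1}$-subgraph'' is false: the extender $W$ is merely some smaller $k$-critical graph (possibly a large $k$-Ore or non-Ore graph) and need not contain any clique structure sitting inside $G-R$. Moreover, potential arithmetic cannot close this case: with $|X|=1$ the term $p(K_X)+\delta T(K_X)-\delta|X|$ is only $(k-2)(k+1)+\varepsilon-\delta$, so \Cref{submodular} with $p(W)\le k(k-3)$ yields only $p_G(R)\ge p(G)+2(k-1)+2i(k-1)+\varepsilon-\delta$, which for $i$ up to roughly $(k-4)/2$ is far below the target $p(G)+(k-1)(k-2)+\gap+Q+\delta$. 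The paper eliminates this configuration by a completely different mechanism that your proposal never invokes: if \emph{every} critical extension is spanning, has core of size one, and is at most $(k-4)/2$-incomplete, then $R$ is $(k-4)/2$-collapsible (\Cref{collapsibility}), hence $G$ admits a $(k-2)/2$-edge-addition (\Cref{colladd}), contradicting \Cref{buckets}. Without this collapsibility-to-edge-addition step there is no way to rule out, say, a complete spanning extension with core $\{x_1\}$ whose extender is a big $k$-Ore graph, and your appeal to the ungemmed hypothesis and \Cref{emeraldCluster} has nothing to bite on there.

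A second, related misstep: your ``generic choice of $|X|$'' computation does not cover $|X|=k-1$ when $|V(G)\setminus R|\ge 2$. There the bound from \Cref{submodular} together with Assumption 1.3 only forces $p(W)>k(k-3)-P$, i.e.\ $W$ is $k$-Ore, and is perfectly consistent with $i=0$; this is exactly where the paper needs the structural argument you gestured at (a gem of $W$ disjoint from $X$ via \Cref{kOreExcludeGem}, then \Cref{emeraldCluster} and an edge-addition contradicting \Cref{buckets}), but you attached that argument to $|X|=1$ instead. (Also, your assertion that the remaining spanning configurations with $|V(G)\setminus R|\ge 2$ have $W\neq K_k$ is not true when $|X|\le k-2$, though that sub-case is harmless since $p(K_k)\le k(k-3)$.) As written, the proposal therefore has a genuine gap in the two cases that carry the real content of the lemma: core size one (missing the collapsibility/edge-addition machinery) and core size $k-1$ with $W$ a $k$-Ore graph other than $K_k$ (missing the gem argument in the correct place).
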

\begin{proof}
Suppose not. Let $R \subsetneq V(G), |R| \geq k$ be a proper subset with $p_G(R) \leq p(G) + (k-1)(k-2) + (\gap + Q + \delta)$. 
	
\begin{claim}\label{claim:RSI2}
If $R'$ is a critical extension of $R$, then
$R'$ is spanning and $i$-incomplete for some $i \le (k-3)/2$.
\end{claim}
\begin{proof}
Suppose not. First suppose that $R'$ is not spanning. Then \Cref{edgedrop} implies that
\begin{align*}
p_G(R') \leq p_G(R) - 2(k-1) - \delta &\leq p(G) + (k-1)(k-2) + (\gap + Q + \delta) - 2(k-1) - \delta \\
&= p(G) + (k-1)(k-2) - (2(k-1) - \gap - Q - \delta) \\
&< p(G) + (k-1)(k-2) + \buc
\end{align*}
This contradicts \Cref{buckets} as $R' \neq V(G)$. So we may assume that $R'$ is spanning. 
		
So we may assume that $R'$ is not $i$-incomplete for some $i\le (k-4)/2$. For even $k$, if $R'$ is at most $(k-2)/2$-incomplete, then \Cref{edgedrop} implies
$$p(G) = p_G(R') \leq p_G(R) - k(k-1) - \delta \leq p(G) - (2(k-1) - \gap - Q)$$
a contradiction. For odd $k$, if $R'$ is at most $(k-3)/2$-incomplete, then the same calculation using \Cref{edgedrop} implies that $\gap + Q \geq k-1$, a contradiction.
\end{proof}
	
It follows from \Cref{buckets} that $R$ is not $(k-4)/2$-collapsible. But then by \Cref{collapsibility} and \Cref{claim:RSI2}, there exists a critical extension $R'$ of $R$ that has a core of size greater than one. Let $W$ be the extender of $R$ and let $X$ be the core of $R$.
	
First suppose that $2 \leq |X| < k-1$. Then by \Cref{submodular},
\begin{align*}
p(G) = p_G(R') &\leq p(R) + p(W) - (p(K_2) - 2\delta) \\
&\leq p(G) + p(W) - (k(k-3) + 4k - 6 - \gap - Q + 2\varepsilon - 3\delta).
\end{align*}
Thus $p(W) > k(k-3)$, a contradiction. 
	
So we may assume that $|X|=k-1$. Repeating the previous calculation with $|X| = k-1$ in \Cref{submodular}, we find that
\begin{align*}
p(G) = p_G(R') &\leq p(R) + p(W) - (p(K_{k-1}) - (k - 3)\delta) \\
&\leq p(G) + p(W) - (k(k-3) - (\gap - 2 + Q + k \delta - (k-1)\varepsilon)).
\end{align*}
Since $(\gap - 2) + Q + k\delta - (k-1)\varepsilon < P$ by Assumption 1.3, we have that $p(W) > k(k-3) - P$. Hence $W$ is $k$-Ore. 
	
Now suppose that $W \neq K_k$. Then by \Cref{kOreExcludeGem}, there exists a gem $D$ with vertices in $V(W) \setminus X$. If $D$ is a diamond, then $G$ admits an edge-addition, contradicting \Cref{buckets}. So we may assume that $D$ is an emerald. As $R'$ is spanning and at most $(k-4)/2$-incomplete by \Cref{claim:RSI2}, there are at most $(k-4)/2$ vertices of $D$ incident with an edge in $E(R') \setminus E(W)$. Hence at least $k - 1 - (k-4)/2 \geq k/2 + 1$ vertices of $D$ have degree $k - 1$ in $G$. Let $u_1,\ldots,u_\ell$ denote the vertices in $D$ with $d_G(u_i) = k-1$. For each $i$ with $1\le i \le \ell$, let $v_i$ denote the neighbor of $u_i$ their neighbors not in $D$. By \Cref{emeraldCluster}, $\ov{N}(u_i) = \ov{N}(u_j)$ for all $i,j$. Hence, there exists a vertex $v\in V(G)\setminus V(D)$ such that $v$ is adjacent to all of $\{u_1,\ldots,u_\ell\}$. Adding at most $(k-4)/2$-edges between $v$ and $u_{\ell + 1},\ldots,u_{k-1}$ yields a $K_k$-subgraph of $G$. Thus $G$ admits a $(k-4/)2$-edge-addition, contradicting \Cref{buckets}.
	
So we may assume that $W = K_k$. A $R'$ is spanning and $|X| = k-1$, $|G - R| = 1$. Since $R'$ is at most $(k-4)/2$-incomplete, the single vertex of $G - R$ has degree at most $k - 1 + (k-4)/2$, a contradiction.
\end{proof}

Next, we show that tight and ungemmed graphs have no clusters of size $k - 3$ or larger. We will need the following theorem of Kostochka and Yancey.

\begin{THM}[Theorem 6, \cite{KY2018Combinatorica}]\label{strong_KY}
Let $k \geq 4$ and $G$ be a $k$-critical graph. Then $G$ is $k$-extremal (i.e, $G$ satisfies \Cref{KY1} with equality) if and only if it is a $k$-Ore graph. Moreover, if $G$ is not a $k$-Ore graph, then
$$|E(G)| \geq \frac{(k-2)(k+1)|V(G)| - y_k}{2(k-1)}$$
where $y_4 = 2, y_5 = 4$, and $y_k = k^2 - 5k + 2$ for $k \geq 6$.
\end{THM}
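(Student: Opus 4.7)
The plan is to prove this theorem by strong induction on $|V(G)|$, extending the original Kostochka--Yancey argument (\Cref{KY1}) to track the exact potential gap between $k$-Ore graphs and other $k$-critical graphs. The equivalence of $k$-extremality with being $k$-Ore is essentially the content of \Cref{KY1} together with analysis of its equality case; the new content is the quantitative gap $y_k = k(k-3) - 2(k-1)$ for non-Ore graphs.

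For the forward direction (Ore implies extremal), I would induct on the Ore-composition sequence defining $G$. The base case $G = K_k$ is immediate. For the inductive step, a direct calculation shows that if $G_1, G_2$ are $k$-extremal and $G$ is their Ore-composition, then $\pk(G) = k(k-3)$, since removing one edge, splitting one vertex into two of positive degree, and identifying along two overlap vertices preserves the linear combination defining $\pk$.

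For the quantitative strengthening, suppose $G$ is a minimum counterexample: $G$ is $k$-critical, not $k$-Ore, $\pk(G) > y_k$, and the statement holds for all smaller $k$-critical graphs. Since $\pk(G) \leq k(k-3)$ by \Cref{KY1}, the gap is at most $2(k-1)$. I would analyze critical extensions in the style of \Cref{sec:potential}: for a proper subset $R \subsetneq V(G)$ with $G[R]$ not a clique, a $(k-1)$-coloring $\varphi$ of $G[R]$ yields the auxiliary graph $G_{R,\varphi}$, which contains a $k$-critical subgraph $W$. By the inductive hypothesis, $\pk(W) \leq k(k-3)$, with $\pk(W) \leq y_k$ unless $W$ is $k$-Ore. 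Plugging into a submodularity-type inequality for $\pk$ (analogous to \Cref{submodular} in the $\delta = \varepsilon = 0$ setting) forces the extender $W$ in any tight configuration to be $k$-Ore.

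The main obstacle is that having only $k$-Ore extenders does not by itself yield a contradiction. To finish, I would argue that if every critical extension of a minimum counterexample has a $k$-Ore extender, then $G$ itself must contain a gem (diamond or emerald) whose internal vertices all have degree $k-1$ in $G$; such a gem realizes $G$ as an Ore-composition with a smaller $k$-critical edge-side, so $G$ is $k$-Ore, a contradiction. Locating the gem typically requires choosing $R$ to be a near-clique in $G$ (so the extender is ``almost'' $K_k$) and then lifting a gem from $W$ to $G$ through the core $X$ of the extension, using \Cref{kOreGem} or \Cref{kOreExcludeGem}. Getting the sharp constant $y_k$ rather than a weaker bound is forced by requiring the extension to be exactly complete and spanning with a core of size one, which in turn extracts a single edge of slack that matches the $2(k-1)$ potential gap exactly; controlling this slack in the degenerate cases (e.g., small $|X|$, partial incompleteness) is the combinatorial heart of the argument.
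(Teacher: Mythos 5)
The paper does not prove this statement at all: it is quoted as Theorem~6 of Kostochka and Yancey \cite{KY2018Combinatorica} and used as a black box (only via \Cref{strong_KY_cor}), so there is no internal proof to compare your argument against; what you have written is a sketch of a reproof of an imported external result.

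Taken on its own terms, your sketch has a genuine gap exactly at the step meant to close the induction. First, the equivalence ``extremal $\Leftrightarrow$ $k$-Ore'' is not ``essentially the content of \Cref{KY1} plus an equality analysis''; the equality characterization is itself a substantial theorem with its own lengthy proof, not a routine byproduct. Second, and more seriously, your concluding claim that a gem ``realizes $G$ as an Ore-composition with a smaller $k$-critical edge-side, so $G$ is $k$-Ore, a contradiction'' is false: a diamond only exhibits $G$ as an Ore-composition whose edge-side is $K_k$ and whose vertex-side is some $k$-critical graph $W$, and this makes $G$ $k$-Ore only if $W$ is $k$-Ore; non-Ore critical graphs contain gems in abundance (for instance, any Ore-composition of a non-Ore critical graph with $K_k$), so finding a gem is not by itself a contradiction. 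For diamonds the step can be repaired, since in that decomposition one computes $\pk(G)=\pk(W)$ and a minimum counterexample then inherits the bound from $W$ by induction; but for emeralds the outside neighbours of the $K_{k-1}$ need not be concentrated on two vertices, so no Ore-decomposition is produced and the reduction must be done by other means (recoloring/cloning-type arguments) --- this, together with the deferred case analysis over core size and incompleteness, is where the actual difficulty of the Kostochka--Yancey proof lies. Finally, note that $y_k=k(k-3)-2(k-1)$ only for $k\ge 6$; the stated values $y_4=2$ and $y_5=4$ do not fit that formula, so an argument organized around a uniform slack of $2(k-1)$ cannot yield the theorem as stated for $k\in\{4,5\}$ (the present paper only needs $k\ge 6$, but the statement you set out to prove includes the small cases).
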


Restated in terms of the potential function, \Cref{strong_KY} shows that for $k \geq 6$, $k$-critical graphs $G$ that are not $k$-Ore have smaller potential as follows.

\begin{CORO}\label{strong_KY_cor}
If $k \geq 6$ and $G$ is $k$-critical graph that is not $k$-Ore, then $\pk(G) \leq k(k-3) - 2(k-1)$.
\end{CORO}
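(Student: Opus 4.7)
The plan is to derive this corollary as a direct rearrangement of \Cref{strong_KY}. Since $G$ is $k$-critical and not $k$-Ore, the moreover part of \Cref{strong_KY} applies, giving
\[
|E(G)| \geq \frac{(k-2)(k+1)|V(G)| - y_k}{2(k-1)},
\]
where $y_k = k^2 - 5k + 2$ for $k \geq 6$. Multiplying both sides by $2(k-1)$ and rearranging yields
\[
\pk(G) = (k-2)(k+1)|V(G)| - 2(k-1)|E(G)| \leq y_k = k^2 - 5k + 2.
\]

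All that remains is a routine arithmetic check that $k(k-3) - 2(k-1) = k^2 - 3k - 2k + 2 = k^2 - 5k + 2$, so the right-hand side equals $k(k-3) - 2(k-1)$, giving the desired bound. There is no real obstacle here: the corollary is essentially a restatement of the quantitative improvement of Kostochka and Yancey in the language of the KY-potential, and the only nontrivial ingredient is \Cref{strong_KY} itself, which is quoted rather than proved. The proof will be a short two-line calculation.
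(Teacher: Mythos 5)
Your proposal is correct and is exactly how the paper obtains the corollary: the paper presents it as a direct restatement of \Cref{strong_KY} in potential language, and your rearrangement together with the identity $k(k-3)-2(k-1)=k^2-5k+2$ is precisely that computation.
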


\begin{LMA}\label{k3Cluster}
Let $G$ be a tight and ungemmed graph. The maximum size of a cluster in $G$ is $k - 4$.
\end{LMA}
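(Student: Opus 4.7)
My plan is to assume for contradiction that $G$ contains a cluster $C$ of size $s \ge k - 3$ (chosen so that $s$ is the maximum cluster size in $G$), and to handle the cases $s \in \{k, k-1, k-2\}$ directly via gem-finding, then attack $s = k - 3$ with the edge-addition and cloning machinery. If $s = k$ then $G = K_k$, whose $K_{k-1}$-subgraphs are emeralds, contradicting ungemmedness. If $s = k - 1$, the cluster $C$ itself is an emerald. If $s = k - 2$, then $C$ together with its two common external neighbors $u', v'$ induces either $K_k$ (impossible since $G \neq K_k$ as ungemmedness forces $|V(G)| > k$ and $k$-criticality forbids $K_k$ as a proper subgraph) or $K_k - u'v'$, which is a diamond with endpoints $u', v'$. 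Each case produces a gem in $G$, the sought contradiction.

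The main case is $s = k - 3$. Let $u, v, w$ be the three common external neighbors of $C$ and set $R = C \cup \{u, v, w\}$ of size $k$; every vertex of $C$ saturates its degree inside $R$. Let $e$ denote the number of edges among $\{u, v, w\}$. Since $K_k$ cannot be a proper subgraph of $k$-critical $G$ and $G \neq K_k$, $e \leq 2$. For $e \in \{1, 2\}$, adding the $3 - e$ missing edges among $\{u, v, w\}$ completes $G[R]$ to a $K_k$ on $R \subsetneq V(G)$, yielding a $(3-e)$-edge-addition that contradicts \Cref{buckets} as $3 - e \le 2 \le (k-2)/2$ for $k \ge 6$. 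This reduces the problem to the subcase $e = 0$.

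For $e = 0$, I first rule out small orders: $|V(G)| = k$ gives $G = K_k$, and $|V(G)| = k + 1$ forces the unique extra vertex $z$ to have all its neighbors in $\{u, v, w\}$ (since $C$-vertices saturate their degree inside $R$), hence $d(z) \le 3 < k - 1$, violating minimum degree. Thus $|V(G)| \ge k + 2$. For $k \ge 8$ the set $\{uv, uw, vw\}$ is a 3-edge-addition to $K_k$ on $R$, again precluded by \Cref{buckets}. For $k \in \{6, 7\}$, I invoke \Cref{cloning} with $x \in C$ and $y = u$; a direct check using $e = 0$ shows $u$'s own cluster is a singleton, so the hypothesis $|C_y| \le s$ holds. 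Assuming $d(u) \le 2k - 5$, \Cref{cloning} yields that any $k$-critical subgraph $H \subseteq G_{u \to x}$ is either $k$-Ore or equals $G_{u \to x}$ with $d_G(u) = k - 1$. When $V(H) \subsetneq V(G_{u \to x})$, the vertex $\tilde{x}$ must lie in $V(H)$ (else $H \subseteq G - u$ would be $(k-1)$-colorable), and relabeling $\tilde{x} \to u$ produces a $k$-critical $H'$ on fewer than $|V(G)|$ vertices with $H' \subseteq G + S$ where $S \subseteq \{uv, uw\}$, yet another prohibited edge-addition. When $H = G_{u \to x}$, the cloned graph is $k$-critical, smaller than $G$ in the paper's ordering (same $|V|$ and $|E|$ but more cluster pairs), and contains a cluster $C \cup \{\tilde{x}\}$ of size $k - 2$ which creates new $K_{k-1}$-subgraphs such as $C \cup \{\tilde{x}, v\}$, forcing $T(G_{u \to x}) \ge T(G) + 1$ and hence $p(G_{u \to x}) \le p(G) - \delta$; combining with \Cref{OURCONJ} applied to $G_{u \to x}$ and the tightness bound on $G$ is then meant to close out the inequalities via Assumption 1. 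The boundary case $d(u), d(v), d(w) \ge 2k - 4$, where cloning is inapplicable, is handled by exploiting the large number of external edges from $R$ together with \Cref{singleVertex}.

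The hardest step is the subcase $e = 0$ with $k \in \{6, 7\}$ and cloning returning $H = G_{u \to x}$. Although $G_{u \to x}$ necessarily contains a diamond on $C \cup \{\tilde{x}, v, w\}$ (by applying the $s = k - 2$ analysis to its $(k-2)$-cluster), this diamond uses the edges $\tilde{x}v$ and $\tilde{x}w$ of the clone, which do not correspond to edges of $G$ (as $uv, uw \notin E(G)$), so it does not pull back to a gem in $G$. The contradiction must therefore be extracted through the comparison of $T$-values and potentials of $G$ and $G_{u \to x}$, a delicate numerical argument relying on the quantitative relationships among $\varepsilon$, $\delta$, $P$, $Q$, $\gap$, and $\buc$ prescribed by Assumption 1.
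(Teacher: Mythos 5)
Your reductions for cluster size $\ge k-2$, for $k \ge 8$ (via a $3$-edge-addition), and for forcing the three common neighbors $u,v,w$ to be pairwise non-adjacent all match the paper's opening moves. But the heart of the lemma is the remaining case $k \in \{6,7\}$ with $u,v,w$ pairwise non-adjacent, and there your sketch has two genuine gaps. First, the case in which every common neighbor has degree too large for \Cref{cloning} (degree $\ge 2k-4$) is dismissed with an appeal to \Cref{singleVertex}, but that lemma cannot fire here: for $R = C \cup \{u,v,w\}$ the three missing edges among $u,v,w$ keep $p_G(R)$ far above the threshold (for $k=6$, $p_G(R) \approx 48$ while $p(G) + (k-1)(k-2) + \gap + Q + \delta \approx 39$), and no other subset is identified. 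This high-degree situation is precisely what the paper's argument is built for: for each $v_i$ of degree $>k$ it forms the identification graph $G_i$ from $G - C - v_i$ by merging the other two common neighbors, shows $G_i$ is not $(k-1)$-colorable, proves (using the potential and \Cref{singleVertex} on a suitably augmented vertex set, where the low potential comes from the identification) that the resulting critical subgraph is $k$-Ore unless $d(v_i) \le k$, shows at most one $v_i$ has degree $\le k$, and then double-counts the KY-potential over the union of the two $k$-Ore subgraphs together with $C$, forcing the union to be spanning via \Cref{buckets} and contradicting tightness via the strengthened Kostochka--Yancey bound (\Cref{strong_KY_cor}). None of this machinery, or a substitute for it, appears in your proposal.

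Second, in your cloning branch the terminal subcase ($d_G(u)=k-1$ and $\Gclone{u}{x}$ itself $k$-critical) does not close. Even granting $T(\Gclone{u}{x}) \ge T(G)+1$ (which is itself unproven: upgrading $C\cup\{\widetilde{x}\}$ to a $K_{k-1}$ needs $v$ or $w$ to be unused in an optimal clique family), the resulting inequality reads $p(G) \ge p(\Gclone{u}{x}) + \delta$, and the only available upper bounds on $p(\Gclone{u}{x})$ from \Cref{OURCONJ} (at best $k(k-3)-P$, or $k(k-3)-2\delta$ if the clone is $k$-Ore) give $p(G) \le k(k-3)-P+\delta$ or $k(k-3)-\delta$, neither of which contradicts tightness $p(G) > k(k-3)-P-Q$ with the paper's constants; nor may you use \Cref{ungemmedClone} to exploit the diamond in the clone, since that lemma is proved from \Cref{k3Cluster}. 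Your own wording (``is then meant to close out,'' ``a delicate numerical argument'') concedes this step is missing. Note the paper sidesteps this trap deliberately: it clones only with a neighbor of degree exactly $k$ (so the ``$H$ equals the whole clone'' alternative of \Cref{cloning} is excluded), and when two common neighbors have degree $k-1$ it uses a direct recoloring argument instead of cloning. Smaller omissions (the spanning-but-proper $k$-Ore subgraph of the clone; the intermediate degrees $k+1 \le d(u) \le 2k-5$) could likely be patched, but the two gaps above are essential.
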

\begin{proof}
Suppose to the contrary that there exists a cluster $C$ of size at least $k - 3$. If $|C| \geq k - 2$, then $G[C\cup N(C)]$ is a diamond of $G$, a contradiction. So we may assume that $|C| = k - 3$. Let $C = \{u_1,\ldots,u_{k-3}\}$, and let $v_1, v_2, v_3$ be the common neighbors of vertices in $C$. Adding the edges $v_1v_2, v_2v_3, v_1v_3$ produces a $K_k$-subgraph, so $G$ admits a 3-edge-addition, a contradiction to \Cref{buckets} when $k \geq 8$. Hence, we may assume that $k \in \{6,7\}$. Furthermore, as $G$ does not admit a $2$-edge-addition by \Cref{buckets}, we may assume that $v_1v_2,v_1v_3,v_2v_3\not\in E(G)$. For each $i \in \{1,2,3\}$, let $G_i$ denote the graph obtained from $G-C-v_i$ by identifying the two vertices of $\{v_1,v_2,v_3\}\setminus \{v_i\}$.
	
\begin{claim}\label{claim:DegOrOre}
For each $i\in\{1,2,3\}$, at least one of the following holds:
\begin{enumerate}
\item $v_i$ has degree at most $k$, or
\item $G_i$ has a $k$-Ore subgraph.
\end{enumerate}
\end{claim}
\begin{proof}
Suppose not. We may assume without loss of generality that the claim does not hold for $i=3$. Let $w$ denote the identified vertex of $G_3$. Note that $G_3$ is not $(k-1)$-colorable, as a $(k-1)$-coloring $\varphi$ of $G_3$ extends to a $(k-1)$-coloring of $G$ by letting $\varphi(v_1)=\varphi(v_2)=\varphi(w)$ and coloring the vertices in $C$ with the remaining $k - 3$ colors of $[k-1]\setminus \{\varphi(w),\varphi(v_3)\}$. Let $K$ be a $k$-critical subgraph of $G_3$ and note that $w \in V(K)$. Let $R = V(K) - w + \{v_1, v_2\} + C$. Observe that $|V(R)| = |V(K)| + k - 2$, $|E(G \lb R\rb)| \geq |E(K)| + \binom{k-1}{2} - 1, T(G \lb R \rb) \geq T(K) - 1$. Thus,
\begin{align*}
p_G(R) &\leq p(K) + ((k-2)(k+1) + \varepsilon)(k-2) - 2(k-1)\left( \binom{k-1}{2}-1\right) + \delta \\
&\leq p(K) + k^2 - 3k + 4 + 2\delta.
\end{align*}
Furthermore, if $v_3 \in V(K)$, then $|E(G \lb R\rb)| \geq |E(K)| + \binom{k}{2} - 3$; in which case, $p_G(R) \leq p(K) + (-k^2 + 5k - 2) + 2\delta$.
		
For all $k \geq 5$, $-k^2 + 5k - 2 \leq -2$. As $G$ is good, $K$ satisfies \Cref{OURCONJ} and hence $p(K) \leq k(k-3)$. Since $R$ has a non-edge $v_1v_2$, $G \lb R\rb$ is not a clique, and \Cref{edgedrop} implies that $p(G) \leq p_G(R)$. Thus, if $v_3 \in V(K)$, $p(G) \leq p_G(R) \leq k(k-3)  +( -k^2 + 5k - 2) \leq k(k-3) - 2$, which contradicts the fact that $G$ is tight. We deduce that $v_3 \notin V(K)$.
		
Now $K$ is not $k$-Ore as otherwise 2 holds. But then
$$p_G(R) \leq p(G) + (k-1)(k-2) + 2 + Q + 2\delta \leq p(G) + (k-1)(k-2) + \gap + Q + \delta.$$
Now \Cref{singleVertex} implies that $G - K = \{v_3\}$ is a single vertex of degree at most $k - 1 + (k-4)/2$. Since $k \leq 7$, it follows that $v_3$ has degree at most $k$ and 1 holds, a contradiction.
\end{proof}
	
\begin{claim}\label{claim:AtMostOne}
At most one of the vertices $v_1,v_2,v_3$ has degree at most $k$.
\end{claim}
\begin{proof}
Suppose not. Suppose without loss of generality that both $v_2$ and $v_3$ have degree at most $k$. Since $G$ is $k$-critical, $v_2$ and $v_3$ have degree at least $k-1$. We have two cases to consider.
		
First, suppose that both $v_2$ and$v_3$ have degree $k-1$. Let $\varphi: V(G) \setminus (C \cup \{v_2, v_3\}) \rightarrow \{1,\ldots,k-1\}$ be a $(k-1)$-coloring of $G \setminus (C \cup \{v_2, v_3\})$. But then  $\varphi$ extends to $G$, a contradiction, as follows. For each $i\in\{2,3\}$, let $L_i=\{\varphi(v):v\in N(v_i)\}$. As $v_2$ has degree $k-1$, $v_2$ has at most 2 neighbors not in $C \cup \{v_2, v_3\}$ and hence $|L_2| \leq 2$. Similarly $|L_3|\leq 2$. 
		
If $\varphi(v_1) \notin L_2$, then let $\varphi(v_2) = \varphi(v_1)$, let $\varphi(v_3)\in [k-1]\setminus L_3$ and then color the vertices in $C$ from $[k-1]\setminus \{\varphi(v_1),\varphi(v_3)\}$. So we may assume that $\varphi(v_1)\in L_2$ and similarly that $\varphi(v_1)\in L_3$. But then $|(L_2 \cup L_3) \setminus \{\varphi(v_1)\}| \leq 2 < k - 2$ since $k\ge 5$. But then we let $\varphi(v_2)\in [k-1]\setminus (L_2\cup L_3)$, let $\varphi(v_3) = \varphi(v_2)$, and color the vertices of $C$ with colors from $[k-1]\setminus \{\varphi(v_1),\varphi(v_2)\}$.
		
Hence, we may assume without loss of generality that $v_2$ has degree $k$. Since $k \geq 6$, we have that $k \leq k - 2 + (k-3)$. By \Cref{cloning}, $\Gclone{v_2}{C}$ contains a $k$-Ore subgraph $H$. Since $d_G(v_2) = k$, we have that $|E(G)| > |E(H)|$. If $|V(H)| = |V(G)|$, then
$$p(G) \leq p(H) - 2(k-1) + \delta \leq k(k-3) - 2(k-1) + \delta < k(k-3) - P - Q,$$
which contradicts that $G$ is tight. 
		
So we may assume that $|V(H)| < |V(G)|$. Let $R = (V(H) \setminus \{\widetilde{v_2}\}) \cup \{v_2\}$, which is a proper subset of $V(G)$. Observe that $H\subseteq G \lb R\rb + v_2v_1 + v_2v_3$, and therefore $G$ admits a 2-edge-addition, contradicting \Cref{buckets}.
\end{proof}
	
By \Cref{claim:DegOrOre,claim:AtMostOne}, for at least two $i\in \{1,2,3\}$, $G_i$ has a $k$-Ore subgraph. We may assume without loss of generality that both $G_1$ and $G_2$ contain a $k$-Ore subgraph. For each $i\in\{1,2\}$, let $K_i$ be a $k$-Ore subgraph of $G_i$ and let $R_i = V(K_i) \setminus \{w_i\} \cup \{v_i, v_3\}$. Notice that $R_1$ is disjoint from $C \cup \{v_2\}$ and $R_2$ is disjoint from $C\cup \{v_1\}$. Let $R = R_1 \cup R_2 \cup C$. 
	
\begin{claim}
$$\pk_G(R) \le k(k-3)+2(k-1).$$
\end{claim}
\begin{proof}
Suppose not. Let $H = R_1 \cap R_2$. For each $i\in \{1,2\}$, let
$$E_i = \{ xy: x, y \in V(K_i) \cup \{v_3\}, xy \in E(G \lb R_i\rb), xy \notin E(K_i)\}.$$
That is, $E_i$ is the set of edges which are present in $G$ but not in $K_i$, and have both endpoints in $V(K_i) \cup \{v_3\}$. Finally, define
$$E_H = \{ xy: x, y \in R_1 \cap R_2, xy \in E(G \lb R_1 \cap R_2 \rb), xy \notin E(K_1)\}.$$
Note that $E_H \subseteq E_1$.
		
For each $i\in\{1,2\}$, $|R_i| \ge |V(K_i)|+1$ and $|E(G[R_i])|\ge |E(K_i)|+|E_i|$, and hence we have that $\pk_G(R_i) \leq \pk(K_i) + (k-2)(k+1) - 2(k-1)|E_i|$. 
		
Let $S = E( K_1\lb H \rb)$. Observe that  $(k-2)(k+1)|H| - 2(k-1)|S| = \pk_{K_1}(H) \geq k(k-3) + 2(k-1)$, by \Cref{edgedrop}. Then $\pk_G(H) = \pk_{K_1}(H) - 2(k-1)|E_H|$. We calculate that
\begin{align*}\pk(R_1 \cup R_2)
&\leq \pk_G(R_1) + \pk_G(R_2) - \pk_G(H) \\
&\leq \pk(K_1) + \pk(K_2)  - \pk_{K_1}(H) + 2(k-2)(k+1)-2(k-1)(|E_1| + |E_2|) +2(k-1)|E_H| \\
&\leq k(k-3) + 2(k-2)(k+1) - 2(k-1)(|E_1| + |E_2| - |E_H| + 1) \\
&\leq k(k-3) + 2(k-2)(k+1) - 2(k-1).
\end{align*}
Next, adding the vertices and edges of the cluster $C$, we find that
\begin{align*}\pk_G(R) &\leq k(k-3) + 2(k-2)(k+1) - 2(k-1) + (k-2)(k+1)(k-3) - (k-1)(k-3)(k+2) \\
&= k(k-3) + 2(k-1).
\end{align*}
\end{proof}
	
By \Cref{kOreTBound}, for each $i\in\{1,2\}$, $T(K_i) \geq 2 + \frac{|V(K_i)| - 1}{k-1}$. Note that $T(G \lb R \rb) \geq \max\{ T(K_1), T(K_2)\} - 1$. Since $|V(K_1)| + |V(K_2)| \geq |R| - (k-2)$, we may assume without loss of generality that $|V(K_1)| \geq \frac{1}{2}(|R| - (k-2))$. Therefore $T(G \lb R\rb) \geq 1+ \frac{|R| - (k-1)}{2(k-1)}$. But then
\begin{align*}
p_G(R) &\leq \pk_G(R) + \varepsilon |R| - \delta \left( 1 + \frac{|R| - (k-1)}{2(k-1)} \right) \\
&\leq k(k-3) + 2(k-1) + \varepsilon |R| - \delta \left( 1 + \frac{|R| - (k-1)}{2(k-1)} \right) \\
&= k(k-3) + 2(k-1) + (\varepsilon - \frac{1}{2(k-1)} \delta) |R| - \frac{1}{2}\delta.
\end{align*}
It follows that $R = V(G)$ by \Cref{buckets}. As $G$ is not $k$-Ore, \Cref{strong_KY_cor} implies that $\pk(G) \leq k(k-3) - 2(k-1)$. Thus, substituting $R = V(G)$ above, we have that
\begin{align*}
p(G) &\leq \pk (G) + \varepsilon |V(G)| - \delta \left( 1 + \frac{ |V(G)| - (k-1)}{2(k-1)} \right) \\
&\leq k(k-3) - 2(k-1) \\
&< k(k-3) - P - Q,
\end{align*}
which contradicts the fact that $G$ is tight.
\end{proof}

\begin{LMA}\label{ungemmedClone}
If $G$ is a tight, ungemmed graph, $x$ is a vertex of degree $k-1$, and $y$ is a neighbor of $x$, then $\Gclone{y}{x}$ is ungemmed.
\end{LMA}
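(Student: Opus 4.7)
Suppose for contradiction that $\Gclone{y}{x}$ contains a gem $D$. The plan is to produce either a gem of $G$ (contradicting that $G$ is ungemmed) or a cluster of $G$ of size at least $k-3$ (contradicting \Cref{k3Cluster}). The key bookkeeping tool is the degree relation: for $v \in V(G)\setminus\{x,y\}$,
\[ d_{\Gclone{y}{x}}(v) = d_G(v) + \mathbf{1}[v\in N_G(x)\setminus N_G(y)] - \mathbf{1}[v\in N_G(y)\setminus N_G(x)], \]
while $d_{\Gclone{y}{x}}(x) = d_{\Gclone{y}{x}}(\tildx) = k-1$. Combined with $k$-criticality ($d_G \ge k-1$), any vertex $v$ with $d_{\Gclone{y}{x}}(v) = k-1$ that lies in $N_G(x)\setminus\{y\}$ must also lie in $N_G(y)$, and in that case $d_G(v)=k-1$; conversely, any non-endpoint of $D$ lying in $N_G(y)\setminus N_G(x)$ has $d_G(v)=k$.

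I would then split on whether $\tildx \in V(D)$. Suppose first $\tildx \in V(D)$; every neighbor of $\tildx$ in $D$ lies in $\{x\}\cup(N_G(x)\setminus\{y\})$. If $x\notin V(D)$, set $D' := (V(D)\setminus\{\tildx\})\cup\{x\}$: since $x$ and $\tildx$ are twins in $\Gclone{y}{x}$ and neither $\tildx$ nor $y$ lies in $V(D')$, the induced subgraph $G[D'] = \Gclone{y}{x}[D']$ is isomorphic to $\Gclone{y}{x}[D]$, and the degree formula forces every non-endpoint of $D'$ to have $d_G = k-1$, producing a gem of $G$. If instead $x \in V(D)$, then the non-endpoint vertices of $V(D)\setminus\{\tildx\}$ (which include $x$) each have $d_G = k-1$ and lie in $\{x\}\cup(N_G(x)\cap N_G(y))$; they induce a clique in $G$ with any two sharing at least $k-3$ common neighbors (namely $x$, $y$, and the other such vertices). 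By \Cref{emeraldCluster}---valid since $G$ admits no edge-additions by \Cref{buckets}---they lie in a common cluster of $G$ of size at least $k-3$, contradicting \Cref{k3Cluster}.

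Now suppose $\tildx \notin V(D)$, so that $G[V(D)] = \Gclone{y}{x}[V(D)]$ and $D$ already has the clique structure of a gem in $G$. Let $A$ be the set of non-endpoints of $D$ lying in $N_G(y)\setminus N_G(x)$ (so $d_G(v)=k$) and $B$ the remaining non-endpoints (so $d_G(v)=k-1$; no non-endpoint lies in $N_G(x)\setminus N_G(y)$, by criticality). If $A=\emptyset$ then $D$ is already a gem of $G$, done. Otherwise, for any two $v,v'\in B$, the set $V(D)\setminus\{v,v'\}$ supplies at least $k-3$ common neighbors in $G$, so \Cref{emeraldCluster} places $B$ inside a single cluster of $G$, which by \Cref{k3Cluster} has size at most $k-4$. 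This forces $|A|$ to be relatively large (at least $3$ for emeralds, at least $2$ for diamonds). The intended finish is to note that $A\cup\{y\}$ and then $A\cup\{y\}\cup(B\cap N_G(x)\cap N_G(y))$ are cliques in $G$, and to use $K_k$-freeness of $G$ (forced by $G\ne K_k$, which follows from ungemmedness and $k$-criticality) together with applications of \Cref{kNeighbor} or \Cref{commonNeighbors} to the remaining non-endpoints to force a $K_k$ subgraph or a gem of $G$.

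The main obstacle is precisely this last case. When $\tildx \notin V(D)$ and $A$ is nonempty, the clean twin-swap of the $\tildx \in V(D)$ case is unavailable, and one must carefully trade off the cluster-size bound of \Cref{k3Cluster} against the $K_k$-freeness of $G$, the edge-addition-freeness of \Cref{buckets}, and the local structural restrictions of \Cref{kNeighbor}/\Cref{commonNeighbors}. Diamond sub-cases where $\tildx$ or $x$ itself lies at an endpoint of $D$ require analogous but routine bookkeeping modifications.
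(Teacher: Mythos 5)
Your handling of the case $\tildx \in V(D)$ is sound and essentially matches the paper (the twin-swap when $x \notin V(D)$, and the cluster-of-size-$\geq k-3$ contradiction with \Cref{k3Cluster} when $x \in V(D)$, are both fine given \Cref{emeraldCluster} and \Cref{buckets}). However, the proposal has a genuine gap exactly where you flag it: the case $\tildx \notin V(D)$ with $A \neq \emptyset$ is never completed. Your "intended finish'' through $y$ does not close, because the vertices of $B$ need not be adjacent to $y$, so the cliques $A \cup \{y\}$ and $A \cup \{y\} \cup (B \cap N_G(x) \cap N_G(y))$ generally fall short of size $k$, and no argument is supplied to bridge that shortfall; the trade-off you gesture at between \Cref{k3Cluster}, \Cref{buckets} and \Cref{kNeighbor} is precisely the content that is missing.

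Two ideas from the paper's proof fill this hole. First, diamonds with $\tildx \notin V(D)$ need no degree analysis at all: $G\lb V(D)\rb$ already contains $K_k - e$, so adding the one missing edge gives a $K_k$ on a proper subset of $V(G)$, i.e.\ an edge-addition, contradicting \Cref{buckets}; your uniform $A/B$ treatment overlooks this shortcut. Second, for an emerald with $\tildx \notin V(D)$: your $B$ lies in one cluster $C$ and every $A$-vertex is adjacent to $y$. If $C = \emptyset$ then $G\lb V(D) \cup \{y\}\rb = K_k$, a contradiction. Otherwise pick $z \in C$ (so $d_G(z) = k-1$) and let $w$ be its unique neighbor outside $D$; for each $v \in A$ (degree $k$, adjacent to $z$, with at least $k-3$ common neighbors inside $D$), \Cref{kNeighbor} gives $N\lb z\rb \subseteq N\lb v\rb$, so $w$ is adjacent to every $A$-vertex, and the cluster vertices share $N\lb z\rb$ as well, whence $G\lb V(D) \cup \{w\}\rb = K_k$, again a contradiction. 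The key maneuver you are missing is to route the argument through the unique external neighbor $w$ of a degree-$(k-1)$ vertex of $D$ rather than through $y$.
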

\begin{proof}
Suppose not. Let $\tildx$ denote the new vertex in $\Gclone{y}{x}$. First, suppose that $\Gclone{y}{x}$ contains a diamond $D$. If $\tildx \notin V(D)$, then $G \lb V(D) \rb$ is a $K_k - e$ subgraph of $G$, then $G$ admits an edge-addition, contradicting \Cref{buckets}. Hence $\tildx \in V(D)$, and therefore $x \in V(D)$ as well. Let $C$ be the set of vertices in $D - \tildx$ of degree $k-1$ in $\Gclone{y}{x}$. Note that $|C|\ge k-3$. Every vertex $v\in C$ is adjacent to $y$ in $G$, as otherwise $v$ has degree strictly less than $k - 1$ in $G$. Hence $C$ is a cluster in $G$ of size $k - 3$, contradicting \Cref{k3Cluster}.
	
So we may assume that $\Gclone{y}{x}$ contains an emerald $D$. Further suppose that $\tildx \in V(D)$. If $x \notin V(D)$, then as every vertex of $D$ is a neighbor of $x$, $G \lb (V(D) \setminus \{\tildx\}) \cup \{x\} \rb = K_k$. Note that every vertex $v\in D$ is adjacent to $y$ in $G$ since $v$ has degree $k-1$ in $\Gclone{y}{x}$. Then it follows that $G \lb (V(D) \setminus \{\tildx\}) \cup \{x, y\} \rb = K_k$, a contradiction. So we may assume that both $x, \tildx \in V(D)$. But then $G \lb (V(D) \setminus \{\tildx\}) \cup \{y\}\rb = K_{k-1}$, and \Cref{emeraldCluster} implies that the $k - 2$ vertices of degree $k-1$ in $D - \tildx$ are in the same cluster in $G$. This contradicts \Cref{k3Cluster}.
	
So we may assume that $x, \tildx \notin V(D)$. As $G \lb V(D) \rb = K_{k-1}$, \Cref{emeraldCluster} implies that the vertices of degree $k-1$ in $D$ are in the same cluster $C$. The only edges present in $G$ that are not present in $\Gclone{y}{x}$ are those incident with $y$. Hence if a vertex $v$ in $D$ has degree greater than $k - 1$ in $G$, then $v$ is adjacent to $y$. Therefore the vertices of $D - C$ have degree $k$ in $G$. If $C = \emptyset$, then $G \lb V(D) \cup \{y\} \rb = K_k$, a contradiction. Hence there exists a vertex $z \in V(D)$ with $d_G(z) = k-1$. Let $w$ be the unique neighbor of $z$ not in $D$. \Cref{kNeighbor} implies that every vertex in $D - C$ is also adjacent to $w$. But then $G \lb V(D) \cup \{w\} \rb = K_k$, a contradiction.
\end{proof}

\begin{LMA}\label{tightClone}
If $C$ is a cluster of size $s$, $x$ is a vertex of $C$, and $y$ is a neighbor of $x$ with degree at most $k - 2 + s$, then $p(\Gclone{y}{x}) \geq p(G) - \delta$.
\end{LMA}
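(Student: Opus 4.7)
The plan is to directly compute $p(G_{y \to x}) - p(G)$ by tracking changes in vertices, edges, and the clique quantity $T(\cdot)$. First I would note that $|V(G_{y \to x})| = |V(G)|$ (we remove $y$ and add $\widetilde{x}$), and that $|E(G_{y \to x})| = |E(G)| - d_G(y) + (k-1)$, since deleting $y$ removes $d_G(y)$ edges and the clone contributes $k-1$ new edges (one to $x$ and one to each of the $k-2$ vertices of $N_G(x) \setminus \{y\}$). Substituting into the definition of $p$, the vertex terms cancel and the edge terms give
\[
p(G_{y \to x}) - p(G) \;=\; 2(k-1)\bigl(d_G(y) - (k-1)\bigr) \;-\; \delta\bigl(T(G_{y \to x}) - T(G)\bigr).
\]
Since $G$ is $k$-critical we have $d_G(y) \ge k-1$, so the first term is non-negative, and it suffices to prove $T(G_{y \to x}) \le T(G) + 1$.

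To bound the change in $T$, I would take a union $H^*$ of vertex-disjoint $K_{k-1}$'s and $K_{k-2}$'s in $G_{y \to x}$ realizing $T(G_{y \to x})$, and modify it into such a union in $G$ with weight loss at most $1$. If $\widetilde{x}$ is not used by any clique of $H^*$, then $H^* \subseteq G_{y \to x} - \widetilde{x} = G - y$, so nothing is lost. Otherwise $\widetilde{x}$ lies in some clique $K^* \in H^*$ of size $k-1$ or $k-2$, and I would split on whether $x \in K^*$. If $x \in K^*$, I replace $K^*$ with $K^* - \widetilde{x}$: this is a clique of $G - y$, and the weight drops by exactly $1$, since a $K_{k-1}$ becomes a $K_{k-2}$ of weight $1$, and a $K_{k-2}$ becomes a $K_{k-3}$ of weight $0$. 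If $x \notin K^*$, then using that $\widetilde{x}$ and $x$ share the neighborhood $N_G(x) \setminus \{y\}$ in $G_{y \to x} - \{x, \widetilde{x}\}$, the set $K^* - \widetilde{x} + x$ is a clique of the same size in $G$.

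The only remaining subcase is when $x \notin K^*$ but $x$ already lies in some other clique $K'^* \in H^*$, which a priori could force weight to be lost. Here I would apply the pigeonhole observation that $K^* \setminus \{\widetilde{x}\}$ and $K'^* \setminus \{x\}$ are disjoint subsets of $N_G(x) \setminus \{y\}$, a set of size $k-2$, so $(|K^*|-1)+(|K'^*|-1) \le k-2$. But $|K^*|, |K'^*| \ge k-2$ forces $2(k-3) \le k-2$, i.e., $k \le 4$, contradicting $k \ge 6$. Hence this subcase cannot occur and no weight is lost, completing the proof that $T(G_{y \to x}) \le T(G) + 1$. Substituting back into the displayed identity yields $p(G_{y \to x}) - p(G) \ge -\delta$, as required. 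The main obstacle is the case analysis on $H^*$: one must carefully check that the proposed replacements preserve vertex-disjointness of the clique collection, and the crucial input is the tight pigeonhole estimate $|N_G(x) \setminus \{y\}| = k-2$ combined with the hypothesis $k \ge 6$. The cluster hypothesis and the degree bound $d_G(y) \le k-2+s$ do not enter this computation directly, but ensure (via \Cref{criticalClone}) that the clone is the right object to study elsewhere.
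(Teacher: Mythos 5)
Your proposal is correct and follows essentially the same route as the paper: compare $|V|$, $|E|$, and $T$ between $G$ and $\Gclone{y}{x}$ (vertex count unchanged, edge count does not increase since $d_G(y)\ge k-1$ while $\tildx$ has degree $k-1$, and $T$ increases by at most $1$) and then evaluate $p$. The paper simply asserts $T(\Gclone{y}{x}) \le T(G)+1$ without proof; your case analysis on the clique collection containing $\tildx$ is a correct justification of that assertion, and your observation that the hypotheses on $C$ and $d(y)$ are not needed for this particular computation is accurate.
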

\begin{proof}
Observe that $|V(\Gclone{y}{x})| = |V(G)|$, $|E(\Gclone{y}{x})| \leq |E(G)|$, and $T(\Gclone{y}{x}) \leq T(G) + 1$. The result follows by evaluating $p(\Gclone{y}{x})$.
\end{proof}

\subsection{Gadgets}

Several structures resembling $k$-Ore subgraphs arise from cloning, and their properties will be crucial for analyzing the discharging. Suppose that $G$ is tight and ungemmed, and $x,y$ are vertices satisfying the conditions of \Cref{cloning}, such that $\Gclone{y}{x}$ contains a $k$-Ore subgraph $H$. We first show that $H$ has a unique frame which contains $x,\tildx$.

\begin{LMA}\label{cloneKey}
Let $G$ be tight and ungemmed, and suppose that
\begin{enumerate}
\item $x,y$ satisfy the conditions of \Cref{cloning}
\item $\Gclone{y}{x}$ contains a $k$-Ore subgraph $H$
\end{enumerate}
Then $H$ has a unique frame $V^\ast(H) = (N(x) \setminus \{y\}) \cup \{\wt{x}\}$.
\end{LMA}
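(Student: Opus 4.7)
The plan is to nail down the forced structure of $H$ around $\wt{x}$, then produce the frame via induction and exploit the twin relation between $x$ and $\wt{x}$ to get uniqueness.

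First I would show $\wt{x} \in V(H)$. If not, then $H$ is a subgraph of $\Gclone{y}{x} - \wt{x} = G - y$; but $G$ is $k$-critical, so $G - y$ is $(k-1)$-colorable, which would force $\chi(H) \leq k-1$, a contradiction. Since $\wt{x}$ has degree exactly $k-1$ in $\Gclone{y}{x}$ (its neighbors there are $x$ and $N(x)\setminus\{y\}$) and $H$ has minimum degree at least $k-1$, we get $d_H(\wt{x}) = k-1$ and all of $\wt{x}$'s $\Gclone{y}{x}$-neighbors — namely $x$ and the vertices of $N(x)\setminus\{y\}$ — lie in $V(H)$. Applying the same reasoning to $x$ (which also has degree $k-1$ in $\Gclone{y}{x}$) gives $d_H(x)=k-1$ and $N_H(x) = \{\wt{x}\} \cup (N(x)\setminus\{y\})$. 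Hence $x$ and $\wt{x}$ are twins in $H$ with shared closed neighborhood equal to the target set $(N(x)\setminus\{y\}) \cup \{\wt{x}\}$ together with $x$.

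For existence of the frame, I would induct on the number of Ore compositions used to build $H$. In the base case $H=K_k$ the frame is $V(H)$ itself, which matches. For the inductive step, writing $H$ as an Ore composition of $k$-Ore graphs $H_1, H_2$ on an edge of $H_1$ and a split vertex of $H_2$, I use that $\wt{x}$, having degree $k-1$, cannot be the split vertex (the two split halves would need positive degrees summing to less than $k-1$ after losing the replaced edge). Moreover the adjacent twin pair $\{x,\wt{x}\}$ must lie entirely on one side (else the edge $x\wt{x}$ is destroyed). Applying the inductive hypothesis on that side, and adjoining any replacement structure coming from the other side, produces the claimed frame of $H$.

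For uniqueness, suppose $F'$ is another frame of $H$. I would show $\wt{x} \in F'$: otherwise $\wt{x}$ is an internal vertex of a split $k$-Ore subgraph $S$ replacing some frame edge $uv$ of $F'$. Interior (non-split) vertices of such an $S$ have degree at least $k-1$ inside $S$, so $d_S(\wt{x}) = k-1 = d_H(\wt{x})$, forcing $N_H(\wt{x}) \subseteq V(S)$; in particular $x \in V(S)$, and by the twin property $x$ is also interior to $S$ with $N_H[x] = N_H[\wt{x}]$. Tracing this twin pair through the recursive Ore decomposition of $S$ — twin degree-$(k-1)$ vertices cannot be created by any non-trivial Ore composition internally — contradicts $S$ being a proper split $k$-Ore subgraph. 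So $\wt{x} \in F'$ and, by the symmetric argument, $x \in F'$; the remaining $k-2$ frame vertices are then pinned to $N(x)\setminus\{y\}$ by the direct frame edges from $\wt{x}$.

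The main obstacle will be the uniqueness step, specifically controlling where $\wt{x}$ can sit in the Ore decomposition given only its degree and its twin relation with $x$; the cleanest route is to push the twin-pair argument recursively through $S$ until reaching a $K_k$-copy, which is incompatible with $S$ being strictly larger than $K_k$.
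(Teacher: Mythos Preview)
Your argument never uses the hypothesis that $G$ is tight and ungemmed, and this hypothesis is essential: the purely structural claim you are implicitly relying on --- that a pair of degree-$(k-1)$ twins in a $k$-Ore graph $H$ must lie in every frame of $H$ --- is false. Take $H$ to be the Ore composition of two copies of $K_k$, and let $b_2,b_3$ be two non-split vertices on the vertex side that were sent to the same half of the split. Then $b_2,b_3$ are adjacent degree-$(k-1)$ twins in $H$, yet the frame of $H$ coming from the edge-side $K_k$ contains neither of them. Your recursive ``trace the twin pair through $S$'' step bottoms out precisely at such a split $K_k$, where there is no contradiction: two interior vertices of a split $K_k$ can perfectly well be degree-$(k-1)$ twins. (Your existence induction has a related soft spot: the parenthetical claim that $\wt{x}$ cannot be an overlap vertex is unjustified, since an overlap vertex can have degree exactly $k-1$.)

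The paper's proof is a two-line application of the no-edge-addition property, which is where the tight/ungemmed hypothesis enters. Take any frame $V^\ast(H)$ and suppose $x\notin V^\ast(H)$; then $x$ lies inside some replacement edge with endpoints $a,b$, and by the twin relation so does $\wt{x}$. Replacing that entire split $k$-Ore piece by the single edge $ab$ produces a smaller $k$-Ore graph $H'$ with $\wt{x}\notin V(H')$, so $H'\subseteq G\lb V(H)\setminus\{\wt{x}\}\rb + ab$. This exhibits $ab$ as an edge-addition in $G$, contradicting \Cref{buckets}. Hence every frame contains $x$ and $\wt{x}$, and since $d_H(x)=k-1$ the remaining $k-2$ frame vertices are forced to be $N_G(x)\setminus\{y\}$, giving both the identification and the uniqueness at once.
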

\begin{proof}
We claim that any frame $V^\ast(H)$ must be the closed neighborhood of $x$ in $\Gclone{y}{x}$. Suppose to the contrary that $x$ is inside a replacement edge with endpoints $a,b$. Note that $\tildx$ is then also within the replacement edge $a,b$. Then $G \lb V(H) \setminus \{\tildx\} \rb + ab$ is a $k$-Ore subgraph, as we have added a real edge over the replacement edge in $ab$, so $G$ admits an edge-addition, contradicting \Cref{buckets}. Therefore $x, \tildx \in V^\ast(H)$, and the other vertices of $V^\ast(H)$ are neighbors of $x$.
\end{proof}

This lemma shows that the clone $\Gclone{y}{x}$ has a $K_k$-subgraph with replacement edges, and with $x,\tildx$ vertices of $K_k$. Then upon deleting $\tildx$, $G$ contains a subgraph that can be viewed as $K_{k-1}$ with replacement edges, with $x$ a vertex of $K_{k-1}$. This motivates the following definitions.

\begin{DEF}
A \emph{gadget} is a subgraph $H$ obtained from a $k$-Ore graph by deleting a vertex $x$ of degree $k-1$ in a cluster of size at least 2.

A \emph{proto-gadget} is a subgraph $H$ isomorphic to $K_{k-1}$ with replacement edges, along with a distinguished frame $V^\ast(H) = V(K_{k-1})$.

A \emph{kite} is a subgraph $H$ isomorphic to $K_{k-2}$ with replacement edges, along with a distinguished frame $V^\ast(H) = V(K_{k-2})$.
\end{DEF}

\begin{LMA}\label{splitProto}
Let $G$ be tight and ungemmed, and let $H$ be a split $k$-Ore subgraph of $G$, with split vertices $\{a,b\}$. If $x \in V(H) \setminus \{a,b\}$, then $x \in V^\ast(H')$ for some proto-gadget $H'$ of $G$ with $H' \subseteq H - \{a,b\}$.
\end{LMA}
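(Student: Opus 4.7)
The plan is to proceed by induction on $|V(H)|$. Letting $H^+$ denote the $k$-Ore graph obtained by identifying $a$ and $b$ back into a single vertex $v$, we have $H - \{a,b\} = H^+ - v$ as graphs, so it suffices to find a proto-gadget in $H^+ - v$ whose distinguished frame contains $x$. In the base case $H^+ = K_k$, the graph $H^+ - v = K_{k-1}$ is itself a proto-gadget whose frame is its entire vertex set and contains $x$.

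For the inductive step, I would write $H^+$ as an Ore composition of smaller $k$-Ore graphs $H^+_1$ (edge-side) and $H^+_2$ (vertex-side), with overlap vertices $\{c,d\} \subseteq V(H^+_1)$ and split vertex $e \in V(H^+_2)$, and then case-split on the location of $v$ and $x$. When $v, x$ both lie strictly on the edge-side $V(H^+_1) \setminus \{c,d\}$, apply induction to the smaller split $k$-Ore subgraph $H_1 \subseteq H$ obtained by splitting $v$ in $H^+_1$; the resulting proto-gadget lies in $H_1 - \{a,b\} \subseteq H - \{a,b\}$, and if its frame happens to contain both $c$ and $d$, then the direct edge $cd$ of $H^+_1$ is reinterpreted in $H^+$ as a replacement edge whose corresponding split $k$-Ore subgraph is the vertex-side blob. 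When $v$ lies on the edge-side but $x$ strictly on the vertex-side, apply induction to the split $k$-Ore subgraph $H_2 \subseteq H$ obtained by splitting $e$ to $\{c,d\}$ in $H^+_2$, with split vertices $\{c,d\}$ and vertex $x$; since $v$'s split took place entirely on the unaffected edge-side, the proto-gadget inside $H_2 - \{c,d\}$ lies inside $H - \{a,b\}$. When $v$ and $x$ both lie strictly on the vertex-side, apply induction to the split $k$-Ore subgraph $H_2^* \subseteq H$ obtained by splitting $v$ in $H^+_2$, and reconcile the (unchanged) splitting of $e$ to $\{c,d\}$ to place the resulting proto-gadget inside $H - \{a,b\}$.

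The main obstacle is the remaining case where $v$ lies strictly on the vertex-side but $x$ lies strictly on the edge-side, together with the boundary cases $v \in \{c,d\}$. In the former, $H^+_1$ is preserved in $H$ except that the edge $cd$ has been replaced by a modified blob that, having lost $\{a,b\}$, is not itself a split $k$-Ore subgraph; so a proto-gadget of $H^+_1$ whose frame happens to contain both $c$ and $d$ cannot be directly upgraded to a proto-gadget of $G$. I expect to resolve this by invoking the structural results in Section~\ref{sec:kOre}---specifically, using Lemma~\ref{kOreGem} or Lemma~\ref{kOreExcludeGem} inside $H^+_1$ to produce a frame of $H^+_1$ containing $x$ in which either $c$ or $d$ is present, and then removing that overlap vertex as the ``distinguished missing frame vertex'' to obtain a proto-gadget containing $x$ whose frame avoids using $cd$ as a replacement edge. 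The boundary case $v \in \{c,d\}$ is handled similarly, by exploiting the flexibility in how the split of $v$ (an overlap vertex of $H^+$) partitions its neighbors between the two sides, which enables a reduction to one of the cleaner cases by choosing the partition consistently with the decomposition. Throughout, the ungemmed hypothesis on $G$ should ensure that no structural obstruction---an unwanted gem inside $H^+_1$ or inside the blob---blocks the construction.
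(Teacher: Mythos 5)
Your induction and base case match the paper's, but the case you yourself flag as the main obstacle --- the identified vertex $\underline{ab}$ lying strictly on the vertex-side while $x$ lies on the edge-side --- is a genuine gap, and the repair you sketch does not work. \Cref{kOreGem} and \Cref{kOreExcludeGem} produce \emph{gems}, i.e.\ copies of $K_k-e$ or $K_{k-1}$ whose vertices satisfy degree conditions in the ambient graph; they do not produce a frame of $H_1^+$ containing a prescribed vertex $x$, let alone one avoiding the edge $cd$, so they cannot supply the proto-gadget you need there. Similarly, your appeal to ``ungemmed'' to exclude an unwanted gem inside $H_1^+$ or inside the blob misreads the definition: gems are defined by degrees in $G$, so $G$ being ungemmed says nothing about the internal structure of the abstract $k$-Ore graph $H_1^+$. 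What the tight-and-ungemmed hypothesis actually buys --- and what your argument never uses --- is \Cref{buckets}: $G$ admits no edge-additions. The paper's proof takes a frame of the identified $k$-Ore graph $H^\bullet$ (your $H^+$) and uses precisely this to force $\underline{ab}$ onto the frame: if $\underline{ab}$ were strictly inside a replacement edge with endpoints $u,v$, then $G+uv$ would contain a $k$-Ore subgraph, i.e.\ an edge-addition, a contradiction. With $\underline{ab}$ a frame vertex, either $x$ is also a frame vertex (delete $\underline{ab}$ and its incident replacement edges to obtain the proto-gadget), or $x$ lies inside a replacement edge whose split $k$-Ore graph avoids $\underline{ab}$ or meets it only as an endpoint, and one recurses into that replacement edge. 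So your troublesome configuration is eliminated by the no-edge-addition argument rather than resolved combinatorially inside $H_1^+$; without that ingredient you would need an ``avoid a prescribed edge $cd$, hit a prescribed vertex $x$'' strengthening of the lemma for arbitrary $k$-Ore graphs, which you have not established and which your cited lemmas do not give.

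A smaller but related issue: in your case where $v$ and $x$ both lie on the edge-side, the graph $H_1$ (the edge-side with $v$ split) is not literally a subgraph of $H$, since its edge $cd$ is absent from $G$, having been replaced by the blob; so the induction hypothesis as stated does not apply to it. Your reinterpretation of $cd$ as a replacement edge patches the conclusion, but the hypothesis of the inductive call needs the same care --- another complication the paper's frame-based recursion (which only ever descends into a single replacement edge of $H^\bullet$) avoids.
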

\begin{proof}
We argue by induction on $|V(H)|$. If $|V(H)| = k + 1$, then $H - \{a,b\} = K_{k-1}$, which is a proto-gadget containing $x$ as desired. Note that $V^\ast(H)$ is unique for $H = K_{k-1}$.

Proceeding inductively, let $H^\bullet$ be the $k$-Ore graph obtained by identifying $a,b$ to a single vertex $\underline{ab}$, and let $V^\ast(H^\bullet)$ be a frame of $H^\bullet$. First suppose that $\underline{ab} \notin V^\ast(H^\bullet)$. But then $\underline{ab}$ is contained within a replacement edge with endpoints $u,v$. Then $G + uv$ contains a $k$-Ore graph and thus $G$ admits an edge-addition, a contradiction to \Cref{buckets}. 

Now, if $x$ is not in a replacement edge, then delete $\underline{ab}$ and its incident replacement edges to obtain a proto-gadget $H'$ with $x \in V(H')$ and $H' \subseteq H - \{a,b\}$ as desired.

So we may assume that $x$ is in a replacement edge of $H^\bullet$ with endpoints $u,v$. Let $H^{\bullet \bullet}$ be the split $k$-Ore graph contained in the replacement edge. As $\underline{ab} \in V^\ast(H)$, either $\underline{ab} \notin V(H^{\bullet \bullet})$, or $\underline{ab} \in \{u,v\}$. Clearly $|V(H^{\bullet \bullet})| < |V(H)|$, so the induction hypothesis implies that there exists a proto-gadget $H' \subseteq H^{\bullet \bullet} - \{u,v\}$ with $x \in V^\ast(H')$. Since $\underline{ab} \notin V(H')$, $H'$ is a proto-gadget of $G$ and $a,b \notin V(H')$ as desired.
\end{proof}

Recall that a replacement edge $e$ between the vertices $a,b$ is precisely a split $k$-Ore subgraph $H$ with split vertices $\{a,b\}$. Abstractly, we wish to be able to treat the graph $G$ as containing an edge $ab$ instead of the subgraph $H$, and more generally, given a recursive description of the graph in terms of successive replacement edges, we should be able to treat higher-level representations of the graph without reference to the internals of the replacements. This motivates the next definition.

\begin{DEF}\label{defInsideEdge}
Given a replacement edge $e$ corresponding to a split $k$-Ore subgraph $H$ with split vertices $\{a,b\}$, we say that a set of vertices $S$ is \emph{inside (the replacement edge) $e$} if $S \subseteq V(H) \setminus \{a,b\}$.
\end{DEF}

Stated in this parlance, \Cref{splitProto} shows that if $e$ is a replacement edge and $x$ is inside $e$, then there is an entire proto-gadget $H'$ inside $e$ with $x \in V(H')$.

\subsection{Structural Properties}
\begin{LMA}\label{3path}
If $G$ is a tight, ungemmed graph, then $G$ does not contain an induced path of length 3 consisting of vertices of degree $k-1$.
\end{LMA}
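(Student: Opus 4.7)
The plan is to argue by contradiction using the cloning operation of \Cref{cloning}. Suppose $G$ contains an induced path $v_1 v_2 v_3 v_4$ with $d_G(v_i)=k-1$ for each $i$. Since the path is induced, $v_1\in N(v_2)$ but $v_1\notin N(v_3)$, so $N[v_2]\neq N[v_3]$ and $v_2,v_3$ lie in distinct clusters $C_{v_2},C_{v_3}$. By the symmetry $v_1v_2v_3v_4\leftrightarrow v_4v_3v_2v_1$, we may assume $|C_{v_2}|\ge |C_{v_3}|$; setting $s=|C_{v_2}|$ gives $d_G(v_3)=k-1\le k-2+s$, so the three hypotheses of \Cref{cloning} hold for cloning $v_2$ with $v_3$.

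Form $\Gclone{v_3}{v_2}$. Since $v_2v_4\notin E(G)$, the vertex $v_4$ does not gain $\wt{v_2}$ as a neighbor while it does lose $v_3$, so $d_{\Gclone{v_3}{v_2}}(v_4)=k-2$. Hence $v_4$ cannot lie in any $k$-critical subgraph $H$ of $\Gclone{v_3}{v_2}$, so in particular $H\neq \Gclone{v_3}{v_2}$, and by \Cref{cloning} the subgraph $H$ must be $k$-Ore. By \Cref{cloneKey}, $H$ has a unique frame $V^\ast(H)=\{v_2,\wt{v_2}\}\cup (N_G(v_2)\setminus\{v_3\})$, which contains $v_1$.

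Write the frame as $\{v_1,v_2,\wt{v_2}\}\cup\{u_1,\ldots,u_{k-3}\}$ with $u_i\in N_G(v_2)\setminus\{v_1,v_3\}$. Each pair of frame vertices is joined in $H$ either by a direct edge or by a replacement edge, i.e., hosts a split $k$-Ore subgraph. The pairs $\{v_1,v_2\}$ and $\{v_1,\wt{v_2}\}$ are direct, coming from $v_1v_2\in E(G)$. The heart of the argument is a case analysis of the links $\{v_1,u_i\}$. If each such link is a direct edge of $G$, then $\{v_1,v_2,u_1,\ldots,u_{k-3}\}$ induces a $K_{k-1}$ in $H-\wt{v_2}$, which is a subgraph of $G-v_3$ and hence of $G$. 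If all $u_i$ have degree $k-1$ in $G$, this $K_{k-1}$ is an emerald, contradicting that $G$ is ungemmed. Otherwise some $u_i$ has degree at least $k$, and we combine the existing clique with $v_1$'s unique extra neighbor (or one of the external neighbors of a high-degree $u_i$) to exhibit a $K_k$ obtainable by adding at most $(k-2)/2$ edges, giving an $i$-edge-addition forbidden by \Cref{buckets}. If instead some $\{v_1,u_i\}$ is a replacement edge, then \Cref{splitProto} yields a proto-gadget $H'$ with $v_1\in V^\ast(H')$ sitting inside the replacement edge; since the replacement edge does not involve $\wt{v_2}$ as a split vertex, this proto-gadget lives inside $G$ and again yields a gem of $G$ or a small edge-addition by the same type of analysis.

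The main obstacle is the case where replacement edges are incident with $v_1$, since then one must control how the nested split $k$-Ore structure interacts with the cluster $\{v_2,\wt{v_2}\}$ and with $v_1$'s tight degree budget. The key leverage is that $v_1$ has only $k-2$ neighbors in $G$ beyond $v_2$, which severely limits the number of replacement edges at $v_1$ that can carry nontrivial internal structure; combined with the fact that $v_2,\wt{v_2}$ are both frame neighbors of every $u_i$, this forces a proto-gadget close enough to the frame to compose into either an emerald/diamond of $G$ (contradicting ungemmedness) or an $i$-edge-addition with $i\le (k-2)/2$ (contradicting \Cref{buckets}).
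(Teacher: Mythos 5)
Your opening matches the paper's strategy: clone $v_2$ with $v_3$, note $v_4$ has degree $k-2$ in the clone so the resulting $k$-critical subgraph $H$ must be $k$-Ore, and use \Cref{cloneKey} to pin the frame to the closed neighborhood of $v_2$. But both branches of your case analysis have real gaps. In the case where every $v_1u_i$ is a real edge, you claim that $\{v_1,v_2,u_1,\ldots,u_{k-3}\}$ induces a $K_{k-1}$ in $G$; that does not follow, since the frame pairs $u_iu_j$ may themselves be replacement edges of $H$, so these vertices need not be pairwise adjacent in $G$. Even granting a clique, your dichotomy (emerald, or ``add at most $(k-2)/2$ edges to reach a $K_k$'') is unjustified: the $u_i$ may have large degree, nothing bounds the number of edges needed to complete a $K_k$, and an $i$-edge-addition additionally requires the resulting critical subgraph to live on a proper subset of $V(G)$, which you never check. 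What the situation actually yields is that $v_1$ and $v_2$ are adjacent vertices of degree $k-1$ with at least $k-3$ common neighbors, so \Cref{emeraldCluster} (available because \Cref{buckets} excludes edge-additions in a tight, ungemmed graph) gives $N[v_1]=N[v_2]$, hence $v_1v_3\in E(G)$, contradicting that the path is induced. This is the finishing move your argument is missing.

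The replacement-edge case, which you yourself flag as the main obstacle, is not resolved. Your appeal to \Cref{splitProto} is misapplied: $v_1$ is a split vertex (an endpoint) of the replacement edge, not inside it in the sense of \Cref{defInsideEdge}, so \Cref{splitProto} does not put $v_1$ in the frame of a proto-gadget, and ``the same type of analysis'' is not an argument. In fact this case is impossible by a short counting argument: if $v_1u_1$ is a replacement edge, let $T$ be the set of neighbors of $v_1$ inside it and add the edges $\{wu_1: w\in T\}$; this restores to $u_1$ the attachments of the split vertex and so recreates a $k$-Ore subgraph, giving a $|T|$-edge-addition. Since $G$ admits no 2-edge-addition by \Cref{buckets}, $|T|\ge 3$. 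But $v_1$ also has at least one real neighbor for $v_2$ and for each of the remaining frame vertices $u_2,\ldots,u_{k-3}$, so $d_G(v_1)\ge (k-3)+3=k$, contradicting $d_G(v_1)=k-1$. Without this step (or a substitute for it), and with the flawed conclusion in the all-real-edges case, the proposal does not constitute a proof.
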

\begin{proof}
Suppose to the contrary that there exists an induced path $P=v_1v_2v_3v_4$ such that each vertex of $P$ has degree $k-1$. Let $\{z_1,\ldots,z_{k-3}\}$ be the neighbors of $v_2$ not in $P$. Consider $\Gclone{v_2}{v_3}$, that is cloning $v_2$ with $v_3$. As $v_3$ has degree $k-2$ in $\Gclone{u}{y}$, $\Gclone{v_2}{v_3}$ is not $k$-critical. Hence, \Cref{cloning} implies that $\Gclone{v_2}{v_3}$ contains a $k$-Ore subgraph $H$, and \Cref{cloneKey} implies that $V^\ast(H) = \{v_1, \widetilde{v_1}, v_2, z_1,\ldots,z_{k-3}\}$.
	
We claim that $v_1z_i \in E(G)$ for all $1 \leq i \leq k - 3$. Suppose not. We may assume without loss of generality that $v_1z_1\notin E(G)$ and hence $v_1z_1$ is a replacement edge in $H$ instead of a real edge. Let $T$ be the set of neighbors of $v_1$ within the replacement edge $v_1z_1$, and let $S$ be the set of edges $\{ wz_1: w \in T\}$. 
	
Observe that $G + S$ contains a $k$-Ore subgraph, as $z_1$ now has the same neighbors as $z_1$ identified with $v_1$. Thus $|T| \geq 3$, as $G$ does not admit 2-edge-additions by \Cref{buckets}. But $v_1$ has at least one real edge for each of the vertices $v_1, z_2,\ldots,z_{k-3}$, so $d_G(x) \geq k-3 + 3 = k$, a contradiction. It follows that $v_1z_i$ is an edge of $G$ for all $i$ with $1 \leq i \leq k - 3$. But then $v_1,v_2$ have $k - 3$ common neighbors. Hence \Cref{emeraldCluster} implies that $v_1,v_2$ are in the same cluster. So $v_1$ is adjacent to $v_3$, which contradicts the assumption that $P$ is induced.
\end{proof}

\begin{LMA}\label{min2Cut}
If $G$ is a smallest counterexample to \Cref{OURCONJ}, then $G$ is 3-connected.
\end{LMA}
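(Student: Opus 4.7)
The plan is to derive a contradiction from a hypothetical $2$-cut $\{a,b\}$ of $G$ by showing that $G$ must then decompose as an Ore-composition of two strictly smaller $k$-critical graphs, and then using the inductive (minimality) hypothesis together with the $(\varepsilon,\delta)$-potential to strictly bound $p(G)$ below $k(k-3)-P$, contradicting that $G$ is a counterexample.

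Suppose $\{a,b\}$ is a $2$-cut. Write $V(G)\setminus\{a,b\}=V_1\sqcup V_2$ with no edge between $V_1$ and $V_2$ and set $G_i:=G[V_i\cup\{a,b\}]$. I would first rule out $ab\in E(G)$: each $G_i$ is a proper subgraph of the $k$-critical $G$, hence is $(k-1)$-colorable, and renaming colors on each side to agree on $\{a,b\}$ gives a $(k-1)$-coloring of $G$, a contradiction. With $ab\notin E(G)$, classify each $(k-1)$-coloring of $G_i$ by whether $\varphi(a)=\varphi(b)$ or $\varphi(a)\neq\varphi(b)$. Any two compatible colorings combine to one of $G$, so the only situation consistent with $\chi(G)=k$ is that, up to relabeling, every $(k-1)$-coloring of $G_1$ has $\varphi(a)\neq\varphi(b)$ while every $(k-1)$-coloring of $G_2$ has $\varphi(a)=\varphi(b)$. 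Thus $G_1/ab$ contains a $k$-critical subgraph $H_1$ (containing the identified vertex $c$) and $G_2+ab$ contains a $k$-critical subgraph $H_2$ (containing the edge $ab$); both are smaller than $G$, so by minimality both satisfy \Cref{OURCONJ}.

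Next, I would form the Ore-composition $H^\circ$ of $H_2$ on the edge $ab$ with $H_1$ on the vertex $c$, choosing the split of $c$ into $\{a,b\}$ so that each edge $vc\in E(H_1)$ is routed to whichever of $va$, $vb$ is present in $G_1$. This is possible because each neighbor of $c$ in $G_1/ab$ is a neighbor of $a$ or $b$ in $G_1$, and $2$-connectedness of $G$ ensures that both $a$ and $b$ have at least one neighbor in $V_1$, so both get positive degree after the split. Then $H^\circ\subseteq G$ and $H^\circ$ is $k$-critical, so $k$-criticality of $G$ forces $H^\circ=G$. Consequently $V(H_1)=V_1\cup\{c\}$, $V(H_2)=V_2\cup\{a,b\}$, $a$ and $b$ have no common neighbor in $G_1$, and $G$ is exactly this Ore-composition. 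Since $G$ is not $k$-Ore, at least one of $H_1,H_2$ is not $k$-Ore.

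For the final contradiction, apply the Ore-composition identities $|V(G)|=|V(H_1)|+|V(H_2)|-1$ and $|E(G)|=|E(H_1)|+|E(H_2)|-1$ together with the superadditivity $T(G)\geq T(H_1)+T(H_2)-2$ from \Cref{Tsuperadd} (refined to $T(G)\geq T(H_1)+T(H_2)-1$ when one of $H_i=K_k$) to obtain
\[
p(G)\;\leq\;p(H_1)+p(H_2)-k(k-3)-\varepsilon+2\delta.
\]
Now case-analyze using \Cref{OURCONJ} on $H_1,H_2$. When both are not $k$-Ore, $p(H_i)\leq k(k-3)-P$ and $P\geq 2\delta$ (Assumption 1.3) gives $p(G)\leq k(k-3)-2P-\varepsilon+2\delta<k(k-3)-P$. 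When (WLOG) $H_1$ is not $k$-Ore while $H_2$ is $k$-Ore with $H_2\neq K_k$, substituting the refined Ore-bound from \Cref{OURCONJ}(2) produces the term $(|V(H_2)|-1)(\varepsilon-\delta/(k-1))$, which by Assumption 1.1 ($\delta=2(k-1)\varepsilon$) equals $-(|V(H_2)|-1)\varepsilon<0$, so $p(G)<k(k-3)-P$. When $H_2=K_k$, the refined $T$-bound yields $p(G)\leq p(H_1)-(k-1)\varepsilon<k(k-3)-P$ by the same identity. In every case we contradict $p(G)>k(k-3)-P$. The main obstacle is the middle case: the saving relies on the delicate telescoping forced by Assumption 1.1, and this is precisely what the modified $(\varepsilon,\delta)$-potential is calibrated to produce—the original KY-potential would not give the strict inequality here.
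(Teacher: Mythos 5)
Your proposal is correct and takes essentially the same route as the paper: split $G$ at the $2$-cut via the coloring dichotomy into an Ore-composition of two smaller graphs that satisfy \Cref{OURCONJ} (the paper invokes Dirac's decomposition directly and rules out a common neighbor of the cut vertices by a coloring argument, while you recover the same structure by forcing the composed non-$(k-1)$-colorable subgraph $H^\circ$ to equal $G$), and then apply the vertex/edge identities with \Cref{Tsuperadd} and Assumption 1.1 to push $p(G)$ strictly below $k(k-3)-P$. The endgame is organized slightly differently (the paper forces the $k$-Ore piece to be $K_k$ before using the sharper form of \Cref{Tsuperadd}, whereas you case-split directly on $K_k$ versus non-$K_k$ $k$-Ore), but the computations and the substance of the argument coincide.
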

\begin{proof}
Suppose to the contrary that $G$ has a 2-cut $\{x,y\}$, separating $G$ into two edge-disjoint subgraphs $G_1, G_2$. As $G$ is $k$-critical, $x$ and $y$ as otherwise there exists a $(k-1)$-coloring of $G$ by taking the union of a $(k-1)$-coloring of $G_1$ and a $(k-1)$-coloring of $G_2$ as necessary (cf. Theorem 3 of \cite{D1953FM}). As noted in \cite{D1953FM}, either $\varphi_1(x) = \varphi_1(y), \varphi_2(x) \neq \varphi_2(y)$ for all possible $(k-1)$-colorings $\varphi_1, \varphi_2$ of $G_1, G_2$ respectively, or vice versa, as otherwise, by permuting colors if necessary, the union of $\varphi_1, \varphi_2$ is a $(k-1)$-coloring of $G$, a contradiction. So without loss of generality, assume that $\varphi_1(x) = \varphi_1(y)$ for all $(k-1)$-colorings $\varphi_1$ of $G_1$.
	
We claim that $G$ is an Ore-composition of $H_1 = G_1$ and $H_2 = G_2 - \{x,y\} + \underline{xy}$, with $H_1$ being the edge-side. Suppose not. Then there exists a vertex $z\in V(G_2)$ such that $z$ is adjacent to both $x$ and $y$. Let $\varphi$ be a $(k-1)$-coloring of $G - xz$. Since $\varphi$ induces a $(k-1)$-coloring of $G_1$, it follows that $\varphi(x) = \varphi(y)$. Since $yz\in E(G-xz)$, we have that $\varphi(x) \neq \varphi(z)$. But then $\varphi$ is a $(k-1)$-coloring of $G$, a contradiction. This proves the claim.
	
Since $G$ is not $k$-Ore, at least one of $H_1$ or $H_2$ is not $k$-Ore. Note that both $H_1$ and $H_2$ are smaller than $G$ and so satisfy \Cref{OURCONJ}. Observe that $|V(G)| = |V(G_1)| + |V(G_2)| - 1, |E(G)| = |E(H_1)| + |E(H_2)| - 1$, and by \Cref{Tsuperadd}, $T(G) \geq T(H_1) + T(H_2) - 2$. Thus
$$p(G) \leq p(H_1) + p(H_2) - (k-2)(k+1) - \varepsilon + 2(k-1) + 2\delta.$$
As $G$ is a minimal counterexample, $p(G) > k(k-3) - P$. 
	
First suppose that $H_2$ is not $k$-Ore. Then $p(H_2) \leq k(k-3) - P$, and
$$k(k-3) - P < p(G) \leq p(H_1) - P - \varepsilon + 2\delta,$$
whence $p(H_1) > k(k-3) - 2\delta + \varepsilon$. It then follows that $H_1 = K_k$, as $\delta \geq 2(k-1)\varepsilon$ implies
$$k(k-3) - 2\delta + \varepsilon > k(k-3) + \varepsilon |V(G)| -  \left( 2 + \frac{|V(G)| - 1}{k-1} \right)\delta.$$
But then the sharper form of \Cref{Tsuperadd} when $H_1 = K_k$ implies that $T(G) \geq T(H_1) + T(H_2) - 1$, and so
$$p(G) \leq k(k-3) - P - (\delta - (k-1)\varepsilon) < k(k-3) - P,$$
which contradicts the assumption that $G$ is a counterexample. So we may assume that $H_2$ is $k$-Ore and $H_1$ is not. But then the same argument applies using again the sharper form of \Cref{Tsuperadd}. 
\end{proof}

\begin{LMA}\label{minNoEdgeAdd}
If $G$ is a smallest counterexample to \Cref{OURCONJ}, then $G$ does not admit an edge-addition.
\end{LMA}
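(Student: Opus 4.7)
The plan is proof by contradiction via the potential calculus. Suppose $G$ admits an edge-addition through a non-edge $xy$, giving a $k$-critical subgraph $H \subseteq G + xy$ with $V(H) = R \subsetneq V(G)$; note $xy \in E(H)$, for otherwise $H \subseteq G$ would force $H = G$, contradicting $V(H) \subsetneq V(G)$. The first step is to bound $p_G(R)$ in terms of $p(H)$. Since $|V(H)| = |R|$, $|E(H)| \leq |E(G[R])| + 1$, and any clique of $H$ that uses $xy$ (necessarily containing both $x$ and $y$) can be shrunk by deleting $y$ to yield a clique of $G[R]$ one size smaller, so $T(H) \leq T(G[R]) + 1$. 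Together these give $p_G(R) \leq p(H) + 2(k-1) + \delta$. Because $G[R]$ contains the non-edge $xy$ it is not a clique, and every intermediate set in a chain of critical extensions has at least $|R| \geq k$ vertices, so by the $K_k$-freeness of $G \neq K_k$ (which holds since smallest counterexamples are not $k$-Ore and hence not $K_k$) none of those induced subgraphs is a clique either. Iterating \Cref{edgedrop} to reach $V(G)$ yields $p(G) \leq p_G(R) - (2(k-1) + \delta) \leq p(H)$.

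If $H$ is not $k$-Ore, then since $H$ is smaller than $G$ and $G$ is good, statement 3 of \Cref{OURCONJ} gives $p(H) \leq k(k-3) - P$ and so $p(G) \leq k(k-3) - P$, contradicting that $G$ is a counterexample. The harder case is when $H$ is $k$-Ore, where the plan is to exhibit $G$ as an Ore-composition forcing $G$ itself to be $k$-Ore. Every $(k-1)$-coloring of $G[R]$ must assign distinct colors to $x$ and $y$ (else it extends to a $(k-1)$-coloring of $H$), so identifying $x$ and $y$ to a single vertex $z$ produces a $k$-chromatic graph $G'$ that contains a $k$-critical subgraph $K$ with $z \in V(K)$, and $K$ is smaller than $G$. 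If $K$ captures $V(G) \setminus R$ and no edges of $G$ cross between $R \setminus \{x,y\}$ and $V(G) \setminus R$, then $G$ is literally the Ore-composition of $H$ (edge-side with replaced edge $xy$) and $K$ (vertex-side with split-vertex $z$), so $G$ is $k$-Ore, the desired contradiction.

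The main obstacle is that \Cref{min2Cut} forbids $\{x,y\}$ from being a $2$-cut whenever $|V(G) \setminus R| \geq 1$, so in the $k$-Ore case crossing edges between $R \setminus \{x,y\}$ and $V(G) \setminus R$ must exist and must be absorbed. I would mirror the end of the \Cref{min2Cut} argument: crossing edges add a strict positive term to $|E(G)| - |E(H)| - |E(K)| + 1$, strengthening the Ore-composition potential inequality; combined with the sharper form of \Cref{Tsuperadd} in the subcase where $H = K_k$ or $K = K_k$ (which we force by comparing the residual potential with the $k$-Ore bounds $p(H) \leq k(k-3) + k\varepsilon - 2\delta$ for $H = K_k$ and $p(H) \leq k(k-3) + (2-|V(H)|)\varepsilon - 2\delta$ for $H \neq K_k$ via \Cref{kOreTBound}), the extra slack should be enough to conclude $p(G) \leq k(k-3) - P$ and complete the contradiction.
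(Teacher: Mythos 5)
Your first half is fine and matches the easy part of the paper's argument: bounding $p_G(R)\le p(H)+2(k-1)+\delta$ and iterating \Cref{edgedrop} (the intermediate sets contain the non-edge $xy$, so they are never cliques) gives $p(G)\le p(H)$, which kills the case where $H$ is not $k$-Ore. One directional slip: every $(k-1)$-coloring of $G[R]$ must give $x$ and $y$ the \emph{same} color, not distinct colors --- a coloring with distinct colors would already properly color $H$.

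The genuine gap is the case $H$ $k$-Ore, which is the entire content of the lemma, and your treatment of it does not close. First, you have no control over the $k$-critical subgraph $K$ of the identified graph: $V(K)$ may intersect $R$ arbitrarily and need not contain $V(G)\setminus R$, so $G$ is never exhibited as an Ore-composition of $H$ and $K$; the ``clean'' case you describe is both unestablished and, as you yourself observe, impossible by \Cref{min2Cut}. Second, even in that clean case the conclusion ``$G$ is $k$-Ore'' would require the vertex-side $K$ to be $k$-Ore, not merely $k$-critical. Consequently the whole burden falls on the closing ``extra slack should be enough'' estimate, which is never set up and does not obviously exist: since $V(K)$ can overlap $R$, the potentials of $H$ and $K$ cannot simply be added as in \Cref{min2Cut}, the crossing edges do not slot into any established subadditivity inequality, and when $H$ is $k$-Ore one only knows $p(H)\le k(k-3)+k\varepsilon-2\delta$, which is far above the target $k(k-3)-P$; this is precisely why the paper does not argue by potential at this point. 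The paper's proof instead takes $R$ of minimum size, shows $R$ is collapsible via the critical-extension machinery (the analogue of \Cref{claim:coll} in \Cref{buckets}), deduces $H=G[R]+xy$, rules out $H=K_k$ because $\partial_G R=\{x,y\}$ would then be a 2-cut contradicting \Cref{min2Cut}, and rules out $H$ being an Ore-composition of two $k$-Ore graphs by showing $\partial_G R$ meets both sides (else the overlap vertices form a 2-cut) and then running the recoloring argument of \Cref{claim:i1GivenBoundary}, which contradicts the minimality of $H$. Some structural/coloring argument of this kind is needed; your proposal does not supply a substitute for it.
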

\begin{proof}
The proof is the same as $i = 1$ in \Cref{buckets}, with a small modification to account for the fact that we are no longer assuming $G$ to be ungemmed. Let $R \subsetneq V(G)$ be a subset of minimum size such that $G\lb R \rb + xy $ contains a $k$-critical subgraph $H$. The same argument as \Cref{claim:coll} in \Cref{buckets} shows that $R$ is collapsible. Since $H$ is proper, by \Cref{edgedrop} we have that $k(k-3) - P < p(G) < p(H)$ and hence $H$ is $k$-Ore. Thus we have established the analogue of \Cref{claim:kOre} in \Cref{buckets}. The same argument as in \Cref{claim:S1} holds here. Together, these facts imply that $H$ is $k$-Ore and $H = G\lb R \rb + xy$, from which we deduce that $H$ cannot be $K_k$, since then $\{x,y\}$ would form a 2-cut in $G$, contradicting \Cref{min2Cut}.

Having shown that $H = K_k$ is impossible, it only remains to show that $H$ cannot be an Ore-composition of two $k$-Ore graphs $H_1, H_2$. Clearly there must exist $u,v \in \partial_G R$ with $u \in V(H_1) \setminus V(H_2), v \in V(H_2) \setminus V(H_1)$, as otherwise the overlap vertices of $H_1, H_2$ would be a 2-cut in $G$. Thus, the conditions for \Cref{claim:i1GivenBoundary} hold, and \Cref{claim:i1GivenBoundary} implies that $H$ cannot be an Ore-composition of $H_1, H_2$, which completes the argument.
\end{proof}

\section{6-Critical Graphs}\label{sec:6critical}

We now prove \Cref{MAINTHM} for $k = 6$. The proof is divided into two parts. In this section, we first analyze the local, structural properties of 6-critical graphs by applying the results of \Cref{sec:potential} and \Cref{sec:cloning}. We then combine this with discharging in \Cref{sec:discharging} to complete the proof.

Let
$$\varepsilon = \frac{1}{105},~ \delta = 10\varepsilon = \frac{10}{105},~ P = \frac{20}{21},~ Q = \frac{2}{7},~ \buc = \frac{32}{21},~ \gap = 2 + \frac{10}{105}$$
One can verify by routine calculations that these values satisfy Assumption 1 (see \Cref{sec:potential_assumptions}), and that the inequalities arising in the discharging analysis of \Cref{sec:discharging} are satisfied for $\varepsilon = \frac{1}{105}$.

\begin{LMA}\label{minIsTightUngemmed}
If $G$ is a smallest counterexample to \Cref{MAINTHM}, then $G$ is 3-tight and ungemmed.
\end{LMA}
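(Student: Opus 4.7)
The statement asserts two conclusions, that $G$ is 3-tight and that $G$ is ungemmed, and the plan is to verify them separately.

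For 3-tightness the argument is essentially bookkeeping. Since $G$ is a smallest counterexample to \Cref{MAINTHM}, every graph smaller than $G$ satisfies \Cref{OURCONJ}: statements 1 and 2 hold for every $k$-Ore graph unconditionally (by direct calculation and \Cref{kOreTBound}), while statement 3 holds for every smaller $k$-critical non-$k$-Ore graph by the minimality of $G$. Hence $G$ is good. Moreover, since the first two statements of \Cref{MAINTHM} follow from \Cref{kOreTBound}, $G$ itself must violate statement 3, giving $p(G) > 18 - P$. A quick arithmetic check with the chosen constants shows $3\delta = 30/105 = 2/7 = Q$, so $18 - P = 18 - P - Q + 3\delta$, making $G$ 3-tight.

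For ungemmedness I would argue by contradiction: suppose $G$ contains a gem $D$. The key tool is \Cref{minNoEdgeAdd}, which says $G$ admits no edge-addition, and the plan is to show that any gem forces one. The diamond case is immediate: if $G[V(D)] = K_k - uv$, then $V(D) \subsetneq V(G)$ (else $G = K_k - uv$ would be $(k-1)$-colorable, contradicting $\chi(G) = k$), and adding $uv$ creates a $K_k$ inside $G + uv$ on the proper vertex subset $V(D)$, giving an edge-addition and contradicting \Cref{minNoEdgeAdd}.

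The main obstacle is the emerald case. Let $V(D) = \{v_1, \ldots, v_{k-1}\}$ and let $w_i$ be the unique neighbor of $v_i$ outside $D$ (which exists since $d_G(v_i) = k-1$ and $v_i$ has $k-2$ neighbors inside $D$). First I would show, via a Hall/SDR argument, that every proper $(k-1)$-coloring $\varphi$ of $G - V(D)$ must assign the same color to all $w_i$: each $v_i$ has the $k-2$ allowed colors $\{1,\ldots,k-1\} \setminus \{\varphi(w_i)\}$, and Hall's condition for a system of distinct representatives into $\{1,\ldots,k-1\}$ fails only when all the forbidden colors $\varphi(w_i)$ coincide, so otherwise $\varphi$ would extend to a proper $(k-1)$-coloring of $G$, contradicting $\chi(G) = k$. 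Given this, two subcases arise. If all the $w_i$ coincide as a single vertex $w$, then $V(D) \cup \{w\}$ induces a $K_k$ in $G$; since $K_k$ is itself $k$-critical this forces $G = K_k$, but $K_k$ is 6-Ore, contradicting that $G$ is a counterexample to statement 3. Otherwise pick $i, j$ with $w_i \neq w_j$; then $w_iw_j \notin E(G)$ (else it would be an edge of $G - V(D)$ on which every coloring assigns equal colors, impossible), and $(G - V(D)) + w_iw_j$ is not $(k-1)$-colorable, hence contains a $k$-critical subgraph $H$ with $V(H) \subseteq V(G) \setminus V(D) \subsetneq V(G)$. This $H$ is also a subgraph of $G + w_iw_j$, so $w_iw_j$ is an edge-addition, again contradicting \Cref{minNoEdgeAdd}.
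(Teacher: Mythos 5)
Your proof is correct, and its skeleton matches the paper's: 3-tightness comes from the arithmetic $Q \ge 3\delta$ together with the fact that a smallest counterexample must violate statement 3 of \Cref{MAINTHM} (so $p(G) > 18 - P$) and is good by minimality, while gems are excluded via \Cref{minNoEdgeAdd}. The only place you diverge is the emerald case. The paper dispatches it in one line by citing \Cref{emeraldCluster}: since $G$ admits no edge-addition, any two vertices of the emerald (adjacent, degree $k-1$, with $k-3$ common neighbors) have equal closed neighborhoods, so all of them share one outside neighbor and $G$ contains a $K_k$, contradicting that $G$ is not $6$-Ore. You instead re-derive this from scratch with a Hall/SDR argument showing that every $(k-1)$-coloring of $G - V(D)$ colors all the outside neighbors $w_i$ alike, and then either all $w_i$ coincide (yielding $K_k$) or some non-edge $w_iw_j$ is an explicit edge-addition, again contradicting \Cref{minNoEdgeAdd}. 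Both routes rest on the same lemma \Cref{minNoEdgeAdd}; your version is more self-contained (it essentially reproves the content of \Cref{commonNeighbors}/\Cref{emeraldCluster} in this special case, treating the whole emerald at once rather than pairwise), whereas the paper's is shorter by reusing machinery already established for tight, ungemmed graphs. Either way the conclusion and the use of minimality are the same.
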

\begin{proof}
Suppose not. Since $p(G) > k(k-3) - P$ and $Q \geq 3\delta$, $G$ is 3-tight. By \Cref{minNoEdgeAdd}, $G$ does not admit an edge-addition and hence $G$ does not contain a diamond $D$. If $G$ contains an emerald $D$, then by \Cref{minNoEdgeAdd} and \Cref{emeraldCluster}, the vertices of $D$ have a common neighbor $v$ not in $D$. But then $G \lb V(D) \cup \{v\}\rb = K_k$, a contradiction since $G$ is not $k$-Ore. Hence $G$ is $3$-tight and ungemmed, a contradiction.
\end{proof}

In \Cref{sec:discharging}, we will assign charge to $V(G)$ in such a way that vertices of degree 5 have negative initial charge, and all other vertices have positive initial charge. Our goal is to discharge the vertices so that every vertex has non-negative charge, which will produce a contradiction. To that end, we first study the structural properties of degree 5 vertices.

\begin{DEF}
For each $i\ge 5$, $D_i(G)$ denotes the subgraph of $G$ induced by vertices of degree $i$.
\end{DEF}

\begin{LMA}\label{3inE}
If $H \subseteq G$ is a split $k$-Ore subgraph with $V(H) \subsetneq V(G)$ and split vertices $\{a,b\}$, then $d_H(a), d_H(b) \geq 3$.
\end{LMA}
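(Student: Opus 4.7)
The plan is to show $d_H(a) \geq 3$; the bound for $b$ will follow by symmetry. I will assume $d_H(a) \leq 2$ for contradiction and exhibit an $i$-edge-addition in $G$ with $i \leq 2$, which is ruled out by \Cref{buckets} (applicable because $G$ is tight and ungemmed by \Cref{minIsTightUngemmed}, and $(k-2)/2 = 2$ when $k=6$).

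Let $H^\bullet$ denote the $k$-Ore graph from which $H$ was obtained by splitting a vertex $v$ into $a$ and $b$; in particular $H^\bullet$ is $k$-critical (every $k$-Ore graph is), and since $v$ carried no self-loop, $a$ and $b$ are non-adjacent in $H$. Consider the bijection $\phi: V(H) \setminus \{a\} \to V(H^\bullet)$ that sends $b \mapsto v$ and fixes every other vertex. Tracing definitions, $\phi$ maps $E(H-a)$ onto $E(H^\bullet) \setminus \{\, vu : u \in N_H(a) \setminus N_H(b)\,\}$. Setting
\[
  M := \{\, bu : u \in N_H(a) \setminus N_H(b)\,\},
\]
one has $|M| \leq d_H(a) \leq 2$, and $(H-a)+M$ is isomorphic to $H^\bullet$ via $\phi$ (noting that each $u$ in $M$ lies in $V(H) \setminus \{a,b\}$, so no self-loop or multi-edge arises).

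Now let $S := M \setminus E(G)$. Since $H - a \subseteq G$ and $M \subseteq E(G) \cup S$, the graph $(H-a)+M$ is a subgraph of $G + S$, so $G + S$ contains a copy of the $k$-critical graph $H^\bullet$ on vertex set $V(H) \setminus \{a\}$, which is a proper subset of $V(G)$ as $V(H) \subsetneq V(G)$. If $S = \emptyset$, then $G$ itself contains a proper $k$-critical subgraph, directly contradicting the $k$-criticality of $G$. Otherwise $S$ is an $i$-edge-addition of $G$ with $1 \leq i = |S| \leq 2$, contradicting \Cref{buckets}.

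The only real content is the edge-count that produces the isomorphism $(H-a)+M \cong H^\bullet$; once this bookkeeping is in place the forbidden-edge-addition machinery finishes the argument immediately, so I do not expect a significant obstacle.
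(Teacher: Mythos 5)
Your proof is correct and follows essentially the same route as the paper: add the at most two missing edges from $b$ to $N_H(a)$ so that the identified $k$-Ore graph $H^\bullet$ appears on the proper vertex set $V(H)\setminus\{a\} \subsetneq V(G)$, yielding a forbidden 2-edge-addition by \Cref{buckets}. Your extra bookkeeping (the explicit isomorphism, discarding edges already in $E(G)$, and the $S=\emptyset$ case) only makes explicit what the paper leaves implicit.
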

\begin{proof}
Suppose not. We may assume without loss of generality that $d_H(a) \le 2$. Let $G'=G \lb V(H) \rb + \{bu: u\in N_H(a)\}$. Since $N_{G'}(b)$ contains $N_H(a)\cup N_H(b)$, it follows that $G'$ contains a 6-Ore subgraph and hence $G$ admits a 2-edge-addition, contradicting by \Cref{buckets}.
\end{proof}

We will frequently make use of \Cref{3inE}. In particular, we have the following.

\begin{LMA}\label{RepDeg}
Let $x \in V^\ast(H)$ for a gadget $H$ of $G$. If $d_G(x) = 5$, then $x$ is not incident to a replacement edge of $H$, and if $d_G(x) \in \{6,7\}$, then $x$ is incident to at most 1 replacement edge of $H$.
\end{LMA}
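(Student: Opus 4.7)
The approach is a direct degree count leveraging \Cref{3inE}. Let $\ell$ denote the number of replacement edges of the gadget $H$ incident to $x$. Since $x \in V^\ast(H)$ and the frame has $k-1 = 5$ vertices arranged as a $K_{k-1}$ (with some edges possibly being replacement edges), the vertex $x$ is associated with the other $k-2 = 4$ frame vertices via either a real edge of $G$ or a replacement edge. The $4 - \ell$ real frame edges incident to $x$ contribute $4 - \ell$ distinct neighbors of $x$ lying in $V^\ast(H) \setminus \{x\}$.

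For each of the $\ell$ replacement edges incident to $x$, say $xy$, the replacement corresponds to a split $6$-Ore subgraph $H_{xy} \subseteq G$ with split vertices $\{x,y\}$. Since $|V^\ast(H) \setminus \{x,y\}| = 3 \ge 1$ and these remaining frame vertices lie in $V(G) \setminus V(H_{xy})$, the hypothesis $V(H_{xy}) \subsetneq V(G)$ of \Cref{3inE} holds. Applying \Cref{3inE} gives $d_{H_{xy}}(x) \ge 3$, which in turn yields at least $3$ neighbors of $x$ in $V(H_{xy}) \setminus \{x,y\}$. These interior neighbors are disjoint from $V^\ast(H) \setminus \{x\}$ by \Cref{defInsideEdge}, and the interiors of distinct replacement edges incident to $x$ are pairwise disjoint by the recursive structure of Ore-compositions (each top-level replacement contributes an internally disjoint split $6$-Ore substructure sharing only the endpoints $x$ and $y$ with the surrounding frame).

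Summing the contributions yields the degree bound
$$d_G(x) \;\ge\; (4 - \ell) + 3\ell \;=\; 4 + 2\ell.$$
Both conclusions of the lemma follow by case analysis: if $d_G(x) = 5$, then $4 + 2\ell \le 5$ forces $\ell = 0$, so $x$ is incident to no replacement edge of $H$; if $d_G(x) \in \{6,7\}$, then $4 + 2\ell \le 7$ forces $\ell \le 1$, so $x$ is incident to at most one replacement edge of $H$. I do not anticipate significant obstacles here; the argument is essentially a pigeonhole count derived from \Cref{3inE}, and the only delicate point is the verification that the $(4-\ell) + 3\ell$ neighbors counted are genuinely distinct, which follows directly from the decomposition of $H$ via its frame.
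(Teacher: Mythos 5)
Your proof is correct and is essentially the paper's own argument: the paper likewise deduces $d_G(x)\ge 4+2r$ from \Cref{3inE} (where $r$ is the number of replacement edges of $H$ at $x$) and concludes by the same case split on $d_G(x)$. Your write-up just makes explicit the details the paper leaves implicit (the hypothesis $V(H_{xy})\subsetneq V(G)$, the non-adjacency of split vertices, and the disjointness of the counted neighbors), all of which check out.
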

\begin{proof}
It follows from \Cref{3inE} that $d_G(x) \ge 4 + 2r$ where $r$ is the number of replacement edges of $H$ that $x$ is incident to. Hence if $d_G(x)=5$, then $r=0$ and if $d_G(x) \le 7$, then $r\le 1$.
\end{proof}

\begin{CORO}\label{oneCinH}
Let $H$ be a proto-gadget of $G$. If $x,y \in V^\ast(H)$ and $d_G(x) = d_G(y) = 5$, then $x$ and $y$ are in the same cluster of $G$.
\end{CORO}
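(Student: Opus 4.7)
The plan is a short one-step argument using \Cref{RepDeg} and \Cref{emeraldCluster}. Throughout, I assume (as is implicit in this section) that $G$ is a tight, ungemmed graph, so by \Cref{buckets} it admits no edge-additions.

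First I would observe that since $H$ is a proto-gadget with distinguished frame $V^\ast(H)$, the induced subgraph on $V^\ast(H)$ is isomorphic to $K_{k-1}$ where each pair of vertices is joined either by a real edge of $G$ or by a replacement edge. Since $d_G(x) = d_G(y) = 5 = k-1$, \Cref{RepDeg} (with $k=6$) implies that neither $x$ nor $y$ is incident to any replacement edge of $H$. Consequently, every edge of the $K_{k-1}$ on $V^\ast(H)$ incident to $x$ or $y$ is a real edge of $G$. In particular, $xy \in E(G)$ and each of the $k-3 = 3$ remaining vertices of $V^\ast(H) \setminus \{x,y\}$ is a common real neighbor of $x$ and $y$ in $G$.

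Therefore $x$ and $y$ are adjacent vertices of degree $k-1$ with $|N_G(x) \cap N_G(y)| \geq k - 3$. Applying \Cref{emeraldCluster}, we conclude that $N\lb x \rb = N\lb y \rb$, so $x$ and $y$ lie in the same cluster of $G$ by definition of a cluster.

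The argument is entirely routine once \Cref{RepDeg} is available; there is no real obstacle. The only subtlety worth noting is the verification that $xy$ is a real edge rather than a replacement edge, which follows immediately from \Cref{RepDeg} applied to either endpoint, and that this then provides both the adjacency and the $k-3$ common neighbors required to invoke \Cref{emeraldCluster}.
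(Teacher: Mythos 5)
Your proposal is correct and follows the paper's own proof essentially verbatim: both apply \Cref{RepDeg} to conclude that neither $x$ nor $y$ is incident to a replacement edge, deduce that $xy$ is a real edge with at least $k-3$ common neighbors in $V^\ast(H)$, and then invoke \Cref{emeraldCluster}. No further comment is needed.
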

\begin{proof}
By \Cref{RepDeg}, neither $x$ nor $y$ are incident to a replacement edge of $H$. Hence $x$ and $y$ are adjacent and $|N(x)\cap N(y)|\ge 3$. By \Cref{emeraldCluster}, $x$ and $y$ are in the same cluster of $G$ as desired.
\end{proof}

If $C$ is a cluster in $G$, $x\in C$ and $y\in N(x)\setminus C$, then we say $y$ is a \emph{neighbor of $C$}.

\begin{LMA}\label{clusterNeighbor6}
If $G$ is a 1-tight, ungemmed graph and $C$ is a cluster of size 2 in $G$, then $C$ has at most 1 neighbor having degree at most 6. Furthermore, if $C$ has a neighbor having degree at most 6, then $C$ is contained in a proto-gadget of $G$.
\end{LMA}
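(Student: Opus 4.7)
The plan is to establish the ``furthermore'' (proto-gadget) claim first, then use it twice to drive a contradiction for the ``at most $1$'' claim. Write $C=\{x_1,x_2\}$, and let the common neighbors of $C$ be $y_1,y_2,y_3,y_4$; say $y_1$ is the claimed neighbor with $d_G(y_1)\le 6$. The hypotheses of \Cref{cloning} hold for the pair $(x_1,y_1)$: $x_1$ lies in the cluster $C$ of size $s=2$; $d(y_1)\le 6=k-2+s$; and any cluster containing $y_1$ differs from $C$ and has size at most $k-4=2$ by \Cref{k3Cluster} applied to the tight, ungemmed graph $G$. Thus \Cref{cloning} supplies a $k$-critical subgraph $H\subseteq \Gclone{y_1}{x_1}$ that is either $k$-Ore or equal to the whole clone with $d(y_1)=5$.

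To rule out the second alternative, observe that in $\Gclone{y_1}{x_1}$ the vertices $x_1$, $x_2$, and $\tildx$ (where the notation conforms to $\Gclone{y_1}{C}$) all have degree $5$ and share the closed neighborhood $\{x_1,x_2,y_2,y_3,y_4,\tildx\}$; they therefore form a cluster of size $3$ in $H$. By \Cref{tightClone} and the $1$-tightness of $G$, $p(H)\ge p(G)-\delta>k(k-3)-P-Q$, so (since $H$ is smaller than $G$ by the third criterion and therefore good) $H$ is tight; by \Cref{ungemmedClone}, $H$ is ungemmed. But \Cref{k3Cluster} bounds the maximum cluster of a tight, ungemmed graph by $k-4=2$, contradicting the size-$3$ cluster. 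Hence $H$ is $k$-Ore, and in fact $H\ne \Gclone{y_1}{x_1}$ because the clone is ungemmed while every $k$-Ore graph contains a gem by \Cref{kOreGem}. \Cref{cloneKey} then pins the frame of $H$ to $\{x_1,\tildx,x_2,y_2,y_3,y_4\}$. Since $\tildx$ has only $5$ neighbors in the clone (all in the frame), it can support no replacement edge of $H$ (a replacement edge would introduce an internal neighbor outside the frame, contradicting the neighborhood of $\tildx$ in the clone); so the five frame edges incident to $\tildx$ are real, and $H-\tildx$ is a $K_{k-1}$ with replacement edges on $\{x_1,x_2,y_2,y_3,y_4\}$, i.e., a proto-gadget of $G$ containing $C$ in its frame.

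For the first claim, suppose for contradiction that $y_1,y_2$ both have degree at most $6$. The above argument, applied once to each, produces proto-gadgets $P_1$ with frame $\{x_1,x_2,y_2,y_3,y_4\}$ and $P_2$ with frame $\{x_1,x_2,y_1,y_3,y_4\}$. I would then splice $P_1$ and $P_2$ along their shared vertices $\{x_1,x_2,y_3,y_4\}$ and adjoin the new edge $y_1y_2$ to build a $K_6$-skeleton on $\{x_1,x_2,y_1,y_2,y_3,y_4\}$: the edges $x_1x_2$ and $x_iy_j$ are real in $G$ (using $d(x_1)=d(x_2)=5$ and \Cref{RepDeg}); the pairs $\{y_2y_3,y_2y_4\}$ are real or realized by split $k$-Ore subgraphs inside $P_1$; the pairs $\{y_1y_3,y_1y_4\}$ similarly inside $P_2$; the pair $y_3y_4$ is handled by either $P_i$; and $y_1y_2$ is the added edge. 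This exhibits a $6$-Ore subgraph in $G+y_1y_2$. If $y_1y_2\notin E(G)$, that is a $1$-edge-addition, contradicting \Cref{buckets}. If $y_1y_2\in E(G)$, the $6$-Ore subgraph lies already in $G$, so by $k$-criticality of $G$ it must span $V(G)$, making $G$ a $6$-Ore graph and contradicting that $G$ is ungemmed via \Cref{kOreGem}.

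The main obstacle is the splicing in the last paragraph: one must verify that the split $k$-Ore subgraphs realizing the various replacement edges (those drawn from $P_1$, those drawn from $P_2$, and the one for the shared potential replacement $y_3y_4$) can be chosen vertex-disjoint apart from shared frame endpoints, so that the resulting $K_6$-frame is a bona fide Ore-composition. Within each $P_i$ disjointness is automatic from the $k$-Ore structure; across $P_1$ and $P_2$, the degree restrictions $d_G(y_1),d_G(y_2)\le 6$ together with \Cref{RepDeg} control how many replacement edges $y_1,y_2$ can carry, and any stubborn overlap forces a smaller split $k$-Ore configuration that produces an earlier $i$-edge-addition (for some $i\le (k-2)/2=2$), again contradicting \Cref{buckets}.
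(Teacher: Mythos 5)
Your treatment of the ``furthermore'' statement is sound and is essentially the paper's own argument: clone $C$ with the low-degree neighbor, rule out the $6$-critical alternative because the clone has a cluster of size $3$ (via \Cref{tightClone}, \Cref{ungemmedClone} and \Cref{k3Cluster}), then use \Cref{cloneKey} and delete the cloned vertex to get a proto-gadget containing $C$. The problem is the second half, and the gap is exactly the one you flag as ``the main obstacle'': the splicing of $P_1$ and $P_2$. To conclude that $G+y_1y_2$ contains a $6$-Ore subgraph you need the interiors of the replacement edges you borrow from $P_1$ (at $y_2$, and possibly $y_3y_4$) to be vertex-disjoint from the interiors borrowed from $P_2$ (at $y_1$), and disjoint from the frame $\{x_1,x_2,y_1,y_2,y_3,y_4\}$; nothing in the two independent clonings guarantees this, since $P_1$ and $P_2$ are produced separately and their replacement-edge interiors may intersect arbitrarily. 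Your fallback claim that ``any stubborn overlap forces a smaller split $k$-Ore configuration that produces an earlier $i$-edge-addition'' is precisely the missing argument, and it is not clear what configuration or which edges would witness it. A second, smaller omission: when $y_1y_2\notin E(G)$, an edge-addition requires the $6$-critical subgraph of $G+y_1y_2$ to live on a \emph{proper} subset of $V(G)$, so \Cref{buckets} does not dispose of the case where your spliced $6$-Ore subgraph spans $V(G)$; that case needs a separate potential computation against $1$-tightness (as in the paper), which you only supply in the $y_1y_2\in E(G)$ subcase via criticality.

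For contrast, the paper avoids any splicing. From the first clone it applies \Cref{3inE} to bound the number of replacement edges at the second low-degree neighbor $y_2$ inside $H_1$, concluding $y_2$ is genuinely adjacent to one of $y_3,y_4$ (and it kills $d(y_2)=5$ outright with \Cref{emeraldCluster}). Then it performs a \emph{single} second cloning $\Gclone{y_2}{C}$, obtaining one intact $6$-Ore graph $H_2$ with frame $C\cup\{\wt{y_2},y_1,y_3,y_4\}$, and substitutes the real vertex $y_2$ for $\wt{y_2}$: since $y_2$ is already adjacent to $x_1,x_2$ and to one of $y_3,y_4$, adding only two edges makes $G\lb (V(H_2)\setminus\{\wt{y_2}\})\cup\{y_2\}\rb$ plus those edges contain a copy of $H_2$. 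This yields a $2$-edge-addition with no disjointness issues, and the spanning case is handled by the bound $p(G)\le p(H_2)-2(k-1)+\delta<k(k-3)-P-Q$. If you want to salvage your one-edge route, you would have to prove the disjointness statement for the two proto-gadgets and add the spanning-case potential argument; as written, the proof is incomplete.
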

\begin{proof}
Let $\{v_1,v_2,v_3,v_4\}$ denote the neighbors of $C$. Suppose $C$ has a neighbor having degree at most $6$. Suppose without loss of generality $d(v_1) \leq 6$. By \Cref{cloning}, $\Gclone{v_1}{C}$ is $6$-critical or $\Gclone{v_1}{C}$ contains a $6$-Ore subgraph. \Cref{tightClone} implies that $\Gclone{v}{C}$ is tight and \Cref{ungemmedClone} implies that $\Gclone{v}{C}$ is ungemmed. But $\Gclone{v}{C}$ has a cluster of size $3$, and therefore is not $6$-critical, or else it would contradict \Cref{k3Cluster}. Hence $\Gclone{v}{C}$ contains a $6$-Ore subgraph $H_1$. By \Cref{cloneKey}, $V^\ast(H_1) = C \cup \{\wt{v_1},v_2,v_3,v_4\}$, and $H_1 - \wt{v_1}$ is a proto-gadget of $G$ containing $C$. This proves the second statement.
	
To prove the first statement, let us further suppose for the purposes of contradiction that $C$ has a second neighbor having degree at most $6$. Suppose without loss of generality that $d(v_2) \leq 6$. Since $v_2$ has degree at most 7 in $\Gclone{v_1}{C}$, \Cref{3inE} implies that $v_2$ is adjacent to at least one of $v_3$ and $v_4$. Without loss of generality, we may assume that $v_2$ is adjacent to $v_3$. Suppose $d(v_2) = 5$. Hence $v_2$ has degree 6 in $\Gclone{v_1}{C}$ and \Cref{RepDeg} implies that $v_2$ is adjacent to both $v_3,v_4$. But then \Cref{emeraldCluster} implies that $v_2$ is in the cluster $C$, a contradiction. 
	
So we may assume $d(v_2) = 6$. Now consider $\Gclone{v_2}{C}$. By the same reasoning as above, $\Gclone{v_2}{C}$ contains a 6-Ore subgraph $H_2$ with $V^\ast(H_2) = C \cup \{\wt{v_2}, v_1, v_3, v_4\}$. Using the replacement edges of $H_2$, we find that $G \lb (V(H_2) \setminus \{\wt{v_2}\})\cup \{v_2\}\} \rb + v_2v_1 + v_2v_4$ contains a 6-Ore subgraph. If $(V(H_2)\setminus \{\wt{v_2}\}) \cup \{v_2\} \subsetneq V(G)$, then $G$ admits a 2-edge-addition, contradicting \Cref{buckets}. So we may assume that $(V(H_2) \setminus \{\wt{v_2}\}) \cup \{v_2\} = V(G)$. But now $|V(G)| = |V(H_2)|$ and $|E(G)| \geq |E(H_2)| + 1$. Since $H_2$ is 6-Ore, we find that the potential of $G$ is at most
$$p(G) \leq p(H_2) - 2(k-1) + \delta < k(k-3) - P - Q,$$
which contradicts that $G$ is 1-tight.
\end{proof}	 

\begin{LMA}\label{K4}
If $G$ is a 2-tight, ungemmed graph, then $D_5(G)$ does not contain a subgraph isomorphic to $K_4$.
\end{LMA}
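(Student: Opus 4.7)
The plan is to assume for contradiction that four vertices $u_1,u_2,u_3,u_4 \in D_5(G)$ induce a $K_4$ in $G$ and split on the cluster structure of these vertices. By \Cref{k3Cluster} every cluster of $G$ has size at most $k-4=2$, so at most two of the $u_i$ can lie in one cluster. If two of them, say $u_1$ and $u_2$, share a cluster $C$, then $N[u_1]=N[u_2]$ forces $u_3,u_4$ together with two further common neighbors $a,b$ to be the external neighbors of $C$; then $u_3$ and $u_4$ are two neighbors of $C$ of degree $5\leq 6$, directly contradicting \Cref{clusterNeighbor6}.

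I would then proceed under the assumption that each $u_i$ sits in its own singleton cluster and attack this case via cloning. With $x=u_1$ and $y=u_2$, the hypotheses of \Cref{cloning} are satisfied since $d(u_2)=5=(k-2)+1$ and $\{u_1\}\neq\{u_2\}$ are distinct singleton clusters. \Cref{criticalClone} yields a $6$-critical subgraph $H\subseteq \Gclone{u_2}{u_1}$, and \Cref{cloning} forces $H$ to be $6$-Ore or else $H=\Gclone{u_2}{u_1}$. Note that \Cref{tightClone} and \Cref{ungemmedClone} ensure $\Gclone{u_2}{u_1}$ is $1$-tight and ungemmed, which is what I will need to invoke structural lemmas inside the clone. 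Let $a,b$ be the two neighbors of $u_1$ in $G$ outside $\{u_2,u_3,u_4\}$; then $u_1,\wt{u_1},u_3,u_4$ all have degree $5$ in $\Gclone{u_2}{u_1}$.

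If $H$ is $6$-Ore, \Cref{cloneKey} identifies its frame as the closed neighborhood of $u_1$ in $\Gclone{u_2}{u_1}$, namely $\{u_1,\wt{u_1},u_3,u_4,a,b\}$. Since the frame vertices $u_3,u_4$ have degree $5$ there, \Cref{RepDeg} (via \Cref{3inE}) forces them to be incident to no replacement edge of $H$, so every frame edge at $u_3$ or $u_4$ is a real edge in $\Gclone{u_2}{u_1}$. Translating back to $G$ gives $N_G(u_3)=\{u_1,u_2,u_4,a,b\}$ and $N_G(u_4)=\{u_1,u_2,u_3,a,b\}$, so $|N_G(u_3)\cap N_G(u_4)|=4\geq k-3$; \Cref{emeraldCluster} would then place $u_3$ and $u_4$ in a common cluster of $G$, contradicting the unclustered assumption.

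In the remaining case $H=\Gclone{u_2}{u_1}$ is itself $6$-critical, as well as $1$-tight and ungemmed. Here $u_1$ and $\wt{u_1}$ have identical closed neighborhoods in $\Gclone{u_2}{u_1}$, so $\{u_1,\wt{u_1}\}$ is a cluster there, of size exactly $2$ by \Cref{k3Cluster}. Its external neighbors include $u_3$ and $u_4$, both of degree $5\leq 6$ in $\Gclone{u_2}{u_1}$, once more contradicting \Cref{clusterNeighbor6}. The only subtle point is bookkeeping the tight/ungemmed hypotheses when passing between $G$ and $\Gclone{u_2}{u_1}$, but the propagation lemmas handle this cleanly and no direct potential estimate is required.
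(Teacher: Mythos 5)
Your proposal is correct and takes essentially the same route as the paper: clone one vertex of the $K_4$ with an adjacent one, and then either the clone contains a $6$-Ore subgraph, in which case \Cref{cloneKey}, \Cref{RepDeg} and \Cref{emeraldCluster} force two of the remaining degree-$5$ vertices into a common cluster (the paper phrases the contradiction as a cluster of size $3$ violating \Cref{k3Cluster}, which is the same computation), or the clone is $6$-critical, $1$-tight and ungemmed and the cluster $\{u_1,\wt{u_1}\}$ with two degree-$5$ neighbors contradicts \Cref{clusterNeighbor6} — your explicit note that the clone is $1$-tight is in fact slightly more careful than the paper's wording. One minor bookkeeping point: your case split (``two $u_i$ share a cluster'' versus ``all four are singleton clusters'') omits the possibility that some $u_i$ lies in a size-$2$ cluster with a vertex outside the $K_4$, but that case dies by the very same \Cref{clusterNeighbor6} argument, since such a cluster still has at least two of the other $u_j$, of degree $5$, as neighbors; so nothing essential is missing.
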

\begin{proof}
Suppose to the contrary that $v_1,v_2,v_3,v_4$ have degree 5 and $G \lb \{v_1,v_2,v_3,v_4\}\rb = K_4$. Let $u_1,u_2$ be the other neighbors of $v_1$. By \Cref{cloning}, $\Gclone{v_2}{v_1}$ is $6$-critical or $\Gclone{v_2}{v_1}$ contains a 6-Ore subgraph. 
	
First suppose $\Gclone{v_2}{v_1}$ has a 6-Ore subgraph $H$. Then by \Cref{cloneKey}, $V^\ast(H) = \{v_1, \wt{v_1}, v_3, v_4\}$. By \Cref{RepDeg} as $v_3,v_4$ have degree 5, we find that $v_3,v_4$ are not incident with a replacement edges of $H$. Hence $v_3,v_4$ are each adjacent to both of $u_1, u_2$. But then \Cref{emeraldCluster} implies that $v_1,v_3,v_4$ are in the same cluster, contradicting \Cref{k3Cluster}. 
	
So we may assume that $\Gclone{v_2}{v_1}$ is 6-critical. By \Cref{tightClone,ungemmedClone}, $\Gclone{v_2}{v_1}$ is tight and ungemmed. But $\Gclone{v_2}{v_1}$ contains a cluster of size 2 with two neighbors having degree 5, which contradicts \Cref{clusterNeighbor6}.
\end{proof}

\begin{LMA}\label{2path}
Let $G$ be a 2-tight, ungemmed graph. If $P=v_1v_2v_3$ is an induced path such that $d_G(v_1)=d_G(v_2)=d_G(v_3)=5$, then $\Gclone{v_1}{v_2}$ and $\Gclone{v_3}{v_2}$ are 1-tight and ungemmed.
\end{LMA}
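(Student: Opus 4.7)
The two statements are symmetric in $v_1$ and $v_3$, so my plan is to focus on $\Gclone{v_1}{v_2}$ (with $\Gclone{v_3}{v_2}$ handled by the identical argument). I would decompose the task into three claims: that this clone is $6$-critical, that it is $1$-tight, and that it is ungemmed. The last two will follow routinely from machinery already in place: \Cref{tightClone} gives $p(\Gclone{v_1}{v_2}) \geq p(G) - \delta > k(k-3) - P - Q + \delta$ since $G$ is $2$-tight, and \Cref{ungemmedClone} applies directly with $x = v_2$ (degree $5$) and $y = v_1 \in N(v_2)$. So the real substance of the proof is $6$-criticality.

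For $6$-criticality, I would first invoke \Cref{criticalClone} with $x = v_2, y = v_1$, and $s \geq 1$ (noting $d(v_1) = 5 \leq k-2+s$) to conclude that $\Gclone{v_1}{v_2}$ is not $5$-colorable. Then, supposing for contradiction that there is a proper $6$-critical subgraph $H \subsetneq \Gclone{v_1}{v_2}$, I would observe that $\wt{v_2} \in V(H)$ (otherwise $H \subseteq G - v_1$, contradicting $6$-criticality of $G$). Since $d_G(v_1) = 5 = k-1$, the clone has the same vertex and edge count as $G$, so any proper subgraph $H$ of the clone is strictly smaller than $G$ in either vertex or edge count; hence goodness of $G$ applies to $H$, and the potential analysis from the proof of \Cref{cloning} goes through, forcing $H$ to be $6$-Ore.

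Applying \Cref{cloneKey} to $H$ yields the frame $V^*(H) = N_{\Gclone{v_1}{v_2}}[v_2] = \{v_2, \wt{v_2}, v_3\} \cup W$, where $W = N_G(v_2) \setminus \{v_1, v_3\}$ has $3$ elements. Since the path $v_1 v_2 v_3$ is induced, $d_{\Gclone{v_1}{v_2}}(v_3) = d_G(v_3) + 1 = 6$, and the argument behind \Cref{RepDeg}---applied to $H$ as a $6$-Ore graph, where a frame vertex incident to $r$ replacement edges has degree at least $5 + 2r$ in the ambient graph by \Cref{3inE}---forces $r = 0$ for $v_3$. Hence $v_3$ is adjacent in $G$ to every vertex of $W$, giving $|N(v_2) \cap N(v_3)| \geq 3 = k-3$; \Cref{emeraldCluster} (applicable since $G$ admits no edge-additions by \Cref{buckets}) then places $v_2$ and $v_3$ in the same cluster of $G$. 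This contradicts $|C_{v_2}| = 1$, which holds by \Cref{clusterNeighbor6}: otherwise both $v_1$ and $v_3$ would be degree-$5$ neighbors of a size-$2$ cluster containing $v_2$. The main obstacle throughout is that \Cref{cloning}'s condition~3 may fail when $v_1$ has a twin, but the direct $V, E$-count comparison above replaces the need for it in the potential analysis, and the structural contradiction via $v_3$ is insensitive to the cluster structure of $v_1$.
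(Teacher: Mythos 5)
Your proposal is correct and essentially the same as the paper's proof: the paper likewise clones $v_2$ with one endpoint of the path (it uses $\Gclone{v_3}{v_2}$, so that $v_1$ is the degree-6 frame vertex), invokes \Cref{cloning} and \Cref{cloneKey} to get a 6-Ore subgraph framed on $N[v_2]$, uses the \Cref{RepDeg}/\Cref{3inE} degree count to force that vertex to be adjacent to the other three neighbors of $v_2$, applies \Cref{emeraldCluster} to contradict the path being induced, and finishes with \Cref{tightClone} and \Cref{ungemmedClone}. Your two deviations are minor: the ending via \Cref{clusterNeighbor6} is an equivalent (slightly more roundabout) way to reach the same contradiction the paper gets directly from $v_1v_3\notin E(G)$, and your explicit workaround for condition~3 of \Cref{cloning} (noting $H$ is automatically smaller than $G$ since the clone has the same vertex and edge counts) patches a hypothesis the paper applies without comment.
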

\begin{proof}
Suppose not. First suppose that $\Gclone{v_3}{v_2}$ is not 6-critical. By \Cref{cloning} and \Cref{cloneKey}, $\Gclone{v_3}{v_2}$ contains a 6-Ore subgraph $H$ with $V^\ast(H) = (N(v_2) \setminus \{v_3\}) \cup \{\wt{v_2}\}$. The vertex $v_1$ has degree 6 in $\Gclone{v_3}{v_2}$. By \Cref{RepDeg}, $v_1$ is not incident with a replacement edge of $H$. Hence $v_1$ is adjacent to the other 3 neighbors of $v_2$. But then \Cref{emeraldCluster} implies that $v_1,v_2$ are in the same cluster and hence $v_1$ is adjacent to $v_3$, a contradiction since $P$ is induced. 
	
So we may assume that $\Gclone{v_3}{v_2}$ is 6-critical. By \Cref{tightClone} $p(\Gclone{v_3}{v_2}) \geq p(G) - \delta > p(G) - P - Q$ and hence $\Gclone{v_3}{v_2}$ is tight. By \Cref{ungemmedClone}, $\Gclone{v_3}{v_2}$ is ungemmed. By symmetry $\Gclone{v_1}{v_3}$ is also tight and ungemmed, a contradiction.
\end{proof}

\begin{LMA}\label{loneK3}
Let $G$ be a 2-tight, ungemmed graph. If a component of $D_5(G)$ contains a triangle, then that component is a triangle.
\end{LMA}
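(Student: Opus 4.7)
The plan is to suppose for contradiction that the component of $D_5(G)$ containing a triangle $T = v_1v_2v_3$ contains a further vertex. Then some degree-$5$ vertex $u \notin V(T)$ is adjacent to a vertex of $T$; by symmetry we may take $uv_1 \in E(G)$. By \Cref{K4}, $u$ cannot be adjacent to both $v_2$ and $v_3$, so we split into two cases.

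\textbf{Case 1:} $u$ is adjacent to neither $v_2$ nor $v_3$. Then $uv_1v_2$ is an induced path consisting of three degree-$5$ vertices, so by \Cref{2path} the graph $H := \Gclone{v_2}{v_1}$ is $1$-tight and ungemmed. The twins $v_1$ and $\wt{v_1}$ have the same closed neighborhood in $H$ and both have degree $5$, so they form a cluster which, by \Cref{k3Cluster} (with $k-4 = 2$), has size exactly $2$. I will then check that $v_3$ and $u$ are both neighbors of this cluster of degree at most $6$ in $H$: $v_3$ stays at degree $5$ (it loses $v_2$ but gains $\wt{v_1}$ since $v_3 \in N_G(v_1)$), and $u$ stays at degree $5$ (it gains $\wt{v_1}$, and since $u$ was not adjacent to the deleted $v_2$, no other change occurs). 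Two neighbors of degree $\le 6$ contradict \Cref{clusterNeighbor6}.

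\textbf{Case 2:} $u$ is adjacent to exactly one of $v_2,v_3$; by symmetry say $uv_2 \in E(G)$ and $uv_3 \notin E(G)$. Then $v_3v_2u$ is an induced path of three degree-$5$ vertices, so by \Cref{2path} the graph $H := \Gclone{v_3}{v_2}$ is $1$-tight and ungemmed. As before, $\{v_2,\wt{v_2}\}$ forms a cluster of size $2$ in $H$. I will check that $v_1$ has degree $5$ in $H$ (loses $v_3$, gains $\wt{v_2}$) and $u$ has degree $6$ in $H$ (gains $\wt{v_2}$ via $u \in N_G(v_2)$, with no other change). Both are neighbors of the cluster $\{v_2,\wt{v_2}\}$, so this cluster again has two neighbors of degree $\le 6$, contradicting \Cref{clusterNeighbor6}.

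The substantive step in each case is producing an induced $3$-vertex path of degree-$5$ vertices to which \Cref{2path} applies, so that cloning yields a $1$-tight ungemmed graph in which the twin pair can be exhibited as a cluster of size $2$ with too many low-degree neighbors. The main obstacle, and the only thing requiring care, is correctly tracking degrees and adjacencies across the cloning operation — in particular confirming that the two designated neighbors of the new cluster both have degree at most $6$ after the clone — but once this bookkeeping is done, the contradiction follows immediately from \Cref{clusterNeighbor6}.
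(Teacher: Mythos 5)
Your proof is correct, and it reaches the conclusion by a genuinely different arrangement of the same toolkit. The paper does not case-split on the adjacencies of $u$: it clones $v_1$ with $v_2$, notes that $v_3$ and $u$ would give the twin cluster two neighbors of degree at most $6$, and uses \Cref{clusterNeighbor6} only to conclude that $\Gclone{v_2}{v_1}$ is \emph{not} $6$-critical; it then extracts a $6$-Ore subgraph via \Cref{cloning} and \Cref{cloneKey}, applies \Cref{RepDeg} to deduce $uv_3\in E(G)$, repeats the argument with $\Gclone{v_3}{v_1}$ to get $uv_2\in E(G)$, and finishes by contradicting \Cref{K4}. You instead use \Cref{K4} at the outset to exclude the fully adjacent case, which guarantees an induced degree-$5$ path ($uv_1v_2$ or $v_3v_2u$); this lets you invoke \Cref{2path} as a black box so the clone is $1$-tight and ungemmed, after which \Cref{clusterNeighbor6}, together with \Cref{k3Cluster} to pin the twin cluster at size exactly $2$, yields the contradiction directly. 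Your route is shorter at the level of this lemma because the $6$-Ore/replacement-edge analysis is delegated to \Cref{2path}, while the paper's route avoids the case analysis and extracts the extra adjacencies $uv_2,uv_3$ without needing an induced path. One cosmetic slip: in Case 1 you say $u$ \emph{stays} at degree $5$ in $\Gclone{v_2}{v_1}$, but since $u$ gains $\wt{v_1}$ and loses nothing it has degree $6$ there; this is harmless, as only degree at most $6$ is needed for \Cref{clusterNeighbor6}.
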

\begin{proof}
Suppose not. Thus we may assume there exists a triangle $T=v_1v_2v_3$ in $D_5(G)$ and $u \in D_5(G)$ adjacent to $v_1$. Note that in $\Gclone{v_2}{v_1}$, $v_3$ has degree 5 and $u$ has degree at most 6. Hence we deduce from \Cref{clusterNeighbor6} that $\Gclone{v_2}{v_1}$ is not 6-critical. Thus \Cref{cloning} and \Cref{cloneKey} imply that $\Gclone{v_2}{v_1}$ contains a 6-Ore subgraph $H$ with $v_3, u \in V^\ast(H)$. Since $v_3$ has degree 5 in $G$, it follows from \Cref{RepDeg} that $v_3$ is not incident to a replacement edge of $H$. Hence $v_3u \in E(G)$. Therefore, both $v_2$ and $u$ have degree 5 in $\Gclone{v_3}{v_1}$. 
	
Similarly by \Cref{clusterNeighbor6}, $\Gclone{v_3}{v_1}$ is not 6-critical. Hence $\Gclone{v_3}{v_1}$ has a 6-Ore subgraph $H$ with $v_2,u \in V^\ast(H)$. Since both $v_2$ and $u$ have degree 5 in $G$, it follows from \Cref{RepDeg} that neither $v_2$ nor $u$ is incident to a replacement edge of $H$. Hence $v_2u \in E(G)$. But then $G \lb \{v_1,v_2,v_3,u \} \rb = K_4$, contradicting \Cref{K4}.
\end{proof}

\begin{LMA}\label{noC4}
If $G$ is a 2-tight, ungemmed graph, then $D_5(G)$ does not contain a 4-cycle.
\end{LMA}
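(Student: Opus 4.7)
The plan is to suppose for contradiction that $D_5(G)$ contains a $4$-cycle $v_1v_2v_3v_4$ (with edges $v_1v_2$, $v_2v_3$, $v_3v_4$, $v_4v_1$) and to split into three subcases according to the number of chords that $G$ places on $\{v_1,v_2,v_3,v_4\}$.

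If both chords $v_1v_3$ and $v_2v_4$ are edges of $G$, then $G[\{v_1,v_2,v_3,v_4\}]=K_4$ inside $D_5(G)$, which directly contradicts \Cref{K4}. If exactly one chord is present, say $v_1v_3\in E(G)$ (the other case being symmetric), then $v_1v_2v_3$ forms a triangle in $D_5(G)$, and \Cref{loneK3} forces the component of $D_5(G)$ containing it to consist of exactly those three vertices; but $v_4$ is a degree-$5$ vertex adjacent to the degree-$5$ vertex $v_1$, hence $v_4$ lies in that same component, giving it at least four vertices, a contradiction.

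The remaining case is that the $4$-cycle is chordless in $G$. Then $v_1v_2v_3$ is an induced path in $G$ whose three vertices all have degree $5$, so \Cref{2path} applies and yields that $\Gclone{v_1}{v_2}$ is $1$-tight and ungemmed; in particular it is $6$-critical and so has minimum degree at least $5$. I will obtain the contradiction by computing $d_{\Gclone{v_1}{v_2}}(v_4)$: cloning $v_2$ with $v_1$ deletes $v_1$, so $v_4$ loses the edge $v_1v_4$; since chordlessness gives $v_2v_4\notin E(G)$, we have $v_4\notin N_G(v_2)$, so $v_4$ is not adjacent to the newly added vertex $\wt{v_2}$ either. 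Hence $d_{\Gclone{v_1}{v_2}}(v_4)=d_G(v_4)-1=4$, contradicting that a $6$-critical graph has minimum degree at least $5$.

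There is no serious obstacle: each of the three cases reduces immediately to one of \Cref{K4}, \Cref{loneK3}, or \Cref{2path}. The only points that require care are (i) verifying that $v_1v_2v_3$ is induced \emph{in $G$} (not merely in $D_5(G)$) before invoking \Cref{2path}, which is exactly what chordlessness of the $4$-cycle guarantees, and (ii) correctly tracking in the notation $\Gclone{v_1}{v_2}$ that it is $v_1$ that is deleted while $v_2$ is cloned, so that the relevant degree drop falls on the opposite vertex $v_4$ of the $4$-cycle.
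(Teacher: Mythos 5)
Your proof is correct and follows essentially the same route as the paper: rule out the chords $v_1v_3,v_2v_4$ (the paper does both via \Cref{loneK3}, your extra appeal to \Cref{K4} for the double-chord case is a harmless redundancy), then apply \Cref{2path} to the induced path and observe that the opposite vertex of the $4$-cycle has degree $4$ in the resulting $1$-tight, hence $6$-critical, clone. The degree bookkeeping in $\Gclone{v_1}{v_2}$ is handled correctly.
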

\begin{proof}
Suppose to the contrary that $v_1v_2v_3v_4v_1$ is a 4-cycle in $D_5(G)$. By \Cref{loneK3}, $v_1v_3, v_2v_4 \notin E(G)$. By \Cref{2path}, $\Gclone{v_3}{v_2}$ is 1-tight and ungemmed. But $v_4$ has degree 4 in $\Gclone{v_3}{v_2}$, a contradiction.
\end{proof}

\begin{LMA}\label{noDeg3}
If $G$ is a 2-tight, ungemmed graph, then the maximum degree of $D_5(G)$ is at most $2$.
\end{LMA}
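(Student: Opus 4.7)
The plan is to suppose, for contradiction, that some vertex $v \in V(G)$ of degree 5 has three neighbors $v_1, v_2, v_3$ that also have degree 5, and then to produce a configuration in an appropriate clone of $G$ that violates \Cref{clusterNeighbor6}. Let $u_1, u_2$ denote the remaining two neighbors of $v$.

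First I will show that $v_1, v_2, v_3$ are pairwise non-adjacent in $G$. If some $v_iv_j \in E(G)$, then $v v_i v_j$ would form a triangle inside $D_5(G)$; by \Cref{loneK3} this triangle would be the entire component of $D_5(G)$ containing $v$, contradicting the fact that the third vertex $v_\ell$ is also a degree-5 neighbor of $v$. Consequently, $v_1 v v_2$ is an induced 3-path whose vertices all have degree 5, so by \Cref{2path} the graph $G' := \Gclone{v_1}{v}$ is 1-tight and ungemmed.

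Next I will analyze the local structure around $v$ in $G'$. By the definition of cloning, $N_{G'}\lb v\rb = N_{G'}\lb \widetilde{v}\rb = \{v, \widetilde{v}, v_2, v_3, u_1, u_2\}$, so $v$ and $\widetilde{v}$ lie in a common cluster $C$ of $G'$; by \Cref{k3Cluster} we have $|C| \leq 2$, and hence $C = \{v, \widetilde{v}\}$ has size exactly 2. Because $v_1v_2, v_1v_3 \notin E(G)$, neither $v_2$ nor $v_3$ loses an edge when $v_1$ is deleted, while each gains the new edge to $\widetilde{v}$; therefore $d_{G'}(v_2) = d_{G'}(v_3) = 6$. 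Thus $C$ is a cluster of size 2 in $G'$ with at least two neighbors of degree at most 6, directly contradicting \Cref{clusterNeighbor6}.

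The main obstacle is choosing the right cloning operation: one must clone along an edge $v v_i$ connecting two of the degree-5 vertices so that (a) the clone produces a cluster of size 2 (which requires the other two degree-5 neighbors $v_j, v_\ell$ to be non-adjacent to $v_i$, forcing them to remain as common neighbors of $v$ and $\widetilde{v}$), and (b) the clone remains 1-tight and ungemmed, which is delivered by \Cref{2path}. Once the correct clone is identified, the proof reduces to routine degree bookkeeping; the genuinely delicate point is recognizing in advance that pairwise non-adjacency of $v_1, v_2, v_3$ is what makes \Cref{2path} applicable and what forces $v_2, v_3$ to have degree exactly $6$ in $G'$.
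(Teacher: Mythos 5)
Your proof is correct and is essentially the paper's argument: both use \Cref{loneK3} to get an induced degree-5 path through $v$, \Cref{2path} to make $\Gclone{v_1}{v}$ 1-tight and ungemmed, and then contradict \Cref{clusterNeighbor6} via the size-2 cluster $\{v,\widetilde{v}\}$ with two neighbors of degree at most 6. The only (harmless) difference is that you establish full pairwise non-adjacency of $v_1,v_2,v_3$ to force $d_{G'}(v_2)=d_{G'}(v_3)=6$ exactly, whereas the paper only needs $v_1v_2\notin E(G)$ since degree at most 6 already suffices for \Cref{clusterNeighbor6}.
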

\begin{proof}
Suppose to the contrary that $u\in D_5(G)$ is a vertex with neighbors $v_1,v_2,v_3\in D_5(G)$. By \Cref{loneK3}, we find that $v_1$ is not adjacent to $v_2$ and hence $v_1uv_2$ is an induced path in $G$. Then by \Cref{2path}, $\Gclone{v_1}{u}$ is a 1-tight and ungemmed graph with a cluster $\{u, \wt{u}\}$ that has two neighbors $v_2,v_3$ having degree at most 6 in $\Gclone{v_1}{u}$, contradicting \Cref{clusterNeighbor6}.
\end{proof}

\begin{LMA}\label{d5struct}
Let $G$ be a 2-tight, ungemmed graph. Every component of $D_5(G)$ has size at most 3.
\end{LMA}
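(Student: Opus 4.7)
The plan is to suppose for contradiction that some component $C$ of $D_5(G)$ has at least 4 vertices, then use \Cref{2path} to clone along an induced 3-path inside $C$ and obtain a vertex of degree 4 in the resulting graph, contradicting 6-criticality.

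First I would collect the shape constraints on $C$ from earlier lemmas: by \Cref{noDeg3} the maximum degree of $D_5(G)$ is at most 2, so $C$ is a path or cycle; by \Cref{loneK3} a component containing a triangle is exactly a triangle, so $C$ (being of size $\geq 4$) is triangle-free; and by \Cref{noC4} there is no 4-cycle. Hence $C$ is either an induced path of length $\geq 3$ or an induced cycle of length $\geq 5$. Either way, I can pick four consecutive vertices $v_1, v_2, v_3, v_4 \in V(C)$ with $v_1v_2, v_2v_3, v_3v_4 \in E(G)$ and no further edges of $G$ among $\{v_1,v_2,v_3,v_4\}$: any such extra edge would join two vertices of $D_5(G)$, hence lie in $D_5(G)$, and would either create a triangle, a 4-cycle, or push some $v_i$ to have $D_5(G)$-degree at least 3, all of which are forbidden. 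In particular $v_1v_2v_3$ is an induced path in $G$ with all three vertices of degree~5, and $v_2v_4 \notin E(G)$.

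The decisive step is to apply \Cref{2path} to the induced path $v_1v_2v_3$: it yields that $\Gclone{v_3}{v_2}$ is 1-tight and ungemmed, and in particular is 6-critical, so its minimum degree is at least 5. Now I track the degree of $v_4$ in $\Gclone{v_3}{v_2}$: the vertex $v_3$ is deleted (losing $v_4$ one neighbor), and the new vertex $\wt{v_2}$ is adjacent precisely to $(N_G(v_2)\setminus\{v_3\})\cup\{v_2\}$, which does not contain $v_4$ since $v_2v_4 \notin E(G)$. Therefore $d_{\Gclone{v_3}{v_2}}(v_4) = 5-1 = 4$, contradicting that $\Gclone{v_3}{v_2}$ is 6-critical.

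I do not expect any serious obstacle: the potential-function and cloning machinery is already encapsulated in \Cref{2path}, and the only thing to verify carefully is the non-adjacency of $v_2$ and $v_4$ (and the inducedness of the path $v_1v_2v_3$), both of which follow mechanically from the degree bound in \Cref{noDeg3} together with the absence of triangles (\Cref{loneK3}) and 4-cycles (\Cref{noC4}) in $D_5(G)$.
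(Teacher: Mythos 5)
Your proposal is correct, and it reaches the conclusion by a slightly different route than the paper. The paper's proof is a one-line combination of \Cref{3path}, \Cref{loneK3}, \Cref{noC4}, and \Cref{noDeg3}: the last three reduce any component of size at least $4$ to a path or long cycle, and such a component contains an induced path of length $3$ of degree-$5$ vertices, which \Cref{3path} forbids outright. You use the same three structural lemmas to extract four consecutive chordless vertices $v_1v_2v_3v_4$, but then, instead of invoking \Cref{3path}, you apply \Cref{2path} to the induced path $v_1v_2v_3$ and observe that in $\Gclone{v_3}{v_2}$ the vertex $v_4$ loses its neighbor $v_3$ while gaining no edge to $\wt{v_2}$ (since $v_2v_4\notin E(G)$, as such an edge would create a triangle or a degree-$3$ vertex in $D_5(G)$), so $v_4$ has degree $4$ in a graph that \Cref{2path} guarantees is $1$-tight, hence $6$-critical --- a contradiction. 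This is a clean and valid shortcut: it avoids the heavier cloning/$k$-Ore/cluster analysis inside the proof of \Cref{3path}, at the price of using the stronger $2$-tight hypothesis (needed for \Cref{2path}), which is exactly what \Cref{d5struct} assumes; the paper's version, by contrast, packages the exclusion of length-$3$ induced paths as a standalone lemma valid already for tight graphs and then cites it. Your chord-elimination discussion (triangles via \Cref{loneK3}, $4$-cycles via \Cref{noC4}, degree bound via \Cref{noDeg3}) covers all cases, so there is no gap.
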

\begin{proof}
From \Cref{3path}, \Cref{loneK3}, \Cref{noC4}, and \Cref{noDeg3}, we find that no component of $D_5(G)$ can have more than 3 vertices.
\end{proof}

The possible components in $D_5(G)$ are shown in \Cref{fig:D5G}. Vertices depicted as open circles are in a cluster of size 2.

\begin{figure}
\centering
\newcommand\gOffset{2.5}
\begin{tikzpicture}[]
\tikzstyle{vertex}=[circle,fill=black!100,minimum size=6pt,inner sep=2pt]
\tikzstyle{cluster}=[circle, draw, inner sep=2pt, minimum size=6pt]
 \node[vertex] (A_1) at (1/2*\gOffset,0) {}; \draw (A_1);
 
\node[vertex] (B_1) at (\gOffset, 0) {}; \node[vertex] (B_2) at (\gOffset+3/5,1/3) {}; \node[vertex] (B_3) at (\gOffset+6/5,0) {};
\draw (B_1) -- (B_2) -- (B_3);

\node[cluster] (C_1) at (2*\gOffset,0) {}; \node[cluster] (C_2) at (2*\gOffset+1, 0) {};
\draw (C_1) -- (C_2); 
 
\node[cluster] (D_1) at (3*\gOffset,-1/4) {}; \node[cluster] (D_2) at (3*\gOffset+1, -1/4) {}; \node[vertex] (D_3) at (3*\gOffset + 1/2, 1.2/2) {};
\draw (D_1) -- (D_2) -- (D_3) -- (D_1);

\node[vertex] (E_1) at (4*\gOffset, 0) {}; \node[vertex] (E_2) at (4*\gOffset+ 1, 0) {};
\draw (E_1) -- (E_2);

\node[vertex] (F_1) at (5*\gOffset,-1/4) {}; \node[vertex] (F_2) at (5*\gOffset+1,-1/4) {}; \node[vertex] (F_3) at (5*\gOffset+1/2, 1.2/2) {};
\draw (F_1) -- (F_2) -- (F_3) -- (F_1);
\end{tikzpicture}
\caption{Components of $D_5(G)$. Open circles represent vertices in the same cluster.}
\label{fig:D5G}
\end{figure}
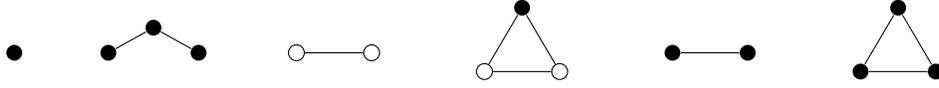

\begin{LMA}\label{noCommonProto7}
Let $G$ be a 2-tight and ungemmed graph, and let $H \subseteq G$ be a proto-gadget. If $x \in V^\ast(H)$ has degree 5 in $G$, and $y \notin V^\ast(H)$ is a neighbor of $x$ having degree at most 7 in $G$, then there does not exist a proto-gadget $H'$ with $x,y \in V^\ast(H')$.
\end{LMA}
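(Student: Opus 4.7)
First, \Cref{RepDeg} applied to $x \in V^{\ast}(H)$ with $d_G(x) = 5$ shows that $x$ lies on no replacement edge of $H$, so the four members of $V^{\ast}(H) \setminus \{x\}$ are real neighbors of $x$. Together with $y \notin V^{\ast}(H)$, these exhaust $N_G(x)$; label $V^{\ast}(H) = \{x, a_1, a_2, a_3, a_4\}$, so $N_G(x) = \{y, a_1, a_2, a_3, a_4\}$. For a hypothetical proto-gadget $H'$ with $x, y \in V^{\ast}(H')$, the same reasoning forces $V^{\ast}(H') \setminus \{x\}$ to consist of four real neighbors of $x$; since $y$ is one of them, the remaining three lie in $\{a_1, a_2, a_3, a_4\}$, and after relabeling $V^{\ast}(H') = \{x, y, a_1, a_2, a_3\}$.

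Next I combine $H$ and $H'$ into a subgraph $\hat H \subseteq H \cup H' \subseteq G$ whose frame is $K_6 - ya_4$ on $\{x, y, a_1, a_2, a_3, a_4\}$, choosing for each frame edge a real edge (if present in $H$ or $H'$) or a single split $6$-Ore substructure from one of them. Then $\hat H + ya_4$ is a $K_6$ with replacement edges, hence a $6$-Ore graph, and in particular $6$-critical.

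The argument now case-splits. If $ya_4 \in E(G)$, then $\hat H + ya_4 \subseteq G$ is a $6$-chromatic subgraph of the $6$-critical graph $G$; since every proper subgraph of $G$ is $5$-colorable, $\hat H + ya_4 = G$, making $G$ itself $6$-Ore, and \Cref{kOreGem} supplies a gem in $G$, contradicting ungemmedness. If $ya_4 \notin E(G)$ and $V(\hat H) \subsetneq V(G)$, then $\{ya_4\}$ is a $1$-edge-addition of $G$ with critical subgraph $\hat H + ya_4$ on a proper vertex subset, contradicting \Cref{buckets}, which rules out $1$-edge-additions in $2$-tight, ungemmed graphs.

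The hard case is $ya_4 \notin E(G)$ with $V(\hat H) = V(G)$. Here $\hat H + ya_4$ is $6$-Ore on $V(G)$; if $\hat H + ya_4 = K_6$ then $|V(G)| = 6$ forces $G = K_6$, which contains $K_5$-emeralds, a contradiction. Otherwise, I apply \Cref{kOreGem} (or \Cref{kOreExcludeGem} with a judicious $K_5$-subgraph inside $\hat H + ya_4$) to locate a gem $D$ of $\hat H + ya_4$ avoiding $y$, so that $D \subseteq \hat H \subseteq G$ as an edge-subgraph. The main obstacle is to show $D$ remains a gem of $G$: any extra edge in $F := E(G) \setminus E(\hat H)$ incident to a vertex of $D$ could raise its degree beyond $k-1 = 5$. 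I expect to handle this via a potential-comparison argument, matching the $6$-Ore bound $p(\hat H + ya_4) \leq k(k-3) + |V(G)|\varepsilon - (2 + (|V(G)| - 1)/(k-1))\delta$ from \Cref{MAINTHM}(2) against $G$'s $2$-tightness $p(G) > k(k-3) - P - Q + 2\delta$; this forces $|F|$ to be small enough that a suitable choice of $D$ avoids vertices incident to $F$-edges (with the subsidiary observation $d_G(y) \geq 5$ and $d_G(a_4) \geq 5$ controlling the delicate degree accounting at $y$ and $a_4$). Producing a gem in $G$ then contradicts ungemmedness.
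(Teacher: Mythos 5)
Your opening reduction (no replacement edges at $x$, so $V^\ast(H)$ and $V^\ast(H')$ consist of real neighbors of $x$ and overlap in all but one vertex) matches the paper's setup, but after that your route diverges and has two genuine gaps. First, the claim that the union $\hat H$ of the two proto-gadgets is ``$K_6-ya_4$ with replacement edges, hence $\hat H + ya_4$ is $6$-Ore'' is not justified: the split $6$-Ore substructures you borrow from $H$ and from $H'$ are subgraphs of $G$ that may share interior vertices with one another, and a frame vertex of one gadget (e.g.\ $y$, or $a_4$) may lie \emph{inside} a replacement edge of the other, so the union need not be an Ore-composition at all. What survives is only that every replacement structure forces distinct colors on its two endpoints, so $\hat H + ya_4$ is not $5$-colorable; but then it merely contains some $6$-critical subgraph, and your subsequent appeals to \Cref{kOreGem}/\Cref{kOreExcludeGem} and to the $6$-Ore potential bound are not licensed. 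Second, your ``hard case'' ($ya_4\notin E(G)$, $\hat H$ spanning) is only a declared intention (``I expect to handle this via a potential-comparison argument''), and the sketch pulls in opposite directions: to drive $p(G)$ below the $2$-tight threshold you need $|E(G)|$ to strictly exceed the edge count of the putative $6$-Ore graph, whereas to keep a gem alive you need $F=E(G)\setminus E(\hat H)$ small; moreover you would need a gem avoiding \emph{both} $y$ and $a_4$ and all $F$-incident vertices, which \Cref{kOreGem} (one excluded vertex) does not give and \Cref{kOreExcludeGem} only gives relative to a $K_5$ containing the vertices to be avoided, which need not exist. Note also that $y$ can already have degree $6$ or $7$ inside $\hat H$ (via a replacement edge of $H'$), so no useful bound on $|F|$ falls out of $d_G(y)$.

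The paper avoids all of this by never forming the union. If $d(y)=5$, then \Cref{RepDeg} applied to $y$ in $H'$ makes $y$ truly adjacent to three frame vertices of $H$ besides $x$, and \Cref{emeraldCluster} puts $x,y$ in one cluster, forcing $G=G[V(H)\cup\{y\}]$ to be $6$-Ore and hence gemmed, a contradiction. If $d(y)\in\{6,7\}$, then $y$ is really adjacent to $x$ and at least two of the remaining frame vertices of $H$, so adding only the (at most) two missing edges $yz_1,yz_2$ turns $H$ together with $y$ into a genuine $6$-Ore graph $K$ on $V(H)\cup\{y\}$, whose replacement structures all come from the single gadget $H$; then either $V(K)\subsetneq V(G)$ and one has a $2$-edge-addition contradicting \Cref{buckets}, or $V(K)=V(G)$ and the fact that $y$ has at least three $G$-edges outside $E(K)$ gives $|E(G)|\geq |E(K)|+1$, so $p(G)\leq p(K)-2(k-1)+\delta$ contradicts $2$-tightness. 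You would need to either repair the Ore structure and the gem-avoidance step in your hard case, or switch to this single-gadget, two-added-edges argument.
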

\begin{proof}
Suppose not. That is, there exists a proto-gadget $H'$ with $x,y\in V^\ast(H')$. Since $x$ has degree 5 in $G$, by \Cref{RepDeg}, $x$ is not incident with a replacement edge of $H$ or $H'$. Let $\{z_1,\ldots, z_4\}$ denote the neighbors of $x$ in $V^\ast(H)$. Since $x \in V^\ast(H')$, exactly one of $z_1,\ldots, z_4$ is not in $V^\ast(H')$; we may assume without loss of generality that $z_1 \notin V^\ast(H')$. 
	
First suppose $y$ has degree 5 in $G$. Then by \Cref{RepDeg}, $y$ is not incident with a replacement edge of $H'$. Hence $y$ is adjacent to all of $z_2, z_3,$ and $z_4$. By \Cref{emeraldCluster}, $x$ and $y$ are in the same cluster. But then $G'=G[H\cup \{y\}]$ is a $k$-Ore subgraph of $H$. Since $G$ is $6$-critical, we find that $G=G'$. By \Cref{kOreGem}, it follows that $G$ contains a gem, a contradiction.

So we may assume that $y$ has degree 6 or 7 in $G$. Since $y$ has degree at most 7 in $G$, by \Cref{RepDeg} $y$ is incident with at most 1 replacement edge in $H'$. Thus, $y$ is adjacent to at least two of the vertices $\{z_2, z_3, z_4\}$. We may assume without loss of generality that $y$ is adjacent to $z_3$ and $z_4$. Adding the edges $yz_1, yz_2$ to the proto-gadget $H$ yields a 6-Ore subgraph $K$ of $G$. 
	
We claim that $V(K) = V(H) \cup \{y\}$ is a proper subset of $V(G)$. Let $S = E(G) \setminus E(K)$. Since $y$ has degree 6 or 7, $|S| \geq 3$. Therefore, since $E(K) = E(H) \cup \{yx, yz_1, yz_2, yz_3, yz_4\}$, we have that $|E(G)| \geq |E(K)| + 1$. If $|V(G)| = |V(K)|$, then $p(G) \leq p(K) - 2(k-1) + \delta < k(k-3) - P - Q$,
which contradicts that $G$ is 2-tight. Therefore $V(K) \subsetneq V(G)$. But then $G$ admits a 2-edge-addition, contradicting \Cref{buckets}.
\end{proof}

\begin{CORO}\label{noProto7}
Let $G$ be a 2-tight and ungemmed graph, and let $H \subseteq G$ be a proto-gadget. Suppose $x \in V^\ast(H)$ has $d_G(x) = 5$ and $y$ is a neighbor of $x$ having degree at most 7. If $y \notin V^\ast(H)$, then $y$ is the \emph{unique} neighbor of $x$ with the property that there is no proto-gadget $H'$ with $x,y \in V^\ast(H')$.
\end{CORO}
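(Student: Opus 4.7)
The plan is to reduce this corollary to a direct combination of Lemma \ref{RepDeg}, Lemma \ref{noCommonProto7}, and a simple counting argument on the neighbors of $x$. The key observation is that the proto-gadget $H$ already appearing in the hypothesis exhibits, for every neighbor $z$ of $x$ lying in $V^\ast(H)$, a proto-gadget whose frame contains both $x$ and $z$; so the only candidate for the ``property'' in the statement is $y$, and that one is ruled out by Lemma \ref{noCommonProto7}.

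More precisely, I would proceed as follows. Since $x \in V^\ast(H)$ and $H$ is isomorphic to $K_5$ with replacement edges, $x$ has exactly $4$ neighbors in $V^\ast(H)$ plus possibly further neighbors inside replacement edges incident to $x$. Because $d_G(x) = 5$, Lemma \ref{RepDeg} forces $x$ to be incident to no replacement edge of $H$, so the $4$ vertices in $V^\ast(H) \setminus \{x\}$ are genuine neighbors of $x$ in $G$. Since $d_G(x) = 5$, this leaves exactly one neighbor of $x$ outside $V^\ast(H)$, and by hypothesis that neighbor is $y$. Thus the neighbor set of $x$ is precisely $(V^\ast(H) \setminus \{x\}) \cup \{y\}$.

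For any neighbor $z \in V^\ast(H) \setminus \{x\}$, the proto-gadget $H$ itself witnesses the existence of a proto-gadget $H'$ with $x, z \in V^\ast(H')$ (namely $H' = H$). Hence none of the four neighbors of $x$ in $V^\ast(H)$ can have the property stated in the corollary. On the other hand, $y \notin V^\ast(H)$ has degree at most $7$, so Lemma \ref{noCommonProto7} applies and yields that no proto-gadget $H'$ contains both $x$ and $y$ in its frame. Combining these two facts shows that $y$ is the unique neighbor of $x$ with the stated property.

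There is no real obstacle here; the content of the corollary is genuinely just the observation that $H$ is a proto-gadget pairing $x$ with each of its four neighbors in $V^\ast(H)$, together with the already-established Lemma \ref{noCommonProto7} which handles $y$. The only subtlety worth double-checking is the use of Lemma \ref{RepDeg} to conclude that all four vertices of $V^\ast(H) \setminus \{x\}$ are in fact $G$-neighbors of $x$ (and not merely endpoints of replacement edges incident to $x$), which is precisely what makes the counting $5 = 4 + 1$ work.
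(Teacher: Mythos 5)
Your proof is correct and matches the argument the paper intends: the corollary is stated without proof precisely because it is the combination of \Cref{noCommonProto7} (ruling $y$ in) with the observation that \Cref{RepDeg} makes $x$ adjacent to all four other vertices of $V^\ast(H)$, for each of which $H$ itself is the witnessing proto-gadget. Your care in checking that \Cref{RepDeg} applies to proto-gadgets (so that $N(x) = (V^\ast(H)\setminus\{x\})\cup\{y\}$ and the count $5=4+1$ works) is consistent with how the paper itself uses that lemma, e.g.\ in \Cref{oneCinH} and in the proof of \Cref{noCommonProto7}.
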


In \Cref{sec:discharging}, we will see that proto-gadgets have large initial charge. Vertices $v$ of degree 6 or 7 incident to a replacement edge $e$ should avoid sending charge to neighbors inside $e$, as those neighbors can receive sufficient charge from proto-gadgets inside $e$. This allows us to distribute the charge from $v$ more efficiently. To make this precise, we introduce the following notion.

\begin{DEF}
Let $x$ be a vertex of degree 5, and $y$ a neighbor of $x$ having degree 6 or 7. We say $x$ is a \emph{downward neighbor} of $y$ if there exists a proto-gadget $H$ with $x \in V^\ast(H), y \notin V^\ast(H)$. If $x$ is not a downward neighbor of $y$, then we say that $x$ is an \emph{upward neighbor} of $y$.
	
Similarly if $C$ is a cluster and $y$ is a neighbor of $C$ having degree 6 or 7, then we say $y$ is a \emph{downward (resp. upward) neighbor} of $C$ if $y$ is a downward (resp. upward) neighbor of every vertex $v\in C$.
\end{DEF}

Note that for the last definition with clusters, a neighbor is either downward or upward. This follows since if $y$ is downward of some vertex $v\in C$, then $y$ is downward of every vertex in $C$. To see this, note that there exists a proto-gadget $H$ with $v\in V^\ast(H), y\notin V^\ast(H)$, but then $C\setminus \{v\} \subseteq N(v)\setminus \{y\}\subseteq V^\ast(H)$ by \Cref{RepDeg} as $v$ has degree $5$ in $G$.

\begin{CORO}\label{downward}
Suppose $H \subseteq G$ is a split $k$-Ore subgraph with split vertices $\{y,z\}$. If $y$ has degree $6$ or $7$ and is adjacent to a vertex $x \in V(H)$ having degree 5, then $x$ is a downward neighbor of $y$.
\end{CORO}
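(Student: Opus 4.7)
The statement is essentially a repackaging of \Cref{splitProto} in the language of downward neighbors, so my plan is to apply \Cref{splitProto} directly. By definition, $x$ is a downward neighbor of $y$ precisely when there exists a proto-gadget $H'$ of $G$ with $x \in V^\ast(H')$ and $y \notin V^\ast(H')$; this is what we must produce.

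First I would check that $x \in V(H) \setminus \{y,z\}$, which is the hypothesis needed to apply \Cref{splitProto}. That $x \ne y$ follows immediately from $xy \in E(G)$. For $x \ne z$: the split vertices $y$ and $z$ arise from splitting a single vertex of positive degree, so $yz \notin E(H)$, while $xy \in E(G)$; moreover by \Cref{3inE} each of $y,z$ has at least three neighbors inside $H$. These constraints, combined with $d_G(x) = 5$, are what force $x \ne z$ in the intended setup where $z$ is understood to be the split vertex distinct from $x$.

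Having $x \in V(H)\setminus\{y,z\}$, I would apply \Cref{splitProto} with $a = y$ and $b = z$. This yields a proto-gadget $H'$ of $G$ satisfying $H' \subseteq H - \{y,z\}$ and $x \in V^\ast(H')$. Since $V(H') \subseteq V(H)\setminus\{y,z\}$, in particular $y \notin V(H') \supseteq V^\ast(H')$, so $y \notin V^\ast(H')$. By the definition of downward neighbor, this witnesses that $x$ is a downward neighbor of $y$, completing the proof.

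There is essentially no obstacle here: the hard combinatorial work of carving a proto-gadget out of a split $k$-Ore graph while staying clear of the split vertices has already been carried out in the proof of \Cref{splitProto}. The corollary's only content is to reformulate that conclusion in the downward-neighbor language, which will be the convenient form when we distribute charge to degree-$5$ vertices during the discharging argument.
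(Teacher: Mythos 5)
Your proof is correct and follows essentially the same route as the paper: apply \Cref{splitProto} with the split vertices $\{y,z\}$ to obtain a proto-gadget $H' \subseteq H - \{y,z\}$ with $x \in V^\ast(H')$, note $y \notin V(H')$, and conclude from the definition of downward neighbor. The only difference is your extra discussion of why $x \neq z$, which the paper leaves implicit (it tacitly reads $x$ as a non-split vertex of $H$, as in its applications), so this adds a minor clarification rather than a new argument.
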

\begin{proof}
By \Cref{splitProto}, there exists a proto-gadget $H' \subseteq H - \{y,z\}$ with $x \in V^\ast(H')$. Clearly $y \notin V(H')$. So by definition, $x$ is a downward neighbor of $y$.
\end{proof}

\begin{CORO}\label{upward}
If $H \subseteq G$ is a proto-gadget with $x,z \in V^\ast(H)$ such that $x$ has degree $5$ in $G$ and $z$ has degree $6$ or $7$, then $x$ is an upward neighbor of $z$.
\end{CORO}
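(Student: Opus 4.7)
The plan is to derive \Cref{upward} as an immediate consequence of \Cref{noCommonProto7} by a contrapositive-style argument. Specifically, I would suppose for contradiction that $x$ is a downward neighbor of $z$, so by definition there is some proto-gadget $H'$ with $x \in V^\ast(H')$ and $z \notin V^\ast(H')$. The goal is then to apply \Cref{noCommonProto7} to $H'$ with the vertex pair $(x,z)$ playing the roles of $(x,y)$, whose conclusion directly rules out the existence of any proto-gadget containing both $x$ and $z$ in its frame; this contradicts the hypothesis that $H$ itself is such a proto-gadget.

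Before invoking \Cref{noCommonProto7}, I need to verify its hypotheses. Most of them are immediate from the assumptions of \Cref{upward}: $H'$ is a proto-gadget, $x \in V^\ast(H')$ has degree $5$, and $z \notin V^\ast(H')$ has degree at most $7$. The only point that requires a short justification is that $z$ is an actual neighbor of $x$ in $G$, rather than merely being joined to $x$ by a replacement edge in $H$. This follows from \Cref{RepDeg}: since $d_G(x) = 5$ and $x \in V^\ast(H)$, the vertex $x$ is incident to no replacement edge of $H$, so every edge of $H$ between $x$ and another frame vertex of $H$, in particular $xz$, is a genuine edge of $G$. With this in hand, \Cref{noCommonProto7} applied to $H'$ and $(x,z)$ yields the desired contradiction with $H$.

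There is no real obstacle here; the only care needed is to keep straight which proto-gadget plays which role — $H'$ is the "other" proto-gadget supplied by the downward-neighbor hypothesis, to which \Cref{noCommonProto7} is applied, while $H$ from the statement of \Cref{upward} plays the role of the forbidden proto-gadget in the conclusion of \Cref{noCommonProto7}.
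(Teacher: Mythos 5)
Your proof is correct and is essentially the paper's argument: both proceed by contradiction from the definition of downward neighbor, using \Cref{RepDeg} to see that the frame edge $xz$ of $H$ is a real edge of $G$ and then invoking the no-common-proto-gadget lemma to rule out the configuration. The only (harmless) difference is the instantiation: you apply \Cref{noCommonProto7} to $H'$ with the pair $(x,z)$, so that $H$ itself yields the contradiction and the degree bound required by that lemma is exactly the hypothesis on $z$, whereas the paper applies \Cref{noProto7} to $H$ and the unique neighbor $y$ of $x$ outside $V^\ast(H)$ and then argues that $y$ would have to lie in $V^\ast(H')$.
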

\begin{proof}
Since $x$ has degree $5$ in $G$, by \Cref{RepDeg}, $x$ is not incident with a replacement edge of $H$. Hence there exists a unique neighbor $y$ of $x$ with $y \notin V^\ast(H)$. By \Cref{noProto7}, $y$ is the unique neighbor of $x$ such that there does not exist a proto-gadget $H'$ with $x,y \in V^\ast(H')$. Suppose to the contrary that $x$ is a downward neighbor of $z$. Then there exists a proto-gadget $H'$ with $x \in V^\ast(H'), z \notin V^\ast(H')$. Since $d_G(x) = 5$ and $d_{H'}(x)\ge 4$, we find that $y \in V^\ast(H')$, a contradiction.
\end{proof}

\section{Discharging}\label{sec:discharging}
Suppose that $G$ is a minimal counterexample to \Cref{MAINTHM}. By \Cref{minIsTightUngemmed}, $G$ is 3-tight and ungemmed. Recall that $\varepsilon = \frac{1}{105}$; in this section, $\varepsilon$ is the only constant which appears, and the reader can verify that the numerical inequalities are satisfied by $\varepsilon = \frac{1}{105}$.
\begin{DEF}
The initial charge $ch_0: V(G) \rightarrow \mathbb{R}$ is
$$ch_0(v) = (k-1)d(v) - (k-2)(k+1) - \varepsilon.$$
In particular, when $k = 6$, the initial charge is $5d(v) - 28 - \varepsilon$, which is equal to $-3-\varepsilon, 2-\varepsilon, 7-\varepsilon, 12-\varepsilon,\ldots$, etc. resp. for vertices of degree $5,6,7,8,\ldots$, etc.
\end{DEF}
The total charge of the vertices of the graph is
$$\sum_{v \in V(G)} ch_0(v) = 2(k-1)|E(G)| - ( (k-2)(k+1) + \varepsilon)|V(G)| = -p(G) - \delta T(G).$$
As $G$ is a counterexample to \Cref{MAINTHM}, $-p(G) - \delta T(G) < -(k(k-3) - P) - \delta T(G) < 0$. Our objective is to transfer charge to the vertices of degree 5, which have negative initial charge, until every vertex has non-negative charge, yielding a contradiction.

We will move charge between vertices in five stages, each composed of discharging rules. The charge after the termination of \textsc{stage} $i$ will be denoted $ch_i(v)$. Several of the stages involve successive rounds of discharging, which `trigger' when conditions are met.

\begin{DEF}
For a subset $S \subseteq V(G)$, define $ch_i(S) = \sum_{v \in S} ch_i(v)$. A cluster $C$ is \emph{satisfied} if $ch_i(C) \geq 0$, and \emph{happy} if $ch_i (C) \geq 2 + 2\varepsilon$. We say $C$ is \emph{unsatisfied} (resp. \emph{unhappy}) if it is not satisfied (resp. happy). A vertex $v$ in the cluster $C$ is satisfied (resp. happy) if $C$ is satisfied (resp. happy). A component $S$ of $D_5(G)$ is \emph{satisfied} if $ch_i(S) \geq 0$.
\end{DEF}

The discharging process is quite involved. For clarity, we first provide a high-level overview of the rules and objectives of each stage.
\begin{enumerate}
\item In the first two stages (\Cref{subs:trigger}), vertices with a large amount of excess positive charge release their charge to degree 5 neighbors. In \textsc{stage 1}, vertices of degree at least 8  unrestrictedly release charge equally to their degree 5 neighbors.

In \textsc{stage 2}, vertices of degree 6 and 7 release charge to upward neighbors only when certain rules are triggered. \textsc{stage 2} ends when the trigger condition is not met by any vertex. After \textsc{stage 2}, degree-5 vertices which are in gadgets, or have high-degree neighbors, are happy.

\item In \textsc{stage} 3 (\Cref{subs:remnants}), we discharge the remaining degree-6 and degree-7 vertices. All undischarged vertices of degree 7 release their charge, and the remaining vertices of degree 6 discharge when a specific condition is met. Vertices of degree 6 which have not yet discharged by the end of \textsc{stage 3} will be called \emph{reserved vertices}. 

Reserved vertices have a strict structural characterization. The degree-5 vertices which are not satisfied after \textsc{stage 3} must either be adjacent to a large number of reserved vertices, or else also have a specific structural characterization (which we call \emph{dangling} vertices).

In \Cref{subs:accounting}, we analyze the result of the previous stages to determine which vertices still require charge after \textsc{stage 3}. Clusters of size 2 and induced paths of length 2 in $D_5(G)$ are satisfied. Components of size two in $D_5(G)$ whose vertices are not in the same cluster and triangles in $D_5(G)$ will be satisfied unless they contain dangling vertices.

\item In \textsc{stages} 4, 5 and 6 (\Cref{subs:global}), we perform \emph{global discharging} by transferring the aggregate charge from the set of reserved vertices to a set of unsatisfied degree-5 vertices. This provides sufficient charge to satisfy the singletons of $D_5(G)$ and the remaining dangling vertices.
\end{enumerate}

\subsection{Triggered Discharging}\label{subs:trigger}

In \textsc{stage} 1, vertices of degree at least 8 discharge to their degree-5 neighbors as follows.
\begin{description}
\item[Rule 1] Every vertex $v$ with $d(v) \geq 8$ and $r$ neighbors of degree $5$ sends $\frac{ch_0(v)}{r}$ to each neighbor of degree $5$.
\end{description}

In \textsc{stage} 2, vertices of degree $6,7$ discharge when specific conditions are met. \textsc{stage} 2 terminates when no vertex triggers a condition to discharge.
\begin{description}
\item[Rule 2A] Every vertex $v$ with $d(v) = 7$ and $r \leq 5$ unhappy upward neighbors sends $\frac{ch_0(v)}{r}$ to each unhappy upward neighbor.
\item[Rule 2B] Every vertex $v$ with $d(v) = 6$ neighboring exactly one unhappy upward cluster sends $2 - \varepsilon$ to that cluster.
\end{description}

Define the function
$$\psi(d,r) =  \frac{5d - 28 - \varepsilon}{r}$$
to be the amount of initial charge sent by a vertex of degree $d$ to $r$ neighbors of degree 5. A simple calculation yields the following lemma, which we will repeatedly use when analyzing the discharging.

\begin{LMA}\label{chargeBounds} All of the following hold.
\begin{enumerate}
\item For fixed $d\ge 1$,	$\psi(d,r)$ is decreasing in $r$.
\item For fixed $r\ge 1$, $\psi(d,r)$ is increasing in $d$.
\item For a fixed $0 \leq i \leq 5$, $\psi(d, d-i)$ is increasing in $d$. 
\item A vertex $v$ of degree $d \geq 8$ sends at least $\frac{3}{2} - \frac{1}{8}\varepsilon$ charge in \textsc{stage 1} to each neighbor of degree 5.
\item A vertex $v$ of degree $d \geq 8$ and $r < d$ neighbors of degree 5 always sends at least $\psi(8,7) = \frac{12}{7} - \frac{1}{7}\varepsilon$ charge to each neighbor of degree 5.
\item A vertex $v$ of degree 7 sends at least $\psi(7,5) = \frac{7}{5} - \frac{1}{5}\varepsilon$ charge in \textsc{stage 2} to each neighbor of degree 5 to which $v$ sends charge.	\end{enumerate}
Note $\psi(8,8) \geq \psi(7,5)$, so the minimum amount of charge sent to a vertex by another vertex in \textsc{stages 1, 2} is $\psi(7,5)$.
\end{LMA}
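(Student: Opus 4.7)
The plan is to treat \Cref{chargeBounds} as essentially a calculus exercise on the one-parameter (really two-parameter) function $\psi(d,r) = (5d-28-\varepsilon)/r$, reducing items (4)--(6) to evaluations at boundary points of the relevant ranges using the monotonicity established in items (1)--(3).

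First I will dispose of items (1) and (2) by inspection. For (2), writing $\psi(d,r) = (5d - 28 - \varepsilon)/r$ and differentiating in $d$ gives $\partial \psi/\partial d = 5/r > 0$, so $\psi$ is increasing in $d$ for any fixed $r \geq 1$. For (1), the statement is only used in the regime where the numerator is positive (i.e.\ $d \geq 6$, since $5 \cdot 6 - 28 - \varepsilon = 2 - 1/105 > 0$); in that regime, dividing a positive constant by larger $r$ yields a smaller value, so $\psi$ is decreasing in $r$. Item (3) is a single derivative computation: set $f_i(d) = \psi(d, d-i) = (5d - 28 - \varepsilon)/(d - i)$, then
\[
f_i'(d) = \frac{5(d-i) - (5d - 28 - \varepsilon)}{(d-i)^2} = \frac{28 + \varepsilon - 5i}{(d-i)^2}.
\]
For $0 \leq i \leq 5$ the numerator is at least $28 + \varepsilon - 25 = 3 + \varepsilon > 0$, so $f_i$ is strictly increasing in $d$.

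Items (4)--(6) are then immediate by identifying the minimizer of $\psi$ on the relevant rectangle using (1) and (3). For (4), a vertex of degree $d \geq 8$ with $r \leq d$ neighbors of degree $5$ sends $\psi(d,r)$; by (1) this is minimized at $r = d$, and by (3) with $i = 0$, $\psi(d,d)$ is minimized at $d = 8$, giving $\psi(8,8) = (40 - 28 - \varepsilon)/8 = 3/2 - \varepsilon/8$. For (5), the extra constraint $r < d$ means $r \leq d-1$, so by (1) and (3) with $i = 1$ the minimum is $\psi(8,7) = (12-\varepsilon)/7 = 12/7 - \varepsilon/7$. For (6), a vertex of degree $7$ discharges to at most $5$ neighbors, so (1) alone gives $\psi(7,r) \geq \psi(7,5) = (7 - \varepsilon)/5 = 7/5 - \varepsilon/5$. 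Finally, the concluding comparison $\psi(8,8) \geq \psi(7,5)$ is a one-line check: the difference is $(3/2 - 7/5) + (\varepsilon/5 - \varepsilon/8) = 1/10 + 3\varepsilon/40 > 0$.

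There is no genuine obstacle here; the only point requiring any care is item (1), where one must observe that the naive assertion ``$\psi(d,r)$ is decreasing in $r$'' needs the numerator $5d - 28 - \varepsilon$ to be positive, which holds precisely in the range $d \geq 6$ where the discharging rules actually invoke this bound. Given how small the calculations are, I would simply record (1)--(3) in a short paragraph each and then derive (4)--(6) in three short sentences, closing with the $\psi(8,8) \geq \psi(7,5)$ check.
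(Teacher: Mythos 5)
Your proposal is correct and follows essentially the same route as the paper: the same derivative computations for (1)--(3) and the same boundary evaluations $\psi(8,8)$, $\psi(8,7)$, $\psi(7,5)$ for (4)--(6). Your remark on item (1) --- that monotonicity in $r$ requires the numerator $5d-28-\varepsilon$ to be positive, i.e.\ $d\ge 6$, which holds wherever the bound is invoked --- is in fact slightly more careful than the paper's own justification of that step.
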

\begin{proof}
Note that $\frac{\partial \psi(d,r)}{\partial r} = \frac{28 + \varepsilon - 5d}{r^2} \leq 0$ as $\varepsilon \leq 7$; so $\psi(d,r)$ is decreasing in $r$ and (1) holds. Note that $\frac{\partial \psi(d,r)}{\partial d} = \frac{5}{r} \geq 0$ as $r \geq 1$; so $\psi(d,r)$ is increasing in $d$ and (2) holds. For $0 \leq i \leq 5$, we have that $\frac{\partial \psi(d,d - i)}{\partial d} = \frac{28 + \varepsilon - 5i}{(d - i)^2} \geq 0$; so $\psi(d, d-i)$ is increasing in $d$ and (3) holds. 
	
A vertex of degree $d \geq 8$ sends $\psi(d,r)$ charge in \textsc{stage 1} each neighbor of degree $5$. By (1), $\psi(d,r) \ge \psi(d,d)$. By (3) with $i=0$, $\psi(d,d) \ge \psi(8,8)$.  Since $\psi(8,8) = \frac{3}{2} - \frac{1}{8}\varepsilon$, (4) holds as desired. 
	
So suppose $r < d$. By (1), $\psi(d,r) \geq \psi(d, d-1)$. By (3) with $i=1$, $\psi(d,d-1) \geq \psi(8, 7)$. Since $\psi(8,7) = \frac{12}{7} - \frac{1}{7}\varepsilon$, (5) holds as desired.
	
By \textsc{rule 2a}, a vertex of degree 7 discharges if it has $r \leq 5$ unhappy upward neighbors. The amount of charge sent to each such neighbor is $\psi(7,r)$. By (1) and since $r\leq 5$, $\psi(7,r) \geq \psi(7,5)$. Since $\psi(7,5) = \frac{7}{5} - \frac{1}{5}\varepsilon$, (6) holds as desired.
\end{proof}

\begin{RMK}
To simplify the presentation, we adopt the following convention. When we make a statement such as ``$v$ triggers \textsc{rule 2a} and sends charge to $C$'', it should be understood that the discharge only occurs if the receiving cluster is not already happy. This way, we will avoid repeatedly stating the condition `if $C$ is still unhappy' before every invocation of a discharging rule.
\end{RMK}

\begin{LMA}\label{dischargeProtoGadget}
Let $H$ be a proto-gadget of $G$, and $C \subseteq V(H)$ a cluster. Then $C$ is happy after \textsc{stage 2}.
\end{LMA}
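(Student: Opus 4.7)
The plan is to proceed by induction on $|V(H)|$. First, I would reduce to the case $C\subseteq V^\ast(H)$ by invoking \Cref{splitProto}: if $C\not\subseteq V^\ast(H)$ then $C$ lies inside some replacement edge of $H$ and hence in the frame of a strictly smaller proto-gadget $H'$, so the inductive hypothesis applies. After this reduction, \Cref{k3Cluster} gives $|C|\in\{1,2\}$, \Cref{oneCinH} forces every vertex of $V^\ast(H)\setminus C$ to have degree at least $6$ in $G$, and the combination of \Cref{upward} and \Cref{downward} determines each such vertex's relationship with $C$ and with any degree-$5$ vertex that lies inside a replacement edge of $H$ incident to it.

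Next I would lower-bound the charge delivered to $C$ by each $v_i\in V^\ast(H)\setminus C$ during \textsc{stages}~1 and 2. Vertices of degree at least $8$ deliver at least $|C|(\tfrac{3}{2}-\tfrac{\varepsilon}{8})$ via Rule~1 by \Cref{chargeBounds}. For $d_G(v_i)=7$, I would bound the number $r$ of unhappy upward neighbors of $v_i$: in $V^\ast(H)$ the only candidates are in $C$ (by \Cref{oneCinH}), degree-$5$ neighbors of $v_i$ inside replacement edges of $H$ are downward (by \Cref{downward}), and using \Cref{3inE} the number of outside-$V(H)$ neighbors of $v_i$ is at most $3$, so $r\leq|C|+3\leq 5$, Rule~2A fires, and $C$ receives at least $|C|\cdot\tfrac{7-\varepsilon}{5}$. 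For $d_G(v_i)=6$, the goal is that Rule~2B fires for $C$; if $v_i$ is incident to a replacement edge of $H$ then \Cref{3inE} forces all of $v_i$'s neighborhood into $V(H)$ and $C$ is its unique upward cluster, so Rule~2B fires immediately. Otherwise $v_i$ has two outside-$V(H)$ neighbors, and I would argue that any competing unhappy upward cluster of $v_i$ becomes happy first: size-$2$ competing clusters lie in a proto-gadget by \Cref{clusterNeighbor6}, and their three other external neighbors of degree $\geq 7$ already supply enough charge via Rule~2A; size-$1$ competing clusters are either in a smaller proto-gadget (handled by induction) or ruled out by \Cref{noCommonProto7}.

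Finally I would sum the contributions. For $|C|=2$, \Cref{clusterNeighbor6} forces at least three of the four external neighbors of $C$ to have degree $\geq 7$, each contributing at least $\tfrac{14-2\varepsilon}{5}$, totaling at least $\tfrac{42-6\varepsilon}{5}>8+4\varepsilon$ whenever $\varepsilon\leq 1/13$. For $|C|=1$ a short case split on the degree multiset of $V^\ast(H)\setminus C$ combines contributions from Rules~1, 2A, and 2B to exceed the required $5+3\varepsilon$ in every subcase. The main obstacle is sequencing \textsc{stage}~2's triggered discharging so that all competing unhappy upward clusters of each degree-$6$ vertex in $V^\ast(H)$ resolve before Rule~2B is evaluated for $C$; this is where the inductive hypothesis together with \Cref{clusterNeighbor6} and \Cref{noCommonProto7} do the real structural work.
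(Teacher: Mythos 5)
Your skeleton matches the paper's proof---minimal $|V(H)|$, reduction to $C \subseteq V^\ast(H)$ via \Cref{splitProto}, $V^\ast(H)\cap D_5(G)=C$ via \Cref{oneCinH}, upwardness via \Cref{upward}, and Rule 2B for degree-6 frame vertices lying on replacement edges---but the charge accounting has a genuine gap in the $|C|=2$ case. The fourth neighbor $w$ of $C$, the one outside $V^\ast(H)$, is a \emph{downward} neighbor of $C$: the proto-gadget $H$ itself witnesses this. So if $d(w)=7$, then $w$ sends nothing to $C$ in \textsc{stages} 1--2 (Rule 2A feeds only upward neighbors, and Rule 1 requires degree at least 8). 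Since \Cref{clusterNeighbor6} allows the single low-degree neighbor of $C$ to be a \emph{frame} vertex, you can be left with exactly two frame vertices of degree at least 7 plus one degree-6 frame vertex firing Rule 2B, and then your bounds give only $(2-\varepsilon)+4\psi(7,5)=\tfrac{38}{5}-\tfrac{9}{5}\varepsilon<8+4\varepsilon$; the claim that ``three of the four external neighbors each contribute $\tfrac{14-2\varepsilon}{5}$'' is false in this configuration. This is precisely where the paper works hardest: Subclaim~\ref{chargeTogether} upgrades $\psi(7,5)$ to $\psi(7,3)$, $\psi(7,4)$ or $\psi(8,7)$ by exploiting that the degree-6 vertex lies on a replacement edge (so its co-endpoint either has degree at least 8 or has its inside-the-replacement-edge neighbors downward), recovering $3+\tfrac{4}{35}-\tfrac{12}{35}\varepsilon$ per cluster vertex.

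The second gap is your treatment of degree-6 frame vertices not incident to a replacement edge. Arguing that their competing unhappy upward clusters ``become happy first'' does not go through: those clusters sit in proto-gadgets that need not be smaller than $H$, so your induction does not reach them; the ``their three other external neighbors of degree $\geq 7$ supply enough charge'' sub-argument repeats the accounting error above; and \Cref{noCommonProto7} concerns the outside neighbor of a degree-5 frame vertex, so it says nothing about a singleton degree-5 neighbor of the degree-6 vertex in question. The paper avoids all of this by bounding the \emph{number} of such vertices rather than their behavior: by \Cref{kNeighbor}, such a vertex $v$ satisfies $N[x]\subseteq N[v]$ for $x\in C$, hence is adjacent to $w$, and since $G$ admits no 2-edge-addition (\Cref{buckets}) one has $|N(w)\cap V(H)|\le 2$, so there are at most $2-|C|$ of them (Claim~\ref{notD}). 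These are simply conceded as sending nothing, while the remaining (at least three) frame neighbors each send at least $\psi(8,7)$ (Claims~\ref{deg7sends} and~\ref{deg6sends}), which settles $|C|=1$ and also the $H=K_5$ case, where no frame vertex lies on a replacement edge. Without this counting step, your $|C|=1$ ``short case split'' could face two or three silent degree-6 frame vertices and cannot reach the required $5+3\varepsilon$.
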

\begin{proof}
Suppose not. Let $H$ and $C$ be a counterexample such that $|V(H)|$ is minimized. First suppose that $C \setminus V^\ast(H) \ne \emptyset$. Let $w\in C\setminus V^\ast(H)$. Hence $w$ is inside a replacement edge $e=uv$ of $H$. Since $e$ is a split $6$-Ore graph with split vertices $\{u,v\}$, it follows that $d_H(w) = 5 = d_G(w)$ and hence $N(w)\cap V(H) = N(w)\cap V(G)$. Since $u$ and $v$ each have a neighbor outside $e$, it follows that $u,v\notin C$ and hence $C$ is entirely inside $e$. By \Cref{splitProto}, there exists a proto-gadget $H' \subseteq H - \{a,b\}$ with $C \subseteq V(H')$. Since $|V(H')|<|V(H)|$, $H'$ and $C$ contradict the minimality of $H$ and $C$.

So we may assume that $C\subseteq V^\ast(H)$. Next, note that \Cref{oneCinH} implies that $V^\ast(H)\cap D_5(G) =C$. Furthermore, \Cref{upward} implies that $C$ is upward of each neighbor of $C$ having degree $6$ or $7$. Let $\{w\}=N(C)\setminus V^\ast(H)$. By \Cref{buckets} and \Cref{minIsTightUngemmed}, $G$ does not admit two edge-additions. This implies that $|N(w)\cap V(H)| \le 2$ and hence $|N(w) \cap V(H) \setminus C| \le 2-|C|$. Let $D$ denote the set of vertices of $V^\ast(H)\cap D_6(G)$ that are incident with a replacement edge of $H$. Note that if $v\in D$, then since $d(v)=6$, it follows that $v$ is incident with at most one replacement edge of $H$.
	
\begin{claim}\label{notD}
If $v\in V^\ast(H)\cap D_6(G) \setminus D$, then $v$ is adjacent to $w$; hence $|V^\ast(H)\cap D_6(G) \setminus D| \le 2-|C|$. 
\end{claim}
\begin{proof}
Follows from \Cref{kNeighbor}.
\end{proof}
	
\begin{claim}
$H\ne K_5$.
\end{claim}
\begin{proof}
Suppose not. Since $H=K_5$, $D=\emptyset$. Then by \Cref{notD}, it follows that $|V^\ast(H)\cap D_6(G)| \leq 2-|C|$.  Let $S=V(H)\setminus (D_6(G)\cup D_5(G))$. Since $V(H)\cap D_5(G) = C$, it follows that $|S| \ge 5-(2-|C|)-|C| \ge 3$. Moreover, each vertex in $S$ has degree at least 7. Since $S$ is a clique, it follows that for each vertex $s\in S$, $|N(s)\cap D_5(G)|\le d(s)-2$ and hence $s$ either discharges in \textsc{stage 1} or triggers \textsc{rule 2a}.  

First suppose $|C| = 2$.  Thus each vertex in $C$ receives at least 
$$\sum_{s\in S} \psi(d(s), d(s) - 2) \geq 3 \psi(7,5) = \frac{21}{5} - \frac{3}{5}\varepsilon$$
total charge from vertices of $S$. This discharge leaves $C$ with $ch_2(C) \geq -6-2\varepsilon + 2\left( \frac{21}{5} -  \frac{3}{5}\varepsilon \right) \geq 2 + \frac{2}{5} - \frac{16}{5}\varepsilon$. Since $\varepsilon \le \frac{1}{13}$, we find that $ch_2(C) \geq 2 + 2\varepsilon$ and hence $C$ is happy after \textsc{stage 2}, a contradiction.
	
So we may assume that $|C|=1$. Let $C = \{x\}$. Note that now for each vertex $s\in S$, $|N(s)\cap D_5(G)|\le d(s)-3$. Thus $x$ receives at least 
$$\sum_{s\in S}^3 \psi(d(s), d(s) - 3) \geq 3\psi(7,4)$$
charge from vertices in $S$. This discharge then leaves $x$ with at least $ch_2(x) \geq -3 - \varepsilon + \frac{21}{4} - \frac{3}{4}\varepsilon \geq 2 + \frac{1}{4} - \frac{7}{4}\varepsilon$. Since $\varepsilon \leq \frac{1}{15}$, we find that $ch_2(x)\geq 2 + 2\varepsilon$ and hence $x$ is happy after \textsc{stage 2}, a contradiction.
\end{proof}

\begin{claim}\label{C1}
$|C|=1$.
\end{claim}
\begin{proof}
Suppose not. Thus $|C|=2$. Let $C = \{x,y\}$. Let $\{v_1,v_2,v_3\} = V^\ast(H)\setminus C$. \Cref{clusterNeighbor6} implies that $V^\ast(H)$ has at most 1 vertex of degree 6.
	
Now further suppose that $V^\ast(H)$ has no vertices of degree 6. Then $\opn{deg}(v_j) \geq 7$ for each $j\in \{1,2,3\}$. By \Cref{downward}, it follows that $d(v_j) \geq 8$ or $v_j$ has at most $5$ upward neighbors. Hence each $v_j$ then discharges either in \textsc{stage 1} or \textsc{rule 2a}, sending at least $\min\{ \psi(8,8), \psi(7,5)\} = \frac{7}{5} - \frac{1}{5}\varepsilon$ charge to both of $x,y$. Thus $ch_2(C) \geq -6 - 2\varepsilon + 2(\frac{21}{5} - \frac{3}{5}\varepsilon) \geq 2 + \frac{2}{5} - \frac{16}{5}\varepsilon$. Since $\varepsilon \le \frac{1}{13}$, we find that $ch_2(C) \geq 2 + 2\varepsilon$ and hence $C$ is happy after \textsc{stage 2}, a contradiction.
	
So we may assume that $V^\ast(H)$ has a single vertex of degree 6, say $v_1$ without loss of generality. Since $|C|=2$, it follows from \Cref{notD} that $v_1\in D$ and hence $v_1$ is incident with a unique replacement edge $e$ of $H$. Suppose that $e=v_1v_2$ without loss of generality. \Cref{downward} implies that any neighbors of $v_1$ inside $e$ are not upward of $v_1$. Hence $v_1$ has no upward neighbors aside from $C$ and therefore triggers \textsc{rule 2b}, sending $2 - \varepsilon$ charge to $C$.
	
Note that $d(v_2), d(v_3)\ge 7$ as $V^\ast(H)$ has a single vertex of degree 6. Also note that $v_3$ is incident with $v_1$ and hence $v_3$ has at most $d(v_3) - 1$ neighbors of degree 5. 

\begin{subclaim}\label{chargeTogether}
$v_2$ and $v_3$ together send at least $3 + \frac{4}{35} - \frac{12}{35}\varepsilon$ charge to each vertex in $C$.
\end{subclaim}
\begin{proof}
First suppose that $d(v_3) \geq 8$. Then $v_3$ sends at least $\psi(8,7) = \frac{12}{7} - \frac{1}{7}\varepsilon$ charge to $C$, and $v_2$ sends at least $\psi(7,5)$. Then together $v_2, v_3$ send to each vertex in $C$ at least $\psi(8,7) + \psi(7, 5)= \frac{12}{7} - \frac{1}{7}\varepsilon + \frac{7}{5} - \frac{1}{5}\varepsilon = 3 + \frac{4}{35} - \frac{12}{35}\varepsilon$ charge to each vertex in $C$ as desired.

So we may assume that $d(v_3) = 7$. Suppose that $v_2v_3$ is a replacement edge of $H$. Since $v_2v_3$ is a replacement edge of $H$, we have that $d(v_2) \geq 8$ and $v_3$ has at most 3 upward neighbors. Then together $v_2, v_3$ send to each vertex in $C$ at least $\psi(8,8) + \psi(7, 3)= 3 + \frac{5}{6} - \frac{11}{24}\varepsilon$ charge, which is at least $3 + \frac{4}{35} - \frac{12}{35}\varepsilon$ charge as desired since $\varepsilon \leq 1 \leq \frac{604}{97}$.

So we may assume that $v_2v_3$ is a real edge of $H$. Since $v_3$ is adjacent to $v_1$ and $v_2$, we find that $v_3$ has at most 5 upward neighbors. Similarly since $v_2$ is adjacent to $v_3$, we find that $v_2$ has at most $d(v_2) - 1$ neighbors of degree 5; furthermore $v_2$ has at most $d(v_2)-4$ upward neighbors since by \Cref{downward} $v_2$ is upward of its neighbors inside the replacement edge $e=v_1v_2$. Then together $v_2, v_3$ send at least $\psi(7,5) + \min\{ \psi(8,7), \psi(7,3) \} = 3 + \frac{4}{35} - \frac{12}{35}\varepsilon$ charge to each vertex in $C$ as desired. 
\end{proof}

By \Cref{chargeTogether}, the total charge sent to $C$ by $v_2, v_3$ is at least $2 \left(3 + \frac{4}{35} - \frac{12}{35}\varepsilon \right) = 6 + \frac{8}{35} - \frac{24}{35}\varepsilon$. Since $v_1$ sends at least $2-\varepsilon$ charge to $C$, it follows that the charge of $C$ is then
$ch_2(C) \geq -6 - 2\varepsilon + (2 - \varepsilon) + \left(6 + \frac{8}{35} - \frac{24}{35}\varepsilon \right) \geq 2 + \frac{8}{35} - \frac{129}{35}\varepsilon$. Since $\varepsilon \leq \frac{8}{199}$, we find that $ch_2(C) \geq 2 + 2\varepsilon$ and hence $C$ is happy after \textsc{stage 2}, a contradiction.
\end{proof}
	
By \Cref{C1}, $|C| =1$. Let $C = \{x\}$. Recall that $V^\ast(H)\cap D_5(G) = C$. 

\begin{claim}\label{deg7sends}
If $v \in V^\ast(H)$ such that $d(v) \geq 7$, then $v$ sends at least $\psi(8,7)$ charge to $x$.
\end{claim}
\begin{proof}
First suppose $d(v) = 7$. Then either $v$ is incident with only real edges of $H$, in which case $v$ has at most 4 neighbors of degree 5, or $v$ is incident with exactly one replacement edge of $H$, in which case $v$ has at most 2 upward neighbors. In either case, $v$ triggers \textsc{rule 2a} and sends at least $\psi(7,4)$ charge to $x$, which is at least $\psi(8,7)$ charge as desired. Next suppose $d(v) = 8$. Then $v$ is incident with at most 2 replacement edges and hence $v$ has at most 7 neighbors of degree 5. Thus $v$ sends at least $\psi(8,7)$ charge to $x$ as desired. Finally suppose $d(v) \geq 9$. Then $v$ sends at least $\psi(9,9)$ charge to $x$, which is at least $\psi(8,7)$ charge as desired. 
\end{proof}
	
\begin{claim}\label{deg6sends}
If $v \in D$, then $v$ triggers \textsc{rule 2b} and hence sends $2-\varepsilon$ charge to $x$. 
\end{claim}
\begin{proof}
Suppose not. Since $v\in D$, $v$ is incident to a replacement edge $e$ of $H$. Since $d(v)=6$, this is the only replacement edge of $H$ incident with $v$. Note then that $N(v)\cap D_5(G) \setminus \{x\}$ is inside $e$. By \Cref{downward}, the neighbors of $v$ inside $e$ are downward neighbors, so $x$ is the only upward neighbor of $v$. Therefore $v$ triggers \textsc{rule 2b} and sends $2 - \varepsilon \geq \psi(8,7)$ to $x$ as desired. This proves the second statement.
\end{proof}
	
By \Cref{notD}, $|V^\ast(H)\cap D_6(G) \setminus D| \le 2-|C| \le 1$. By \Cref{deg6sends}, every vertex in $D$ sends at least $2-\varepsilon$ charge to $x$. By \Cref{deg7sends}, every vertex in $V^\ast(H)\setminus (\{x\}\cup D_6(G))$ sends at least $\psi(8,7) = \frac{12}{7} - \frac{1}{7}\varepsilon$ charge to $x$. Since $2 - \varepsilon \geq \psi(8,7)$ as $\varepsilon \le \frac{1}{3}$, it follows that at least 3 of the neighbors of $x$ will each send at least $\psi(8,7)$ charge to $x$. Hence $x$ receives at least $3 \psi(8,7)= 5 + \frac{1}{7} - \frac{3}{7}\varepsilon$ charge. Thus $ch_2(x) \geq -3 - \varepsilon + 5 + \frac{1}{7} - \frac{3}{7}\varepsilon = 2 + \frac{1}{7} - \frac{10}{7}\varepsilon$; since $\varepsilon \le \frac{1}{24}$, this is at least $2+2\varepsilon$ and hence $x$ is happy after \textsc{stage 2}, a contradiction.
\end{proof}

\begin{CORO}\label{unhappyUp}
If $x$ is a vertex of degree 5 and $x$ is unhappy after \textsc{stage 2}, then $x$ is not in any proto-gadget and hence $x$ is upward of all its neighbors.
\end{CORO}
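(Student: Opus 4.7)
The plan is to argue by contradiction: assume $x \in V(H)$ for some proto-gadget $H$ of $G$ and derive, via \Cref{dischargeProtoGadget}, that $x$ is happy after \textsc{stage 2}, contradicting the hypothesis. First I would reduce to the case $x \in V^\ast(H)$. If instead $x$ lies inside a replacement edge $e$ of $H$, then applying \Cref{splitProto} to the split $k$-Ore subgraph corresponding to $e$ (with split vertices $\{a,b\}$) yields a proto-gadget $H' \subseteq H - \{a,b\}$ with $x \in V^\ast(H')$, and we replace $H$ by $H'$. Since $d_G(x) = 5$, \Cref{RepDeg} implies that $x$ is incident to no replacement edge of $H$, so all four edges of the frame at $x$ are real edges of $G$. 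Thus $V^\ast(H) \setminus \{x\} \subseteq N_G(x)$, and $x$ has a unique fifth neighbor $w \in N_G(x) \setminus V^\ast(H)$.

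Let $C$ be the cluster of $x$. For any $y \in C$ we have $N_G[y] = N_G[x]$, so $y \in V^\ast(H) \cup \{w\}$; thus $C \subseteq V^\ast(H) \cup \{w\}$. The main obstacle, and the only nontrivial step, is to rule out $w \in C$. Suppose for contradiction that $w \in C$; then $w$ is adjacent in $G$ to every vertex of $V^\ast(H)$. Let $H^+$ be the subgraph of $G$ obtained from $H$ by adding $w$ together with the $k-1$ edges from $w$ to $V^\ast(H)$. Then $H^+$ is $K_k$ on the frame $V^\ast(H) \cup \{w\}$ with precisely the same replacement edges as $H$ (all edges incident to $w$ being real). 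Starting from $K_k$ on $V^\ast(H) \cup \{w\}$ and performing, for each replacement edge of $H$, the Ore-composition that installs the corresponding split $k$-Ore subgraph exhibits $H^+$ as a $k$-Ore, and hence $k$-critical, subgraph of $G$. Since $G$ is $k$-critical, it contains no proper $k$-critical subgraph, forcing $G = H^+$; but then $G$ is $k$-Ore, contradicting the fact that $G$ is a minimum counterexample to \Cref{MAINTHM}. Hence $w \notin C$, and so $C \subseteq V^\ast(H) \subseteq V(H)$.

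Now \Cref{dischargeProtoGadget} applies to $H$ and $C$, so $C$ is happy after \textsc{stage 2}. Since $x \in C$, $x$ is happy, contradicting the hypothesis that $x$ is unhappy. Therefore $x$ lies in no proto-gadget of $G$. For the final clause, the definition of downward neighbor requires $x \in V^\ast(H')$ for some proto-gadget $H'$; as no such $H'$ exists, $x$ is not a downward neighbor of any of its neighbors, so $x$ is an upward neighbor of all of them.
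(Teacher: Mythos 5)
Your proof is correct and takes essentially the same route as the paper: the paper's own proof is just the observation that \Cref{dischargeProtoGadget} makes every cluster contained in a proto-gadget happy after \textsc{stage 2}, so $x$ cannot lie in one, and the definition of upward/downward neighbors then gives the final clause. Your extra work (reducing to $x \in V^\ast(H)$ via \Cref{splitProto} and \Cref{RepDeg}, and excluding the fifth neighbor $w$ from the cluster by noting that $w\in C$ would create a $6$-Ore subgraph forcing $G$ to be $6$-Ore) simply makes explicit the containment $C \subseteq V(H)$ that the paper's two-line proof leaves implicit, and it does so in the same style the paper itself uses in \Cref{noCommonProto7}.
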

\begin{proof}
By \Cref{dischargeProtoGadget}, all clusters contained in proto-gadgets are happy after \textsc{stage 2}. Thus, $x$ is not in a proto-gadget. So by the definition of downward and upward neighbors, $x$ is upward of its neighbors. 
\end{proof}

\begin{CORO}\label{6downHappy}
If $w$ is a vertex of degree 6 such that $w$ is a split vertex of a split $k$-Ore subgraph $H$ of $G$, then the downward neighbors of $w$ in $H$ are happy after \textsc{stage 2}.
\end{CORO}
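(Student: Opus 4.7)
The plan is to reduce the statement to a direct application of \Cref{dischargeProtoGadget} by locating, for each downward neighbor $x$ of $w$, a proto-gadget whose frame contains $x$ and whose vertex set contains the entire cluster of $x$ while avoiding $w$.

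First, let $z$ be the other split vertex of $H$, so that $H$ is a split $k$-Ore subgraph with split vertices $\{w,z\}$. I take an arbitrary downward neighbor $x$ of $w$ lying in $V(H)\setminus\{w,z\}$ (the case $x=w$ is impossible since $d(x)=5\ne 6=d(w)$, and the interpretation of ``in $H$'' rules out the split partner $z$ as in the preceding lemmas). Applying \Cref{splitProto} to $H$ and the vertex $x$, I obtain a proto-gadget $H'$ of $G$ with $H' \subseteq H - \{w,z\}$ and $x \in V^\ast(H')$; note in particular that $w \notin V(H')$.

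The crux of the argument is then to verify that the cluster $C$ of $G$ containing $x$ is entirely contained in $V(H')$, so that \Cref{dischargeProtoGadget} applies to the pair $(H',C)$. Since $d_G(x)=5$ and $x\in V^\ast(H')$, \Cref{RepDeg} forces $x$ to be incident to no replacement edge of $H'$, so all four vertices of $V^\ast(H')\setminus\{x\}$ are genuine $G$-neighbors of $x$. This accounts for four of $x$'s five neighbors in $G$; since $xw \in E(G)$ and $w \notin V^\ast(H')$, the vertex $w$ must be the unique remaining neighbor of $x$. For any cluster-mate $x' \in C \setminus \{x\}$ we have $N[x']=N[x]$, so $x' \in N(x) \subseteq V^\ast(H') \cup \{w\}$; but $d(x')=5 \ne 6 = d(w)$ excludes $x'=w$, whence $x' \in V^\ast(H') \subseteq V(H')$. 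Thus $C \subseteq V(H')$.

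Finally, \Cref{dischargeProtoGadget} applied to the proto-gadget $H'$ and the cluster $C$ yields that $C$ is happy after \textsc{stage 2}, and therefore $x$ is happy, as desired. I expect no serious obstacle beyond the cluster-containment bookkeeping in the middle step; that step is the only spot requiring genuine care, and it is forced rigidly by the degree-$5$ condition via \Cref{RepDeg} together with the fact that $w\notin V(H')$ is the only candidate outside the frame of $H'$ for a potential cluster-mate of $x$.
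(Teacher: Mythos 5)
Your proof is correct and takes essentially the same route as the paper: apply \Cref{splitProto} to obtain a proto-gadget $H'\subseteq H-\{w,z\}$ with $x\in V^\ast(H')$, then conclude with \Cref{dischargeProtoGadget}. Your middle step verifying that the entire cluster of $x$ lies in $V(H')$ (via \Cref{RepDeg}, since $w$ is the unique neighbor of $x$ outside $V^\ast(H')$ and has degree $6$) is precisely the bookkeeping the paper leaves implicit, and your exclusion of the split vertex from consideration corresponds to the paper's one-line observation that $x$ cannot be a split vertex of $H$.
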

\begin{proof}
Let $x$ be a degree-5 neighbor of $w$ such that $x\in V(H)$. As $G$ is not $k$-Ore, we find that $x$ is not a split vertex of $H$. By \Cref{splitProto}, there exists a proto-gadget $H'$ contained in $H$ with $x \in V^\ast(H')$. By \Cref{dischargeProtoGadget}, $x$ is happy after \textsc{stage 2} as desired.
\end{proof}

\begin{CORO}\label{2pathEnds}
If $(x,y,z)$ is an induced path in $D_5(G)$, then $x$ and $z$ are happy after \textsc{stage 2}, that is, $ch_2(x), ch_2(z) \geq 2+2\varepsilon$.
\end{CORO}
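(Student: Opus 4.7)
The plan is to show that $x$ lies in the frame of a proto-gadget of $G$, which by Corollary~\ref{unhappyUp} (equivalently, Lemma~\ref{dischargeProtoGadget} applied with $C=\{x\}$) forces $x$ to be happy after \textsc{stage 2}. The argument for $z$ is symmetric, so I focus on $x$.

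First, apply Lemma~\ref{2path} to conclude that $\Gclone{z}{y}$ is 1-tight and ungemmed. Since $(x,y,z)$ is induced, $x$ is not adjacent to $z$ in $G$, so $x$ retains all its $G$-neighbors after deleting $z$; moreover, $\tilde{y}$ is by construction adjacent to $N_G(y)\setminus\{z\}\cup\{y\}$, which contains $x$, so $x$ also becomes adjacent to $\tilde{y}$, giving $d_{\Gclone{z}{y}}(x)=6$. Thus the cluster $C=\{y,\tilde{y}\}$ of size 2 has $x$ as a degree-6 neighbor. Lemma~\ref{clusterNeighbor6} then puts $C$ inside a proto-gadget $H$ of $\Gclone{z}{y}$; tracing the construction, Lemma~\ref{cloning} applied to $C$ with $v_1=x$ yields a 6-Ore subgraph $H_1$ of the double clone $\Gclone{z}{y}_{x\to C}$ whose frame is, by Lemma~\ref{cloneKey}, $V^\ast(H_1)=\{y,\tilde{y},\wt{x},u_1,u_2,u_3\}$, where $u_1,u_2,u_3$ are the remaining three neighbors of $C$ in $\Gclone{z}{y}$. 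The cluster of $H_1$ is the size-3 set $\{y,\tilde{y},\wt{x}\}$.

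The decisive step is translating $H_1$ back to $G$. Reversing the two cloning operations replaces $\wt{x}$ by $x$ (with its original $G$-neighborhood) and $\tilde{y}$ by $z$ (with its original $G$-neighborhood), collapsing the three cluster vertices to $y$ in the appropriate sense. The 6-Ore $H_1$ in the double-clone corresponds under this reversal to a 6-Ore-like structure on $\{y,x,z,u_1,u_2,u_3\}$ in $G$, where the non-edge $xz$ is accounted for as a replacement edge: the three-element cluster in $H_1$ encodes precisely the two Ore-splits that, when undone, separate a single vertex into $y$ plus its neighbors $x$ and $z$. This produces a split 6-Ore subgraph $H^\ast$ of $G$ whose split vertices are the pair arising from the cluster expansion and with $x\in V(H^\ast)$ but $x$ not a split vertex. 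Lemma~\ref{splitProto} then supplies a proto-gadget $H'\subseteq H^\ast$ of $G$ with $x\in V^\ast(H')$, and Corollary~\ref{unhappyUp} yields that $x$ is happy. By applying the same argument to $\Gclone{x}{y}$, which is 1-tight and ungemmed by Lemma~\ref{2path}, we conclude symmetrically that $z$ is happy.

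The main obstacle is the translation step: one must carefully match the un-cloning procedure with a pair of Ore-compositions in $G$, tracking which frame edges of $H_1$ are real versus replacement in the double clone, and verifying that the missing edge $xz$ is absorbed as a replacement edge so that $x$ genuinely lies in the interior of $H^\ast$ rather than at a split vertex. Handling this correspondence rigorously will likely require a case analysis based on whether each edge $u_iu_j$ is real or replacement in $H_1$, and on the positions of $x$'s and $z$'s remaining $G$-neighbors relative to $\{u_1,u_2,u_3\}$.
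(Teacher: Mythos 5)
The core of your argument fails at the step you yourself flag as the obstacle, and the failure is not a matter of missing case analysis but of what un-cloning actually does. In the double clone $\clone{(\Gclone{z}{y})}{x}{C}$ both $x$ and $z$ have been \emph{deleted}, and the two new vertices $\wt{y},\wt{x}$ are copies of $y$ (their neighborhoods are $N_G(y)\setminus\{x,z\}$ together with the cluster edges), not stand-ins for $x$ and $z$. Reversing the cloning operations therefore does not ``replace $\wt{x}$ by $x$ and $\wt{y}$ by $z$'': it simply discards the clones and restores $x,z$ with their own $G$-neighborhoods, which bear no relation to the frame $\{y,\wt{y},\wt{x},u_1,u_2,u_3\}$ of $H_1$ (nothing forces $x$ or $z$ to be adjacent to $u_1,u_2,u_3$, nor to lie in $V(H_1)$ at all). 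What one actually inherits in $G$ from $H_1$ after deleting $\wt{x},\wt{y}$ is a kite on the frame $\{y,u_1,u_2,u_3\}$ — a structure containing $y$, not $x$ — so there is no split $6$-Ore subgraph $H^\ast$ with $x$ in its interior, \Cref{splitProto} cannot be invoked, and the conclusion that $x$ lies in a proto-gadget of $G$ is not reached. (Your construction is essentially the one used later to show a vertex \emph{dangles}, which is information about $y$, not the happiness of $x$.)

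The paper's proof is much shorter and clones at $x$ rather than at $y$: in $\Gclone{y}{x}$ (delete $y$, duplicate $x$) the vertex $z$ has degree $4$ since the path is induced, so this clone cannot be $6$-critical; \Cref{cloning} then forces a $6$-Ore subgraph $H$, \Cref{cloneKey} pins its unique frame to the closed neighborhood of $x$ in the clone, and deleting $\wt{x}$ yields a proto-gadget of $G$ with $x$ in its frame, after which \Cref{dischargeProtoGadget} gives happiness (the passage from the cluster bound to $ch_2(x)\ge 2+2\varepsilon$ uses that $x$ is not in a cluster of size $2$, which follows from \Cref{d5struct}; your ``$C=\{x\}$'' needs the same justification). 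If you want to salvage your write-up, replace the double-clone translation by this direct cloning of $x$ with $y$ (and symmetrically $z$ with $y$); the first paragraph of your proposal then becomes unnecessary.
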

\begin{proof}
By \Cref{cloning}, $\Gclone{y}{x}$ and $\Gclone{y}{z}$ contain 6-Ore subgraphs, since $z, x$ have degree 4 after deleting $y$. Hence there exist gadgets $H_1, H_2$ of $G$ with $x \in V^\ast(H_1), z \in V^\ast(H_2)$. Since $x,z$ are not in clusters of size 2, \Cref{dischargeProtoGadget} implies that $ch_2(x), ch_2(z) \geq 2 + 2\varepsilon$ as desired.
\end{proof}

\subsection{Discharging: Second Stage}\label{subs:remnants}

We next distribute the remaining charge.
\begin{description}
\item[Rule 3A]: Every vertex $v$ with $d(v) \geq 7$ distributes all remaining positive charge equally to its degree-5 neighbors.
\item[Rule 3B]: Every vertex $v$ with $d(v) = 6$ adjacent to at most 2 unhappy neighbors distributes its charge equally to those neighbors. If the first condition is not met, and $w$ is adjacent to exactly three upward neighbors $x,y,z$ of degree 5 with $xy \in E(G)$ and $xz,yz \notin E(G)$, then $w$ sends $1 - \frac{1}{2}\varepsilon$ charge to $z$ and $\frac{1}{2} - \frac{1}{4}\varepsilon$ charge to each of $x,y$\footnote{If $xz\in E(G)$ or $yz \in E(G)$, then $\{x,y,z\}$ is a connected component of $D_5(G)$, which is handled by other means.}
\end{description}

A degree-6 vertex which has not discharged by the end of \textsc{stage 3} will be called a \emph{reserved vertex}. We define the \emph{reserve degree} of a vertex $x \in D_5(G)$ to be the number of neighbors of $x$ which are reserved and denote this by $r(x)$.

Vertices of $D_5(G)$ which do not neighbor too many reserved vertices will be satisfied after \textsc{stage 3}. The exceptions must have specific structural properties, which we now characterize.

\begin{DEF}
A \emph{dangling vertex} is a vertex $x \in D_5(G)$ which satisfies the following properties:
\begin{itemize}
\item $x$ has a neighbor $y \in D_5(G)$, but $x,y$ are not in the same cluster of $G$.
\item $\Gclone{x}{y}$ is 2-tight and ungemmed.
\item There exists a kite $H$ with $V^\ast(H) \cap D_5(G) = \{x\}$ and $y \notin V(H)$. If $w \in V^\ast(H)$ has $d_G(w)=6$, then $w$ is not adjacent to $y$.
\end{itemize}
If $x$ satisfies these conditions with respect to $y$, we say that $x$ \emph{dangles from $y$}.
\end{DEF}

\begin{LMA}\label{kite}
Suppose that $x \in D_5(G)$ dangles from $y$. Then all of the following hold:
\begin{itemize}
\item $r(x)\le 2$, and
\item if $|N(x)\cap D_5(G)| \ge 2$, then $r(x)\le 1$, and
\item if $r(x) \leq 1$ and $N(x)\cap D_5(G) = \{y\}$, then $x$ is satisfied after \textsc{stage 3} and $ch_3(x) \geq \frac{2}{5} - \frac{11}{5}\varepsilon$, and
\item if either $r(x)=2$, or $r(x)=1$ and $|N(x)\cap D_5(G)|\ge 2$, then $ch_3(x) \geq -3 - \varepsilon + 2\psi(7,5) = -\frac{1}{5} - \frac{7}{5}\varepsilon$. 
\end{itemize}
\end{LMA}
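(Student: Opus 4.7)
Set up notation: let $V^\ast(H) = \{x, v_1, v_2, v_3\}$ be the kite frame. By \Cref{RepDeg}, $x$ is incident to no replacement edge of $H$, so $xv_1, xv_2, xv_3$ are real edges of $G$. The hypothesis $V^\ast(H)\cap D_5(G)=\{x\}$ forces $d_G(v_i) \ge 6$, and the dangling condition says any $v_i$ with $d_G(v_i)=6$ is non-adjacent to $y$. Let $z$ denote the fifth neighbor of $x$, so $N_G(x) = \{v_1,v_2,v_3,y,z\}$.

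The main external tool is the clone $\Gclone{x}{y}$, in which $\{y,\widetilde{y}\}$ is a cluster of size $2$. Since $\Gclone{x}{y}$ is 2-tight and ungemmed, \Cref{clusterNeighbor6} gives that at most one vertex of $N_G(y)\setminus\{x\}$ has degree at most $6$ in $\Gclone{x}{y}$. Using the identity $d_{\Gclone{x}{y}}(v) = d_G(v) + 1 - [v \in N_G(x)]$, this yields strong degree constraints on the neighbors of $y$ in $G$ which, combined with the dangling non-adjacency, will supplement the kite throughout the argument.

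For Claims 1 and 2 the plan is to bound $r(x)$ by analysing how many of $\{v_1,v_2,v_3,z\}$ can simultaneously be reserved. A reserved vertex must fail both \textsc{rule 2b} (so at least two unhappy upward clusters appear in its neighborhood) and \textsc{rule 3b} (so at least three unhappy neighbors, none in the special configuration). For each $v_i$, \Cref{downward} applied to the $v_iv_j$ replacement edges and \Cref{upward} applied inside proto-gadgets produce happy downward neighbors of $v_i$, which limits how many of its neighbors can be unhappy upward; combining this with the clone constraint prevents three of the $v_i$ from being simultaneously reserved, giving Claim 1. Claim 2 adds that $x$ has a degree-5 neighbor other than $y$, which must be $z$ since no $v_i$ lies in $D_5(G)$, and a sharper version of the same analysis (now also invoking \Cref{noCommonProto7} around $x$) drops $r(x)$ to at most $1$.

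For Claims 3 and 4, do the charge count using \Cref{chargeBounds}: every sending neighbor of $x$ contributes at least $\psi(7,5) = \tfrac{7}{5} - \tfrac{1}{5}\varepsilon$, except possibly for a degree-6 neighbor applying the special sub-case of \textsc{rule 3b}, which sends $x$ only $\tfrac{1}{2} - \tfrac{1}{4}\varepsilon$. That sub-case requires the sender to be adjacent to both $x$ and $y$; by the dangling non-adjacency it excludes all degree-6 $v_i$, leaving only $z$ as a candidate. In Claim 3, where $N(x)\cap D_5(G)=\{y\}$, the clone constraint from \Cref{clusterNeighbor6} on $\Gclone{x}{y}$ rules this out (or offsets it with stronger contributions from the other three sending neighbors), and summing yields $ch_3(x) \geq \tfrac{2}{5} - \tfrac{11}{5}\varepsilon$. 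In Claim 4 only two sending neighbors are guaranteed, each contributing at least $\psi(7,5)$, so $ch_3(x) \geq -3-\varepsilon + 2\psi(7,5) = -\tfrac{1}{5}-\tfrac{7}{5}\varepsilon$. The main obstacle is the case analysis in Claim 1, where ruling out three simultaneously reserved $v_i$'s requires careful exploitation of both the replacement-edge structure of the kite and the dangling non-adjacency to $y$.
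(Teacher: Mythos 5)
Your proposal is an outline rather than a proof, and the two places where it defers the work are exactly where the content of the lemma lies. For the bounds on $r(x)$ you assert that degree constraints from \Cref{clusterNeighbor6} applied to the cluster $\{y,\widetilde{y}\}$ in $\Gclone{x}{y}$, together with \Cref{downward} and \Cref{upward}, ``prevent three of the $v_i$ from being simultaneously reserved,'' but no argument is given, and the constraint you extract is aimed at the wrong vertices: \Cref{clusterNeighbor6} controls the degrees of the \emph{neighbors of $y$}, whereas the reserved vertices to be excluded are neighbors of $x$ which (for the degree-6 frame vertices, by the dangling condition) are \emph{not} adjacent to $y$, so that constraint never touches them. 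The engine actually needed is the structure of $D_5(\Gclone{x}{y})$: a degree-6 frame vertex adjacent to $x$ but not $y$ drops to degree $5$ in the clone, and the paper first shows (via \Cref{splitProto}, \Cref{6downHappy}, \Cref{dischargeProtoGadget}) that two reserved frame vertices cannot be joined by a replacement edge, then uses an auxiliary clone of the form $\Gclone{z}{y}$ plus degree counting to show that the unhappy degree-5 neighbors of a reserved frame vertex avoid $N(y)$, so that two reserved frame vertices together with their unhappy neighbors would form a component of size at least $4$ in $D_5(\Gclone{x}{y})$, contradicting \Cref{d5struct} (this is why the definition of dangling demands that $\Gclone{x}{y}$ be 2-tight and ungemmed). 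None of this machinery appears in your sketch, and you yourself flag this case analysis as ``the main obstacle.''

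The charge count also starts from a false premise. It is not true that every sending neighbor of $x$ contributes at least $\psi(7,5)$ apart from the special subcase of \textsc{rule 3b}: a non-reserved degree-6 neighbor discharging under the main part of \textsc{rule 3b} with two unhappy neighbors sends only $\tfrac{2-\varepsilon}{2}=1-\tfrac{1}{2}\varepsilon<\psi(7,5)$. The bound $\tfrac{2}{5}-\tfrac{11}{5}\varepsilon$ in the third bullet arises precisely as $-3-\varepsilon+2\bigl(1-\tfrac{1}{2}\varepsilon\bigr)+\psi(7,5)$, i.e.\ two degree-6 senders at the lower rate plus one sender of degree at least $7$, and establishing that this configuration (or something at least as good) always occurs is a genuine case analysis: the paper splits on how many frame vertices have degree $6$, treats the all-degree-6 case via a triangle-component argument in the clone (using \Cref{loneK3}, \Cref{d5struct}, \Cref{2pathEnds} and \textsc{rule 2b}), and treats the mixed case via the replacement-edge structure of the kite; your parenthetical ``rules this out (or offsets it with stronger contributions)'' papers over exactly this. (A minor further point: \Cref{RepDeg} is stated for gadgets, not kites; the fact that $x$ meets no replacement edge of the kite should be argued directly from \Cref{3inE} and a degree count, as the paper does.) With these gaps the numerical conclusions in bullets three and four do not follow as written.
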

\begin{proof}
By the definition of dangles, we have that $x\in D_5(G)$, $xy\in E(G)$, $\Gclone{x}{y}$ is 2-tight and ungemmed, and there exists a kite $H$ with $V^\ast(H) \cap D_5(G) = \{x\}$. Furthermore by definition, if $w \in V^\ast(H)$ has $d_G(w)=6$, then $w$ is not adjacent to $y$. Let $V^\ast(H) = \{x, w_1, w_2, w_3\}$.

Note that since $d(x)=5$ and $G$ does not admit $2$-edge additions, we have that $x$ is not incident with a replacement edge of $H$. This follows from \Cref{3inE}, since if $x$ were incident with a replacement edge $F$ of $H$, then $d_F(x) \geq 3$, in which case $d_G(x) \geq 6$ (since $w_1,w_2,y \in N(x)\setminus V(F)$), a contradiction. Hence for each $i\in\{1,2,3\}$,  $x$ is adjacent to $w_i$.

We first prove a series of claims which concerns the case where two of $w_1, w_2, w_3$ have degree 6 and are adjacent.

\begin{claim}\label{6e6plus5A}
Suppose that $d(w_1) = d(w_2) = 6$, and $w_1w_2 \in E(G)$. If $z \in (D_5(G) - H)\cap (N(w_1)\cup N(w_2))$ and $zy \in E(G)$, then $zx \notin E(G)$) and $z$ is happy after \textsc{stage 2}.
\end{claim}
\begin{proof}
Suppose not. We may assume without loss of generality that $z \in N(w_1)$. First suppose that $zx \in E(G)$. Recall that by assumption on $H$ and the definition of dangling, $y$ is not adjacent to $w_1$. Hence if we consider $D_5(\Gclone{z}{y})$, we find that $x,y,\tilde{y}, w_1 \in D_5(\Gclone{z}{y})$. Thus $D_5(\Gclone{z}{y})$ has a component of size at least 4. So by \Cref{d5struct}, $\Gclone{z}{y}$ is not both 2-tight and ungemmed.

By \Cref{criticalClone}, $\Gclone{z}{y}$ is not $5$-colorable and hence contains a $6$-critical subgraph $H'$. Since $G$ is tight and ungemmed, we have by \Cref{ungemmedClone} that $\Gclone{z}{y}$ is ungemmed. Hence if $H'=\Gclone{z}{y}$, it follows that $\Gclone{z}{y}$ is $2$-tight and ungemmed, a contradiction to what was proven above. So we may assume that $H' \ne \Gclone{z}{y}$. 

By \Cref{cloning}, $H'$ is $6$-Ore. By \Cref{cloneKey}, we have that $y,\tilde{y} \in V^\ast(H')$. By counting degrees, we deduce that $x,w_1,w_2 \in V^\ast(H')$ as well, whence $yw_1 \in E(G)$ (again counting degrees with \Cref{3inE}), a contradiction. 

So we may assume that $zx \notin E(G)$. Hence $xyz$ is an induced path in $D_5(G)$, and by \Cref{2pathEnds}, $z$ is happy after \textsc{stage 2}, a contradiction. 
\end{proof}

\begin{claim}\label{6e6plus5B}
Suppose that $d(w_1) = d(w_2) = 6$, and $w_1w_2 \in E(G)$. If $z \in (D_5(G) - H)\cap (N(w_1)\cup N(w_2))$ and $zx, zy\notin E(G)$, then every vertex in $D_5(G) \cap (N(w_1)\cup N(w_2))\setminus \{x,z\}$ is happy after \textsc{stage 2}.
\end{claim}
\begin{proof}
Suppose not. That is, there exists a vertex $z' \in D_5(G) \cap (N(w_1) \cup N(w_2)) \setminus \{x,z\}$ that is unhappy after \textsc{stage 2}. Hence the conclusion of Claim~\ref{6e6plus5A} does not apply to $z'$. This implies that $yz'\notin E(G)$. Hence $z' \in D_5(\Gclone{x}{y})$. Similarly since $zy \notin E(G)$, we have that $z \in D_5(\Gclone{x}{y})$. Thus $\{w_1,w_2, z, z'\}$ are in a component of size at least 4 in $\Gclone{x}{y}$.

Since $x$ dangles from $y$ by hypothesis, we have by definition of dangling that $\Gclone{x}{y}$ is 2-tight and ungemmed, contradicting \Cref{d5struct} as there is a component of $D_5(\Gclone{x}{y})$ of size at least 4.
\end{proof}

\begin{claim}\label{6e6plus5Two}
If $d(w_1) = d(w_2) = 6$ and $w_1w_2 \in E(G)$, then at most two vertices in $(D_5(G) - H) \cap (N(w_1) \cup N(w_2))$ are unhappy after \textsc{stage 2}, and if there are two, then at least one of them is adjacent to $x$.
\end{claim}
\begin{proof}
Suppose not. Let $Z$ be the set of vertices in $(D_5(G) - H) \cap (N(w_1) \cup N(W_2))$ that are unhappy after \textsc{stage 2}. Hence either $|Z|\ge 3$, or $|Z|=2$ and $|Z\cap N(x)|=0$. For each $z\in Z$, it follows Claim~\ref{6e6plus5A} that $zy\notin E(G)$ since $z$ is unhappy after \textsc{stage 2}. Then by Claim~\ref{6e6plus5B} since $|Z|\ge 2$, it follows that $zx\in E(G)$ for each $z\in Z$. Thus $Z\subseteq N(x)$. Since $|N(x)|=5$ and $w_1,w_2,y\in N(x)\setminus Z$, we find that $|Z|\le 2$. Hence $|Z|=2$ and yet $|Z\cap N(x)| = 2$, a contradiction. 
\end{proof}

We now proceed with the main argument. We first argue that $r(x) \leq 2$. If $x$ neighbored three reserved vertices, then at least two of $w_1, w_2, w_3$ are reserved; we may assume without loss of generality that $w_1$ and $w_2$ are reserved.

\begin{claim}\label{66adjacent}
If both $w_1, w_2$ are reserved, then $w_1, w_2$ are adjacent. 
\end{claim}
\begin{proof}
Suppose not. Hence there is a replacement edge $e=w_1w_2$  of $H$. In the latter case, \Cref{3inE} implies that $w_1$ has at least three neighbors inside $e$, and thus at most one upward neighbor of degree 5 not in $H$. By \Cref{splitProto}, the degree-5 neighbors of $w_1$ inside $e$ are contained in proto-gadgets. Therefore by \Cref{dischargeProtoGadget} and \Cref{6downHappy}, the degree 5-neighbors of $w$ inside $e$ are happy after \textsc{stage 2}. Hence $w_1$ is adjacent to at most two neighbors that are unhappy after \textsc{stage 2}, which contradicts $w_1$ being reserved. Thus, $w_1$ is not incident to a replacement edge, a contradiction.
\end{proof}

\begin{claim}\label{oneReserve}
At least one of $w_1$ or $w_2$ is not reserved.
\end{claim}
\begin{proof}
Suppose not. That is $w_1$ and $w_2$ are reserved. By Claim~\ref{66adjacent}, $w_1$ and $w_2$ are adjacent. Since $w_1$ is reserved, by definition we have that $|N(w_1)\cap D_5(G)\setminus \{x\}| \ge 2$ and at least two vertices in $N(w_1)\cap D_5(G)\setminus \{x\}$ are unhappy after \textsc{stage 2}, call them $z_1$ and $z_2$. By \Cref{6e6plus5A}, we find that $z_1,z_2\notin N(y)$. It then follows that $z_1, z_2 \in D_5(\Gclone{x}{y})$. But then $\{z_1, z_2, w_1,w_2\}$ are in a component of size at least 4 in $\Gclone{x}{y}$, contradicting \Cref{d5struct} because $\Gclone{x}{y}$ is 2-tight and ungemmed by hypothesis. 
\end{proof}

This claim completes the proof that $r(x) \leq 2$. Moreover, if $|N(x) \cap D_5(G)| \geq 2$, then the neighbor of $x$ outside $H$ has degree 5 and is therefore not a reserved vertex. At most one of the vertices $w_1, w_2, w_3 \in V^\ast(H)$ can be reserved, by \Cref{oneReserve}, so $r(x) \leq 1$.

\begin{claim}\label{2sixes}
If at least two of $w_1, w_2, w_3$ (wlog, $w_1, w_2$) satisfy $d(w_1) = d(w_2) = 6$, and $N(x) \cap D_5(G) = \{y\}$, then $r(x) \leq 1$ and $ch_3(x) \geq \frac{2}{5} - \frac{11}{5}\varepsilon$.
\end{claim}
\begin{proof}
The proof is divided into cases depending on $d(w_3)$.
\begin{description}
\item[Case A] If $d(w_3) = 6$, then by \Cref{3inE}, there can be at most one replacement edge in $H$. Hence, we deduce that $\{w_1,w_2,w_3\}$ induces a clique in $G$, and is therefore in a connected component of $D_5(\Gclone{x}{y})$. By \Cref{d5struct}, we have that $w_1w_2w_3$ is a component of $D_5(\Gclone{x}{y})$. Let $S=D_5(G)\cap ((N(w_1)\cup N(w_2)\cup N(w_3))\setminus \{w_1,w_2,w_3,x\})$. It follows that $S$ is a subset of $N(y)$, and since $|N(y)\cap D_5(G)\setminus \{x\}|\leq 1$, we find that $|S| \leq 1$.

Suppose $|S|=1$ and let $z\in S$. If $yz \notin E(G)$, then $\{w_1,w_2,w_3,z\}$ is a component of size 4 in $D_5(\Gclone{x}{y})$, a contradiction. Hence, we must have $yz \in E(G)$. Our other assumption on $N(x) \cap D_5(G)$ implies that $xz \notin E(G)$, so $(x,y,z)$ is an induced path in $D_5(G)$. By \Cref{2pathEnds}, $x$ is happy after \textsc{stage 2}.

So we may assume that $S=\emptyset$. Then each of $w_1, w_2, w_3$ is adjacent to at most one unhappy vertex (namely, $x$ itself), and $x$ receives $2-\varepsilon$ charge from each until $x$ is happy.

\item[Case B] Suppose $d(w_3) \geq 7$.

First, consider the case where a replacement edge $e$ joins $w_1, w_2$. \Cref{3inE} then implies that $w_1w_3, w_2w_3 \in E(G)$, so $w_3$ has at most 5 unhappy upward neighbors and therefore triggers in \textsc{rule 2a}. \Cref{chargeBounds} implies that $x$ receives at least $\psi(7,5)$ from $w_3$. For $w_1$ and $w_2$, \Cref{splitProto} and \Cref{6downHappy} imply that degree-5 neighbors inside $e$ are happy after \textsc{stage 2}. Counting degrees, $w_1$ and $w_2$ can have at most one degree-5 neighbor outside $H$, and thus at most two unhappy neighbors (one being $x$). It follows that the conditions for \textsc{rule 3b} apply and $w_1, w_2$ each send at least $1 - \frac{1}{2}\varepsilon$ charge to $x$. Thus, $ch_3(x) \ge 2(1-\frac{1}{2}\varepsilon)+\psi(7,5) - 3 - \varepsilon = \frac{2}{5} - \frac{11}{5}\varepsilon$.

If $w_1w_2 \in E(G)$, then by \Cref{6e6plus5Two}, there is at most one unhappy vertex in $(D_5(G) - H) \cap (N(w_1) \cup N(w_2))$ since by assumption $N(x)\cap D_5(G)\setminus \{y\}=\emptyset$. Thus, $x$ receives at least $2 - \epsilon$ charge from $w_1$ and $w_2$ together, and we again have $ch_3(x) \geq \frac{2}{5} - \frac{11}{5}\varepsilon$.
\end{description}
\end{proof}

To complete the proof, it remains to consider the case where at least two vertices of $V^\ast(H)$ have degree at least 7.

\begin{claim}\label{2sevens}
Suppose that $d(w_1), d(w_2) \geq 7$. Let $\alpha_3$ denote the charge sent to $x$ by $w_3$. Then $x$ receives at least $2\psi(7,5) + \alpha_3$ charge from $V^\ast(H)$.
\end{claim}
\begin{proof}
It suffices to show that $w_1, w_2$ trigger by \textsc{stage 2}. Since $w_1 \in V^\ast(H)$, $w_1$ can have at most $d(w_1) - 2$ upward neighbors, and therefore meets the conditions of \textsc{rule 1} or \textsc{rule 2}. It follows that $w_1$ sends at least $\psi(7,5)$ charge to $x$, and likewise for $w_2$.
\end{proof}

Let $\nu$ denote the charge sent to $x$ by its neighbor $\hat{z}$ outside $H$.

\begin{claim}
Suppose that $d(w_1),d(w_2) \geq 7$. Then we either have $2\psi(7,5) + \alpha_3 + \nu \geq \frac{2}{5} - \frac{11}{5}\varepsilon$, or one of the following two conditions holds:
\begin{enumerate}
\item $r(x) = 2$, or
\item $r(x) = 1$ and $|N(x) \cap D_5(G)| \geq 2$.
\end{enumerate}
\end{claim}
\begin{proof}
If $r(x) \leq 1$, then $x$ receives charge from at least one of $w_3$ or $\hat{z}$. By the definition of \textsc{rule 3}, we find that $\alpha_3 + \nu \geq 1 - \frac{1}{2}\varepsilon$ and hence $ch_3(x) \geq 2\psi(7,5)+1-\frac{1}{2}\varepsilon - 3 - \varepsilon = \frac{2}{5} - \frac{11}{5}\varepsilon$.

Suppose? $\alpha_3 + \nu = 0$. Then $ch_3(x) \geq 2\psi(7,5) - 3 - \varepsilon = -\frac{1}{5} - \frac{7}{5}\varepsilon$. This can only occur if \emph{neither} $w_3$ nor $\hat{z}$ sends charge to $x$, which implies that either:
\begin{enumerate}
\item Both $w_3$ and $\hat{z}$ are reserved, or
\item $w_3$ is reserved and $d_G(\hat{z}) = 5$.
\end{enumerate}
\end{proof}

Combining \Cref{2sixes} and \Cref{2sevens}, the proof of \Cref{kite} is complete.
\end{proof}

\begin{LMA}\label{edge06}
Let $S = \{x,y\}$ be a component of size 2 in $D_5(G)$ such that $S$ is not a cluster. If both $\Gclone{y}{x}$ and $\Gclone{x}{y}$ are 2-tight and ungemmed, and $|N(x)\cap N(y)\cap D_6(G)|=0$, then at least one of the following holds:
\begin{enumerate}
\item $x$ is happy after \textsc{stage 3}, or
\item Every neighbor of $x$ (other than $y$) sends at least $1 - \frac{1}{2}\varepsilon$ charge to $x$, or
\item $y$ dangles from $x$.
\end{enumerate}
The same applies to $y$.
\end{LMA}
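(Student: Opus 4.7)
The plan is to suppose that conditions (1) and (2) both fail and derive (3). Under $\neg(1)$, $x$ is unhappy after \textsc{stage 3}, so by \Cref{dischargeProtoGadget} $x$ lies in no proto-gadget of $G$ and is therefore upward of each of its neighbors. Under $\neg(2)$, some neighbor $v \ne y$ of $x$ sends less than $1 - \varepsilon/2$ charge. Using \Cref{chargeBounds}, any degree-$\ge 7$ neighbor sends at least $\psi(7,7) = 1 - \varepsilon/7 > 1 - \varepsilon/2$ via \textsc{rule 2a} or \textsc{rule 3a}, and any degree-6 neighbor sends at least $(2-\varepsilon)/2 = 1 - \varepsilon/2$ to an unhappy upward target via \textsc{rule 2b} or the first clause of \textsc{rule 3b}; the only way to fall short is to be reserved, or to trigger the second clause of \textsc{rule 3b} with $x$ as a leaf, which would force $v \in N(x)\cap N(y)\cap D_6(G)$ and contradict hypothesis (c). Hence $v$ must be a reserved degree-6 vertex with $v \notin N(y)$.

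Next, I would consider $\Gclone{x}{y}$, which is 2-tight and ungemmed, with $\{y, \wt{y}\}$ a cluster of size $2$ having common neighborhood $N_G(y)\setminus\{x\}$. By \Cref{clusterNeighbor6}, if this cluster has a neighbor of degree at most $6$ in $\Gclone{x}{y}$, then it is contained in a proto-gadget $H$ of $\Gclone{x}{y}$. I would then extract a kite $K \subseteq G$ by deleting $\wt{y}$ and its incident replacement edges, so that $V^\ast(K) = V^\ast(H) \setminus \{\wt{y}\}$. To verify the dangling conditions for $y$: (i) $V^\ast(K) \cap D_5(G) = \{y\}$ follows from \Cref{oneCinH} together with the maximum cluster size $k - 4 = 2$ of \Cref{k3Cluster}, which force $V^\ast(H) \cap D_5(\Gclone{x}{y}) = \{y, \wt{y}\}$, and from $d_G(u) \ge d_{\Gclone{x}{y}}(u) \ge 6$ for every $u \in V^\ast(K) \setminus \{y\}$; (ii) $x \notin V(K)$ is immediate since $K \subseteq \Gclone{x}{y}$; (iii) a degree-6 frame vertex $u$ adjacent to $x$ in $G$ would satisfy $d_{\Gclone{x}{y}}(u) = 5$, forcing $u \in \{y, \wt{y}\}$ and contradicting $d_G(u) = 6$.

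Otherwise, $\{y, \wt{y}\}$ has no neighbor of degree at most $6$ in $\Gclone{x}{y}$, so in $G$ every common neighbor of $x$ and $y$ has $d_G \ge 8$ and every non-common neighbor of $y$ has $d_G \ge 7$. I would apply \Cref{clusterNeighbor6} symmetrically in $\Gclone{y}{x}$, using $v$ as a degree-6 neighbor of $\{x, \wt{x}\}$ (since $v \notin N(y)$ gives $d_{\Gclone{y}{x}}(v) = 6$), to obtain a proto-gadget $H_x$ of $\Gclone{y}{x}$ with frame $\{x, \wt{x}, u_1, u_2, u_3\}$, where $u_1, u_2, u_3$ are the non-$v$ common neighbors of $\{x, \wt{x}\}$ and each has $d_G(u_i) \ge 7$ (with $d_G(u_i) \ge 8$ when $u_i \in N(y)$). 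The $K_5$-with-replacement-edges structure of $H_x$ guarantees that $u_1, u_2, u_3$ are pairwise joined by a real or replacement edge in $G$. When all three $u_i$ are common neighbors of $x$ and $y$, then $yu_i \in E(G)$ and $\{y, u_1, u_2, u_3\}$ forms a $K_4$ with replacement edges in $G$, producing the kite required for $y$ to dangle from $x$; condition (iii) is then automatic as each $u_i$ has $d_G \ge 8$.

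The hard part will be the residual subcase where fewer than three of $u_1, u_2, u_3$ lie in $N(y)$. There, the intended route is to show directly that $x$ is happy, contradicting $\neg(1)$, via an arithmetic analysis: each common neighbor of $x$ and $y$ (which has $d_G \ge 8$) contributes at least $\psi(8,8) = 3/2 - \varepsilon/8$ to $x$ under \textsc{rule 1}; the kite $K_x = H_x - \wt{x}$ witnesses that $x$ dangles from $y$, so \Cref{kite} applies and gives a lower bound on $ch_3(x)$ in the regime $r(x) = 1$ and $N(x)\cap D_5(G) = \{y\}$; and the remaining non-common neighbors of $x$ inherit degree constraints $\ge 7$ from the same $\Gclone{y}{x}$ proto-gadget analysis, so they also contribute at least $\psi(7,7) = 1 - \varepsilon/7$. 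The delicate balance of these bounds with the specific constant $\varepsilon = 1/105$ should close the argument.
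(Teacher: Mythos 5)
Your first paragraph is sound and is essentially the same reduction the paper makes: if $x$ is unhappy it is upward of its neighbors by \Cref{unhappyUp}, every neighbor of degree at least $7$ sends at least $1-\varepsilon/7$, and a degree-6 neighbor that actually discharges sends at least $1-\tfrac{1}{2}\varepsilon$ unless the second clause of \textsc{rule 3b} fires with $x$ in the edge, which $|N(x)\cap N(y)\cap D_6(G)|=0$ forbids; so a deficient neighbor $v$ must be a reserved degree-6 vertex with $v\notin N(y)$. The genuine gap is in what you do with $v$. You try to manufacture the required kite by applying \Cref{clusterNeighbor6} to the clusters $\{y,\wt{y}\}$ in $\Gclone{x}{y}$ and $\{x,\wt{x}\}$ in $\Gclone{y}{x}$, but the degree bookkeeping is backwards: in $\Gclone{y}{x}$ the vertex $v$ gains the neighbor $\wt{x}$ and, since $v\notin N(y)$, loses nothing, so $d_{\Gclone{y}{x}}(v)=7$, not $6$. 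In fact, under the hypotheses ($S$ a component of $D_5(G)$, no common degree-6 neighbor), every neighbor of $\{x,\wt{x}\}$ in $\Gclone{y}{x}$ and every neighbor of $\{y,\wt{y}\}$ in $\Gclone{x}{y}$ has degree at least $7$ in the respective clone, so \Cref{clusterNeighbor6} yields no proto-gadget in either clone; your branch with the cluster $\{y,\wt{y}\}$ is vacuous and the proto-gadget $H_x$ in the other branch never materializes. The downstream claims inherit this: ``every common neighbor of $x$ and $y$ has $d_G\ge 8$'' does not follow (a common neighbor keeps its degree in the clone, so only $\ge 7$ follows), ``every non-common neighbor of $y$ has $d_G\ge 7$'' does not follow (only $\ge 6$), and the final ``residual subcase'' is an unverified arithmetic sketch that rests on these bounds and on \Cref{kite}, whose hypothesis (that $x$ dangles from $y$) you have not established.

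For contrast, the paper interrogates the deficient degree-6 neighbor $w$ itself rather than the cluster around $x$: since $w\notin N(y)$, $w$ has degree $5$ in $G'=\Gclone{x}{y}$. If $w$ has at most one other degree-5 neighbor it discharges enough by \textsc{rule 3b}; if it has two adjacent ones, \Cref{loneK3} applied in $G'$ forces exactly the configuration of the second clause of \textsc{rule 3b} with $x$ in the isolated role, so again $w$ sends $1-\tfrac{1}{2}\varepsilon$; if it has two non-adjacent ones $z_1,z_2$, then $(z_1,w,z_2)$ is an induced path in $D_5(G')$ and cloning $w$ inside $G'$ (\Cref{cloning}) produces gadgets containing $z_1$ and $z_2$. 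If one of those gadgets involves $\wt{y}$, then \Cref{splitProto} extracts a proto-gadget whose frame contains $y$ and $\wt{y}$, and deleting $\wt{y}$ gives the kite certifying that $y$ dangles from $x$; otherwise all of $w$'s degree-5 neighbors other than $x$ lie in gadgets of $G$, are happy after \textsc{stage 2} by \Cref{dischargeProtoGadget}, and $w$ is forced to send $2-\varepsilon$ to $x$, contradicting that $w$ is reserved. Some argument of this kind---extracting the kite from a clone of $G'$ around the reserved neighbor, not from $\Gclone{y}{x}$ around $x$---is the missing ingredient in your proposal.
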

\begin{proof}
We consider $x$. The same argument applies symmetrically to $y$. Assume that $x$ is unhappy after \textsc{stage 3} and hence after \textsc{stage 2} as well. By \Cref{unhappyUp}, $x$ is upward of its neighbors and therefore receives charge from them in \textsc{stage 2}.
	
Let $w \neq y$ be a neighbor of $x$. If $d(w) \geq 7$, then $w$ sends at least $\psi(7,7) \geq 1 - \frac{1}{2}\varepsilon$ charge to $x$ as $w$ discharges during either \textsc{stage 2} or \textsc{stage 3}. Hence, we may assume $d(w) = 6$.
	
If $w$ has at most one neighbor of degree 5 other than $x$, then $w$ sends at least $1 - \frac{1}{2}\varepsilon$ charge to $x$ by \textsc{rule 3a} and (2) holds. Thus, we may assume that $w$ has at least two other neighbors $z_1, z_2 \neq x$ of degree 5. 
	
First suppose $z_1, z_2$ are adjacent. Then $\{w, z_1, z_2\}$ is a triangle in $D_5(\Gclone{x}{y})$. By \Cref{loneK3}, it follows that $w$ has exactly three neighbors $x, z_1, z_2 \in D_5(G)$ and furthemore that $z_1z_2 \in E(G)$. Therefore $w$ triggers \textsc{rule 3b} and sends $1 - \frac{1}{2}\varepsilon$ charge to $x$.
	
So we may assume that $z_1$ and $z_2$ are not adjacent. Let $G' = \Gclone{x}{y}$. By assumption, $G'$ is 2-tight and ungemmed. Now $(z_1, w, z_2)$ is an induced path of $D_5(G')$. Yet $\clone{G'}{w}{z_1}, \clone{G'}{w}{z_2}$ have vertices of degree 4, and therefore are not 6-critical. By \Cref{cloning}, it follows that there exist gadgets $H_1, H_2$ of $G'$ containing $z_1, z_2$ respectively. 
	
Suppose that $H_1$ is not a gadget in $G$. Then $y, \wt{y} \in V(H_1)$, as $\wt{y}$ is the only vertex in $V(G') \setminus V(G)$. Clearly $y \notin V^\ast(H_1)$, as otherwise $y$ is adjacent to $z_1$. Applying \Cref{splitProto} to the replacement edge containing $y$ in $H_1$, there exists a proto-gadget $H_3 \subseteq H_1$ with $y \in V^\ast(H_3)$, so $\wt{y} \in V^\ast(H_3)$ as well. But $H_3 - \wt{y}$ is then a kite in $G$ containing $y$, with $x \notin V(H_3) \setminus \{\tilde{y}\}$ (since $x \notin V(G')$), and hence $y$ dangles from $x$ and (3) holds.
	
So we may assume that $H_1$ is a gadget of $G$ and by symmetry that $H_2$ is also a gadget of $G$. Then $w$ has at most three neighbors $z_1, z_2, z_3$ having degree 5, all of which are pairwise not adjacent, and such that there exist gadgets $H_1, H_2, H_3$ in $G$ containing $z_1, z_2, z_3$ respectively. None of $z_1, z_2, z_3$ is in a cluster of size 2. So by \Cref{dischargeProtoGadget}, $z_1, z_2, z_3$ are happy after \textsc{stage 2}. Thus $w$ sends $2 - \varepsilon$ charge to $x$ in \textsc{rule 3b} and (2) holds.
\end{proof}

\begin{LMA}\label{edge16}
Let $S = \{x,y\}$ be a component of size 2 in $D_5(G)$ such that $S$ is not a cluster. If both $\Gclone{y}{x}$ and $\Gclone{x}{y}$ are 2-tight and ungemmed, and $|N(x)\cap N(y)\cap D_6(G)|=1$, then $y$ dangles from $x$, and $x$ dangles from $y$.
\end{LMA}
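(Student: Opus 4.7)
By symmetry it suffices to prove that $y$ dangles from $x$; the statement that $x$ dangles from $y$ then follows by swapping the roles of $x,y$ and using the other hypothesis that $\Gclone{y}{x}$ is $2$-tight and ungemmed. The first two bullet points of the definition of dangling hold immediately: $x,y\in D_5(G)$ are adjacent and not in a common cluster (since $\{x,y\}$ is not a cluster by hypothesis), and $\Gclone{y}{x}$ is $2$-tight and ungemmed. The task is therefore to produce a kite $H\subseteq G$ with $y\in V^\ast(H)$, $x\notin V(H)$, $V^\ast(H)\cap D_5(G)=\{y\}$, and no frame vertex of degree $6$ adjacent to $x$.

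My plan is to work inside $G':=\Gclone{x}{y}$, which is $2$-tight and ungemmed by hypothesis. In $G'$, the pair $\{y,\wt{y}\}$ forms a cluster of size $2$ whose common neighbourhood equals $N_G(y)\setminus\{x\}$. Let $w$ be the unique vertex of $N(x)\cap N(y)\cap D_6(G)$. A short degree calculation will show that $d_{G'}(w)=6$ (since $w$ loses the edge to $x$ and gains the edge to $\wt{y}$), while every other $v\in N_G(y)\setminus\{x,w\}$ satisfies $d_{G'}(v)\ge 7$: if $v\in N_G(x)$ then uniqueness of $w$ forces $d_G(v)\ge 7$ and $d_{G'}(v)=d_G(v)\ge 7$; if $v\notin N_G(x)$ then $d_G(v)\ge 6$ because $\{x,y\}$ is a component of $D_5(G)$, and $d_{G'}(v)=d_G(v)+1\ge 7$. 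Hence $w$ is the unique neighbour of $\{y,\wt{y}\}$ in $G'$ of degree at most $6$.

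I will then apply \Cref{clusterNeighbor6} inside $G'$ to the cluster $\{y,\wt{y}\}$; this yields a proto-gadget $H^*$ of $G'$ containing that cluster. The critical point, visible directly from the proof of \Cref{clusterNeighbor6}, is that the construction performs the cloning using the unique degree-$\le 6$ cluster-neighbour (here $w$), so $H^*$ is actually contained in $G'-w$ and its frame has the explicit form $V^\ast(H^*)=\{y,\wt{y},v_2,v_3,v_4\}$ with $v_2,v_3,v_4\in N_G(y)\setminus\{x,w\}$. Setting $H:=H^*-\wt{y}$, deletion of the frame vertex $\wt{y}$ from the $K_{k-1}$ proto-gadget $H^*$ leaves a $K_{k-2}$ with replacement edges on $\{y,v_2,v_3,v_4\}$, so $H$ is a kite. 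Moreover $H\subseteq \Gclone{x}{y}-\wt{y}-w=G-x-w$, hence $x\notin V(H)$. The condition $V^\ast(H)\cap D_5(G)=\{y\}$ follows because the only degree-$5$ neighbour of $y$ in $G$ is $x$, which is not in $V^\ast(H)$, so each $v_i$ satisfies $d_G(v_i)\ge 6$. Finally, any $v_i$ of $G$-degree $6$ cannot be adjacent to $x$, for otherwise $v_i$ would be a second common degree-$6$ neighbour of $x$ and $y$, contradicting the uniqueness of $w$.

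The only real obstacle is making sure that $w$ is excluded from the kite's frame, since it is precisely this exclusion that prevents the final condition in the definition of dangling from failing; this is handled automatically by invoking \Cref{clusterNeighbor6} with $w$ playing the role of the distinguished low-degree cluster-neighbour. The remaining steps are routine degree bookkeeping and tracing through the construction that turns a proto-gadget of $G'$ into a kite of $G$.
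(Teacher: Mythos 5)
Your proposal is correct and follows essentially the same route as the paper: the paper also passes to $\Gclone{x}{y}$, clones the cluster $\{y,\wt{y}\}$ with the unique common degree-$6$ neighbour $w$, uses \Cref{cloneKey} (via \Cref{k3Cluster} and \Cref{cloning}) to pin down the frame, and deletes the clone vertices to obtain a kite avoiding $x$ and $w$, verifying the dangling conditions exactly as you do. The only cosmetic difference is that you package the second cloning step by appealing to the \emph{proof} (not just the statement) of \Cref{clusterNeighbor6}; since its statement alone does not record that the proto-gadget excludes $w$ and has frame $C\cup(N(C)\setminus\{w\})$, it would be cleaner to redo that one cloning step explicitly, as the paper does, but this is a presentational point rather than a gap.
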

\begin{proof}
By assumption, $G' = \Gclone{x}{y}$ is 2-tight and ungemmed. Let $N(x)\cap N(y)\cap D_6(G)=\{w\}$. Since $w$ is adjacent to $y$, $w$ has degree 6 in $G'$. Let $C$  be the cluster $\{y, \wt{y}\}$ in $G'$, and consider $G'' = \clone{G'}{w}{C}$. Since $G"'$ contains a cluster of size 3, \Cref{k3Cluster} and \Cref{cloning} imply that $G''$ contains a 6-Ore subgraph $H$. \Cref{cloneKey} implies that $V^\ast(H) = (N(y) \setminus \{x,w\}) \cup \{\wt{y}, \wt{w}\}$. Deleting the cloned vertices $\{\wt{y}, \wt{w}\}$ from $H$ yields a kite $H_2$ of $G$. Since $w$ is the only common neighbor of $x,y$ having degree 6 and $w \notin V^\ast(H_2)$, $x \notin V(H_2)$, $H_2$ is a kite which verifies that $y$ dangles from $x$ and vice versa.
\end{proof}

\begin{LMA}\label{trianglePrelim}
Let $S = \{x_1,x_2,x_3\}$ be a triangle in $D_5(G)$, not containing a cluster of size 2. Then $\Gclone{x_j}{x_i}$ is 2-tight and ungemmed for any two vertices of $x_i, x_j \in S$, and all of the following hold:
\begin{enumerate}
\item no two vertices of $S$ have a common neighbor having degree 6, and
\item\label{triangle1Reserve} each vertex $x$ of $S$ has $r(x)\le 1$, and
\item\label{triangleResSingleton} if $w$ is a reserved vertex neighboring a vertex of $S$, then the other degree-5 neighbors of $w$ are singletons in $D_5(G)$, and
\item\label{triangleCharges} if $\rho$ denotes the number of reserved vertices adjacent to at least one vertex of $S$, then 
$$ch_3(S) \geq \left( -\frac{1}{5} - \frac{7}{5}\varepsilon \right) \rho + \left( \frac{1}{5} - \frac{11}{5}\varepsilon \right)  (3-\rho).$$
\end{enumerate}
\end{LMA}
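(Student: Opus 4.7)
The plan is to first establish that $\Gclone{x_j}{x_i}$ is 2-tight and ungemmed for any two $x_i, x_j \in S$, and then deduce the four claims in turn, leveraging this clone together with the structural and discharging machinery developed earlier.

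For the preliminary statement and (1), I would observe that by \Cref{loneK3} the triangle $S$ is an entire component of $D_5(G)$, and since $S$ contains no cluster of size $2$, each $x_i$ is a singleton cluster. The hypotheses of \Cref{cloning} therefore hold ($s=1$, $d(x_j)=5$). If $\Gclone{x_j}{x_i}$ contained a $6$-Ore subgraph $H$, then \Cref{cloneKey} gives $V^*(H) = (N_G(x_i) \setminus \{x_j\}) \cup \{\wt{x_i}\}$, which contains the third triangle vertex $x_k$; since $d_G(x_k)=5$, \Cref{RepDeg} forbids $x_k$ from being incident to a replacement edge, so $x_k$ is adjacent to all other vertices of $V^*(H)$, giving $|N(x_i)\cap N(x_k)|\ge 3$. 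Then \Cref{emeraldCluster} forces $x_i, x_k$ into a common cluster, contradicting the hypothesis. So $\Gclone{x_j}{x_i}$ is $6$-critical, and \Cref{tightClone,ungemmedClone} (using that $G$ is $3$-tight by \Cref{minIsTightUngemmed}) give the preliminary. Claim (1) then follows: if $w$ were a common degree-$6$ neighbor of $x_1, x_2$, then in the 2-tight ungemmed clone $\Gclone{x_2}{x_1}$ both $x_3$ (still degree $5$) and $w$ (losing its edge to $x_2$ and gaining one to $\wt{x_1}$, hence still degree $6$) would be neighbors of the size-$2$ cluster $\{x_1,\wt{x_1}\}$, violating \Cref{clusterNeighbor6}.

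For (2) and (3), I would argue by contradiction inside the clone $\Gclone{x_j}{x_i}$. For (2), suppose $r(x_1) \ge 2$ with reserved degree-$6$ neighbors $w_1, w_2$; by (1) neither is adjacent to $x_2$ or $x_3$, so in $\Gclone{x_2}{x_1}$ both have degree $7$ and both neighbor the size-$2$ cluster $\{x_1,\wt{x_1}\}$. Since $x_3$ is a degree-$5$ neighbor of this cluster, \Cref{clusterNeighbor6} places $\{x_1, \wt{x_1}\}$ inside a proto-gadget $H'$ of the clone; the resulting local structure together with the fact that each $w_i$ is reserved in $G$ (hence has at least three unhappy degree-$5$ neighbors in $G$, two of which are non-triangle and survive into the clone) would force either a cluster of size $3$ in the clone (contradicting \Cref{k3Cluster}) or a gem of the clone (contradicting \Cref{ungemmedClone}). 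For (3), suppose a reserved neighbor $w$ of $x_i$ has a degree-$5$ neighbor $z \ne x_i$ with $z' \in D_5(G) \cap N(z)$. I would apply \Cref{cloning} to clone $z$ with $z'$ (both singleton clusters with $d(z')=5$); the resulting $6$-Ore subgraph (via \Cref{cloneKey}) combined with $x_i$'s adjacency to $w$ and the clone $\Gclone{x_j}{x_i}$ gives either a proto-gadget containing $x_i$ (forbidden by the triangle analog of \Cref{noCommonProto7}, since $x_i$'s degree-$5$ neighbors $x_2, x_3$ would then force a cluster of size two via \Cref{oneCinH}) or a $D_5$-component in $\Gclone{x_j}{x_i}$ exceeding size $3$, contradicting \Cref{d5struct}.

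For (4), the discharging accounting is performed in aggregate over $S$: each $x_i$ starts with $-3-\varepsilon$ and receives nothing from its two triangle-internal neighbors. By (1), reserved neighbors of distinct triangle vertices are distinct, so $\rho = \sum_i r(x_i)$, and by (2), $r(x_i) \le 1$. The heart of the argument is that each non-reserved non-triangle neighbor of $x_i$ sends at least $\psi(7,5) = \tfrac{7}{5} - \tfrac{1}{5}\varepsilon$ to $x_i$: for degree-$\ge 7$ neighbors this follows from \Cref{chargeBounds}, and for degree-$6$ non-reserved neighbors the combination of (1), (3), and the Rule 3A/3B dichotomy ensures that $x_i$ either receives $2-\varepsilon$ (as the unique unhappy triangle-neighbor of $w$) or at least $\psi(7,5)$ via Rule 3B. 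Summing gives $ch_3(S) \ge -9-3\varepsilon + (9-\rho)\psi(7,5)$, which a direct calculation confirms simplifies to the claimed bound. The hardest part I anticipate is (2) and (3): after one cloning step, degree-$6$ vertices generally become degree $7$ in the clone, which is above the threshold $d(y) \le k-2+s = 6$ for a second direct application of \Cref{cloning}, so one must proceed via the proto-gadget conclusion of \Cref{clusterNeighbor6} and careful replacement-edge bookkeeping to extract the desired contradictions.
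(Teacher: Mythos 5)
Your preliminary statement and claim (1) are correct and essentially the paper's argument (the paper cites \Cref{oneCinH} where you unpack \Cref{RepDeg} plus \Cref{emeraldCluster}, and the \Cref{clusterNeighbor6} argument for (1) is identical). The trouble is claims (2)--(4), where you diverge from the paper and the substitutes do not hold up. For (2), asserting that two reserved neighbors $w_1,w_2$ of $x_1$ ``would force either a cluster of size 3 in the clone or a gem'' is not an argument: in $\Gclone{x_2}{x_1}$ both $w_i$ have degree 7, which is perfectly compatible with \Cref{clusterNeighbor6}, with \Cref{k3Cluster} and with being ungemmed, and you never exhibit the contradiction. For (3), you appeal to a ``triangle analog of \Cref{noCommonProto7}'' that does not exist in the paper, and cloning $z$ with $z'$ (vertices that need not be anywhere near $S$) is never connected to a proto-gadget containing $x_i$ or to the $D_5$-component of $\Gclone{x_j}{x_i}$. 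The paper's actual route is the missing idea: set $G'=\Gclone{x_2}{x_1}$ and form the second clone $G''=\clone{G'}{x_3}{x_1}$, which has a cluster of size 3 and hence (by \Cref{k3Cluster}, \Cref{cloning}, \Cref{cloneKey}) a $6$-Ore subgraph whose frame contains $x_1$ and $N(x_1)\setminus S$; extracting the kite on $\{x_1,w_1,w_2,w_3\}$ and using (1) shows each $x_i$ \emph{dangles} from the other two, so \Cref{kite} delivers $r(x_i)\le 1$ and the per-vertex charge bounds. Claim (3) is then proved by cloning $x_2$ with $x_1$ so that the reserved vertex $w$ drops to degree 5 and its component in $D_5(\Gclone{x_1}{x_2})$ violates \Cref{d5struct}.

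For (4) there is a concrete false step: a non-reserved degree-6 neighbor of $x_i$ does \emph{not} send at least $\psi(7,5)$. Under \textsc{rule 3b} such a vertex with two unhappy neighbors sends only $\tfrac{2-\varepsilon}{2}=1-\tfrac12\varepsilon<\tfrac75-\tfrac15\varepsilon$, and \textsc{rule 3b} never produces $\psi(7,5)$; claims (1) and (3) do not repair this, since they say nothing about non-reserved degree-6 neighbors. With the correct guarantee $1-\tfrac12\varepsilon$ your aggregate becomes $-9-3\varepsilon+(9-\rho)\left(1-\tfrac12\varepsilon\right)=-\rho-\left(3+\tfrac{9-\rho}{2}\right)\varepsilon$, which already at $\rho=0$ is negative and falls short of the required $\tfrac35-\tfrac{33}{5}\varepsilon$. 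The paper does not attempt a generic per-neighbor bound at all: the charge inequalities in (4) come from \Cref{kite}, where the kite frame around each $x_i$ is exactly what forces two frame vertices to send $\psi(7,5)$-type charge and controls the behaviour of degree-6 frame vertices. So the dangling/kite construction is not an optional shortcut here; without it both (2) and (4) are unproven in your proposal.
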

\begin{proof}
We first prove the first statement that for every $i\ne j\in\{1,2,3\}$, $\Gclone{x_j}{x_i}$ is 2-tight and ungemmed. Suppose not. Without loss of generality, suppose $\Gclone{x_2}{x_1}$ is not 2-tight or not ungemmed. Since $x_3$ is adjacent to both $x_1, x_2$, the vertex $x_3$ has degree 5 in $\Gclone{x_2}{x_1}$. From \Cref{cloning}, it Follows that $\Gclone{x_2}{x_1}$ contains a $6$-Ore subgraph $H$. By counting degrees, we see that $x_1, \tilde{x_1}, x_3 \in V^\ast(H)$. Applying \Cref{oneCinH} to $x_3$, we find that $x_3$ is in the same cluster as $x_1$ in $\Gclone{x_2}{x_1}$, and therefore in $G$, which contradicts the definition of $S$. This proves the fist statement.

Next we prove that all of (1)-(4) hold. First suppose (1) does not hold, that is, $x_1, x_2$ have a common neighbor $w$ of degree 6. Note that then $\Gclone{x_2}{x_1}$ contains a cluster $\{ x_1, \wt{x_1}\}$ with a neighbor $x_3$ having degree 5 and a neighbor $w$ having degree 6, which contradicts \Cref{clusterNeighbor6}. So we may assume that (1) holds.

Let $\{w_1, w_2, w_3\} = N(x_1)\setminus S$. Let $G' = \Gclone{x_2}{x_1}$ and consider $G'' = \clone{G'}{x_3}{x_1}$. Note that $G''$ contains a cluster of size 3, so it contains a 6-Ore subgraph $H$ with $\{x_1, w_1, w_2, w_3\} \subseteq V^\ast(H)$. Let $H_1$ be the kite contained in $H$ with $V^\ast(H_1) = \{x, w_1, w_2, w_3\}$. Since $x_1$ has no common neighbors of degree 6 with $x_2, x_3$, and $x_2,x_3 \notin V(H_1)$, we see that $x_1$ dangles from both $x_2, x_3$, and therefore \Cref{kite} applies to $x_1$. Symmetrically, there exist kites $H_2$ and $H_3$, so $x_2$ dangles from $x_1,x_3$ and $x_3$ dangles from $x_1,x_2$, so \Cref{kite} applies to $x_2$ and $x_3$ as well.
	
First, notice that \Cref{kite} implies $x_1$ has reserve degree at most one and hence (\ref{triangle1Reserve}) holds. \Cref{kite} also implies that $ch_3(x_1) \geq \frac{1}{5} - \frac{11}{5}\varepsilon$ if $x_1$ neighbors no reserved vertices, and $ch_3(x_1) \geq -\frac{1}{5} - \frac{7}{5}\varepsilon$ if $x_1$ neighbors one reserved vertex. Using the same reasoning for $x_2$ and $x_3$, we sum the charges on $S$ to obtain
$$ch_3(S) = ch_3(x_1) + ch_3(x_2) + ch_3(x_3) \geq \left( -\frac{1}{5} - \frac{7}{5}\varepsilon \right) \rho + \left( \frac{1}{5} - \frac{11}{5}\varepsilon \right)  (3-\rho)$$
and hence (\ref{triangleCharges}) holds.
	
Finally suppose (3) does not hold. So we may assume without loss of generality that there exists a reserved vertex $w$ that is a neighbor of $x_1$ and that has neighbor $z_1$ of degree 5 such that $z_1$ is not a singleton in $D_5(G)$. Letting $G' = \Gclone{x_1}{x_2}$, we find that $w$ has degree 5 in $D_5(G')$. Let $T$ be the component containing $w$ in $D_5(G')$. Note that $N(w)\cap D_5(G) \setminus \{x_1\} \subseteq V(T)$. By \Cref{d5struct} applied to $D_5(G')$, $T$ has size at most 4, and if $T$ has size 4 then $T$ is a star. Since $w$ is reserved, we have that $w$ is adjacent to at least 3 upward components of $D_5(G)$. Thus it follows that $w$ has degree at least two in $T$ and that $N(w)\cap V(T)$ is an independent set. Since $z_1$ is not a singleton of $D_5(G)$, it follows that $z_1$ has degree at least two in $T$. Thus $T$ is a component of size 4 with two vertices $w, z_1$ of degree at least 2, which contradicts the fact that $T$ is a star.
\end{proof}

\begin{CORO}\label{sendsToTriangle}
If $w$ is a reserved vertex that is a neighbor of a vertex in a triangle $S$ of $D_5(G)$, then $|(N(w)\cap D_5(G))\setminus S|\ge 2$ and if $v\in (N(w)\cap D_5(G))\setminus S$, then $v$ is a singleton.
\end{CORO}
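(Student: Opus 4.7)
The plan is to extract both conclusions almost immediately from \Cref{trianglePrelim} together with the definition of a reserved vertex. The second conclusion---that every $v\in (N(w)\cap D_5(G))\setminus S$ is a singleton in $D_5(G)$---is exactly the content of the third bullet of \Cref{trianglePrelim}, applied to $w$ and its unique neighbor in $S$. So no additional work is needed for that half of the statement.

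For the first conclusion, I would combine the definition of ``reserved'' with the first bullet of \Cref{trianglePrelim}. Since $w$ has degree $6$ and is reserved, $w$ must have failed to trigger the first clause of \textsc{rule 3b}, which discharges any degree-$6$ vertex adjacent to at most two unhappy neighbors. Hence at the end of \textsc{stage} $3$, $w$ has at least three unhappy degree-$5$ neighbors (else the rule would have fired at some point in \textsc{stage} $3$, contradicting that $w$ never discharged). By \Cref{trianglePrelim}(1), no two vertices of $S$ share a common degree-$6$ neighbor, so $|N(w)\cap S|\le 1$. Therefore at most one of those at-least-three unhappy neighbors lies inside $S$, and consequently at least two of them lie in $(N(w)\cap D_5(G))\setminus S$, which gives the desired bound.

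The main obstacle is essentially nil: the difficult structural content---producing kites via cloning in $\Gclone{x_j}{x_i}$ and bounding reserve degrees through \Cref{kite}---has already been carried out in \Cref{trianglePrelim}. The present corollary is just a one-step pigeonhole repackaging of those facts together with the trigger condition of \textsc{rule 3b}, and should be stated and proved in a few lines.
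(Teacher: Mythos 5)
Your proposal is correct and matches the paper's (implicit) argument: the corollary is intended as an immediate consequence of \Cref{trianglePrelim}, with part (3) giving the singleton claim and the definition of a reserved vertex (failure of the first clause of \textsc{rule 3b}, hence at least three unhappy degree-5 neighbors) combined with \Cref{trianglePrelim}(1) giving $|(N(w)\cap D_5(G))\setminus S|\ge 2$. No gaps beyond those already present in the paper's own statement (e.g.\ the inherited hypothesis that $S$ contains no cluster of size 2).
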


\subsection{Counting Charge}\label{subs:accounting}

Using the results from the previous section, we can show that almost all components of $D_5(G)$ are satisfied after \textsc{stage 3}. Before completing the final stages, we perform an accounting of which vertices still require charge.

\begin{PROP}\label{2ClusterSatisfied}
If $C$ is a cluster of size 2 and $C$ has no neighbor having degree 5, then $ch_3(C) \geq 0$.
\end{PROP}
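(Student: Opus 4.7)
The plan is first to handle the case where $C$ lies in a proto-gadget. If $C$ is downward of any common neighbor $v$ of degree $6$ or $7$, then $C \subseteq V(H)$ for some proto-gadget $H$, and \Cref{dischargeProtoGadget} gives $ch_2(C) \geq 2 + 2\varepsilon$, hence $ch_3(C) \geq 0$. Otherwise $C$ is upward of every common neighbor of degree $6$ or $7$, and by \Cref{clusterNeighbor6}, at most one of the four common neighbors $v_1, v_2, v_3, v_4$ of $C$ has degree $6$.

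Next, starting from $ch_0(C) = -6 - 2\varepsilon$, I will lower-bound the charge $\alpha_i$ each $v_i$ sends to $C$ during stages 1--3 using \Cref{chargeBounds}: a degree-$\geq 8$ neighbor contributes at least $2\psi(8,8) = 3 - \varepsilon/4$ via rule 1; a degree-$7$ neighbor contributes at least $2\psi(7,7) = 2 - 2\varepsilon/7$ in the worst case where rule 2A fails and rule 3A distributes over all seven degree-$5$ neighbors; a degree-$6$ neighbor contributes $2 - \varepsilon$ via rule 2B or rule 3B part 1, or $1 - \varepsilon/2$ via rule 3B part 2 (noting that the hypothesis that $C$ has no degree-$5$ neighbor automatically furnishes the required non-edge structure when $v_i$ has exactly three upward degree-$5$ neighbors), or $0$ if $v_i$ is reserved. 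I will then tabulate $\sum_i \alpha_i$ over all degree distributions $(d(v_1), \ldots, d(v_4))$ satisfying the constraint that at most one coordinate equals $6$, and verify that $\sum_i \alpha_i \geq 6 + 2\varepsilon$ in every configuration except one borderline case: where some $v_i$ (say $v_1$) is a reserved degree-$6$ vertex and each of $v_2, v_3, v_4$ has degree $7$ with exactly seven unhappy upward degree-$5$ neighbors.

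The main obstacle is ruling out this borderline configuration. For $v_1$ to be reserved, rules 2B, 3B part 1, and 3B part 2 must all fail to fire; together with the fact that $C$'s cluster structure forces the rule 3B part 2 configuration whenever $v_1$ has exactly three upward degree-$5$ neighbors, this forces $v_1$ to have at least $4$ upward degree-$5$ neighbors and hence at least two additional ones $z_1, z_2$ outside $C$. Similarly, the failure of rule 2A on each $v_j$ ($j = 2, 3, 4$) together with the degree-$7$ assumption forces all seven neighbors of $v_j$ to be unhappy upward degree-$5$ vertices, so $N(v_j) \subseteq D_5(G)$. I will then construct an independent set $A \subseteq D_5(G)$ containing $x_1, z_1, z_2$ together with selected degree-$5$ neighbors of $v_2, v_3, v_4$ chosen to preserve independence (using \Cref{noDeg3} and \Cref{d5struct} to control the local structure of $D_5(G)$), count the edges from $A$ to $\{v_1, v_2, v_3, v_4\} \subseteq B_1 \cup B_2$ (which is at least $3|A|$ since most vertices of $A$ are adjacent to three or four of the $v_i$'s), and contradict the bound $|E(A,B)| \leq |A| + 2|B_1| + 3|B_2|$ from \Cref{indsetedges}. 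This rules out the borderline case and completes the proof.
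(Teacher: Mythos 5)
There is a genuine gap, and it stems from overlooking half of \Cref{clusterNeighbor6}. That lemma does not only say that $C$ has at most one neighbor of degree at most $6$; its second statement says that if $C$ has \emph{any} neighbor of degree at most $6$, then $C$ is contained in a proto-gadget. So once you are in your second case ($C$ not in a proto-gadget, equivalently $ch_2(C)<0$ by \Cref{dischargeProtoGadget}/\Cref{unhappyUp}), \emph{all four} neighbors of $C$ have degree at least $7$, each of $x,y$ is upward of them, and each neighbor sends at least $\psi(7,7)=1-\tfrac{1}{7}\varepsilon$ to each of $x,y$ (via \textsc{rule 2a} or \textsc{rule 3a}), giving $ch_3(C)\geq -6-2\varepsilon+8-\tfrac{8}{7}\varepsilon\geq 0$. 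This is exactly the paper's two-line argument. Your "borderline configuration" with a reserved degree-$6$ neighbor $v_1$ is vacuous: the mere existence of a degree-$6$ neighbor already puts $C$ inside a proto-gadget and hence in your first case.

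Moreover, the substitute argument you propose to kill that configuration does not stand on its own. First, there is no justification for the claim that the degree-$5$ neighbors of $v_2,v_3,v_4$ that you place in $A$ are adjacent to three or four of the $v_i$'s; only $x$ and $y$ are guaranteed to be joined to all four, and they cannot both lie in the independent set $A$, so the lower bound $|E(A,\{v_1,\dots,v_4\})|\geq 3|A|$ is unsupported. Second, \Cref{indsetedges} bounds $|E_G(A,B)|$ where $B_1,B_2$ must consist of \emph{all} degree-$6$ and degree-$7$ vertices with a neighbor in $A$, not just $\{v_1,\dots,v_4\}$; since the right-hand side $|A|+2|B_1|+3|B_2|$ grows with these full sets while you only count edges into four chosen vertices on the left, no contradiction follows as sketched. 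So the final step of your plan fails both on the edge-count lower bound and on the applicability of the lemma; the correct repair is simply to invoke the second statement of \Cref{clusterNeighbor6} as the paper does.
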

\begin{proof}
If $C = \{x,y\}$ is contained in a proto-gadget of $G$, then \Cref{dischargeProtoGadget} implies that $ch_3(C) \geq ch_2(C) \geq 0$. Thus, if $ch_2(C) < 0$, then $C$ is not contained in a proto-gadget of $G$. By \Cref{clusterNeighbor6}, the neighbors of $C$ all have degree at least 7, and \Cref{unhappyUp} implies that $x$ and $y$ are both upward of the neighbors of $C$. Thus $x$ and $y$ each receive at least $\psi(7,7) = 1 - \frac{1}{7}\varepsilon$ charge from four vertices. So $ch_3(C) \geq -6 - 2\varepsilon + 2(4 - \frac{4}{7}\varepsilon) \geq 0$.
\end{proof}

\begin{PROP}\label{cluster5Neighbor}
Let $C = \{x,y\}$ be a cluster with a neighbor $z$ having degree 5, and let $S = \{x,y,z\}$. Then $ch_3(S) \geq -1 + \varepsilon$, and furthermore, $ch_3(S) \geq 0$ unless $z$ has reserve degree at least $3$.
\end{PROP}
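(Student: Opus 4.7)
The plan is to split $ch_3(S) = ch_3(C) + ch_3(z)$ and bound each piece. Since $z \in D_5(G)$ is a neighbor of $C$ with $d(z) \leq 6$, \Cref{clusterNeighbor6} places $C$ in a proto-gadget of $G$, and \Cref{dischargeProtoGadget} then yields $ch_2(C) \geq 2 + 2\varepsilon$. Degree-$5$ vertices do not discharge in \textsc{stage 3}, so this persists to $ch_3(C) \geq 2 + 2\varepsilon$.

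Next I collect structural facts about $z$'s neighborhood. Let $N(z) \setminus \{x,y\} = \{p,q,r\}$. By \Cref{d5struct} the component of $D_5(G)$ containing $z$ has size at most three, and since $\{x,y,z\}$ is already such a component, each of $p,q,r$ has degree at least $6$. By \Cref{clusterNeighbor6} the three common neighbors of $C$ other than $z$ all have degree at least $7$, so $\{p,q,r\}$ is disjoint from those three vertices; in particular $p,q,r$ are \emph{not} adjacent to $x$ or $y$. The same lemma also forces the cluster of $z$ to be the singleton $\{z\}$, since otherwise $C$ would have two neighbors of degree at most $6$.

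If $z$ is happy after \textsc{stage 3}, then $ch_3(S) \geq (2+2\varepsilon) + (2+2\varepsilon) \geq 0$ and the proposition holds. Otherwise, by \Cref{unhappyUp}, $z$ is upward of each of $p,q,r$. The key claim is that every non-reserved $w \in \{p,q,r\}$ contributes at least $1 - \varepsilon/2$ charge to $z$ by the end of \textsc{stage 3}. I would case-split on $d(w)$ and on the triggering rule: $d(w) \geq 8$ yields $\psi(8,8) = \tfrac{3}{2} - \tfrac{\varepsilon}{8}$ via \textsc{rule 1}; $d(w) = 7$ yields at least $\psi(7,7) = 1 - \tfrac{\varepsilon}{7}$ via \textsc{rule 2a} or \textsc{rule 3a}; and non-reserved $d(w) = 6$ must trigger \textsc{rule 2b} or \textsc{rule 3b}, where the only subtle subcase is \textsc{rule 3b}'s second case. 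In that subcase I claim $z$ occupies the ``middle'' position (so receives $1 - \varepsilon/2$), not an ``end'' position: the end position would require $z$ to have a degree-$5$ neighbor that is also adjacent to $w$, but $z$'s only degree-$5$ neighbors are $x,y \in C$ and $w$ is non-adjacent to $\{x,y\}$, a contradiction.

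Granted the claim, the arithmetic finishes the proof. If $r(z) \leq 2$, at least one $w \in \{p,q,r\}$ is non-reserved and contributes at least $1 - \varepsilon/2$; hence $ch_3(z) \geq -3 - \varepsilon + 1 - \varepsilon/2 = -2 - \tfrac{3}{2}\varepsilon$ and $ch_3(S) \geq \varepsilon/2 > 0$. In general, the trivial bound $ch_3(z) \geq -3 - \varepsilon$ yields $ch_3(S) \geq -1 + \varepsilon$. The chief obstacle is the \textsc{rule 3b} second-case analysis, which is precisely where the non-adjacency of $\{p,q,r\}$ to $\{x,y\}$---a consequence of \Cref{clusterNeighbor6}---does the work.
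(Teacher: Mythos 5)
Your proposal is correct and follows essentially the same route as the paper: the paper likewise obtains $ch_2(C) \geq 2+2\varepsilon$ from $C$ lying in a gadget (via cloning $z$ into $C$), gets $ch_3(S) \geq -1+\varepsilon$ from the trivial bound $ch_3(z) \geq -3-\varepsilon$, and then simply asserts that any neighbor of $z$ that is not a reserved degree-6 vertex sends at least $1-\tfrac{1}{2}\varepsilon$ charge, which is exactly the claim your case analysis of Rules 1, 2a, 2b, 3a, 3b makes explicit (including the Rule 3b middle-position point the paper leaves implicit). One small slip: your blanket assertion that $\{p,q,r\}$ is disjoint from the other three neighbors of $C$ is a non sequitur (a degree-$\geq 7$ neighbor of $z$ may well also be adjacent to $x$ and $y$), but this is harmless since you only invoke non-adjacency to $\{x,y\}$ for a degree-6 vertex $w$, where it does hold because \Cref{clusterNeighbor6} forces every neighbor of $C$ other than $z$ to have degree at least 7.
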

\begin{proof}
Note that $\Gclone{z}{C}$ has a cluster of size 3. So $C$ is contained in a gadget $H$. By \Cref{dischargeProtoGadget}, $ch_2(C) \geq 2 + 2\varepsilon$, as the neighbor $z$ of $C$ not in $H$ has degree 5. Therefore $ch_3(S) = ch_3(C) + ch_3(z) \geq -1 + \varepsilon$. If any of the neighbors of $z$ is not a reserved degree-6 neighbor, then $z$ receives at least another $1 - \frac{1}{2}\varepsilon$ charge, whence $ch_3(S) \geq 0$ as desired.
\end{proof}

\begin{PROP}\label{2PathSatisfied}
Let $S = (x,y,z)$ be an induced path in $D_5(G)$. Then $ch_3(S) \geq 0$.
\end{PROP}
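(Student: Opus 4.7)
The plan is to invoke \Cref{2pathEnds} on the induced path $(x,y,z)$ in $D_5(G)$, which directly yields $ch_2(x) \geq 2+2\varepsilon$ and $ch_2(z) \geq 2+2\varepsilon$. In other words, both endpoints are already happy by the end of \textsc{stage 2}, before we ever get to \textsc{stage 3}. The intuitive reason this works is that each endpoint lies in a gadget (obtained by cloning its degree-5 middle neighbor $y$ with itself), so \Cref{dischargeProtoGadget} applies to its proto-gadget, making it happy.

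The next observation is that a degree-5 vertex never loses charge in any discharging stage: inspecting \textsc{rules 1--3}, only vertices of degree at least 6 ever send charge, and degree-5 vertices only appear as recipients. Hence $ch_3(x) \geq ch_2(x) \geq 2+2\varepsilon$ and $ch_3(z) \geq ch_2(z) \geq 2+2\varepsilon$. For the middle vertex $y$, the worst case is that $y$ receives nothing at all (it could be surrounded entirely by reserved degree-6 neighbors that fail to discharge), but even then $ch_3(y) \geq ch_0(y) = -3 - \varepsilon$ since charge is monotone nondecreasing on $y$.

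Summing the three contributions,
\[
ch_3(S) = ch_3(x) + ch_3(y) + ch_3(z) \geq (2+2\varepsilon) + (-3-\varepsilon) + (2+2\varepsilon) = 1 + 3\varepsilon > 0,
\]
which establishes the desired bound. So the proof is a one-line consequence of \Cref{2pathEnds} together with the monotonicity of charge on degree-5 vertices through \textsc{stage 3}. There is no genuine obstacle here; the real work was already done in establishing \Cref{2pathEnds} (which in turn relied on \Cref{cloning} and \Cref{dischargeProtoGadget}). The only thing worth explicitly noting in the write-up is the monotonicity observation, since this is the formal reason we can upgrade a \textsc{stage 2} bound to a \textsc{stage 3} bound without any additional argument.
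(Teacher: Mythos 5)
Your proof is correct and follows essentially the same route as the paper: invoke \Cref{2pathEnds} to make the endpoints happy after \textsc{stage 2}, note that degree-5 vertices never lose charge, and bound the middle vertex by its initial charge to get $ch_3(S) \geq 1 + 3\varepsilon > 0$. The paper's proof is the same computation, with the monotonicity of charge on degree-5 vertices left implicit.
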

\begin{proof}
By \Cref{2pathEnds}, we have $ch_2(x), ch_2(z) \geq 2+2\varepsilon$. Therefore $ch_3(S) \geq ch_2(x) + ch_2(z) - 3 - \varepsilon \geq 1 + 3\varepsilon > 0$.
\end{proof}

\begin{PROP}\label{edgeOf5s}
Let $S = \{x,y\}$ be a component of size 2 in $D_5(G)$, not in a cluster, and let $S' = \{ v \in S: ch_3(v) < 0\}$. Let $\rho = \max_{v \in S'} r(v)$. Then $ch_3(S) \geq \min\{0, -\rho + \frac{3}{2}\}$. Furthermore, if $\rho \geq 3$, then we have a stronger bound that $ch_3(S) \geq -\rho + \frac{8}{5}$.
\end{PROP}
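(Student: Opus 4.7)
The plan is to reduce to the kite bound of \Cref{kite} via the dangling dichotomy from \Cref{edge06}/\Cref{edge16}, and finish one residual sub-case by a direct per-neighbour charge analysis. If $|S'|=0$ the claim is immediate, so assume WLOG $x \in S'$. I would first establish that $\Gclone{y}{x}$ is 2-tight and ungemmed (and analogously for $\Gclone{x}{y}$ when $y \in S'$): by \Cref{criticalClone} and \Cref{cloning}, either this holds, or $\Gclone{y}{x}$ contains a 6-Ore subgraph whose frame (by \Cref{cloneKey}) yields a proto-gadget of $G$ containing $x$, whence \Cref{dischargeProtoGadget} makes $x$ happy and contradicts $x \in S'$; 2-tightness and ungemmedness then follow from \Cref{tightClone}, \Cref{ungemmedClone}, and \Cref{minIsTightUngemmed}. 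Applying \Cref{clusterNeighbor6} to the cluster $\{x,\tildx\}$ in $\Gclone{y}{x}$---noting that any common $D_6(G)$-neighbour of $x,y$ has degree 6 in $\Gclone{y}{x}$---forces $c := |N(x) \cap N(y) \cap D_6(G)| \le 1$.

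Next, for any $v \in S'$ I would force its partner to dangle from $v$: when $c=1$ this is immediate from \Cref{edge16}, and when $c=0$ it follows from \Cref{edge06} applied to $v$, since conclusion (1) contradicts $v \in S'$ and conclusion (2) yields $ch_3(v) \ge -3-\varepsilon + 4(1-\varepsilon/2) = 1-3\varepsilon > 0$. Then applying \Cref{kite} to a dangling $v$, together with $N(v) \cap D_5(G)$ equal to the other endpoint (which follows from $S$ being a size-2 component), the bound ``$r(v)\le 1 \Rightarrow ch_3(v) \ge \tfrac{2}{5} - \tfrac{11}{5}\varepsilon$'' forces $r(v) = 2$ whenever $v \in S'$, giving $ch_3(v) \ge -\tfrac{1}{5} - \tfrac{7}{5}\varepsilon$. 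Hence when $|S'|=2$ both endpoints dangle (by symmetry or \Cref{edge16}), so $r(x) = r(y) = 2 = \rho$ and $ch_3(S) \ge -\tfrac{2}{5} - \tfrac{14}{5}\varepsilon \ge -\tfrac{1}{2}$ for our $\varepsilon$. For $|S'|=1$ with $x \in S'$, $y \notin S'$ (so $ch_3(y) \ge 0$), the same argument gives $ch_3(S) \ge ch_3(x) \ge -\tfrac{1}{5} - \tfrac{7}{5}\varepsilon$ with $\rho=2$, provided $x$ also dangles---which is automatic when $c=1$, or when $c=0$ and \Cref{edge06} applied to $y$ yields its conclusion (3).

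The residual sub-case is $|S'|=1$, $c=0$, and \Cref{edge06} applied to $y$ yielding (1'') or (2''), whence $ch_3(y) \ge 1-3\varepsilon$. Here I would revisit the proof of \Cref{edge06} to extract the local fact that in the $c=0$ setting every non-reserved non-$y$ neighbour $w$ of $x$ sends at least $1 - \varepsilon/2$ to $x$ (the triangle branch of \textsc{rule 3b} sending only $\tfrac{1}{2}-\tfrac{\varepsilon}{4}$ is ruled out because $N(x) \cap D_5(G) = \{y\}$ together with $y \notin N(w)$ prevents $x$ from being one of the two ``edge'' vertices of the triangle). This yields $ch_3(x) \ge -3 - \varepsilon + (4-r(x))(1-\varepsilon/2)$, so $ch_3(S) \ge 2 - 6\varepsilon - r(x)(1-\varepsilon/2)$, and a routine check confirms this meets $\min\{0, -\rho + \tfrac{3}{2}\}$ for each $r(x) \in \{1,2,3,4\}$ (the case $r(x)=0$ is precluded since it would give $ch_3(x) > 0$), and also the stronger $-\rho + \tfrac{8}{5}$ bound when $\rho \ge 3$. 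The main obstacle is exactly this final sub-case, where $x$ is not forced to dangle from $y$ and so the kite lemma cannot be invoked for $x$; it is resolved by combining the per-neighbour extraction from the internals of \Cref{edge06}'s proof with the stronger charge guarantee on $y$ from its (1'') or (2'') branch.
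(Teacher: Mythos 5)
Your overall strategy is the paper's: reduce to the case where both clones are 2-tight and ungemmed, use \Cref{clusterNeighbor6} on the cluster $\{x,\tildx\}$ in $\Gclone{y}{x}$ to force at most one common degree-6 neighbour, then route through \Cref{edge06}/\Cref{edge16} to dangling and \Cref{kite}, with a direct per-neighbour charge count in the leftover branch. Your organisation around $S'$ is in places cleaner than the paper's (e.g.\ noting that conclusions (1) and (2) of \Cref{edge06} applied to a vertex of $S'$ are outright contradictions), and your residual computation $ch_3(S)\ge 2-6\varepsilon-r(x)(1-\tfrac12\varepsilon)$ with the triangle-branch exclusion for $c=0$ is essentially the same local fact the paper uses.

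However, there is a genuine gap in the case $|S'|=1$. Both \Cref{edge06} and \Cref{edge16} have as a hypothesis that \emph{both} $\Gclone{y}{x}$ and $\Gclone{x}{y}$ are 2-tight and ungemmed, and you only establish this for the clone attached to a vertex of $S'$ (``and analogously for $\Gclone{x}{y}$ when $y\in S'$''). When $x\in S'$ but $y\notin S'$, the graph $\Gclone{x}{y}$ may fail to be 6-critical at all: by \Cref{criticalClone} and \Cref{cloning} it may instead contain a proper 6-Ore subgraph, in which case $y$ lies in a proto-gadget and is happy, but neither \Cref{edge06} nor \Cref{edge16} may be invoked for either endpoint. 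The paper avoids this by first disposing of the case where an endpoint is happy via a direct computation ($ch_3(S)\ge 2+2\varepsilon-3-\varepsilon+(4-\rho)(1-\tfrac12\varepsilon)\ge-\rho+\tfrac85$), and only afterwards assuming both clones are 2-tight and ungemmed; your plan has no such happy-endpoint fallback. Concretely, the configuration ``$x\in S'$, $y$ happy because $\Gclone{x}{y}$ contains a 6-Ore subgraph, and $x,y$ have a common degree-6 neighbour'' is covered by none of your branches: \Cref{edge16} is unavailable, and your residual per-neighbour bound is justified only when $c=0$ (with $c=1$ the common degree-6 neighbour can send $x$ only $\tfrac12-\tfrac14\varepsilon$ via the triangle branch of \textsc{rule 3b}). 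The hole is patchable by adding the happy-endpoint computation, but as written the invocation of \Cref{edge06}/\Cref{edge16} in the $|S'|=1$ case is unlicensed. (A smaller presentational slip: after forcing ``the partner dangles from $v$'' you speak of ``applying \Cref{kite} to a dangling $v$''; \Cref{kite} bounds the charge of the vertex that dangles, so for $v\in S'$ you need $v$ itself to dangle, which you do obtain when $|S'|=2$ and handle via the proviso when $|S'|=1$, so this is not itself a gap.)
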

\begin{proof}
Suppose that $x$ is happy after \textsc{stage 3}, so $ch_3(x) \geq 2 + 2\varepsilon$. Since $x$ and $y$ have degree 5 and are adjacent, $\rho \leq 4$. We calculate that
\begin{align*}ch_3(S) = ch_3(x) + ch_3(y) &\geq 2 + 2\varepsilon - 3 - \varepsilon + (4 - \rho)\left( 1 - \frac{1}{2} \varepsilon\right) \\
& = -\rho + 3 - \left( 1-\frac{1}{2}\rho \right) \varepsilon \\
&\geq -\rho + \frac{8}{5}.
\end{align*}
If $\Gclone{y}{x}$ contains a 6-Ore subgraph, then $x$ is in a proto-gadget and \Cref{dischargeProtoGadget} implies that $x$ is happy after \textsc{stage 2}. Therefore, by \Cref{cloning,ungemmedClone,tightClone}, we may assume that $\Gclone{y}{x}$ and $\Gclone{x}{y}$ are both 2-tight and ungemmed.
	
$\Gclone{y}{x}$ contains a cluster $C = \{x, \wt{x}\}$ of size 2. \Cref{clusterNeighbor6} implies that $C$ has at most 1 neighbor of degree 6. Such a neighbor of $C$ is either a vertex of degree 5 in $G$ adjacent to $x$ but not $y$, or a vertex having degree 6 in $G$ adjacent to both $x$ and $y$. Since $x$ does not have a neighbor having degree 5 that is distinct from $y$, we deduce that $x,y$ have at most 1 common neighbor having degree 6. Therefore either \Cref{edge06} or \Cref{edge16} applies.
	
Suppose first that $x,y$ have no common neighbors of degree 6. Apply \Cref{edge06} to the vertex $x$. If \Cref{edge06}(1) holds, then $x$ is happy after \textsc{stage 3} and the proposition follows as above. 
	
So suppose \Cref{edge06}(2) holds. If $x$ receives at least $1 - \frac{1}{2}\varepsilon$ charge from each neighbor, then $x$ neighbors no reserved vertices and $\rho$ is the number of reserved vertices neighboring $y$. The amount of charge received by $y$ is at least $1  -\frac{1}{2}\varepsilon$ for each non-reserved vertex, so
$$ch_3(x) + ch_3(y) \geq -6 - 2\varepsilon + 4\left(1 - \frac{1}{2}\varepsilon \right) + (4 - \rho)\left(1 - \frac{1}{2}\varepsilon \right) = -\rho + 2 - \left(6 - \frac{1}{2}\rho \right) \varepsilon,$$
which is clearly greater than $-\rho + \frac{8}{5}$ as desired.

Hence we may assume that \Cref{edge06}(3) holds, that is $y$ dangles from $x$. Since \Cref{edge06} applies symmetrically to $y$, we find by symmemtry that $x$ dangles from $y$. Now \Cref{kite} implies that
\begin{enumerate}
\item $x$ neighbors at most two reserved vertices.
\item $ch_3(x) \geq -\frac{1}{5} - \frac{7}{5}\varepsilon$.
\item If $x$ neighbors at most one reserved vertex, then $ch_3(x) \geq \frac{1}{5} - \frac{11}{5}\varepsilon$.
\end{enumerate}
By symmetry, the same applies to $y$, so $\rho \leq 2$. Thus, if neither $x$ nor $y$ neighbors two reserved vertices, then $ch_3(S) \geq 2(\frac{1}{5} - \frac{11}{5}\varepsilon) \geq 0$. If $\rho = 2$, then we have $ch_3(S) \geq 2(-\frac{1}{5}-\frac{7}{5}\varepsilon) \geq -\frac{1}{2} = -2 + \frac{3}{2}$, as desired.
	
If $x,y$ have exactly one common neighbor having degree 6, then \Cref{edge16} implies that $x$ dangles from $y$ and vice versa. We have just shown that we have the desired result in this case.
	
Notice that in all the cases where $\rho \geq 3$ is possible, the stronger bound $ch_3(S) \geq -\rho + \frac{8}{5}$ holds.
\end{proof}

\subsection{Global Discharging}\label{subs:global}
We can now employ a global discharging argument to resolve the singletons and all other unsatisfied vertices.

Let $U$ be the set of components $S$ of $D_5(G)$ such that $ch_3(S) < 0$. From each component $S$ of $U$, let $v_S$ denote a vertex of $S$ whose reserve degree is maximum over vertices in $S$. Let $A$ denote this set of vertices; note that $A$ is an independent set. Let $B$ denote the set of reserved vertices neighboring $A$. We apply \Cref{A2B} to $A$ and $B$, and deduce that $|E_G(A,B)| \leq |A| + 2|B|$.


We now perform the following rules in stages (namely we have separate \textsc{stages 4, 5 and 6}).
\begin{description}
\item[Rule 4]: Add $-\frac{4}{5} - \frac{1}{5}\varepsilon$ charge to every vertex in $A$, and distribute $\left( \frac{4}{5} + \frac{1}{5}\varepsilon \right)|A|$ charge among the vertices of $B$. The total charge is unchanged. 
\item[Rule 5]: Send $\frac{4}{5} + \frac{1}{5}\varepsilon$ charge along every edge in $E(A,B)$ towards $A$.
\item[Rule 6]: Redistribute the charge in $B$ so that every vertex of $B$ has at least $\frac{2}{5} - \frac{7}{5}\varepsilon$ charge. Every vertex in $B$ adjacent to a triangle sends $\frac{2}{5}-\frac{7}{5}\varepsilon$ charge to that triangle.
\end{description}

First, we verify that $B$ has sufficient charge stored after \textsc{stage 5} to implement \textsc{rule 6}. Observe that $ch_4(B) = ( \frac{4}{5} + \frac{1}{5}\varepsilon)|A| + (2-\varepsilon)|B|$. By \Cref{A2B}, the total charge sent in \textsc{stage 5} from $B$ to $A$ is at most
$$\left( \frac{4}{5} + \frac{1}{5}\varepsilon \right) |E(A,B)| \leq  \left( \frac{4}{5} + \frac{1}{5}\varepsilon \right) (|A| + 2|B|) = \left( \frac{4}{5} + \frac{1}{5}\varepsilon \right) |A| + \left(\frac{8}{5}  + \frac{2}{5} \varepsilon \right) |B|$$
Consequently,
\begin{align*}ch_5(B) &\geq  \left( \frac{4}{5} + \frac{1}{5}\varepsilon \right) |A| + \left(2-\varepsilon \right) |B| -\left( \frac{4}{5} + \frac{1}{5}\varepsilon \right) |A| - \left(\frac{8}{5} + \frac{2}{5} \varepsilon \right) |B| \\
&\geq \left( \frac{2}{5} - \frac{7}{5}\varepsilon \right) |B|.
\end{align*}
By \Cref{trianglePrelim}, a vertex in $B$ can be adjacent to at most one triangle. Thus, we have enough charge in $B$ after \textsc{stage 5} to carry out \textsc{rule 6}. which ensures that $ch_6(B) \geq 0$.
\begin{LMA}
A triangle neighboring $\rho$ reserved vertices receives $(\frac{2}{5}-\frac{7}{5}\varepsilon)\rho$ charge in \textsc{stage 6}.
\end{LMA}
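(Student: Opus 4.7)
The plan is to reduce the statement to a direct application of Rule 6, after handling two small bookkeeping points. First, by \Cref{trianglePrelim}(1), no two vertices of $S$ share a common neighbor of degree $6$; since every reserved vertex has degree $6$, each of the $\rho$ reserved vertices adjacent to $S$ is adjacent to exactly one vertex of $S$. Hence these $\rho$ reserved neighbors are distinct and contribute $\rho$ distinct edges into $S$, giving a clean one-to-one correspondence between reserved neighbors and would-be charge transfers.

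Next, Rule 6 stipulates that every vertex in $B$ adjacent to a triangle sends exactly $\tfrac{2}{5}-\tfrac{7}{5}\varepsilon$ charge to that triangle, so it suffices to verify that every reserved neighbor $w$ of $S$ lies in $B$, i.e.\ neighbors some vertex of $A$. If $w$ is adjacent to $v_S$ this is immediate, so assume $w$ is adjacent to $x_i \in S \setminus \{v_S\}$. By \Cref{sendsToTriangle}, $w$ has at least two singleton degree-$5$ neighbors outside $S$. Since $w$ is reserved, the first branch of Rule 3B did not apply to $w$, so $w$ has at least three unhappy-after-\textsc{stage 2} degree-$5$ neighbors; at most one lies in $S$, so at least two singleton neighbors of $w$ are unhappy after \textsc{stage 2}. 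A short case analysis of the \textsc{stage 3} discharging received by such a singleton---noting that $w$, being reserved, contributes no charge in \textsc{stage 3} and that the singleton's remaining neighbors can give only a bounded amount---shows at least one of these singletons remains unsatisfied at the end of \textsc{stage 3}, hence forms a size-$1$ component of $U$ whose unique vertex lies in $A$, placing $w$ in $B$.

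Combining the bijection from the first paragraph with Rule 6 then yields that $S$ receives exactly $\rho\bigl(\tfrac{2}{5}-\tfrac{7}{5}\varepsilon\bigr)$ charge in \textsc{stage 6}. The principal obstacle is the second paragraph: verifying that at least one unhappy singleton neighbor of $w$ must remain unsatisfied after \textsc{stage 3}. I expect this to come down to a finite calculation bounding the \textsc{stage 3} charge the singleton can collect from its (at most four) non-$w$ neighbors using \Cref{chargeBounds} together with the degree constraints imposed by \Cref{sendsToTriangle} and the structure of reserved vertices, leaving the singleton with strictly negative charge at the end of \textsc{stage 3}.
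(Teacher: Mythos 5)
Your overall route is the same as the paper's: show that every reserved neighbour $w$ of the triangle lies in $B$ (via \Cref{sendsToTriangle}) and then invoke \textsc{rule 6}; the paper's proof is exactly this short argument, asserting that $w$ is adjacent to a singleton in $A$. The difference is that you have correctly noticed that membership in $A$ requires the singleton to be a component of $U$, i.e.\ to satisfy $ch_3<0$, and you leave precisely this verification open, proposing to settle it by a ``finite calculation'' showing that some unhappy singleton neighbour of $w$ must end \textsc{stage 3} with strictly negative charge. (Your first paragraph's bijection point is harmless but unnecessary: \textsc{rule 6} sends charge per vertex of $B$ adjacent to the triangle, so all that is needed is that each of the $\rho$ reserved neighbours belongs to $B$.)

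That open step is a genuine gap, and the calculation you sketch will not close it as described. Reservedness of $w$ only guarantees that at least three of its degree-$5$ neighbours are \emph{unhappy} (charge below $2+2\varepsilon$), not \emph{unsatisfied} (charge below $0$), and these thresholds differ. A singleton $z$ adjacent to $w$ can be unhappy after \textsc{stage 2} yet end \textsc{stage 3} satisfied: for instance, nothing in the charge accounting alone prevents its other four neighbours from being degree-$7$ vertices that fail to trigger \textsc{rule 2a} (because they have six or seven unhappy upward neighbours) and hence release everything under \textsc{rule 3a}, in which case $z$ receives at least $4\psi(7,7)=4-\frac{4}{7}\varepsilon$ and so $ch_3(z)\ge 1-\frac{11}{7}\varepsilon>0$ while still $ch_3(z)<2+2\varepsilon$; this keeps $w$ reserved but leaves $z\notin A$. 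So no purely local bound on what $z$ ``can collect'' forces $ch_3(z)<0$; one needs additional structural input about the configurations that actually arise around a reserved neighbour of a triangle, not just \Cref{chargeBounds}. To be fair, the paper's own proof is silent on this point---it cites \Cref{sendsToTriangle}, which only yields that the neighbours outside $S$ are singletons, and simply asserts that one of them lies in $A$---so you have put your finger on the right pressure point, but your proposal does not yet prove the lemma, and the route you indicate for filling the hole would fail without a further idea.
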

\begin{proof}
Let $w$ be a reserved vertex neighboring a triangle $S$. By \Cref{sendsToTriangle}, $w \in B$, as $w$ is adjacent to a singleton in $A$. Therefore $w$ sends charge to $S$ in \textsc{stage 6}.
\end{proof}

We check that $ch_6(x) \geq 0$ for all vertices of degree 5.

\begin{PROP}
All vertices are satisfied after \textsc{stage 6}.
\end{PROP}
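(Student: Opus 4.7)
My plan is a case analysis on vertex type, leveraging the bookkeeping already established in \Cref{2ClusterSatisfied,cluster5Neighbor,2PathSatisfied,edgeOf5s,trianglePrelim}.

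Vertices of degree at least $7$ have $ch_0(v) \geq 7 - \varepsilon$; rules 1, 2a, and 3a distribute at most $ch_0(v)$ to degree-5 neighbors, leaving $ch_3(v) \geq 0$, and stages 4--6 do not touch such vertices. Non-reserved degree-6 vertices are fully discharged by rules 2b, 3a, or 3b, so $ch_6(v) = 0$. Reserved degree-6 vertices not in $B$ retain their initial charge $2 - \varepsilon$. Reserved vertices in $B$ are handled by the global accounting already done in the text: combining $ch_5(B) \geq \left( \frac{2}{5} - \frac{7}{5}\varepsilon \right) |B|$ with the fact (via \Cref{sendsToTriangle}) that any reserved vertex is adjacent to at most one triangle, rule 6's redistribution guarantees $ch_6(w) \geq 0$ for every $w \in B$.

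I then handle vertices of degree 5 component-by-component in $D_5(G)$. A component $S \notin U$ has $ch_3(S) \geq 0$ by definition of $U$ and loses no charge in stages 4--6, so $ch_6(S) \geq 0$. By the earlier propositions, a component $S \in U$ is one of four types: a cluster of size $2$ with a degree-5 neighbor of reserve degree at least $3$ (\Cref{cluster5Neighbor}), a non-cluster edge in $D_5(G)$ with $\rho \geq 2$ (\Cref{edgeOf5s}), a triangle with $\rho \geq 2$ (\Cref{trianglePrelim}), or a singleton. In each case the net effect of rules 4 and 5 on $S$ is $(r(v_S) - 1) \left( \frac{4}{5} + \frac{1}{5} \varepsilon \right)$, and for triangles rule 6 contributes an additional $\rho \left( \frac{2}{5} - \frac{7}{5} \varepsilon \right)$. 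Substituting the lower bounds on $ch_3(S)$ from the cited results and checking arithmetic for $\varepsilon = 1/105$ resolves the first three types.

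The main obstacle is the singleton case, for which no previous lemma gives a lower bound on $ch_3(v)$. I would classify the five neighbors of $v$: a reserved neighbor contributes $0$ in stages 1--3 but sends $\frac{4}{5} + \frac{1}{5}\varepsilon$ to $v$ in rule 5; a non-reserved neighbor of degree at least $7$ sends at least $\psi(7,7) = 1 - \frac{1}{7}\varepsilon$ (by rules 2a or 3a); and a non-reserved degree-6 neighbor that discharges via rule 3a or 3b sends at least $\frac{1}{2} - \frac{1}{4}\varepsilon$ to $v$. The delicate sub-case is a degree-6 neighbor $w$ of $v$ that triggered rule 2b and transferred its entire charge to a distinct size-2 cluster, giving nothing to $v$; here I expect \Cref{clusterNeighbor6}, together with the fact that $v$ is itself a singleton, to sharply limit the number of such $w$, so that after this bookkeeping the inequality $ch_3(v) + (r(v) - 1) \left( \frac{4}{5} + \frac{1}{5} \varepsilon \right) \geq 0$ reduces to an arithmetic check at $\varepsilon = 1/105$.
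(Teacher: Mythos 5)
Your overall plan follows the paper's proof closely: the same treatment of vertices of degree at least $7$, of reserved/non-reserved degree-$6$ vertices via the $ch_5(B)$ accounting and \Cref{A2B}, and the same component-by-component analysis of $D_5(G)$ with the net effect $(r(v_S)-1)\left(\frac{4}{5}+\frac{1}{5}\varepsilon\right)$ of \textsc{rules 4--5}. The genuine gap is exactly where you flag it, the singleton case, and the fix you gesture at will not close it. The arithmetic there is unforgiving: after the \textsc{rule 4} tax of $\frac{4}{5}+\frac{1}{5}\varepsilon$, a singleton $v$ with $ch_0(v)=-3-\varepsilon$ needs at least $\frac{4}{5}+\frac{1}{5}\varepsilon$ from \emph{each} of its five neighbors; if one neighbor contributes nothing the total is at most $-\frac{3}{5}-\frac{2}{5}\varepsilon$, and even one neighbor contributing only $\frac{1}{2}-\frac{1}{4}\varepsilon$ (the figure you quote for a degree-$6$ neighbor firing \textsc{rule 3b}) leaves $-\frac{1}{10}-\frac{13}{20}\varepsilon<0$. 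So ``sharply limiting the number'' of degree-$6$ neighbors that sent their charge elsewhere via \textsc{rule 2b} cannot suffice: you must show there are none, and you must upgrade the \textsc{rule 3b} contribution to $1-\frac{1}{2}\varepsilon$. Both follow from an argument you do not give: an unsatisfied singleton never loses charge in \textsc{stages 1--3}, so it is unhappy after \textsc{stage 2}; by \Cref{unhappyUp} (via \Cref{dischargeProtoGadget}) it is upward of all its neighbors; hence any degree-$6$ neighbor $w$ that triggers \textsc{rule 2b} has $\{v\}$ among its unhappy upward clusters, and the ``exactly one'' clause forces $w$ to send its $2-\varepsilon$ to $v$ — your delicate sub-case is vacuous. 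Likewise, in \textsc{rule 3b} the singleton, having no degree-$5$ neighbor, must play the role of $z$ and receives $1-\frac{1}{2}\varepsilon$. \Cref{clusterNeighbor6} at best shows a size-$2$ cluster with a degree-$6$ neighbor is downward of that neighbor (hence not a legal \textsc{rule 2b} target); it says nothing about \textsc{rule 2b} firing toward another singleton, and in any case ``few'' is not ``zero''. This is how the paper's proof closes the case, obtaining $ch_5(v)\ge \frac{1}{5}-\frac{1}{5}\varepsilon\ge 0$.

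A secondary omission: in the triangle case you assert that \textsc{rule 6} delivers $\rho\left(\frac{2}{5}-\frac{7}{5}\varepsilon\right)$, but \textsc{rule 6} only sends from vertices of $B$, so you must verify that every reserved vertex adjacent to the triangle lies in $B$. The paper does this in the short lemma preceding the proposition, using \Cref{sendsToTriangle} to exhibit an adjacent singleton in $A$; you cite \Cref{sendsToTriangle} only for the ``at most one triangle per vertex of $B$'' bound. The remaining cases (paths, clusters of size $2$, cluster plus degree-$5$ neighbor, non-cluster edges) track \Cref{2PathSatisfied,2ClusterSatisfied,cluster5Neighbor,edgeOf5s} just as in the paper and their arithmetic goes through.
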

\begin{proof}
For vertices of degree at least 7, this follows immediately as our discharging rules never send more than the initial charge. Vertices of degree 6 also do not send more than their initial charge unless they belong to $B$. We have shown that $ch_6(B) \geq 0$.
	
It only remains to check the charge of components of $D_5(G)$. From \Cref{d5struct}, there are six possible components of $D_5(G)$. We verify that each type is satisfied.
\begin{description}
\item[Singletons:] A singleton $x \in D_5(G)$ has $ch_4(x) = -3 - \frac{4}{5} - (1 + \frac{1}{5})\varepsilon$. It receives at least $1 - \frac{1}{2}\varepsilon$ charge from its neighbors of degree at least 7, and $\frac{4}{5} + \frac{1}{5}\varepsilon$ from its neighboring reserved vertices. Therefore, it receives at least $\frac{4}{5} - \frac{1}{5}\varepsilon$ charge from every neighbor and
$$ch_5(x) \geq -3 - \frac{4}{5} - \left( 1 + \frac{1}{5} \right) \varepsilon + 5\left(\frac{4}{5} + \frac{1}{5}\varepsilon \right) = \frac{1}{5} - \frac{1}{5}\varepsilon \geq 0,$$
since $\varepsilon \leq 1$.
		
\item[Induced Paths of Length 2:] Let $S = (x,y,z)$ be an induced path of length 2 in $D_5(G)$. By \Cref{2PathSatisfied}, $ch_6(S) \geq ch_3(S) \geq 0$.
		
\item [Clusters of Size 2:] Let $C = \{x,y\}$ be a cluster of size 2. By \Cref{2ClusterSatisfied}, $ch_6(C) \geq ch_3(C) \geq 0$.
		
\item[Clusters of Size 2 with a Degree 5 Neighbor:] Let $S = \{x,y,z\}$, with $C = \{x,y\}$ a cluster of size 2. By \Cref{cluster5Neighbor}, $ch_3(S) \geq 0$ unless $z$ neighbors 3 reserved vertices, in which case $ch_3(S) \geq -1 + \varepsilon$. Then $z \in A$ and the reserved vertices neighboring $z$ are in $B$, so $z$ receives $3( \frac{4}{5} + \frac{1}{5}\varepsilon)$ charge in \textsc{stage 5}. Thus
$$ch_6(S) \geq ch_3(S) - \frac{4}{5} - \frac{1}{5}\varepsilon + 3\left( \frac{4}{5} + \frac{1}{5}\varepsilon \right) \geq \frac{3}{5} + \frac{8}{5}\varepsilon > 0,$$
		
since $\varepsilon \ge 0$.
		
\item[Two Adjacent Vertices:] Let $S = \{x,y\}$ be a component of $D_5(G)$, not in a cluster, with $ch_3(S) < 0$. We may assume that $ch_3(x) < 0$ and without loss of generality that the reserve degree of $x$ is at least as large as the reserve degree of $y$. Then $x \in A$, and the $\rho$ reserved vertices neighboring $x$ are in $B$, so $x$ receives at least $(\frac{4}{5} + \frac{1}{5}\varepsilon)\rho$ charge in \textsc{stage 5}. By \Cref{edgeOf5s}, we find that $ch_3(S) \geq -\rho + \frac{3}{2}$, and if $\rho \geq 3$, then $ch_3(S) \geq -\rho + \frac{8}{5}$. If $\rho \leq 2$, then
\begin{align*}
ch_6(S) \geq \left( -\rho + \frac{3}{2} \right) - \frac{4}{5} - \frac{1}{5}\varepsilon +  \left(\frac{4}{5} + \frac{1}{5}\varepsilon \right) \rho &= \frac{7}{10} - \frac{1}{5}\rho + \frac{\rho - 1}{5}\varepsilon \\
&\geq \frac{7}{10} - \frac{2}{5} + \frac{\rho - 1}{5}\varepsilon \\
& \geq 0,
\end{align*}
since $\rho \ge 0$ and $\varepsilon \le \frac{3}{2}$.
\\
If $3 \leq \rho \leq 4$, we have a similar calculation:
$$ ch_5(S) \geq \frac{4}{5} - \frac{1}{5}\rho + \frac{\rho - 1}{5}\varepsilon \geq \frac{\rho - 1}{5}\varepsilon \geq 0,$$
		
since $\varepsilon \ge 0$.
		
\item[Triangle:] Let $S$ be a triangle, and let $\rho$ be the number of reserved vertices that have a neighbor in $S$. By \Cref{trianglePrelim}, $ch_3(S) \geq \left( -\frac{1}{5} - \frac{7}{5}\varepsilon \right) \rho + \left( \frac{1}{5} - \frac{11}{5}\varepsilon \right)  (3-\rho)$. Thus, if $ch_3(S) < 0$, $\rho \geq 2$. By \Cref{sendsToTriangle}, a reserved vertex $w$ neighboring $S$ is also adjacent to a singleton of $D_5(G)$ that is unsatisfied after \textsc{stage 3}. Such a singleton is in $A$, so $w \in B$.
		
Let $x$ be the vertex of $S$ chosen to be in $A$. In \textsc{stage 4}, $x$ receives $-\frac{4}{5} - \frac{1}{5}\varepsilon$ charge, and then receives $\frac{4}{5} + \frac{1}{5}\varepsilon$ charge back from its adjacent reserved vertex in \textsc{stage 5}. Hence $ch_5(S) = ch_3(S)$. Since every reserved vertex neighboring $S$ is also in $B$, by the reasoning in the previous paragraph, $S$ receives $(\frac{2}{5} - \frac{7}{5}\varepsilon)\rho$ charge from \textsc{rule 6}. Hence
\begin{align*}
ch_6(S) \geq ch_5(S) + \left( \frac{2}{5} - \frac{7}{5}\varepsilon \right) \rho &= -\frac{2}{5}\rho + \frac{3}{5} - \frac{33}{5}\varepsilon + \frac{4\rho}{5}\varepsilon + \left( \frac{2}{5} - \frac{7}{5}\varepsilon \right) \rho \\
&= \frac{3}{5} -\frac{3\rho}{5}\varepsilon - \frac{33}{5}\varepsilon \geq 0,
\end{align*}
since $\rho \le 3$ and $\varepsilon \le \frac{1}{14}$.
\end{description}
\end{proof}

Therefore, the total charge on $G$ after \textsc{stage 5} is non-negative. However, the total charge is invariant and the initial charge was $-p(G)-\delta T(G) < 0$, a contradiction. This completes the proof of \Cref{MAINTHM}.
\bibliographystyle{siam}	
\bibliography{6CriticalK4Free}

\end{document}